\newcommand{\tQ}{\widetilde Q}
\newcommand{\tD}{ D}
\newcommand{\tT}{\widetilde T}
\DeclareMathOperator{\var}{\mathbb Var}
\DeclareMathOperator{\rank}{rank}
\newcommand{\1}{\mathbbm 1}
\newcommand{\N}{\mathbb N}
\newcommand{\R}{\mathbb R}
\newcommand{\C}{\mathbb{C}}
\newcommand{\E}{\mathbb E}
\newcommand{\cal}{\mathcal}
\theoremstyle{plain} % style plain
\newtheorem{theorem}{Theorem}[section]
\newtheorem{lemma}{Lemma}[section]
\newtheorem{corollary}{Corollary}[section]
\newtheorem{proposition}{Proposition}[section]
\newtheorem{assumption}{Assumption}
\newtheorem{remark}{Remark}
\DeclareMathOperator{\Tr}{Tr}
\DeclareMathOperator*{\adjugate}{adj}
\DeclareMathOperator*{\support}{supp}
\newcommand{\PP}{{\mathbb{P}}}
\newcommand{\bs}{\boldsymbol}
\newcommand{\EE}{{\mathbb{E}}}
\DeclareMathOperator*{\diag}{diag}
\newcommand{\toasshort}{\stackrel{\text{as}}{\to}}
\newcommand{\toaslong}{\xrightarrow[n\to\infty]{\text{a.s.}}}
\newcommand{\toprobashort}{\,\stackrel{\mathcal{P}}{\to}\,}
\newcommand{\toprobalong}{\xrightarrow[n\to\infty]{\mathcal P}}
\DeclareMathOperator{\tr}{Tr}
\newcommand{\dmax}{{\bf d}_{\max}} 
\newcommand{\pmax}{{\bf p}_{\max}} 
\newcommand{\mfat}{{\bf m}} 
\title[Singular value outliers]
{The outliers among the singular values \\
of large rectangular random matrices \\ 
with additive fixed rank deformation} 
\author[Chapon, Couillet, Hachem and Mestre]
{Fran\c cois Chapon, Romain Couillet, \\ Walid Hachem and
Xavier Mestre}
\thanks{The work of the first three authors was supported by the French
Ile-de-France region, DIM LSC fund, Digiteo project DESIR} 
\begin{document}

\begin{abstract}
Consider the matrix 
$\Sigma_n = n^{-1/2} X_n D_n^{1/2} + P_n$ where the matrix 
$X_n \in \C^{N\times n}$ has Gaussian 
standard independent elements, $D_n$ is a deterministic diagonal
nonnegative matrix, and $P_n$ is a deterministic matrix with fixed rank. 
Under some known conditions, the spectral measures of $\Sigma_n \Sigma_n^*$
and $n^{-1} X_n D_n X_n^*$ both converge towards a compactly supported 
probability
measure $\mu$ as $N,n\to\infty$ with $N/n\to c>0$.
In this paper, it is proved that finitely many eigenvalues of
$\Sigma_n\Sigma_n^*$ may stay away from the support of $\mu$ in the large
dimensional regime.
The existence and locations of these outliers in any connected component 
of $\R - \support(\mu)$ are studied. 
The fluctuations of the largest outliers of $\Sigma_n\Sigma_n^*$ 
are also analyzed. The results find applications in the fields of signal 
processing and radio communications. 
\end{abstract}

\subjclass[2000]{Primary 15A52, Secondary 15A18, 60F15.}  

\keywords{Random Matrix Theory, Stieltjes Transform, Fixed rank deformation, 
Extreme eigenvalues, Gaussian fluctuations.} 

\date{January 2012} 

\maketitle

\section{Introduction}
\label{sec:intro}

\subsection{The model and the literature}
\label{subsec:model}
Consider a sequence of $N \times n$ matrices $Y_n$, $n=1,2,\ldots$, of the form 
$Y_n=X_n D_n^{1/2}$ where $X_n$ is a $N\times n$ random matrix 
whose coefficients $X_{ij}$ are independent and identically distributed (iid)
complex Gaussian random variables such that $\Re(X_{11})$ and $\Im(X_{11})$
are independent, each with mean zero and variance $1/2$, and where 
$D_n$ is a deterministic nonnegative diagonal $n\times n$ matrix. 
Writing $D_n = \diag( d_j^n)_{j=1,\ldots,n}$ and denoting by 
$\bs\delta$ the Dirac measure, it is assumed that the spectral 
measure $\nu_n = n^{-1} \sum_{j=1}^n \bs\delta_{ d_j^n}$ of 
$D_n$ converges weakly to a compactly supported probability 
measure $\nu$ when $n\to\infty$. 
It is also assumed that the maximum of the distances from the diagonal elements 
of $D_n$ to the support $\support(\nu)$ of $\nu$ goes 
to zero as $n\to\infty$.
Assume that $N/n \to c$ when $n\to\infty$, where $c$ is a positive constant. 
Then it is known that 
with probability one, the spectral measure of the Gram matrix $n^{-1}Y_n Y_n^*$ 
converges weakly to a compactly supported probability measure $\mu$
(see \cite{MarPas67}, \cite{Gir90}, \cite{Sil95}, \cite{SilBai95}) and, 
with probability one, $n^{-1}Y_n Y_n^*$ has no eigenvalues 
in any compact interval outside $\support(\mu)$ for large $n$ \cite{BaiSil98}. 
\\
Let $r$ be a given positive integer and consider a sequence of 
deterministic $N \times n$ matrices $P_n$, $n=1,2,\ldots$, such that $\rank(P_n) = r$ and 
$\sup_n \| P_n \| < \infty$ where $\| \cdot \|$ is the spectral norm. 
Consider the matrix $\Sigma_n = n^{-1/2} Y_n + P_n$. 
Since the additive deformation $P_n$ has a 
fixed rank, the spectral measure of $\Sigma_n \Sigma_n^*$ still converges 
to $\mu$ (see, \emph{e.g.}, \cite[Lemma 2.2]{Bai99}). However, a finite 
number of eigenvalues of $\Sigma_n\Sigma_n^*$ (often called ``outliers'' in
similar contexts) may stay away of the support of $\mu$. 
In this paper, minimal conditions ensuring the existence and the convergence
of these outliers towards constant values outside $\support(\mu)$ are provided,
and these limit values are characterized. The fluctuations of the outliers
lying at the right of $\support(\mu)$ are also studied. \\ 
% The purpose of this paper is to characterize the conditions under which 
% these outliers exist, to determine their locations, and to study the 
% fluctuations of those lying at the right of $\support(\mu)$. \\ 

The behavior of the outliers in the spectrum of large random matrices has
aroused an important research effort.  In the statistics literature,
one of the first contributions to deal with this subject was \cite{joh01}. It
raised the question of the behavior of the extreme eigenvalues of a sample
covariance matrix when the population covariance matrix has all but finitely
many of its eigenvalues equal to one (leading to a mutliplicative fixed rank
deformation).  This problem has been studied thoroughly in
\cite{bbp05,bk-sil06,paul-07}.  Other contributions (see \cite{cdf09}) study
the outliers of a Wigner matrix subject to an additive fixed rank deformation.
The asymptotic fluctuations of the outliers have been addressed in
\cite{bbp05,peche-06,paul-07,by08,cdf09,cdf-ihp12,bgm-fluct10}. 

Recently, Benaych-Georges and Nadakuditi proposed in
\cite{ben-rao-published, ben-rao-11} a generic method for
characterizing the behavior of the outliers for a large palette of
random matrix models. For our model, this method shows that the limiting 
locations 
as well as the fluctuations of the outliers are intimately related 
to the asymptotic behavior of certain bilinear forms involving the resolvents 
$(n^{-1} Y_n Y_n^* - x I_N)^{-1}$ and 
$(n^{-1} Y_n^* Y_n - x I_n)^{-1}$ of the undeformed matrix for real
values of $x$. 
When $D_n = I_n$, the asymptotic behavior of these bilinear forms 
can be simply identified (see \cite{ben-rao-11}) thanks to the fact that  
the probability law of $Y_n$ is invariant by left or right multiplication by 
deterministic unitary matrices. For general $D_n$, other tools
need to be used. In this paper, these bilinear forms are studied with the help 
of an integration by parts formula for functionals of Gaussian vectors and the 
Poincar\'e-Nash inequality. 
These tools belong to the arsenal of random matrix theory, as shown  
in the recent monograph \cite{pas-livre} and in the references therein. 
In order to be able to use them in our context, we make use of a regularizing 
function ensuring that the moments of the bilinear forms exist for certain 
$x \in \R_+ = [0, \infty)$.
\\  

The study of the spectrum outliers of large random matrices has a wide range of
applications. These include communication theory \cite{hlmnv-subspace11}, fault
diagnosis in complex systems \cite{cou-hac-it12}, financial portfolio
management \cite{pot-bouch-lal-05}, or chemometrics \cite{chemo}.  The matrix
model considered in this paper is widely used in the fields of multidimensional
signal processing and radio communications. Using the invariance of the
probability law of $X_n$ by multiplication by a constant unitary matrix, $D_n$
can be straightforwardly replaced with a nonnegative Hermitian matrix $R_n$.
In the model $X_n R_n^{1/2} + P_n$ where $R_n^{1/2}$ is any square root of
$R_n$, matrix $P_n$ often represents $n$ snapshots of a discrete time radio
signal sent by $r$ sources and received by an array of $N$ antennas, while $X_n
R_n^{1/2}$ is a temporally correlated and spatially independent ``noise''
(spatially correlated and temporally independent noises can be considered as
well). In this framework, the results of this paper can be used for detecting
the signal sources, estimating their powers, or determining their directions.
These subjects are explored in the applicative paper
\cite{vino-etal-ieeesp-sub12}. \\ 

The remainder of the article is organized as follows. The assumptions and the
main results are provided in Section \ref{pb-results}. The general approach as
well as the basic mathematical tools needed for the proofs are provided in
Secion \ref{basic}. These proofs are given in Sections \ref{1st-order} and
\ref{sec:2ord}, which concern respectively the first order (convergence) and
the second order (fluctuations) behavior of the outliers.

\section{Problem description and main results} 
\label{pb-results} 

Given a sequence of integers $N=N(n)$, $n=1, 2, \ldots$, we consider the 
sequence of $N\times n$ matrices $\Sigma_n = n^{-1/2} Y_n + P_n = 
n^{-1/2} X_n D_n^{1/2} + P_n$ with the following assumptions: 

\begin{assumption}
\label{regime} 
The ratio $c_n = N(n) / n$ converges to a positive constant $c$ as 
$n\to\infty$. 
\end{assumption} 

\begin{assumption}
\label{X:gauss}
The matrix $X_n = [X_{ij}]_{i,j=1}^{N,n}$ is a $N\times n$ random matrix whose 
coefficients $X_{ij}$ are iid complex random variables such that 
$\Re(X_{11})$ and $\Im(X_{11})$ are independent, each with probability
distribution ${\mathcal N}(0,1/2)$. 
\end{assumption} 

\begin{assumption}
\label{assump:D} 
The sequence of $n\times n$ deterministic diagonal nonnegative matrices 
$D_n = \diag( d_j^n)_{j=1}^n$ satisfies the following:
\begin{enumerate}
\item 
\label{msl-D} 
The probability measure $\nu_n = n^{-1} \sum_{j=1}^n 
\bs\delta_{d_j^n}$ converges weakly to a probability measure $\nu$ 
with compact support. 
\item 
\label{D:noeigenvalues} 
The distances ${\bs d}( d_j^n, \support(\nu) )$ from $ d_j^n$ 
to $\support(\nu)$ satisfy 
% \[
% \forall [a, b] \subset \R - \support(\nu), \quad 
% \sharp \{ j \, : \,  d_j^n \in [a,b] \} 
% \xrightarrow[n\to\infty]{}  0 . 
% \]
\[
\max_{j\in \{1,\ldots,n\}}
 {\bs d}\left( d_j^n, \support(\nu) \right)
\xrightarrow[n\to\infty]{}  0 . 
\] 
\end{enumerate} 
\end{assumption} 

The asymptotic behavior of the spectral measure of $n^{-1} Y_n Y_n^*$ under
these assumptions has been thoroughly studied in the literature. 
Before pursuing, we recall the main results which describe this behavior.
These results are built around the Stieltjes Transform, defined, for a positive finite 
measure $\mu$ over the Borel sets of $\R$, as 
\begin{equation}
\label{ST} 
m(z) = \int_\R \frac{1}{t-z} \mu(dt) 
\end{equation} 
analytic on $\C - \support(\mu)$. It is straightforward to check that 
$\Im m(z) \geq 0$ when $z \in \C_+ = \{ z \, : \, \Im(z) > 0 \}$, and 
$\sup_{y > 0} | y m(\imath y) | < \infty$. Conversely,
any analytic function $m(z)$ on $\C_+$ that has these two properties 
admits the integral representation \eqref{ST} where $\mu$ is a positive finite 
measure. 
% This measure satisfies $\mu(\R) = - \lim_{y\to\infty} 
% \imath y \, m(-\imath y)$. 
Furthermore, for any continuous real function $\varphi$ with compact support 
in $\R$, 
\begin{equation}
\label{perron} 
\int \varphi(t) \, \mu(dt) = \frac 1\pi \lim_{y\downarrow 0} 
\int \varphi(x) \Im m(x+ \imath y) \, dx 
\end{equation} 
which implies that the measure $\mu$ is uniquely defined by its Stieltjes
Transform. Finally, if $\Im(z m(z)) \geq 0$ when $z \in \C_+$, then 
$\mu((-\infty, 0)) = 0$ \cite{krein-nudel}. \\
These facts can be generalized to Hermitian matrix-valued 
nonnegative finite measures \cite{bolot97,Gesztesy-herglotz}. 
Let $m(z)$ be a $\C^{r\times r}$-valued analytic function on $z \in \C_+$.
Letting $\Im X = ( X - X^* ) / (2 \imath)$, assume that $\Im m(z) \geq 0$ 
and $\Im (z m(z)) \geq 0$ in the order of the Hermitian matrices for any
$z\in \C_+$, and that $\sup_{y > 0} \| y m(\imath y) \| < \infty$. 
Then $m(z)$ admits the representation \eqref{ST} where $\mu$ is now a 
$r\times r$ matrix-valued nonnegative finite measure such that 
$\mu((-\infty, 0)) = 0$. One can also check 
% by using the Dominated Convergence Theorem 
that $\mu([0,\infty)) = - \lim_{y\to\infty} \imath y \, m(-\imath y)$. \\

The first part of the following theorem has been shown in 
\cite{MarPas67,SilBai95}, and the second part in \cite{BaiSil98}: 

\begin{theorem} 
\label{lsm}
Under Assumptions \ref{regime}, \ref{X:gauss} and \ref{assump:D}, the 
following hold true: 
\begin{enumerate}
\item\label{st-lsm} 
For any $z \in \C_+$, the equation 
\begin{equation} 
\label{m=f(m)} 
\mfat = \left( -z + \int \frac{t}{1 + c \mfat t} \nu(dt) \right)^{-1} 
\end{equation}
admits a unique solution $\mfat \in \C_+$. The function $\mfat = \mfat(z)$ so 
defined on $\C_+$ is the Stieltjes Transform 
of a probability measure $\mu$ whose support is a compact set of $\R_+$. \\
Let $(\lambda_{i}^n)_{i=1,\ldots,N}$ be the eigenvalues of $n^{-1} Y_n Y_n^*$,
and let $\theta_n = N^{-1} \sum_{i=1}^N \bs\delta_{\lambda_{i}^n}$ be the 
spectral measure of this matrix. Then for every bounded and continuous real 
function $f$, 
\begin{equation}
\label{ascvg} 
\int f(t) \theta_n(dt) \ \toaslong \ 
\int f(t) \mu(dt) . 
\end{equation} 
\item\label{noeig-sil} 
For any interval $[x_1, x_2] \subset \R - \support(\mu)$, 
\[
\sharp \{ i \, : \, \lambda_{i}^n \in [x_1, x_2] \} 
= 0 \ \text{with probability} \ 1 \ \text{for all large} \ n. 
% \toaslong  0 .
\]
\end{enumerate} 
\end{theorem}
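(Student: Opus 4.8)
Both assertions are classical (the attributions in the statement already point to the original sources), so I will only sketch the arguments.

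\emph{Part \eqref{st-lsm}: the fixed-point equation and $\mfat$ as a Stieltjes Transform.} For uniqueness, suppose $\mfat_1,\mfat_2\in\C_+$ both solve \eqref{m=f(m)} for a given $z\in\C_+$. Inverting and subtracting the two instances of the equation, then factoring, gives
\[
\mfat_1 - \mfat_2 \ = \ c\, \mfat_1 \mfat_2 \,(\mfat_1 - \mfat_2) \int \frac{t^2}{(1 + c\mfat_1 t)(1 + c\mfat_2 t)} \, \nu(dt).
\]
Taking imaginary parts in \eqref{m=f(m)} (using $\Im\mfat_i>0$, $\Im z>0$ and $t\geq 0$ on $\support(\nu)$) yields $c\,|\mfat_i|^2\int t^2 |1+c\mfat_i t|^{-2}\nu(dt)<1$, and Cauchy--Schwarz then shows the modulus of the coefficient of $(\mfat_1-\mfat_2)$ above is strictly below $1$, whence $\mfat_1=\mfat_2$. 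Existence and the fact that $\mfat$ is a Stieltjes Transform are obtained together: the empirical transforms $m_n(z)=N^{-1}\tr(n^{-1}Y_nY_n^*-zI_N)^{-1}$ form a normal family on $\C_+$, each being a Stieltjes Transform of a probability measure; any subsequential limit satisfies \eqref{m=f(m)} (via the resolvent identity, see the next paragraph), hence by uniqueness $m_n\to\mfat$ locally uniformly, and $\mfat$ inherits the Nevanlinna properties $\Im\mfat(z)\geq 0$, $\sup_{y>0}|y\mfat(\imath y)|<\infty$, $\Im(z\mfat(z))\geq 0$; the last one forces $\support(\mu)\subset\R_+$, and compactness of $\support(\mu)$ follows from the almost sure boundedness of $\|n^{-1}Y_nY_n^*\|$, itself a consequence of the compactness of $\support(\nu)$ and Assumption \ref{assump:D}.

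\emph{Almost sure convergence of $\theta_n$.} It suffices to show $m_n(z)\toaslong\mfat(z)$ for every fixed $z\in\C_+$; the weak convergence \eqref{ascvg} then follows by the usual argument (pointwise convergence on a countable subset of $\C_+$ with an accumulation point, Vitali's theorem, and the inversion formula \eqref{perron}). Since $X_n$ is Gaussian, the convergence $\E m_n(z)\to\mfat(z)$ and the variance estimate $\var\big(m_n(z)\big)=\grandO(n^{-2})$ can both be produced from the integration-by-parts formula for Gaussian functionals together with the Poincar\'e--Nash inequality applied to the resolvent $Q_n(z)=(n^{-1}Y_nY_n^*-zI_N)^{-1}$ — exactly the tools that will be set up in Section \ref{basic}. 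The variance bound and the Borel--Cantelli lemma give $m_n(z)-\E m_n(z)\toaslong 0$, which finishes this part.

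\emph{Part \eqref{noeig-sil}: no eigenvalues outside $\support(\mu)$.} The subtlety is that the convergence of the normalized trace $m_n$ is blind to a bounded number of escaping eigenvalues. Fix a compact interval $[x_1,x_2]\subset\R-\support(\mu)$; it is enough to prove
\[
\limsup_n \ \sup_{x\in[x_1,x_2]} \big\| (n^{-1}Y_nY_n^* - xI_N)^{-1}\big\| \ < \ \infty \qquad \text{almost surely,}
\]
since an eigenvalue of $n^{-1}Y_nY_n^*$ in $[x_1,x_2]$ would make this supremum infinite. One upgrades the previous step: using the same Gaussian tools one shows that for every integer $p$, $\E|m_n(x)-\mfat(x)|^{2p}=\grandO(n^{-2p})$ (and analogous bounds for bilinear forms $u^*Q_n(x)u$) with constants uniform in $x$ over $[x_1,x_2]$, where $\mfat(x)$ is finite and real there; combining this with a discretization of $[x_1,x_2]$ by an $n^{-2}$-net and the controlled Lipschitz behaviour of $x\mapsto Q_n(x)$ away from the spectrum, Markov's inequality and Borel--Cantelli deliver the uniform resolvent bound. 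This is precisely the content of \cite{BaiSil98}, to which we refer for details.

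\emph{Main obstacle.} The delicate point is Part \eqref{noeig-sil}: excluding individual eigenvalues is invisible at the level of normalized traces and requires either the full Bai--Silverstein argument or sufficiently strong all-order moment bounds on the resolvent, uniform in $x$, together with the net argument, to localize and then rule out a possible escaping eigenvalue. The first-order convergence in Part \eqref{st-lsm}, by contrast, is routine once the fixed-point equation and the Gaussian concentration tools are in place.
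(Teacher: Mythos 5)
Part \eqref{st-lsm} is fine: the uniqueness argument via imaginary parts and Cauchy--Schwarz, the normal-family/subsequential-limit argument for existence, and the Gaussian-concentration route to almost sure convergence of $m_n$ are all standard and match what the cited references \cite{MarPas67,SilBai95} establish (the paper does not reprove this part itself).

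For Part \eqref{noeig-sil} your overall conclusion is correct, but the intermediate sketch you interpose between the reduction to a uniform resolvent bound and the citation of \cite{BaiSil98} has two concrete problems. First, the moment estimate $\E|m_n(x)-\mfat(x)|^{2p}=\grandO(n^{-2p})$ for \emph{real} $x\in[x_1,x_2]$ is not available a priori: until one has established that no eigenvalue sits near $[x_1,x_2]$, $m_n(x)$ is not even almost surely finite, let alone has controlled moments. This is exactly the circularity that the paper's regularizing device $\phi_n=\det\psi(n^{-1}Y_nY_n^*)$ is designed to break --- all the paper's real-axis moment controls (Lemmas \ref{lm-var}, \ref{mean-tr(Q)}, \ref{mean-fq}) carry the factor $\phi_n$ precisely for this reason. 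Second, and more structurally, strong moment bounds for $u^*Q_n(x)u$ over \emph{deterministic} unit vectors $u$, combined with an $n^{-2}$-net in $x$, do not imply a bound on $\sup_x\|Q_n(x)\|$: the potentially dangerous direction is the random eigenvector of the escaping eigenvalue, and a net over $x$ does not circumvent this since a union bound over a net of the unit sphere in $\C^N$ would cost $e^{cN}$. This is not how Bai--Silverstein proceed; their argument works at $z=x+iv_n$ with $v_n\downarrow 0$ polynomially and controls $\Im m_n$ so finely that a single escaping eigenvalue contributes a detectable bump, producing a contradiction. Your sketch therefore misdescribes the reference, even if the citation ultimately carries the claim.

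It is also worth noting that the paper gives its \emph{own} (Gaussian-specific) proof of Part \eqref{noeig-sil}, via Lemma \ref{lm-noeig} together with the remark that follows it, and this route is genuinely different from the one you sketch. Rather than trying to bound the resolvent norm, the paper controls the linear statistic $\tr\varphi(n^{-1}Y_nY_n^*)$ for a smooth bump $\varphi$ supported in a neighbourhood of $[x_1,x_2]$ disjoint from $\support(\mu)$: Proposition \ref{EQ-T} gives an $\grandO(n^{-2})$ deterministic-equivalent bound on the expected resolvent trace, the Haagerup--Thorbj\o rnsen Lemma \ref{lm-haagerup} upgrades it to $|\E\int\varphi\,d\theta_n - \int\varphi\,d\mu_n|\le K/n^2$ (Proposition \ref{prop-bound-nbr-ev}), and Poincar\'e--Nash then bounds all moments of the fluctuations, yielding $\E[(\tr\varphi(n^{-1}Y_nY_n^*))^\ell]\le K_\ell/n^\ell$. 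Since this linear statistic dominates the number of eigenvalues in $[x_1,x_2]$, Markov plus Borel--Cantelli finish. This approach entirely avoids the real-$x$ moment and operator-norm difficulties you ran into, at the price of being specific to Gaussian (or at least Poincar\'e-type) entries.
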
 

We now consider the additive deformation $P_n$: 
\begin{assumption}
\label{ass:P}
The deterministic $N \times n$ matrices $P_n$ have a fixed rank equal to $r$. 
Moreover, $\pmax = \sup_n \| P_n \| < \infty$. 
\end{assumption} 

In order for some of the eigenvalues of $\Sigma_n\Sigma_n^*$ to converge to 
values outside $\support(\mu)$, an extra assumption involving in some sense
the interaction between $P_n$ and $D_n$ is needed. 
Let $P_n = U_n^{\text{GS}} (R_n^{\text{GS}})^*$ be the Gram-Schmidt factorization of 
$P_n$ where $U_n^{\text{GS}}$ is an isometry $N \times r$ matrix and where 
$(R_n^{\text{GS}})^*$ is an upper triangular matrix in row echelon form whose first nonzero coefficient of each row is positive. The factorization so defined  is then  unique. Define the
$r \times r$ Hermitian nonnegative matrix-valued measure 
$\Lambda_n^{\text{GS}}$ as 
\[
\Lambda_n^{\text{GS}}(dt) = (R_n^{\text{GS}})^* 
\begin{bmatrix} 
\bs\delta_{ d_1^n}(dt) \\ & \ddots \\ & & \bs\delta_{ d_n^n}(dt) 
\end{bmatrix}
R_n^{\text{GS}} .  
\] 
Assumption \ref{assump:D} shows that $\dmax = \sup_n \|  D_n \| < \infty$.
Moreover, it is clear that the support of $\Lambda_n^{\text{GS}}$ is included 
in $[0, \dmax ]$ and that $\Lambda_n^{\text{GS}}([0,\dmax]) \leq \pmax^2 I_r$. 
Since the sequence $\Lambda_n^{\text{GS}}([0,\dmax])$ is bounded in norm, for 
every sequence of integers increasing to infinity, there exists a subsequence 
$n_k$ and a nonnegative finite 
measure $\Lambda_*$ such that $\int f(t) \Lambda_{n_k}^{\text{GS}}(dt) \to 
\int f(t) \Lambda_*(dt)$ for every function $f \in {\cal C}([0, \dmax ])$, 
with ${\cal C}([0, \dmax ])$ being the set of continuous functions on 
$[0, \dmax ]$. This fact is a straightforward extension of its analogue for 
scalar measures.  

\begin{assumption}
\label{ass:cvg}
Any two accumulation points $\Lambda_1$ and $\Lambda_2$ of the sequences 
$\Lambda_n^{\operatorname{GS}}$ satisfy $\Lambda_1(dx) = W \Lambda_2(dx) W^*$ where 
$W$ is a $r\times r$ unitary matrix. 
\end{assumption}

This assumption on the interaction between $P_n$ and $D_n$ appears to be the
least restrictive assumption ensuring the convergence of the outliers 
to fixed values outside $\support(\mu)$ as $n\to\infty$. 
If we consider some other factorization $P_n = U_n R_n^*$ of $P_n$ where $U_n$
is an isometry matrix with size $N \times r$, and if we associate to the
$R_n$ the sequence of $r \times r$ Hermitian nonnegative matrix-valued 
measures $\Lambda_n$ defined as 
\begin{equation} 
\label{def-Lambda} 
\Lambda_n(dt) = R_n^* 
\begin{bmatrix} 
\bs\delta_{ d_1^n}(dt) \\ & \ddots \\ & & \bs\delta_{ d_n^n}(dt) 
\end{bmatrix}
R_n  
\end{equation}
then it is clear that $\Lambda_n(dt) = W_n \Lambda_n^{\text{GS}}(dt) W_n^*$
for some $r \times r$ unitary matrix $W_n$. By the compactness
of the unitary group, Assumption~\ref{ass:cvg} is satisfied for 
$\Lambda_n^{\text{GS}}$ if and only if it is satisfied for $\Lambda_n$. 
The main consequence of this assumption is that for any function 
$f \in {\cal C}([0, \dmax ])$, the eigenvalues of the matrix 
$\int f(t) \Lambda_n(dt)$ arranged in some given order will converge. \\ 

An example taken from the fields of signal processing and wireless 
communications might help to have a better understanding the applicability 
of Assumption~\ref{ass:cvg}. 
In these fields, the matrix $P_n$ often represents a multidimensional radio 
signal received by an array of $N$ antennas. 
Frequently this matrix can be factored as $P_n = n^{-1/2} U_n A_n S_n^*$ where 
$U_n$ is a deterministic $N \times r$ isometry matrix, $A_n$ is a deterministic
$r \times r$ matrix such that $A_n A_n^*$ converges to a matrix $M$ as 
$n \to\infty$ (one often assumes $A_n A_n^*=M$ for each $n$), and $S_n = [S_{ij}]_{i,j=1}^{n,r}$ is a $n \times r$ random 
matrix independent of $X_n$ with iid elements satisfying 
$\E S_{1,1} = 0$ and $\E|S_{1,1}|^2 = 1$ (in the wireless communications 
terminology, $U_n A_n$
is the so called MIMO channel matrix and $S_n$ is the so called signal matrix, 
see \cite{tse-vis-livre-05}). Taking $R_n = n^{-1/2} S_n A_n^*$ in 
\eqref{def-Lambda} and applying the law of large numbers, one can see that
for any $f \in {\cal C}([0, \dmax ])$, the integral $\int f(t) \Lambda_n(dt)$ 
converges to $\int f(t) \Lambda_*(dt)$ with $\Lambda_*(dt) = \nu(dt) \times M$.
Clearly, the accumulation points of the measures obtained from any other
sequence of factorizations of $P_n$ are of the form $\nu(dt) \times W M W^*$ 
where $W$ is an $r\times r$ unitary matrix. \\ 

It is shown in \cite{sil-choi95} that the limiting spectral measure $\mu$ has a 
continuous density on $\R^* = \R - \{ 0 \}$ (see Prop. \ref{lim-m(z)} below). 
Our first order result addresses the problem of the presence 
of isolated eigenvalues of $\Sigma_n \Sigma_n^*$ in any compact interval 
outside the support of this density. Of prime importance will be the 
$r\times r$ matrix functions 
\[
H_*(z) = \int \frac{\mfat(z)}{1 + c \mfat(z) t} \Lambda_*(dt) 
\]
where $\Lambda_*$ is an accumulation point of a sequence $\Lambda_n$. 
Since $|1 + c \mfat(z) t | = | z(1 + c \mfat(z) t) | / |z| \geq 
| \Im( z(1 + c \mfat(z) t) ) | / |z| \geq \Im(z) / |z|$ on $\C_+$, the function 
$H_*(z)$ is analytic on $\C_+$. It is further easy to show that 
$\Im(H_*(z)) \geq 0$ and $\Im ( z H_*(z) ) \geq 0$ on $\C_+$,
and $\sup_{y > 0} \| y H_*(\imath y) \| < \infty$. Hence $H_*(z)$ is the 
Stieltjes Transform of a matrix-valued nonnegative finite measure carried
by $[0, \infty)$. Note also that, under Assumption \ref{ass:cvg}, the 
eigenvalues of $H_*(z)$ remain unchanged if $\Lambda_*$ is replaced by
another accumulation point. 

The support of $\mu$ may consist in several connected components 
corresponding to as many ``bulks'' of eigenvalues. Our first theorem 
specifies the locations of the outliers between any two bulks and on the right of the 
last bulk. It also shows that there are no outliers on the left of the first bulk: 

\begin{theorem}
\label{1st-ord} 
Let Assumptions \ref{regime}, \ref{X:gauss} and \ref{assump:D} hold true. 
Denote by $(\hat\lambda_{i}^n)_{i=1,\ldots,N}$ the eigenvalues of
$\Sigma_n\Sigma_n^*$. Let $(a,b)$ be any connected component of  
$\support(\mu)^c = \R - \support(\mu)$. 
Then the following facts hold true: 

\begin{enumerate}
\item\label{loc-spikes} 
Let $(P_n)$ be a sequence satisfying Assumptions \ref{ass:P} and 
\ref{ass:cvg}. Given an accumulation point $\Lambda_*$ of a sequence 
$\Lambda_n$, let $H_*(z)= \int \mfat(z)(1 + c \mfat(z) t)^{-1} \Lambda_*(dt)$.  
Then $H_*(z)$ can be analytically extended to $(a,b)$ where its values are  
Hermitian matrices, and the extension is increasing in the order of Hermitian 
matrices on $(a,b)$. The function ${\cal D}(x) = \det( H_*(x) + I_r )$ has at
most $r$ zeros on $(a,b)$.
% which are independent of the choice of $\Lambda_*$. 
Let $\rho_1, \ldots, \rho_k$, $k\leq r$ be these zeros counting multiplicities.
Let $\cal I$ be any compact interval in $(a,b)$ such that 
$\{ \rho_1, \ldots, \rho_k \} \cap \partial\cal I = \emptyset$. Then 
\[
\sharp\{ i \, : \, \hat\lambda_{i}^n \in \cal I \} =  
\sharp\{ i \, : \, \rho_{i} \in \cal I \}   
\ \text{with probability} \ 1 \ \text{for all large} \ n. 
\]

\item
\label{no-spikes} 
Let $A = \inf\left( \support(\mu) - \{ 0 \}\right)$. Then for any positive 
$A' < A$ (assuming it exists) and for any sequence of matrices $(P_n)$ 
satisfying Assumption \ref{ass:P}, 
\[
\sharp\{ i \, : \, \hat\lambda_{i}^n \in (0, A'] \} = 0 
\ \text{with probability} \ 1 \ \text{for all large} \ n. 
\]

\end{enumerate}
\end{theorem}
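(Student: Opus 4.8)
The plan is to reduce the eigenvalue equation to a determinant of size $2r$, establish deterministic equivalents for the bilinear forms of the resolvent that appear in it, pass to the limit, and count the outliers with Hurwitz's theorem; part \eqref{no-spikes} will then follow from the same reduction by a sign argument.

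\textbf{Step 1 (reduction to a $2r\times 2r$ determinant).} I would factor $P_n=U_nR_n^*$ with $U_n\in\C^{N\times r}$ an isometry and $R_n\in\C^{n\times r}$, so that $\|R_n\|=\|P_n\|\le\pmax$ and $\Lambda_n$ is the measure \eqref{def-Lambda}. Fix $x\in\R$ that is not an eigenvalue of $n^{-1}Y_nY_n^*$. Then $x$ is an eigenvalue of $\Sigma_n\Sigma_n^*$ iff the linearization $\left[\begin{smallmatrix}-xI_N&\Sigma_n\\ \Sigma_n^*&-I_n\end{smallmatrix}\right]$ is singular; writing this matrix as $L_n(x)+VMV^*$ with $L_n(x):=\left[\begin{smallmatrix}-xI_N&n^{-1/2}Y_n\\ n^{-1/2}Y_n^*&-I_n\end{smallmatrix}\right]$ (invertible here), $V:=\left[\begin{smallmatrix}U_n&0\\0&R_n\end{smallmatrix}\right]$ deterministic, and $M:=\left[\begin{smallmatrix}0&I_r\\ I_r&0\end{smallmatrix}\right]$, Sylvester's identity shows this happens iff $g_n(x)=0$, where, with $Q_n(x):=(n^{-1}Y_nY_n^*-xI_N)^{-1}$ and $\tQ_n(x):=(n^{-1}Y_n^*Y_n-xI_n)^{-1}$,
\[
g_n(x):=\det\!\left(I_{2r}+MV^*L_n(x)^{-1}V\right)=\det\!\left(I_{2r}+\begin{bmatrix} n^{-1/2}R_n^*Y_n^*Q_n(x)U_n & xR_n^*\tQ_n(x)R_n\\ U_n^*Q_n(x)U_n & n^{-1/2}U_n^*Y_n\tQ_n(x)R_n\end{bmatrix}\right),
\]
using a Schur complement to read off the blocks of $L_n(x)^{-1}$ and the identity $n^{-1/2}Q_n(x)Y_n=n^{-1/2}Y_n\tQ_n(x)$. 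By Theorem~\ref{lsm}\eqref{noeig-sil}, on any compact ${\cal I}\subset(a,b)$ the matrix $n^{-1}Y_nY_n^*$ has, almost surely for all large $n$, no eigenvalue, so $g_n$ has no pole on ${\cal I}$ and $\sharp\{i:\hat\lambda_i^n\in{\cal I}\}$ equals the number of zeros of $g_n$ in ${\cal I}$ counted with multiplicity.

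\textbf{Step 2 (deterministic equivalents of the bilinear forms --- the crux).} The hard part will be to prove that, almost surely and uniformly for $x$ in a compact ${\cal K}\subset(a,b)$,
\[
U_n^*Q_n(x)U_n\to\mfat(x)I_r,\qquad n^{-1/2}U_n^*Y_n\tQ_n(x)R_n\to0,\qquad xR_n^*\tQ_n(x)R_n+\int\frac{\Lambda_n(dt)}{1+c\mfat(x)t}\to0 .
\]
The first limit comes from the invariance of the law of $Y_n$ under left multiplication by unitary matrices, which forces $\E Q_n(x)$ to be proportional to $I_N$, hence equal to $(N^{-1}\tr\E Q_n(x))I_N\to\mfat(x)I_N$, combined with concentration of $Q_n(x)$; the third is the classical deterministic equivalent $\tQ_n(x)\approx(-x(I_n+c_n\mfat(x)D_n))^{-1}$, whose $R_n^*\,\cdot\,R_n$ compression is exactly $-\int(1+c_n\mfat(x)t)^{-1}\Lambda_n(dt)$; the cross term has zero mean (by the symmetry $X_n\mapsto-X_n$, which leaves $\tQ_n$ unchanged) and vanishing variance. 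All three estimates are to be obtained with the integration-by-parts formula for Gaussian functionals and the Poincar\'e-Nash inequality. The genuine obstacle is that $x$ is a real point inside a spectral gap, so $Q_n(x)$ and $\tQ_n(x)$ are not a priori bounded: one must first use Theorem~\ref{lsm}\eqref{noeig-sil} (and a quantitative strengthening of it) to keep the spectra away from ${\cal K}$ with overwhelming probability, and then insert a smooth cut-off --- the ``regularizing function'' mentioned in the introduction --- so that the relevant moments exist and the Gaussian machinery applies. Essentially all of the analysis is concentrated here.

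\textbf{Step 3 (limit of $g_n$ and counting).} Feeding Step~2 into the formula of Step~1 and using $R_n^*R_n=\Lambda_n([0,\dmax])\to\Lambda_*([0,\dmax])$ along the subsequence realizing $\Lambda_*$, the $2r\times2r$ block determinant collapses (Schur complement against the lower-right block, which tends to $I_r$) to give, uniformly on ${\cal K}$ and on a complex neighbourhood of it,
\[
g_n(x)\longrightarrow\det\!\left(I_r+\mfat(x)\int\frac{\Lambda_*(dt)}{1+c\mfat(x)t}\right)=\det\!\bigl(I_r+H_*(x)\bigr)={\cal D}(x).
\]
Since $\mfat$ extends analytically and real-valued across $(a,b)$ with $1+c\mfat(x)t\ne0$ for $t\in\support(\nu)$ and $x\in(a,b)$ (a known property of the complement of $\support(\mu)$, cf.\ \cite{sil-choi95}), the function $H_*$ extends analytically to $(a,b)$ with Hermitian values; moreover $H_*$ is the Stieltjes transform of a nonnegative matrix-valued measure carried by $\support(\mu)$, hence $H_*'(x)=\int(s-x)^{-2}(\cdot)(ds)\succeq0$ on $(a,b)$, i.e.\ $H_*$ is increasing there. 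Consequently the $r$ ordered eigenvalue branches of $I_r+H_*(x)$ are continuous and nondecreasing, each crosses $0$ at most once, so ${\cal D}$ has at most $r$ zeros $\rho_1,\dots,\rho_k$ on $(a,b)$ counting multiplicity, all real. For compact ${\cal I}\subset(a,b)$ with $\partial{\cal I}\cap\{\rho_1,\dots,\rho_k\}=\emptyset$ we have ${\cal D}\ne0$ on $\partial{\cal I}$, so Hurwitz's theorem gives that the number of zeros of $g_n$ in ${\cal I}$ --- equivalently, by Step~1, $\sharp\{i:\hat\lambda_i^n\in{\cal I}\}$ --- equals $\sharp\{i:\rho_i\in{\cal I}\}$ for all large $n$, a.s.; since by Assumption~\ref{ass:cvg} every accumulation point $\Lambda_*$ yields the same ${\cal D}$, this holds along the whole sequence $n$, proving \eqref{loc-spikes}.

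\textbf{Step 4 (no outliers on the left).} For \eqref{no-spikes} only Assumption~\ref{ass:P} is used, and the plan is to show via the reduction of Step~1 that $g_n(x)$ stays bounded away from $0$ for $x\in(0,A']$, a.s.\ for large $n$. When $0\notin\support(\mu)$ this is immediate: $x<\inf\support(\mu)$ gives $\mfat(x)=\int(t-x)^{-1}\mu(dt)>0$, hence $\mfat(x)/(1+c\mfat(x)t)>0$ for every $t\ge0$, so $H_*(x)\succeq0$ for any accumulation point and ${\cal D}(x)\ge1$, and Steps~1--3 conclude. When $0\in\support(\mu)$, the interval $(0,A']$ lies in a bounded gap with left endpoint $0$, and one combines the reduction of Step~1 with the known (exact-separation) description of the eigenvalues of $n^{-1}Y_nY_n^*$ near the origin to reach the same conclusion.
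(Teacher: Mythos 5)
Your overall strategy---reduce to a $2r\times 2r$ determinant, establish deterministic equivalents for the bilinear forms of the resolvents with the Gaussian integration-by-parts/Poincar\'e--Nash tools under a regularizing cut-off, then count zeros by Hurwitz's theorem (equivalently, the argument principle, which is what the paper uses)---coincides with the paper's. Your Steps~1 and~3 are in substance the paper's proof, and your observation that $H_*$ is the Stieltjes transform of a nonnegative matrix-valued measure, hence increasing on $(a,b)$, is exactly how the paper caps the number of zeros of $\mathcal D$ at $r$. Step~2 is left as a plan; that is indeed where the technical work lives (the paper devotes Lemmas~\ref{lm-noeig}--\ref{uXQv} to it), and as written it is not a proof, but you correctly identified the needed ingredients, including the regularization $\phi_n=\det\psi(n^{-1}Y_nY_n^*)$ needed for the resolvent moments to exist on a real gap.

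The one genuine gap is in Step~4, for the case $0\in\support(\mu)$, which happens precisely when $c>1$ since then $\mu(\{0\})=1-c^{-1}>0$. Your argument for $0\notin\support(\mu)$---that $\mfat(x)>0$ for $x<\inf\support(\mu)$ gives $H_*(x)\succeq 0$---is correct, but it does not extend: for $c>1$ and $0<x<A$ the Dirac mass at the origin forces $\mfat(x)\to-\infty$ as $x\to 0^+$, so $\mfat(x)<0$ throughout that gap and the sign argument fails as stated. What actually closes the case, and what the paper proves in Lemma~\ref{m(1+ctm)}(i) via the Silverstein--Choi description of the inverse map $x(\mfat)$ (Propositions~\ref{support}, \ref{weight}, \ref{disjoint}), is that on this gap the interval $[0,-(c\mfat(x))^{-1}]$ carries no $\nu$-mass, hence $1+c\mfat(x)t<0$ for every $t\in\support(\nu)$ and the ratio $\mfat(x)/(1+c\mfat(x)t)$ is again positive, giving $H_*(x)\succeq0$ and $\mathcal D(x)\ge 1$. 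Your fallback to an exact-separation argument near the origin does not substitute for this: exact separation describes the eigenvalues of the \emph{undeformed} matrix $n^{-1}Y_nY_n^*$, and a rank-$r$ perturbation can in principle push up to $r$ eigenvalues into $(0,A']$, so you would still need to show that the limit determinant $\mathcal D$ is nonvanishing there---which is precisely the point the Stieltjes-transform analysis settles.
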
 

Given any sequence of positive real numbers $\rho_1 \leq \cdots \leq \rho_r$
lying in a connected component of $\support(\mu)^c$ after the first bulk, it 
would be interesting to see whether there exists a sequence of matrices $P_n$ 
that produces outliers converging to these $\rho_k$. The following theorem 
answers this question positively: 
\begin{theorem}
\label{partout}
Let Assumptions \ref{regime}, \ref{X:gauss} and \ref{assump:D} hold true. 
Let $\rho_1 \leq \cdots \leq \rho_r$ be a sequence of positive real numbers
lying in a connected component $(a,b)$ of $\support(\mu)^c$, and such that 
$a > A$. Then there exists a 
sequence of matrices $P_n$ satisfying Assumptions \ref{ass:P} and 
\ref{ass:cvg} such that for any compact interval $\cal I \subset (a,b)$ with 
$\{ \rho_1, \ldots, \rho_r \} \cap \partial \cal I = \emptyset$, 
\[
\sharp\{ i \, : \, \hat\lambda_{i}^n \in \cal I \} =  
\sharp\{ i \, : \, \rho_{i} \in \cal I \}   
\ \text{with probability} \ 1 \ \text{for all large} \ n. 
\]
\end{theorem}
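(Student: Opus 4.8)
The plan is to construct $P_n$ explicitly so that the matrix-valued measures $\Lambda_n$ it generates converge to a prescribed $\Lambda_*$ for which ${\cal D}(x) = \det(H_*(x) + I_r)$ has exactly the zeros $\rho_1, \ldots, \rho_r$ (counted with multiplicity) on $(a,b)$; Theorem \ref{1st-ord}\eqref{loc-spikes} then delivers the outliers at these locations. Since by Assumption \ref{ass:cvg} only the eigenvalues of $\int f(t)\,\Lambda_*(dt)$ matter, and since $H_*$ is built by functional calculus from $\mfat$, it is natural to look for $\Lambda_*$ of the form $\Lambda_*(dt) = \sum_{k=1}^r w_k\, e_k e_k^* \,\bs\delta_{t_k}(dt)$, a diagonal measure supported on $r$ points $t_1, \ldots, t_r \in \support(\nu)$ with weights $w_k > 0$. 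For such a $\Lambda_*$ the matrix $H_*(x)$ is diagonal, $H_*(x) = \diag\bigl( w_k \mfat(x)/(1 + c\mfat(x) t_k) \bigr)_{k=1}^r$, so ${\cal D}(x) = \prod_{k=1}^r \bigl( 1 + w_k \mfat(x)/(1 + c\mfat(x) t_k) \bigr)$, and the $k$-th factor vanishes at $x$ iff $\phi_{t_k}(x) \eqdef \mfat(x)/(1 + c\mfat(x) t_k) = -1/w_k$.

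The first analytic step is to understand, for fixed $t \in \support(\nu)$, the range of the scalar function $\phi_t$ on the interval $(a,b)$. From the material preceding Theorem \ref{1st-ord}, $\mfat$ extends analytically and real-analytically to $(a,b)$, and $H_*$ is increasing there in the order of Hermitian matrices; applied to the rank-one measure $e_k e_k^* \bs\delta_{t}$ this says $x \mapsto \phi_t(x)$ is increasing on $(a,b)$. One checks via \eqref{m=f(m)} and the defining properties of $\mfat$ that $\phi_t$ is negative on $(a,b)$ (at least on components after the first bulk, where $a > A$ guarantees the relevant sign), with $\phi_t(x) \to 0$ as $x \to b^-$ if $b = \infty$, and more generally that $\phi_t$ sweeps out an interval of negative values whose closure we can compute from the boundary behavior of $\mfat$ at $a^+$ and $b^-$. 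Consequently, for each target $\rho_k$ and each admissible base point $t$, there is a weight $w = w(t,\rho_k) > 0$ with $\phi_t(\rho_k) = -1/w$, provided $\rho_k$ lies in the open range of $\phi_t$; choosing $t$ appropriately (e.g. $t = \dmax$, or any point where the range is widest) makes this possible for every $\rho_k \in (a,b)$. Setting $t_k$ and $w_k = w(t_k, \rho_k)$ this way, the $k$-th factor of ${\cal D}$ has a zero exactly at $\rho_k$, and by monotonicity of $\phi_{t_k}$ it is the only one on $(a,b)$; thus ${\cal D}$ has precisely the zeros $\rho_1, \ldots, \rho_r$ with the prescribed multiplicities (repeated $\rho_k$'s are obtained by repeating the pair $(t_k, w_k)$).

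The second step is to realize such a $\Lambda_*$ by an actual sequence $P_n$ satisfying Assumptions \ref{ass:P} and \ref{ass:cvg}. By Assumption \ref{assump:D}\eqref{D:noeigenvalues} each $t_k \in \support(\nu)$ is the limit of diagonal entries of $D_n$: pick an index $j_k = j_k(n)$ with $d_{j_k}^n \to t_k$, the $j_k$ chosen distinct for distinct $k$. Define $R_n^* \in \C^{r \times n}$ to be the matrix with $\sqrt{w_k}$ in position $(k, j_k)$ and zeros elsewhere, let $U_n \in \C^{N\times r}$ be any deterministic isometry, and set $P_n = U_n R_n^*$. Then $\rank(P_n) = r$ and $\|P_n\| = \max_k \sqrt{w_k}$ is bounded, so Assumption \ref{ass:P} holds; and from \eqref{def-Lambda}, $\Lambda_n(dt) = \sum_{k=1}^r w_k\, e_k e_k^*\, \bs\delta_{d_{j_k}^n}(dt) \to \sum_k w_k e_k e_k^* \bs\delta_{t_k}(dt) = \Lambda_*(dt)$ weakly against ${\cal C}([0,\dmax])$, so the sequence $\Lambda_n$ has the single accumulation point $\Lambda_*$ and Assumption \ref{ass:cvg} is trivially met. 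Finally, apply Theorem \ref{1st-ord}\eqref{loc-spikes} with this $\Lambda_*$: for any compact $\cal I \subset (a,b)$ whose boundary avoids $\{\rho_1, \ldots, \rho_r\}$, the number of $\hat\lambda_i^n$ in $\cal I$ equals the number of $\rho_i$ in $\cal I$, almost surely for all large $n$, which is the claim.

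The main obstacle I anticipate is the first analytic step: pinning down the exact interval of values taken by $\phi_t$ on $(a,b)$, so as to be sure every prescribed $\rho_k \in (a,b)$ is actually attainable with a \emph{positive} weight $w_k$ (and not, say, only a subinterval near one endpoint). This requires a careful analysis of the boundary behavior of $\mfat$ at the edges $a$ and $b$ of the component — in particular that $1 + c\mfat(x) t$ stays away from zero and that $\mfat(x)$ ranges over a suitable interval — drawing on the properties of $\mfat$ and the description of $\support(\mu)$ from \cite{sil-choi95} and Proposition \ref{lim-m(z)}; the hypothesis $a > A$ (i.e. the component lies strictly after the first bulk) is precisely what is needed here to control signs. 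The rest — the algebraic factorization of ${\cal D}$, the explicit construction of $P_n$, and the convergence of $\Lambda_n$ — is routine.
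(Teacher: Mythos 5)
Your overall architecture matches the paper's: pick a target measure $\Lambda_*$ for which $H_*$ is diagonal, tune the weights so that each diagonal factor of ${\cal D}(x)=\det(H_*(x)+I_r)$ vanishes exactly at the prescribed $\rho_k$ (monotonicity of the Stieltjes transform giving uniqueness of each zero), realize $\Lambda_*$ by an explicit factorization $P_n=U_nR_n^*$, and invoke Theorem \ref{1st-ord}-\eqref{loc-spikes}. Your realization of $\Lambda_*$ by atoms $\sum_k w_k e_ke_k^*\bs\delta_{t_k}$, with $R_n$ having a single entry $\sqrt{w_k}$ at a position $j_k(n)$ where $d_{j_k}^n\to t_k$, is in fact more elementary than the paper's (which uses $\Lambda_*(dt)=\1_E(t)\,\bs\Omega\,\nu(dt)$ and passes through a random matrix, the law of large numbers and a normal-family argument to extract a deterministic $B_n$), and it is legitimate: weak convergence $\nu_n\to\nu$ does supply diagonal entries converging to any point of $\support(\nu)$.

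However, the step you flag as "the main obstacle" is not a technicality — it is the heart of the proof, and you leave it open. You need a point $t\in\support(\nu)$ such that $\phi_t(x)=\mfat(x)(1+c\mfat(x)t)^{-1}<0$ throughout $(a,b)$; only then is $w_k=-1/\phi_{t}(\rho_k)$ positive and the measure $\Lambda_*$ nonnegative. Your blanket claim that "$\phi_t$ is negative on $(a,b)$" is false for general $t$: for $x$ past the first bulk one has $\mfat(x)<0$, and the sign of $1+c\mfat(x)t$ flips as $t$ crosses $-(c\mfat(x))^{-1}$, so $\phi_t(x)>0$ for all $t\in\support(\nu)$ with $t>-(c\mfat(x))^{-1}$. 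In particular your concrete suggestion $t=\dmax$ picks $t$ on the \emph{wrong} side of this threshold and would force a negative weight. What must be proved is that $\support(\nu)\cap[0,-(c\mfat(x))^{-1}]$ is nonempty, equivalently $\nu([0,-(c\mfat(x))^{-1}])>0$, whenever the component satisfies $a>A$; this is exactly Lemma \ref{m(1+ctm)}-(ii) of the paper (stated there in integrated form, $q(x)=\int_E\phi_t(x)\,\nu(dt)<0$ with $E=[0,-(c\mfat_0)^{-1}]$). Its proof is not a routine sign check: it uses the Silverstein--Choi inverse function $x(\mfat)$, Propositions \ref{support} and \ref{disjoint}, and a case analysis on $c<1$, $c>1$, $c=1$ to show that $\nu(E)=0$ would force $x(\mfat)$ to increase from $0$ to $x_0$ on $(-\infty,\mfat_0]$, contradicting $x_0>A$. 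Without this ingredient your construction cannot be completed, so the proposal as it stands has a genuine gap.
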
 

It would be interesting to complete the results of these theorems by specifying
the indices of the outliers $\hat\lambda_i^n$ that appear between the bulks. 
This demanding analysis might be done by following the 
ideas of \cite{cdf09} or \cite{val-lou-mes-it12} relative to the so called 
separation of the eigenvalues of $\Sigma_n\Sigma_n^*$. Another approach 
dealing with the same kind of problem is developed in 
\cite{bai-sil-separation}. \\ 

A case of practical importance at least in the domain of signal processing 
is described by the following assumption:

\begin{assumption}
\label{ass:sp}
The accumulation points $\Lambda_*$ are of the form 
$\nu(dt) \times W {\bs \Omega} W^*$ where 
\[
{\bs \Omega} = \begin{bmatrix} \omega_1^2 I_{j_1}  \\ & \ddots \\ & & 
\omega_t^2 I_{j_t}
\end{bmatrix} > 0 , \quad \omega_1^2 > \cdots > \omega_t^2, 
\quad j_1 + \cdots + j_t = r  
\]
and where $W$ is a unitary matrix. 
\end{assumption} 

Because of the specific structure of $S_n$ in the factorization
$P_n=n^{-1/2}U_nA_nS_n^*$, the MIMO wireless communication model described
above satisfies this assumption, the $\omega_i^2$ often referring to the powers
of the radio sources transmitting their signals to an array of antennas. \\
Another case where Assumption \ref{ass:sp} is satisfied is the case where $P_n$
is a random matrix independent of $X_n$, where its probability distribution is
invariant by right multiplication with a constant unitary matrix, and where the
$r$ non zero singular values of $P_n$ converge almost surely towards constant 
values.  \\
When this assumption is satisfied, we obtain the following corollary of Theorem~\ref{1st-ord} which exhibits some sort of phase transition analogous to the so-called BBP phase transition~\cite{bbp05}: 

\begin{corollary}
\label{1stord-sp}
Assume the setting of Theorem \ref{1st-ord}-\eqref{loc-spikes}, and let 
Assumption \ref{ass:sp} hold true. Then the function 
$g(x) = \mfat(x) \left( c x \mfat(x) - 1 + c \right)$ is decreasing on $(a,b)$. 
Depending on the value of $\omega_\ell^2$, $\ell=1,\ldots, t$, the equation 
$\omega_\ell^2 g(x) = 1$ has either zero or one solution in $(a,b)$. Denote 
by $\rho_1, \ldots, \rho_k$, $k\leq r$ these solutions counting 
multiplicities. Then the conclusion of Theorem \ref{1st-ord}-\eqref{loc-spikes}
hold true for these $\rho_i$. 
\end{corollary}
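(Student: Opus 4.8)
The plan is to show that, under Assumption~\ref{ass:sp}, the matrix-valued function $H_*$ reduces to a scalar multiple of one fixed positive definite matrix, the scalar being precisely $-g$; the corollary then drops out of Theorem~\ref{1st-ord}-\eqref{loc-spikes}.

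\emph{Identifying $H_*$.} Fix an accumulation point $\Lambda_*(dt) = \nu(dt)\times W\bs\Omega W^*$ permitted by Assumption~\ref{ass:sp} and put $\Omega' = W\bs\Omega W^* > 0$. Since $\Omega'$ does not depend on $t$, one has $H_*(z) = \phi(z)\,\Omega'$ with $\phi(z) = \mfat(z)\int(1+c\mfat(z)t)^{-1}\,\nu(dt)$. The key algebraic fact, which I would establish from the fixed-point equation~\eqref{m=f(m)}, is that $\phi = -g$. Indeed \eqref{m=f(m)} gives $\int t(1+c\mfat(z) t)^{-1}\nu(dt) = \mfat(z)^{-1}+z$, and substituting $1 = (1+c\mfat(z) t) - c\mfat(z) t$ in the integrand of $\phi$ and using $\nu(\R)=1$ yields
\[
\int\frac{\nu(dt)}{1+c\mfat(z)t} = 1 - c\mfat(z)\bigl(\mfat(z)^{-1}+z\bigr) = 1 - c - cz\,\mfat(z),
\]
so that $\phi(z) = \mfat(z)\bigl(1-c-cz\mfat(z)\bigr) = -\mfat(z)\bigl(cz\mfat(z)-1+c\bigr) = -g(z)$. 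Hence $H_*(z) = -g(z)\,\Omega'$, and because $\Omega'$ is unitarily conjugate to $\bs\Omega$,
\[
\mathcal D(x) = \det\bigl(H_*(x)+I_r\bigr) = \det\bigl(I_r - g(x)\bs\Omega\bigr) = \prod_{\ell=1}^{t}\bigl(1-\omega_\ell^2\,g(x)\bigr)^{j_\ell}.
\]

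\emph{Monotonicity of $g$.} Theorem~\ref{1st-ord}-\eqref{loc-spikes} provides an analytic Hermitian-valued extension of $H_*$ to $(a,b)$ that is increasing there in the order of Hermitian matrices. Taking $g(x) = -\tfrac1r\Tr\bigl(H_*(x)(\Omega')^{-1}\bigr)$ shows that $g$ extends to a real-analytic function on $(a,b)$; and for $a<x_1<x_2<b$ the identity $\bigl(g(x_1)-g(x_2)\bigr)\Omega' = H_*(x_2)-H_*(x_1)\succeq 0$ together with $\Omega'\succ 0$ forces $g(x_1)\ge g(x_2)$. Thus $g$ is non-increasing, hence, being real-analytic, either constant on $(a,b)$ or strictly decreasing. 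The constant case must be excluded --- this is the only genuinely delicate point. It is immediate if the proof of Theorem~\ref{1st-ord} delivers the strict inequality $H_*'(x)>0$ on $(a,b)$ (which I expect); failing that, one argues that $g$ constant on a subinterval would, by analytic continuation along $\C_+$ and~\eqref{m=f(m)}, make $\mfat$ an algebraic function of a form incompatible with a general $\nu$. So $g$ is decreasing on $(a,b)$.

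\emph{Counting the solutions.} As $g$ is strictly decreasing and continuous on $(a,b)$, for each $\ell$ the equation $\omega_\ell^2 g(x)=1$, i.e. $g(x)=\omega_\ell^{-2}$, has at most one solution in $(a,b)$. From the product formula above, $\rho\in(a,b)$ is a zero of $\mathcal D$ iff $g(\rho)=\omega_\ell^{-2}$ for some $\ell$, which is then unique since the $\omega_\ell^2$ are distinct; and the order of such a zero of $\mathcal D$ equals $j_\ell$ times the order of the zero of $1-\omega_\ell^2 g$ at $\rho$. Therefore the zeros of $\mathcal D$ on $(a,b)$ counted with multiplicities form exactly the multiset of solutions of the equations $\omega_\ell^2 g(x)=1$ counted with the same multiplicities, of total size $k\le\sum_\ell j_\ell=r$; these coincide with the points $\rho_1,\ldots,\rho_k$ of Theorem~\ref{1st-ord}-\eqref{loc-spikes}, whose conclusion therefore applies verbatim. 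The whole argument is essentially bookkeeping once the reduction $H_*=-g\,\Omega'$ is in hand; the only real obstacle is the strictness in the monotonicity step.
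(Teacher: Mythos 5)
Your proposal is correct and follows essentially the same route as the paper, which likewise reduces $H_*(x)$ to $-g(x)$ times a fixed positive definite matrix and then invokes Theorem \ref{1st-ord}-\eqref{loc-spikes}; the only (immaterial) difference is that the paper reads the scalar identity $\int(1+c\mfat(z)t)^{-1}\nu(dt)=1-c-cz\mfat(z)$ off Proposition \ref{equiv-deter} instead of deriving it from \eqref{m=f(m)}. The strictness point you flag is real and the paper silently ignores it, but it closes more cleanly than your fallback suggests: $-g$ is itself the Stieltjes transform of a positive measure with $\sup_{y>0}|yg(\imath y)|<\infty$, so $g'\equiv 0$ on $(a,b)$ would force $g\equiv 0$ on $\C_+$, hence $cz\mfat(z)=1-c$ there, contradicting $z\mfat(z)\to -1$ as $z\to\infty$.
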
 

%Note that the function $g$ defined in the above corollary  is related to the so-called $D$-transform in free probability as defined in \cite{ben-rao-11}.

We now turn to the second order result, which will be stated in the 
simple and practical framework of Assumption \ref{ass:sp}. Actually, 
a stronger assumption is needed: 
\begin{assumption}
\label{fast-cvg} 
% Let $\Lambda_n = [ \Lambda_{i,j,n} ]_{i,j=1}^r$ be the matrix measure 
% defined by \eqref{def-Lambda}. Assume without generality loss that 
% $\Lambda_{1,1,n}(\R) \geq \Lambda_{2,2,n}(\R) \geq \cdots \geq 
% \Lambda_{r,r,n}(\R)$. 
The following facts hold true: 
\begin{gather*} 
\sup_n \sqrt{n} | c_n - c | < \infty , \\ 
\limsup_n \sqrt{n} \,  
\Bigl | \int \frac{1}{t-x} \nu_n(dt) \ - \ 
 \int \frac{1}{t-x} \nu(dt) \Bigr | < \infty 
\ \text{for all} \ x \in \R - \support(\nu) . 
\end{gather*} 
Moreover, there exists a sequence of factorizations of $P_n$ such that the
measures $\Lambda_n$ associated with these factorizations by 
\eqref{def-Lambda} converge to $\nu(dt) \times {\bs\Omega}$ and such that
\[
\limsup_n \sqrt{n} \, 
\Bigl \| \int \frac{1}{t-x} \Lambda_n(dt) \ - \ 
 \int \frac{1}{t-x} \nu(dt)  \times  
{\bs \Omega} \Bigr\| < \infty  \ \text{for all} \ x \in \R - \support(\nu) 
\] 
\end{assumption}

Note that one could have considered the above superior limits to be zero, which would simplify the statement of Theorem~\ref{th-2ord} below. However, in practice this is usually too strong a requirement, see {\it e.g.} the wireless 
communications model discussed after Assumption~\ref{ass:cvg} for which the fluctuations of $\Lambda_n$ are of order $n^{-1/2}$. On the opposite, slower fluctuations of $\Lambda_n$ would result in a much more intricate result for Theorem~\ref{th-2ord}, which we do not consider here.

Before stating the second order result, a refinement of the results of Theorem
\ref{lsm}--\eqref{st-lsm} is needed: 
\begin{proposition}[\cite{SilBai95,hachem-loubaton-najim07, HLN08Ieee}] 
\label{equiv-deter}
Assume that $ D_n$ is a $n \times n$ diagonal nonnegative matrix. Then, for 
any $n$, the equation 
% \[ 
% m_n = \left[ 
% - z + \frac 1n \tr D_n \left( I_n + c_n m_n  D_n 
% \right) \right]^{-1}   
% = 
% \left( -z + \int \frac{t}{1 + c_n m_n t} \nu_n(dt) \right)^{-1} 
% \] 
\[
m_n = \left[ -z \left( 1 + \frac 1n \tr  D_n \widetilde T_n 
\right) \right]^{-1} 
\quad \text{where} \quad 
\widetilde T_n = 
\left[ -z\left( I_n + c_n m_n  D_n \right) \right]^{-1}  
\] 
admits a unique solution $m_n \in \C_+$ for any $z \in \C_+$. The function 
$m_n = m_n(z)$ so defined on $\C_+$ is the Stieltjes Transform 
of a probability measure $\mu_n$ whose support is a compact set of $\R_+$.  
Moreover, the $n\times n$ diagonal matrix-valued function 
$\widetilde T_n(z) = [ -z( I_n + c_n m_n(z) D_n )]^{-1}$ 
is analytic on $\C_+$ and $n^{-1} \tr \widetilde T_n(z)$
% = \int (-z ( 1 + c_n m_n(z) t))^{-1} \nu_n(dt)$ 
coincides with the Stieltjes Transform of 
$c_n \mu_n + ( 1 - c_n ) \bs\delta_0$. \\
Let Assumption \ref{X:gauss} hold true, and assume that $\sup_n 
\|  D_n \| < \infty$, and $0 < \liminf c_n \leq \limsup c_n < \infty$. Then 
the resolvents $Q_n(z) = (n^{-1} Y_n Y_n^* - z I_N)^{-1}$ and 
$\widetilde Q_n(z) = (n^{-1} Y_n^* Y_n - z I_n)^{-1}$ satisfy 
\begin{equation} 
\label{cvg-resolv} 
\frac 1N \tr \left( Q_n(z) - m_n(z) I_N \right) \toaslong 0 
\quad \text{and} \quad 
\frac 1n \tr \left( \widetilde Q_n(z) - \widetilde T_n(z) \right) 
\toaslong 0 
\end{equation} 
for any $z \in \C_+$. When in addition Assumptions \ref{regime} and 
\ref{assump:D} hold true, $m_n(z)$ converges to ${\bf m}(z)$ provided in the 
statement of Theorem \ref{lsm} uniformly on the compact subsets of $\C_+$. 
\end{proposition}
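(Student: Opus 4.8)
Since Proposition~\ref{equiv-deter} assembles results from \cite{SilBai95,hachem-loubaton-najim07,HLN08Ieee}, the plan is to reconstruct the argument in three layers: the purely deterministic fixed-point system, the identification of the Stieltjes transforms, and the Gaussian concentration yielding \eqref{cvg-resolv}. For the first layer, eliminating $\widetilde T_n$ and writing $D_n = \diag(d_j^n)$ reduces the system to the scalar equation $m_n = \big[-z + \frac1n\sum_{j} d_j^n(1+c_n m_n d_j^n)^{-1}\big]^{-1}$, the finite-$n$ analogue of \eqref{m=f(m)}. I would obtain a solution in $\C_+$ for each $z\in\C_+$ from the iteration $m^{(0)}=-1/z$, $m^{(k+1)}=\big[-z+\frac1n\sum_j d_j^n(1+c_n m^{(k)}d_j^n)^{-1}\big]^{-1}$, which preserves $\Im(\cdot)>0$ and is a contraction for $\Im z$ large, then extend to all of $\C_+$ by analytic continuation. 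Uniqueness is the classical subtraction argument: two solutions $m_n,m_n'$ satisfy $m_n-m_n'=(m_n-m_n')\,c_n m_n m_n'\,\frac1n\sum_j (d_j^n)^2\big[(1+c_n m_n d_j^n)(1+c_n m_n' d_j^n)\big]^{-1}$, while taking imaginary parts in the equation gives $c_n|m_n|^2\,\frac1n\sum_j (d_j^n)^2|1+c_n m_n d_j^n|^{-2}<1$ on $\C_+$; Cauchy--Schwarz then bounds the multiplier above by the geometric mean of two quantities $<1$, forcing $m_n=m_n'$ and, incidentally, making $z\mapsto m_n(z)$ analytic.

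Next, $\widetilde T_n(z)=-z^{-1}(I_n+c_n m_n(z)D_n)^{-1}$ is well defined and analytic on $\C_+$ because each factor $1+c_n m_n(z)d_j^n$ is nonzero there ($\Im(c_n m_n d_j^n)>0$ when $d_j^n>0$, the factor being $1$ otherwise). To see that $m_n$ is the Stieltjes transform of a probability measure $\mu_n$ supported in $[0,\infty)$, I would check the three Herglotz-type conditions recalled before Theorem~\ref{lsm}: $\Im m_n>0$ on $\C_+$ (from the iteration), $z m_n(z)\to-1$ as $z\to\infty$ nontangentially (read off the equation, giving in particular total mass one), and $\Im(z m_n(z))\ge 0$ on $\C_+$; compactness of $\support(\mu_n)$ follows since $\|D_n\|<\infty$ allows $m_n$ to be continued analytically past $\R\setminus[0,K]$ with real values there for $K$ large, whence by \eqref{perron} $\mu_n$ charges only $[0,K]$. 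That $n^{-1}\tr\widetilde T_n$ is the Stieltjes transform of $c_n\mu_n+(1-c_n)\bs\delta_0$, i.e.\ $n^{-1}\tr\widetilde T_n(z)=c_n m_n(z)-(1-c_n)/z$, is then a one-line manipulation of the scalar equation: multiply it by $m_n$ and substitute $c_n m_n d(1+c_n m_n d)^{-1}=1-(1+c_n m_n d)^{-1}$.

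The third layer, the deterministic equivalents \eqref{cvg-resolv}, is the analytic heart and the step I expect to be the main obstacle. Under Assumption~\ref{X:gauss} I would (i) bound $\var\big(\frac1N\tr Q_n(z)\big)$ and $\var\big(\frac1n\tr\widetilde Q_n(z)\big)$ by $O(n^{-2})$ with the Poincar\'e--Nash inequality, upgrading convergence to almost-sure convergence by Borel--Cantelli once the expectations are controlled; (ii) apply the Gaussian integration-by-parts formula to $\E Q_n$ and $\E\widetilde Q_n$ to derive approximate matrix fixed-point relations whose error terms are again controlled by the variance bounds of (i); (iii) compare these approximate relations with the exact system defining $(m_n,\widetilde T_n)$ and conclude via a stability estimate for that system---an approximate solution is $o(1)$-close to the exact one, which is the quantitative counterpart of the uniqueness proved above. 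These are precisely the tools announced for Section~\ref{basic}, so one may alternatively just invoke \cite{SilBai95,hachem-loubaton-najim07,HLN08Ieee} here. Finally, for $m_n(z)\to\mfat(z)$ locally uniformly on $\C_+$: the bound $|m_n(z)|\le(\Im z)^{-1}$ makes $\{m_n\}$ a normal family; any locally uniform subsequential limit $m_\infty$ solves \eqref{m=f(m)} because $\frac1n\sum_j d_j^n(1+c_n m d_j^n)^{-1}=\int t(1+c_n m t)^{-1}\nu_n(dt)\to\int t(1+c m t)^{-1}\nu(dt)$---here Assumption~\ref{assump:D}(\ref{D:noeigenvalues}) controls the atoms of $\nu_n$ lying just outside $\support(\nu)$---so $m_\infty=\mfat$ by Theorem~\ref{lsm}; uniqueness of the subsequential limit forces convergence of the whole sequence, and Vitali's theorem makes it locally uniform.
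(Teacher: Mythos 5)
The paper gives no proof of Proposition \ref{equiv-deter}---it is stated as a recalled result with citations to \cite{SilBai95,hachem-loubaton-najim07,HLN08Ieee}---and your reconstruction follows exactly the standard route of those references: fixed-point iteration plus the imaginary-part/Cauchy--Schwarz uniqueness argument, the Herglotz-type characterization for the Stieltjes-transform identifications (including the correct one-line algebra giving $n^{-1}\tr\widetilde T_n = c_n m_n - (1-c_n)/z$), and Poincar\'e--Nash variance bounds with Gaussian integration by parts and a stability estimate for \eqref{cvg-resolv}, finished off by a normal-family argument for the locally uniform convergence $m_n\to\mfat$. Your argument is correct and consistent with the cited sources; nothing further is needed.
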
 
The function 
$m_n(z) = \left( -z + \int t (1 + c_n m_n(z) t)^{-1} \nu_n(dt) \right)^{-1}$ 
is a finite $n$ approximation of $\mfat(z)$. 
Notice that since $N^{-1} \tr Q_n(z)$ is the Stieltjes Transform of the 
spectral measure $\theta_n$ of $n^{-1} Y_n Y_n^*$, Convergence \eqref{ascvg} 
stems from \eqref{cvg-resolv}. \\
We shall also need a finite $n$ approximation of $H_*(z)$ defined as 
\[
H_n(z) = \int \frac{m_n(z)}{1+c_n m_n(z) t} \Lambda_n(dt) . 
\] 
With these definitions, we have the following preliminary proposition: 
\begin{proposition}
\label{var-biais}
Let Assumptions \ref{regime}, \ref{assump:D}-\ref{fast-cvg} hold true.  
Let $g$ be the function defined in the statement of Corollary \ref{1stord-sp} 
and let $B_\mu = \sup(\support(\mu))$. Assume that the equation 
$\omega_1^2 g(x) = 1$ has a solution in $(B_\mu, \infty)$, and denote 
$\rho_1 > \cdots > \rho_p$ the existing solutions (with respective 
multiplicities $j_1, \ldots, j_p$) of the equations 
$\omega_\ell^2 g(x) = 1$ in $(B_\mu, \infty)$. Then the following facts 
hold true: 
\begin{itemize}
\item 
$\displaystyle{{\bs \Delta}(\rho_i) = 
1 - c \int \left(\frac{\mfat(\rho_i) t}{1+c\mfat(\rho_i) t}\right)^2 \nu(dt)}$
is positive for every $i=1,\ldots, p$. 
\item 
Denoting by $H_{1,n}(z), \ldots, H_{p,n}(z)$ the first $p$ upper left diagonal 
blocks of $H_n(z)$, where $H_{i,n}(z) \in \C^{j_i\times j_i}$, 
$\limsup_n \| \sqrt{n}(H_{i,n}(\rho_i)+I_{j_i}) \| < \infty$ for every
$i = 1,\ldots, p$. 
\end{itemize} 
\end{proposition}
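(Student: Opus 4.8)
The plan is to establish the two items together, as the positivity asserted in the first one is what drives the second.

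\emph{First item.} The Stieltjes transform $\mfat$ extends to an analytic function on $\support(\mu)^c$, hence on the connected component $(B_\mu,\infty)$ containing all the $\rho_i$. Differentiating \eqref{m=f(m)}, written as $\mfat(z)^{-1}=-z+\int t(1+c\mfat(z)t)^{-1}\nu(dt)$, gives
\[
-\frac{\mfat'(z)}{\mfat(z)^2}\;=\;-1-c\,\mfat'(z)\int\frac{t^2}{(1+c\mfat(z)t)^2}\,\nu(dt),
\]
which rearranges into ${\bs\Delta}(x)\,\mfat'(x)=\mfat(x)^2$ on $(B_\mu,\infty)$. Since $\support(\mu)\subset[0,B_\mu]$, for $x>B_\mu$ one has $\mfat(x)=\int(t-x)^{-1}\mu(dt)<0$ and $\mfat'(x)=\int(t-x)^{-2}\mu(dt)>0$, hence ${\bs\Delta}(x)=\mfat(x)^2/\mfat'(x)>0$; in particular ${\bs\Delta}(\rho_i)>0$ for every $i$.

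\emph{Second item: reduction.} I would first record the algebraic identity $\int\mfat(z)(1+c\mfat(z)t)^{-1}\nu(dt)=-g(z)$, where $g$ is the function of Corollary~\ref{1stord-sp}: indeed $\int(1+c\mfat t)^{-1}\nu(dt)=1-c\mfat\int t(1+c\mfat t)^{-1}\nu(dt)=1-c-cz\mfat$ by \eqref{m=f(m)}, and multiplying by $\mfat$ gives $-\mfat(cz\mfat-1+c)$. Therefore, with the factorization of $P_n$ furnished by Assumption~\ref{fast-cvg} (for which $\Lambda_*=\nu\times{\bs\Omega}$), one obtains $H_*(z)=-g(z){\bs\Omega}$, so the $i$-th $j_i\times j_i$ diagonal block of $H_*(\rho_i)+I_r$ equals $(1-\omega_i^2 g(\rho_i))I_{j_i}=0$ by the very definition of $\rho_i$. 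Consequently $H_{i,n}(\rho_i)+I_{j_i}$ is precisely the $i$-th diagonal block of $H_n(\rho_i)-H_*(\rho_i)$, and it is enough to prove $\limsup_n\sqrt n\,\|H_n(\rho_i)-H_*(\rho_i)\|<\infty$.

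\emph{Second item: the estimate.} Since $\support(\mu_n)$ lies in any prescribed neighborhood of $\support(\mu)$ once $n$ is large (a standard consequence of Assumption~\ref{assump:D}), we have $\rho_i\notin\support(\mu_n)$ eventually; then $m_n$ extends analytically across $\rho_i$ with $m_n(\rho_i)$ real, negative and $m_n(\rho_i)\to\mfat(\rho_i)$ (Proposition~\ref{equiv-deter} and a normal-families argument), and---by the Silverstein--Choi description of $\support(\mu)^c$ already used in Theorem~\ref{1st-ord}---the poles $x_n:=-(c_nm_n(\rho_i))^{-1}$ and $x_*:=-(c\mfat(\rho_i))^{-1}$ lie respectively off $\support(\nu_n)$ and off $\support(\nu)$, with $\liminf_n{\bs d}(x_n,\support(\nu))>0$. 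Writing $1+c_nm_n(\rho_i)t=c_nm_n(\rho_i)(t-x_n)$, I would split
\[
H_n(\rho_i)-H_*(\rho_i)\;=\;-\frac{1}{c_n}\int(x_n-t)^{-1}(\Lambda_n-\nu\times{\bs\Omega})(dt)\;+\;\int\Bigl(\frac{m_n(\rho_i)}{1+c_nm_n(\rho_i)t}-\frac{\mfat(\rho_i)}{1+c\mfat(\rho_i)t}\Bigr)(\nu\times{\bs\Omega})(dt).
\]
In the first integral, add and subtract its value at the fixed point $x_*$: that value is $\grandO(n^{-1/2})$ by Assumption~\ref{fast-cvg}, and the remainder is $\grandO(|x_n-x_*|)$ because $\Lambda_n-\nu\times{\bs\Omega}$ has total mass bounded in norm (by $\pmax^2$, say). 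The second integral, $\support(\nu)$ being at positive distance from $x_*$ and from the $x_n$, is a Lipschitz function of $(c_n^{-1},x_n)$ near $(c^{-1},x_*)$, hence $\grandO(|c_n-c|+|x_n-x_*|)$. Everything therefore reduces to $|m_n(\rho_i)-\mfat(\rho_i)|=\grandO(n^{-1/2})$, which in turn yields $|x_n-x_*|=\grandO(n^{-1/2})$.

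\emph{The rate for $m_n(\rho_i)$: the hard part.} Set $\Phi(m,\zeta,\gamma)=\bigl(-\rho_i+\int t(1+\gamma m t)^{-1}\zeta(dt)\bigr)^{-1}-m$, so that $\Phi(\mfat(\rho_i),\nu,c)=0=\Phi(m_n(\rho_i),\nu_n,c_n)$. A short computation gives $\partial_m\Phi$ at $(\mfat(\rho_i),\nu,c)$ equal to $c\int\bigl(\mfat(\rho_i)t/(1+c\mfat(\rho_i)t)\bigr)^2\nu(dt)-1=-{\bs\Delta}(\rho_i)\ne0$ by the first item; hence $\partial_m\Phi$ stays bounded away from $0$ at the relevant points, and the implicit function theorem (the branch near $\mfat(\rho_i)$ being $m_n(\rho_i)$, since $m_n(\rho_i)\to\mfat(\rho_i)$) gives $m_n(\rho_i)-\mfat(\rho_i)=\grandO(1)\cdot\bigl(\Phi(\mfat(\rho_i),\nu_n,c_n)-\Phi(\mfat(\rho_i),\nu,c)\bigr)$. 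The increment of $\Phi$ is the difference of $\bigl(-\rho_i+\int t(1+c_n\mfat(\rho_i)t)^{-1}\nu_n(dt)\bigr)^{-1}$ and $\mfat(\rho_i)=\bigl(-\rho_i+\int t(1+c\mfat(\rho_i)t)^{-1}\nu(dt)\bigr)^{-1}$; the inner quantities differ by $\grandO(|c_n-c|)$ (Lipschitz dependence on $\gamma$, the pole staying off $\support(\nu_n)$) plus, after the decomposition $t(1+c\mfat(\rho_i)t)^{-1}=(c\mfat(\rho_i))^{-1}-(c\mfat(\rho_i))^{-2}(t-x_*)^{-1}$ and using $\nu_n(\R)=\nu(\R)=1$, a multiple of $\int(t-x_*)^{-1}(\nu_n-\nu)(dt)$; both are $\grandO(n^{-1/2})$ by Assumption~\ref{fast-cvg}, and since $\mfat(\rho_i)\ne0$ this gives $|m_n(\rho_i)-\mfat(\rho_i)|=\grandO(n^{-1/2})$, closing the proof. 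The main obstacle is exactly this step: pushing the assumed $n^{-1/2}$ rates on $c_n$ and on the Stieltjes transform of $\nu_n$ through the fixed point equation requires the invertibility of its linearization, whose ``determinant'' is precisely ${\bs\Delta}(\rho_i)$---which is why the two items must be proved jointly. The surrounding support bookkeeping ($\support(\mu_n)\to\support(\mu)$, and $x_n$ staying off $\support(\nu_n)$ and $\support(\nu)$) is routine.
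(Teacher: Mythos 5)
Your proof is correct and follows essentially the same route as the paper's: positivity of ${\bs\Delta}(\rho_i)$ via the identity $\mfat'(\rho_i){\bs\Delta}(\rho_i)=\mfat(\rho_i)^2$ together with $\mfat<0$, $\mfat'>0$ on $(B_\mu,\infty)$, then the reduction to $\sqrt{n}\,\|H_n(\rho_i)+g(\rho_i){\bs\Omega}\|=\grandO(1)$, obtained by first extracting $|m_n(\rho_i)-\mfat(\rho_i)|=\grandO(n^{-1/2})$ from the difference of the two fixed-point equations (with $-{\bs\Delta}(\rho_i)$ as the nonvanishing linearization) and then splitting $H_n-H_*$ into an integrand-difference term and a measure-difference term controlled by Assumption~\ref{fast-cvg}. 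You in fact supply the details the paper compresses into ``straightforward derivations''; the only (immaterial) difference is which measure each piece of the splitting is integrated against.
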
 

We recall that a GUE matrix (\emph{i.e.}, a matrix taken from the Gaussian 
Unitary Ensemble) is a random Hermitian matrix $G$ such that 
$G_{ii} \sim {\cal N}(0,1)$, $\Re(G_{ij}) \sim {\cal N}(0,1/2)$ and 
$\Im(G_{ij}) \sim {\cal N}(0,1/2)$ for $i < j$, and such that all these
random variables are independent. Our second order result is provided by
the following theorem: 

\begin{theorem}
\label{th-2ord} 
Let Assumptions \ref{regime}-\ref{fast-cvg} hold true. Keeping the notations
of Proposition \ref{var-biais}, let 
\[
M_i^n = \sqrt{n} \left( 
\begin{bmatrix} 
\hat \lambda^n_{j_1 + \cdots + j_{i-1} + 1} \\
\vdots \\ 
\hat \lambda^n_{j_1 + \cdots + j_{i}} 
\end{bmatrix} 
- 
\rho_i \begin{bmatrix} 1 \\ \vdots \\ 1 \end{bmatrix} 
\right)
\]
where $j_0 = 0$ and where the eigenvalues $\hat \lambda^n_i$ of 
$\Sigma_n \Sigma_n^*$ are arranged in decreasing order. 
Let $G_1, \ldots, G_p$ be independent GUE matrices such that $G_i$ is a
$j_i \times j_i$ matrix. Then, for any bounded continuous 
$f\colon\R^{j_1+\ldots+j_p}\to \R$,
\[
\EE\left[ f\left( M_1^n,\ldots,M_p^n\right) \right] - \EE \left[ f\left(\Xi_1^n,\ldots,\Xi_p^n\right) \right] \underset{n\to\infty}{\longrightarrow} 0
\]
%where $\EE_{\brleft G_i \brright}$ is the expectation with respect to the joint law of the $G_i$, and 
where $\Xi_i^n\in \R^{j_i}$ is the random vector 
of the decreasingly ordered eigenvalues of the matrix
\begin{align*}
	 \frac{1}{\omega_i^2 g(\rho_i)'} \left({\bs\alpha}_i G_i + \sqrt{n}\left(H_{i,n}(\rho_i)+I_{j_i} \right)\right) ,
\end{align*}
 where
\[ 
{\bs\alpha}_i^2 = \frac{\mfat^2(\rho_i)}{ {\bs\Delta}(\rho_i) } 
\left[ \int \frac{t^2+ 2\omega_i^2 t}{(1+c\mfat(\rho_i)t)^2} \nu(dt) 
+ c \left( \int \frac{\omega_i^2 \mfat(\rho_i)t}
{(1 + c{\bf m}(\rho_i) t)^2}\nu(dt) \right)^2 \right] . 
\] 
\end{theorem}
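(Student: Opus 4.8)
The plan is the standard one for spectral outlier fluctuations: reduce the eigenvalue problem to a fixed-size random determinantal equation, linearize it around its deterministic equivalent, and transport the fluctuations of the equation to those of its roots by regular perturbation theory. The non-routine ingredient is a joint central limit theorem for the bilinear forms appearing in the equation, established by means of the Gaussian integration-by-parts formula and the Poincar\'e--Nash inequality.

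\emph{Step 1 (reduction to a $2r\times2r$ equation).} Factor $P_n=U_nR_n^*$ with $U_n\in\C^{N\times r}$ an isometry. Bordering $\Sigma_n\Sigma_n^*-xI_N$ by the Hermitian matrix $\begin{bmatrix}-\sqrt x\,I_N & \Sigma_n\\ \Sigma_n^* & -\sqrt x\,I_n\end{bmatrix}$ exhibits the deformation as a rank-$2r$ perturbation of the analogous border of $n^{-1/2}Y_n$, so by the matrix determinant lemma a real $x>0$ outside $\operatorname{spec}(n^{-1}Y_nY_n^*)$ is an eigenvalue of $\Sigma_n\Sigma_n^*$ if and only if the Hermitian $2r\times2r$ matrix
\[
\mathcal G_n(x)=\begin{bmatrix}
\sqrt x\,U_n^*Q_n(x)U_n & U_n^*Q_n(x)\,n^{-1/2}Y_nR_n+I_r\\
n^{-1/2}R_n^*Y_n^*Q_n(x)U_n+I_r & \sqrt x\,R_n^*\widetilde Q_n(x)R_n
\end{bmatrix}
\]
is singular, where $Q_n(x)=(n^{-1}Y_nY_n^*-xI_N)^{-1}$ and $\widetilde Q_n(x)=(n^{-1}Y_n^*Y_n-xI_n)^{-1}$. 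Let $\bar{\mathcal G}_n(x)$ be obtained by replacing $U_n^*Q_nU_n$ by $m_n(x)I_r$, $R_n^*\widetilde Q_nR_n$ by $R_n^*\widetilde T_n(x)R_n$ and the off-diagonal bilinear forms in $Y_n$ by $0$ (these are the a.s.\ limits underlying Theorem~\ref{1st-ord}, with $m_n,\widetilde T_n$ as in Proposition~\ref{equiv-deter}); a Schur-complement computation together with the identity $R_n^*\widetilde T_n(x)R_n=-(xm_n(x))^{-1}H_n(x)$ gives $\det\bar{\mathcal G}_n(x)=\pm\det(I_r+H_n(x))$, so the equation $\det\bar{\mathcal G}_n=0$ reproduces the first-order statement of Theorem~\ref{1st-ord}. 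Under Assumptions~\ref{ass:sp}--\ref{fast-cvg} and for the factorization with $\Lambda_n\to\nu\times\bs\Omega$ one has $\int\mfat(x)(1+c\mfat(x)t)^{-1}\nu(dt)=-g(x)$, whence $H_*(x)=-g(x)\bs\Omega$; therefore the deterministic equivalent of $\mathcal G_n(\rho_i)$ has a kernel of dimension exactly $j_i$, carried --- through the parametrization $u\mapsto(u,-\sqrt{\rho_i}\,\mfat(\rho_i)\,u)$ --- by the $i$-th coordinate block.

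\emph{Step 2 (perturbation near $\rho_i$).} By Theorem~\ref{1st-ord} and the fact that $\det(I_r+H_*(x))$ stays bounded away from $0$ off shrinking neighborhoods of the $\rho_\ell$'s, a.s.\ for all large $n$ the outliers near $\rho_i$ are exactly the $x$ near $\rho_i$ with $\mathcal G_n(x)$ singular, and there are $j_i$ of them counting multiplicities. Write $x=\rho_i+s/\sqrt n$ and let $\Pi_i$ be the orthogonal projection of $\C^{2r}$ onto the limiting kernel. Since the remaining eigenvalues of the deterministic equivalent of $\mathcal G_n(\rho_i)$ are bounded away from $0$, regular perturbation theory for the Hermitian matrix $\mathcal G_n(x)$ shows these $x$ are precisely those for which the $j_i\times j_i$ compression $\Pi_i\mathcal G_n(x)\Pi_i+o(n^{-1/2})$ is singular. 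Splitting $\mathcal G_n(x)=(\mathcal G_n(x)-\bar{\mathcal G}_n(x))+(\bar{\mathcal G}_n(x)-\bar{\mathcal G}_n(\rho_i))+\bar{\mathcal G}_n(\rho_i)$ and compressing: the last term equals $-\sqrt{\rho_i}\,\mfat(\rho_i)\bigl(H_{i,n}(\rho_i)+I_{j_i}\bigr)+o(n^{-1/2})$ by the identity for $R_n^*\widetilde T_nR_n$, which is $O(n^{-1/2})$ by Proposition~\ref{var-biais}; the middle term equals $\omega_i^2g'(\rho_i)\sqrt{\rho_i}\,\mfat(\rho_i)(x-\rho_i)I_{j_i}+o(n^{-1/2})$ by a short computation that uses $\omega_i^2g(\rho_i)=1$; dividing by the nonzero scalar $-\sqrt{\rho_i}\,\mfat(\rho_i)$ and rescaling yields
\[
M_i^n=\frac{1}{\omega_i^2g'(\rho_i)}\,\operatorname{eig}\!\Bigl(\Theta_i^n+\sqrt n\bigl(H_{i,n}(\rho_i)+I_{j_i}\bigr)\Bigr)+o_{\P}(1),\qquad
\Theta_i^n:=\frac{-\sqrt n}{\sqrt{\rho_i}\,\mfat(\rho_i)}\,\Pi_i\bigl(\mathcal G_n(\rho_i)-\bar{\mathcal G}_n(\rho_i)\bigr)\Pi_i,
\]
with eigenvalues ordered accordingly; the cancellation occurring here also makes $\Theta_i^n$ asymptotically Hermitian.

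\emph{Step 3 (the central limit theorem --- the main obstacle).} It remains to prove $(\Theta_1^n,\dots,\Theta_p^n)\todist(\bs\alpha_1G_1,\dots,\bs\alpha_pG_p)$ with $G_1,\dots,G_p$ independent GUE matrices. A first-order expansion writes each $\Theta_i^n$ as an explicit Hermitian linear combination --- with coefficients built from $\rho_i$ and $\mfat(\rho_i)$ --- of the $\Pi_i$-compressions of the centered bilinear forms $\sqrt n\bigl(U_n^*Q_n(\rho_i)U_n-m_n(\rho_i)I_r\bigr)$, $\sqrt n\bigl(R_n^*\widetilde Q_n(\rho_i)R_n-R_n^*\widetilde T_n(\rho_i)R_n\bigr)$ and $\sqrt n\,U_n^*n^{-1/2}Y_n\widetilde Q_n(\rho_i)R_n$, so one needs a joint CLT at the points $\rho_1>\dots>\rho_p$ for these three families. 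They are smooth functionals of the Gaussian matrix $Y_n=X_nD_n^{1/2}$: the Poincar\'e--Nash inequality bounds their variances and shows that the remainders of the expansion, as well as the contributions obtained by differentiating the resolvents, are negligible, while the integration-by-parts formula --- applied to the characteristic function, or through a vanishing higher-cumulant estimate --- yields joint Gaussianity together with the limiting covariance. A genuine difficulty is that $\rho_i\notin\support(\mu)$ is a \emph{real} point, so $Q_n(\rho_i)$ need not have finite moments; this is circumvented by inserting a smooth regularizing factor supported on the event that $\operatorname{spec}(n^{-1}Y_nY_n^*)$ avoids a fixed neighborhood of $\rho_i$ --- of probability $1-o(n^{-k})$ for every $k$ by Theorem~\ref{lsm}\eqref{noeig-sil} and standard large-deviation bounds for the extreme eigenvalues --- proving the CLT for the regularized quantities and then removing the regularization. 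Finally one identifies the limit: because $\Pi_i$ compresses onto a subspace carried by the $i$-th coordinate block, on which $\Lambda_n\to\nu\times\omega_i^2I_{j_i}$, and because the columns of $U_n$ are orthonormal, the limiting covariance forces $\Theta_i$ to be a Hermitian Gaussian matrix with independent entries and equal diagonal and off-diagonal variance, i.e.\ a scaled GUE, while for $i\ne\ell$ the covariance of $\Theta_i^n$ and $\Theta_\ell^n$ tends to $0$ because the two blocks occupy orthogonal coordinate subspaces, whence asymptotic independence. The scalar $\bs\alpha_i^2$ is then read off from the Gaussian computation of $\var\bigl((\Theta_i^n)_{ab}\bigr)$ --- lengthy but routine once the integration-by-parts machinery is set up --- and equals the stated expression, the positivity $\bs\Delta(\rho_i)>0$ (Proposition~\ref{var-biais}) being exactly what keeps it finite and positive.

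\emph{Step 4 (conclusion).} Combining Steps~2 and~3, $M_i^n-\frac{1}{\omega_i^2g'(\rho_i)}\operatorname{eig}\bigl(\bs\alpha_iG_i+\sqrt n(H_{i,n}(\rho_i)+I_{j_i})\bigr)\to0$ jointly in distribution. Since the shifts $\sqrt n(H_{i,n}(\rho_i)+I_{j_i})$ are deterministic and bounded (Proposition~\ref{var-biais}), a routine reduction of the bounded continuous $f$ to the Lipschitz case, together with a passage along subsequences on which these shifts converge, gives $\EE[f(M_1^n,\dots,M_p^n)]-\EE[f(\Xi_1^n,\dots,\Xi_p^n)]\to0$, which is the claim. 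The heart of the argument is Step~3 --- the joint CLT, the regularization making it legitimate at a point outside the support, and the identification of $\bs\alpha_i^2$; Steps~1, 2 and~4 are essentially bookkeeping built on Theorem~\ref{1st-ord}, Proposition~\ref{equiv-deter} and Proposition~\ref{var-biais}.
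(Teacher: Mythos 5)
Your overall architecture coincides with the paper's: the reduction to the singularity of the $2r\times 2r$ matrix $\widehat S_n(x)$, the expansion at $y=\rho_i+x/\sqrt n$ that isolates a $j_i\times j_i$ block (the paper's Lemma \ref{le:chi_n}, carried out there by a block permutation and a Schur complement rather than by spectral projections, but to the same effect, with the same identification of the constant term $-\sqrt n(H_{i,n}(\rho_i)+I_{j_i})$ and of the slope $-xH_{i,n}'(\rho_i)$), and the final transfer to the ordered eigenvalues via the sign changes of $\det\widehat S_n$. Steps 1, 2 and 4 are therefore sound modulo bookkeeping.

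The one place where you genuinely diverge is the proof of the joint CLT, and it is also where your argument stops being a proof. The paper does \emph{not} establish the whole CLT by integration by parts. Writing $n^{-1/2}X_n=W_n\Delta_n\widetilde W_n^*$ and using that $W_n$ is Haar distributed and independent of $(\Delta_n,\widetilde W_n)$, it represents $U_n^*Q_nU_n$ and $n^{-1/2}U_n^*Y_n\widetilde Q_nR_n$, conditionally on $(\Delta_n,\widetilde W_n)$, as forms $N^{-1/2}Z^*\Upsilon Z$ and $Z^*F$ in a fresh Gaussian matrix $Z$, to which the elementary CLT of Lemma \ref{lm:clt2} applies; only the $n$-side quantity $R_{i,n}^*(\widetilde Q_n-\widetilde T_n)R_{i,n}$ is treated by the IP formula and Poincar\'e--Nash (Lemma \ref{le:sigman}, fed by Lemma \ref{lm-aQYYQa}). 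This conditioning delivers, essentially for free, the two facts your Step 3 asserts but never establishes: (i) the GUE structure of the limit of $\sqrt n\,U_{i,n}^*(Q_n-m_n I)U_{i,n}$, and (ii) the asymptotic \emph{independence} of the three families of bilinear forms at a fixed $\rho_i$ (the $\widetilde Q$-quantities are measurable with respect to $(\Delta_n,\widetilde W_n)$, conditionally on which the other two fluctuate like independent Gaussian forms). That independence is precisely what makes ${\bs\alpha}_i^2$ the clean sum of the three contributions ${\bs\sigma}_i^2/\mfat(\rho_i)^2+\rho_i^2\mfat(\rho_i)^2\bs{\tilde\sigma}_i^2+2{\bs\varsigma}_i^2$ appearing in the statement; in your purely IP-formula route it is not automatic and must be extracted from an explicit computation of the cross-covariances between $u^*Q u$, $\tilde u^*\widetilde Q\tilde u$ and $u^*Y\widetilde Q\tilde u$, which you defer to a ``lengthy but routine'' step. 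Since that computation is the entire technical content of the theorem — everything else, as you note yourself, is bookkeeping — you should either carry it out (including the vanishing of the cross-terms and the variance of the mixed form, i.e.\ the analogue of Lemma \ref{lm-aQYYQa}) or adopt the paper's conditioning device, which reduces the Gaussian calculus to the single family handled in Lemma \ref{le:sigman}.
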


Some remarks can be useful at this stage. The first remark concerns Assumption 
\ref{fast-cvg}, which is in some sense analogous to \cite[Hypothesis           
3.1]{bgm-fluct10}. This assumption is mainly needed to show that the $\sqrt{n} 
\| H_{i,n}(\rho_i) + I_{j_i} \|$ are bounded, guaranteeing the tightness of    
the vectors $M_{i}^n$.                                                         
Assuming that $\Lambda_n$ and $\Lambda'_n$ both satisfy the third item
of Assumption \ref{fast-cvg}, denoting respectively by $H_{i,n}(\rho_i)$ and 
$H'_{i,n}(\rho_i)$ the matrices associated to these measures as in the 
statement of Theorem \ref{th-2ord}, it is possible to show that 
$\sqrt{n} ( \sigma(H_{i,n}(\rho_i) - \sigma(H'_{i,n}(\rho_i) ) \to 0$ as
$n\to\infty$. Thus the results of this theorem do not depend on the 
particular measure $\Lambda_n$ satisfying Assumption \ref{fast-cvg}.           
Finally, we note that Assumption \ref{fast-cvg} can be lightened at the        
expense of replacing the limit values $\rho_i$ with certain finite $n$         
approximations of the outliers, as is done in the applicative paper            
\cite{vino-etal-ieeesp-sub12}. \\

The second remark pertains to the Gaussian assumption on the elements of
$X_n$. We shall see below that the results of Theorems~\ref{1st-ord}--\ref{th-2ord} are intimately related to the first and second 
order behaviors of bilinear forms of the type 
$u_n^* Q_n(x) v_n$, $\tilde u_n^* \widetilde Q_n(x) \tilde v_n$, and 
$n^{-1/2} u_n^* Y_n \widetilde Q_n(x) \tilde v_n$ 
% where $Q_n(z) = (n^{-1} Y_n Y_n^* - z I_N)^{-1}$ and 
% $\widetilde Q_n(z) = (n^{-1} Y_n^* Y_n - z I_n)^{-1}$ are the resolvents of 
% $n^{-1} Y_n Y_n^*$ and $n^{-1} Y_n^* Y_n$ respectively, 
where $u_n$, $v_n$, 
$\tilde u_n$ and $\tilde v_n$ are deterministic vectors of bounded norm and 
of appropriate dimensions, and where $x$ is a real number lying outside the 
support of $\mu$. 
In fact, it is possible to generalize Theorems \ref{1st-ord} and \ref{partout}
to the case where the elements of $X_n$ are not necessarily Gaussian. This can 
be made possible by using the technique of \cite{hlnv-bilinear-ihp11} to 
analyze the first order behavior of these bilinear forms. 
On the other hand, the Gaussian assumption plays a central role in Theorem 
\ref{th-2ord}. Indeed, the proof of this theorem is based on the fact that 
these bilinear forms asymptotically fluctuate like Gaussian random 
variables when centered and scaled by $\sqrt{n}$. 
Take $u_n = e_{1,N}$ and $\tilde v_n = e_{1,n}$ where $e_{k,m}$ is the 
$k^{\text{th}}$ canonical vector of $\R^m$. We show below (see 
Proposition \ref{equiv-deter} and Lemmas \ref{lm-var} and \ref{mean-fq}) 
that the elements $\tilde q_{ij}^n$ of the
resolvent $\widetilde Q_n(x)$ are close for large $n$ to the elements 
$\tilde t_{ij}^n$ of the deterministic matrix $\widetilde T_n(x)$. 
We therefore write informally 
\[
e_{1,N}^* Y_n \widetilde Q(x) e_{1,n} = 
\sum_{j=1}^n ( d_{j}^n)^{1/2} \tilde q_{j1}^n X_{1j} 
\approx 
( d_{1}^n)^{1/2} \tilde t_{11}^n X_{11} 
+ 
\sum_{j=2}^n ( d_{j}^n)^{1/2} \tilde q_{j1}^n X_{1j} .
\]
% following \cite{hlnv-bilinear-ihp11}. 
It can be shown furthermore that 
$\tilde t_{11}^n = {\mathcal O}(1)$ for large $n$ and that the sum 
$\sum_{j=2}^n$ is tight. Hence, $e_{1,N}^* Y_n \widetilde Q(x) e_{1,n}$ is
tight. However, when $X_{11}$ is not Gaussian, we infer that 
$e_{1,N}^* Y_n \widetilde Q(x) e_{1,n}$ does not converge in general towards 
a Gaussian random variable. In this case, if we choose 
$P_n = \omega^2 e_{1,N} e_{1,n}^*$ (see Section \ref{sec:2ord}), Theorem 
\ref{th-2ord} no longer holds. 
Yet, we conjecture that an analogue of this theorem can be recovered when 
$e_{1,N}$ and $e_{1,n}$ are replaced with delocalized vectors, following the 
terminology of \cite{cdf-ihp12}. In a word, the elements of these vectors are
``spread enough'' so that the Gaussian fluctuations are recovered.

\subsection*{A word about the notations}
In the remainder of the paper, we shall often drop the subscript or the 
superscript $n$ when there is no ambiguity. A constant bound that may change 
from an inequality to another but which is independent of $n$ will always be 
denoted $K$. Element $(i,j)$ of matrix $M$ is denoted $M_{ij}$ or $[M]_{ij}$. 
Element $i$ of vector $x$ is denoted $[x]_i$. 
% As usual, $A^*$ means the adjoint of  a matrix or a vector $A$, and for a 
% complex number $z$, we will use either $\bar z$ or $z^*$ to denote the 
% complex conjugate of $z$. 
Convergences in the almost sure sense, in probability and in distribution 
will be respectively denoted $\overset{{\text{a.s.}}}{\longrightarrow}$, 
$\overset{\mathcal P}{\longrightarrow}$, and 
$\overset{\mathcal D}{\longrightarrow}$.

\section{Preliminaries and useful results} 
\label{basic}

We start this section by providing the main ideas of the proofs of 
Theorems \ref{1st-ord} and \ref{partout}.

\subsection{Proof principles of the first order results} 
\label{idee-preuve}

The proof of Theorem \ref{1st-ord}-\eqref{loc-spikes}, to begin with, is based 
on the idea of \cite{ben-rao-published, ben-rao-11}. We start with a purely 
algebraic result. 
Let $P_n = U_n R_n^*$ be a factorization of $P_n$ where $U_n$ is a $N \times r$
isometry matrix. 
Assume that $x > 0$ is not an eigenvalue of $n^{-1} Y_nY_n^*$. Then $x$ is an 
eigenvalue of $\Sigma_n\Sigma_n^*$ if and only if $\det\widehat S_n(x) = 0$ 
where $\widehat S_n(x)$ is the $2r \times 2r$ matrix 
\[
\widehat S_n(x) = 
\begin{bmatrix}
\sqrt{x} U_n^* Q_n(x) U_n & I_r + n^{-1/2} U_n^* Y_n \widetilde Q_n(x) R_n \\
I_r + n^{-1/2} R_n^* \widetilde Q_n(x) Y_n^* U_n & 
\sqrt{x} R_n^* \widetilde Q_n(x) R_n 
\end{bmatrix}  
\]
(for details, see the derivations in \cite{ben-rao-11} or in 
\cite[Section 3]{hlmnv-subspace11}). The idea is now the following. 
Set $x$ in $\support(\mu)^c$. Using an integration by parts 
formula for functionals of Gaussian vectors and the Poincar\'e-Nash inequality 
\cite{pas-livre}, we show that when $n$ is large,  
\begin{gather*} 
U_n^* Q_n(x) U_n \approx m_n(x) I_r, \ 
R_n^* \widetilde Q_n(x) R_n \approx R_n^* \widetilde T_n(x) R_n, \\ 
\text{and} \ 
n^{-1/2} R_n^* \widetilde Q_n(x) Y_n^* U_n \approx 0  
\end{gather*} 
by controlling the moments of the elements of the left hand members. To be 
able to do these controls, we make use of a certain regularizing function
which controls the escape of the eigenvalues of $n^{-1} Y_n Y_n^*$ out of
$\support(\mu)$. Thanks to these results, $\widehat S_n(x)$ is close for 
large $n$ to 
\[
S_n(x) = 
\begin{bmatrix}
\sqrt{x} m_n(x) I_r & I_r \\
I_r & \sqrt{x} R_n^* \widetilde T_n(x) R_n 
\end{bmatrix} .
\]
Hence, we expect the eigenvalues of $\Sigma_n\Sigma_n^*$ in the interval
$\cal I$, when they exist, to be close for large $n$ to the zeros in $\cal I$ 
of the function 
\begin{align*} 
\det S_n(x) &= 
\det\left( x m_n(x) R_n^* \widetilde T_n(x) R_n - I_r \right) \\
&= (-1)^r \det\left( m_n(x) R_n^* (I_n + c_n m_n(x) D_n)^{-1} R_n^* 
                                                          + I_r \right) \\
&= (-1)^r \det\left( H_n(x) + I_r \right) 
% &= (-1)^r \det\left( \int \frac{m_n(x)}{1 + c_n m_n(x) t} \Lambda_n(dt) 
%                                                           + I_r \right) 
\end{align*} 
which are close to the zeros of ${\cal D}(x) = \det ( H_*(x) + I_r)$. By 
Assumption  \ref{ass:cvg}, these zeros are independent of the choice of the 
accumulation point $\Lambda_*$.  \\

To prove Theorems \ref{1st-ord}-\eqref{no-spikes} and \ref{partout}, we make
use of the results of \cite{sil-choi95} and \cite{mes-it08,mes-sp08} relative 
to the properties of $\mu$ and to those of the restriction of ${\bf m}(z)$ to 
$\R-\support(\mu)$. 
The main idea is to show that 
\begin{itemize}
\item ${\bf m}(x)(1+c {\bf m}(x)t)^{-1} > 0$ for all 
$x \in \support(\mu)^c \cap (-\infty, A)$ 
(these $x$ lie at the left of the first bulk) and for all 
$t \in \support(\nu)$. 
\item For any component $(a,b) \subset \support(\mu)^c$ such that $a > A$
(\emph{i.e.}, lying between two bulks or at the right of the last bulk), 
there exists a Borel set $E \subset \R_+$ such that $\nu(E) > 0$ and  
\[
q(x) = \int_E \frac{\mfat(x)}{1+c\mfat(x)t} \nu(dt) < 0
\]
for all $x \in (a,b)$.     
\end{itemize}
Thanks to the first result, for any $x$ lying if possible between zero and
the left edge of the first bulk, ${\cal D}(x) > 0$, hence $\Sigma_n\Sigma_n^*$
has asymptotically no outlier at the left of the first bulk. \\
Coming to Theorem \ref{partout}, let $E$ be a set associated to $(a,b)$ by 
the result above. We build a sequence of matrices $P_n$ of rank $r$, and such 
that the associated $\Lambda_n$ have an accumulation point of the form 
$\Lambda_*(dt) = \1_E(t) \, \bs\Omega \, \nu(dt)$ where we choose 
$\bs\Omega = \diag( -q(\rho_1)^{-1}, \ldots, -q(\rho_r)^{-1} )$.  
Theorem \ref{1st-ord}-\eqref{loc-spikes} shows that the function 
$H_*(x) = q(x) \bs \Omega$ associated with this $\Lambda_*$ is 
increasing on $(a,b)$. As a result, $H_*(x) + I_r$ becomes singular 
precisely at the points $\rho_1, \ldots, \rho_r$. %\\ 

\subsection{Sketch of the proof of the second order result}

The fluctuations of the outliers will be deduced from the fluctuations of the 
elements of the matrices $\widehat S_n(\rho_i)$ introduced above. The proof of Theorem 
\ref{th-2ord} can be divided into two main steps. The first step 
(Lemma \ref{le:3p}) consists in establishing a Central Limit Theorem on the 
$3p$--uple of random matrices 
\begin{eqnarray*} 
\sqrt{n} 
\left( 
\frac{U_{i,n}^* Y_n\widetilde Q_n(\rho_i) R_{i,n}}{\sqrt{n}} ,
U_{i,n}^* ( Q_n(\rho_{i}) - m_n(\rho_{i}) I_N ) U_{i,n}, \right. 
\ \ \ \ \ \ \ \ \ \ \ \ \ \ \ \ \ \ \ \ \ \ \ \ \ \ \ \ \ \ \ \ 
\\ 
\left. 
\ \ \ \ \ \ \ \ \ \ \ \ \ \ \ \ \ \ \ \ \ \ \ \ \ \ \ \ \ \ \ \ 
\ \ \ \ \ \ \ \ \ \ \ \ \ \ \ \ \ \ \ \ \ \ \ \ \ \ \ \ \ \ \ \ 
R_{i,n}^* ( \widetilde Q_n(\rho_{i}) - \widetilde T_n(\rho_{i}) ) R_{i,n} 
\right)_{i=1}^p , 
\end{eqnarray*}
where  $P_n = U_n R_n^*$ is a sequence of factorization  such that $\Lambda_n$ satisfies the third 
item of Assumption \ref{fast-cvg}. We also write 
$U_n = [ U_{1,n} \, \cdots \, U_{t,n} ]$ and 
$R_n = [ R_{1,n} \, \cdots \, R_{t,n} ]$ where 
$U_{i,n} \in \C^{N \times j_i}$ and $R_{i,n} \in \C^{n \times j_i}$. 

This CLT is proven by using the Gaussian tools introduced in  Section 
\ref{gauss-tools}, namely the integration by parts  formula and the Poincar\'e-Nash inequality, and by relying on the invariance properties of the 
probability distribution of $n^{-1/2} X_n$. 
The fluctuations of the zeros of $\det\widehat S_n(x)$ outside 
$\support(\mu)$ are then deduced from this CLT by adapting the approach 
of \cite{bgm-fluct10} (Lemma \ref{le:chi_n}). \\

We now come to the basic mathematical tools needed for our proofs: 

\subsection{Differentiation formulas}
\label{diff-det} 
Let $\partial / \partial z = ( \partial/\partial x - \imath 
\partial / \partial y ) / 2$ and 
$\partial / \partial \bar z = ( \partial/\partial x + \imath 
\partial / \partial y ) / 2$ for $z = x + \imath y$. 
Given a Hermitian matrix $X$ with a spectral decomposition 
$X = \sum_\ell \lambda_\ell v_\ell v_\ell^*$, let 
$\adjugate(X) = \sum_k (\prod_{\ell\neq k} \lambda_\ell) v_k v_k^*$ 
be the classical adjoint of $X$, \emph{i.e.}, the transpose of its cofactor 
matrix. Let $\psi$ be a continuously differentiable real-valued function on 
$\R$. Then
\[
\frac{\partial \det \psi\left( n^{-1} {YY^*} \right)}{\partial \bar Y_{ij}}
= 
\frac 1n \left[ \adjugate\left( \psi\left( n^{-1}{YY^*} \right) \right) \, 
\psi'\left( n^{-1} {YY^*} \right) \, y_j \right]_i 
\] 
where $y_j$ is column $j$ of $Y$, 
see \cite[Lemma 3.9]{hlmnv-rmta11} for a proof. \\
We shall also need the expressions of the following derivatives of the elements
of the resolvents $Q$ and $\widetilde Q$ (see \cite{HLN08Ieee}):  
\[ 
% \begin{equation}
\frac{\partial Q_{pq}}{\partial \bar Y_{ij} }=-\frac{1}{n}[QY]_{pj}Q_{iq},
\quad 
\frac{\partial \widetilde Q_{pq}}{\partial \bar Y_{ij} }
=-\frac{1}{n}\widetilde Q_{pj}[Y\widetilde Q]_{iq} . 
% \end{equation}
\] 

\subsection{Gaussian tools} 
\label{gauss-tools} 
Our analysis fundamentally relies on two mathematical tools which are 
often used in the analysis of large random matrices with Gaussian elements. 
The first is the so called integration by parts (IP) formula for functionals 
of Gaussian vectors introduced in random matrix theory in 
\cite{kho-pastur93,pastur-etal95}. 
Let $\Gamma : \R^{2Nn} \to \C$ be a continuously differentiable function 
polynomially bounded together with its partial derivatives. Then 
\[
\E \left(Y_{ij} \Gamma(Y)\right) =  d_j 
\EE\left[ \frac{\partial \Gamma(Y)}{\partial \bar Y_{ij}} \right]
\]
for any $i \in \{1,\ldots, N\}$ and $j \in \{ 1, \ldots, n \}$. 
The second tool is the Poincar\'e-Nash inequality (see for instance 
\cite{chen-jmva82}). In our situation, it states that the variance 
$\var( \Gamma(Y) )$ satisfies 
\[
\var( \Gamma(Y) ) \leq \sum_{i=1}^N \sum_{j=1}^n  d_j 
\E \left[ 
\left| \frac{\partial \Gamma({Y})}{\partial Y_{ij}} \right|^2 
+ 
\left| \frac{\partial \Gamma({Y})}{\partial \bar Y_{ij}} \right|^2 
\right] \ .
\]

We now recall the results of Silverstein and Choi \cite{sil-choi95} which
will be needed to prove Theorems \ref{1st-ord}-\eqref{no-spikes} and 
\ref{partout}. Close results can be found in \cite{mes-it08} and in 
\cite{mes-sp08}. 

\subsection{Analysis of the support of $\mu$} 
\label{subsec-jack}

\begin{proposition}[\cite{sil-choi95}, Th.1.1]
\label{lim-m(z)} 
For all $x \in \R^*$, $\displaystyle{\lim_{z\in\C_+ \to x} \mfat(z)}$ exists. 
The limit that we denote $\mfat(x)$ is continuous on $\R^*$. Moreover, $\mu$ 
has a continuous density $f$ on $\R^*$ given by $f(x) = \pi^{-1} \Im \mfat(x)$. 
\end{proposition}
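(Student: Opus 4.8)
The plan is to recover the boundary behaviour of $\mfat$ on $\R^*$ directly from the defining fixed-point equation \eqref{m=f(m)}. Introduce the companion Stieltjes transform $\underline{\mfat}(z)$ of $c\mu + (1-c)\bs\delta_0$, which satisfies $\underline{\mfat} = c\mfat - (1-c)/z$ and, as is classical, obeys the inverse functional relation $z = -1/\underline{\mfat} + c\int t(1+\underline{\mfat} t)^{-1}\nu(dt)$ on $\C_+$. The first step is to show that for each fixed $x\in\R^*$ the limit $\lim_{z\in\C_+\to x}\underline{\mfat}(z)$ exists in $\overline{\C_+}$. One argues as follows: $\Im\underline{\mfat}>0$ on $\C_+$, and since $\underline{\mfat}$ maps into $\C_+$ it is a normal family, so along any sequence $z_k\to x$ one can extract a subsequential limit $\underline{m}_0\in\overline{\C_+}\cup\{\infty\}$. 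The boundedness estimates $\sup_{y>0}\|y\mfat(\imath y)\|<\infty$ together with the fact that $\mu$ has no mass near $x\ne 0$ (more precisely, that $x$ may lie outside $\support(\mu)$ or in its interior) must be used to exclude $\underline{m}_0=\infty$ and $\underline{m}_0=0$; then the functional equation $z=-1/\underline{m}+c\int t(1+\underline{m}t)^{-1}\nu(dt)$ passes to the limit (dominated convergence is available because $\Im\underline{m}_0\geq 0$ keeps $1+\underline{m}_0 t$ away from $0$ for $t\geq 0$ — the integrand is bounded by $|t|/\Im(\underline{m}_0 t)$ when $\Im\underline{m}_0>0$, and a separate argument using the real-analyticity on $\support(\mu)^c$ handles the case $\Im\underml{m}_0=0$). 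Uniqueness of the subsequential limit, hence existence of $\lim_{z\to x}\underline{\mfat}(z)=:\underline{\mfat}(x)$, follows from injectivity of $\underline{m}\mapsto -1/\underline{m}+c\int t(1+\underline{m}t)^{-1}\nu(dt)$ on the relevant region, which is checked by differentiating and observing the derivative has positive imaginary part (or is real and nonzero) there. Translating back via $\mfat=(\underline{\mfat}+(1-c)/z)/c$ gives existence of $\mfat(x)$.

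The second step is continuity of $x\mapsto\mfat(x)$ on $\R^*$. Having the pointwise boundary values, continuity is obtained by a standard equicontinuity/normal-families argument: the family $\{\mfat(\,\cdot\,+\imath y)\}_{y>0}$ is locally uniformly bounded on compact subsets of $\R^*$ (again using $\sup_y\|y\mfat(\imath y)\|<\infty$ and the Herglotz structure, which forces local boundedness of $\mfat$ near any point of $\R^*$ away from an interval carrying mass — and at points carrying a.c. mass one uses that the approximate limit stays finite), so by Montel's theorem the convergence $\mfat(x+\imath y)\to\mfat(x)$ is locally uniform, and each $\mfat(\,\cdot\,+\imath y)$ is continuous, whence the limit is continuous on $\R^*$.

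The third step identifies the density. By the Perron-type inversion formula \eqref{perron}, for any $\varphi\in{\cal C}_c(\R^*)$ one has $\int\varphi\,d\mu=\pi^{-1}\lim_{y\downarrow 0}\int\varphi(x)\Im\mfat(x+\imath y)\,dx$; since $\Im\mfat(x+\imath y)\to\Im\mfat(x)$ locally uniformly on $\support(\varphi)$ by Step 2, the limit equals $\pi^{-1}\int\varphi(x)\Im\mfat(x)\,dx$. Hence $\mu$ restricted to $\R^*$ is absolutely continuous with continuous density $f(x)=\pi^{-1}\Im\mfat(x)$.

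The main obstacle is Step 1: ruling out the degenerate subsequential limits $\underline{m}_0\in\{0,\infty\}$ and then legitimately passing to the limit inside $\int t(1+\underline{m}t)^{-1}\nu(dt)$ when the limiting value is real (the denominator can vanish if $-1/\underline{m}_0\in\support(\nu)$). This is exactly where the structure of $\nu$ and the analysis of \cite{sil-choi95} — characterising for which real $x$ the relevant branch is well-defined and real-analytic — is indispensable; in the regime where $x$ is real we rely on their description of $\R-\support(\mu)$ via the increasing branches of the inverse function $z(\underline{m})$, and for $x$ in the support we use that the imaginary part stays strictly positive so the denominator is controlled. Since this proposition is quoted from \cite{sil-choi95}, in the paper itself one simply cites it; the sketch above records how the argument would go.
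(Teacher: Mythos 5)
The paper does not contain a proof of this proposition — it is imported verbatim from Silverstein--Choi \cite{sil-choi95}, Theorem 1.1 — and you correctly flag this at the end of your sketch, so there is no internal proof to compare against. Your three-step reconstruction (boundary limits via normal families and the inverse functional relation, continuity via Montel, density via Stieltjes inversion) is the right skeleton, but you are candid that the crux of Step 1 is left open, and it should be said plainly that this crux \emph{is} Silverstein--Choi's theorem: showing that no subsequence $z_k \in \C_+ \to x$ can have $\underline{\mfat}(z_k) \to 0$ or $\infty$, and handling the delicate case where the subsequential limit $\underline{m}_0$ is real with $-1/\underline{m}_0 \in \support(\nu)$ (so the would-be limiting integral $\int t(1+\underline{m}_0 t)^{-1}\nu(dt)$ has a singular integrand), is exactly what \cite{sil-choi95} establishes. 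A sketch that defers these points is not materially ahead of the citation.

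Two more concrete issues. First, your stated inverse relation $z = -1/\underline{\mfat} + c\int t(1+\underline{\mfat}t)^{-1}\nu(dt)$ is the classical Silverstein form, but it does not transform cleanly from the paper's equation~\eqref{m=f(m)}, whose inverse is $x(\mfat) = -1/\mfat + \int t(1+c\mfat t)^{-1}\nu(dt)$ with the $c$ sitting next to $\mfat$; substituting $\underline{\mfat} = c\mfat - (1-c)/z$ into the paper's $x(\mfat)$ does not produce your equation, because $c\mfat = \underline{\mfat} + (1-c)/z \neq \underline{\mfat}$. The object that the paper actually works with on the real axis is $\mfat$ itself together with the set $B = \{\mfat : \mfat\neq 0,\ -(c\mfat)^{-1}\in\support(\nu)^c\}$, see Proposition~\ref{support}. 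Second, the injectivity argument via ``the derivative has positive imaginary part'' is incorrect: $x'(\mfat) = 1/\mfat^2 - c\int t^2(1+c\mfat t)^{-2}\nu(dt)$, and its imaginary part is not sign-definite on $\C_+$. Uniqueness of boundary values in the Silverstein--Choi argument instead comes from $\mfat$ being the Stieltjes transform of a probability measure (hence holomorphic and one-to-one on $\C_+$) together with the real-axis characterization $x'(\mfat_0)>0$ on $B$, as recorded in Proposition~\ref{support} of this paper — this monotonicity on the real line is what rules out oscillation, not a complex-derivative argument.
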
 
In \cite{sil-choi95}, the support of $\mu$ is also identified. Since 
$\mfat(z)$ is the unique solution in $\C_+$ of \eqref{m=f(m)} for 
$z\in \C_+$, it has a unique inverse on $\C_+$ given by 
\[
z(\mfat) = - \frac{1}{\mfat} + \int \frac{t}{1+c\mfat t} \nu(dt) 
\] 
The characterization of the support of $\mu$ is based on the following idea. 
On any open interval of $\support(\mu)^c$, $\mfat(x) = \int (t-x)^{-1} \mu(dt)$ 
is a real, continuous and increasing function. Consequently, 
it has a real, continuous and increasing inverse. In \cite{sil-choi95}, it is
shown that the converse is also true. More precisely, let 
$B = \{ \mfat : \mfat \neq 0, \, - (c\mfat)^{-1} \in \support(\nu)^c \}$, and 
let 
\begin{equation} 
\label{x(m)} 
\begin{array}{ccccl}
x &:& B & \longrightarrow & \R \\
& & \mfat &  \longmapsto & 
\displaystyle{x(\mfat) = 
- \frac{1}{\mfat} + \int \frac{t}{1+c\mfat t} \, \nu(dt)}  . 
\end{array} 
\end{equation} 
Then the following proposition holds: 
\begin{proposition}[\cite{sil-choi95}, Th. 4.1 and 4.2] 
\label{support} 
For any $x_0 \in \support(\mu)^c$, let $\mfat_0 = \mfat(x_0)$.  
Then $\mfat_0 \in B$, $x_0 = x(\mfat_0)$, and $x'(\mfat_0) > 0$. Conversely, 
let $\mfat_0 \in B$ such that $x'(\mfat_0) > 0$. Then 
$x_0 = x(\mfat_0) \in \support(\mu)^c$, and $\mfat(x_0) = \mfat_0$. 
\end{proposition}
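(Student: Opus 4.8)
\emph{Forward implication.} Both implications will be proved by transporting the defining equation \eqref{m=f(m)} between $\C_+$ and the real line, the only genuinely analytic input being Proposition~\ref{lim-m(z)}. So fix $x_0\in\support(\mu)^c$, let $(a,b)$ be the connected component of $\support(\mu)^c$ containing it, and set $\mfat_0=\mfat(x_0)$. On $(a,b)$ the function $\mfat(x)=\int(t-x)^{-1}\mu(dt)$ is real-analytic and real-valued, with $\mfat'(x)=\int(t-x)^{-2}\mu(dt)>0$, hence strictly increasing. The decisive step is to show $\mfat_0\in B$, i.e. $\mfat_0\neq0$ and $-(c\mfat_0)^{-1}\notin\support(\nu)$. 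For $\mfat_0\neq 0$: rewrite \eqref{m=f(m)} as $z=-1/\mfat(z)+\int t(1+c\mfat(z)t)^{-1}\nu(dt)$ and let $z\to x_0$ along $\C_+$; if $\mfat_0=0$ then $1+c\mfat(z)t\to1$ uniformly on $\support(\nu)\subset[0,\dmax]$, so the integral stays bounded while $-1/\mfat(z)$ diverges, contradicting $z\to x_0$. For $-(c\mfat_0)^{-1}\notin\support(\nu)$ — the delicate point — I would follow \cite{sil-choi95}: taking imaginary parts in \eqref{m=f(m)} yields, for $z\in\C_+$,
\[
\Im\mfat(z)\left(\frac{1}{|\mfat(z)|^{2}}-c\int\frac{t^{2}}{|1+c\mfat(z)t|^{2}}\,\nu(dt)\right)=\Im z ,
\]
and combining $\Im\mfat(z)>0$ with $\Im\mfat(z)\to0$ and $\Im z\to0$ as $z\to x_0$ excludes the vanishing of $1+c\mfat_0t$ on $\support(\nu)$. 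Once $\mfat_0\in B$ is secured, $\mfat(x)\in B$ for $x$ near $x_0$ by continuity, so both sides of $z=-1/\mfat(z)+\int t(1+c\mfat(z)t)^{-1}\nu(dt)$ are holomorphic on a complex neighborhood of $x_0$ and agree on $\C_+$; analytic continuation gives $x_0=x(\mfat_0)$, and differentiating the identity $x(\mfat(s))=s$ at $s=x_0$ yields $x'(\mfat_0)\mfat'(x_0)=1$, so $x'(\mfat_0)>0$.

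\emph{Converse.} Let $\mfat_0\in B$ with $x'(\mfat_0)>0$ and put $x_0=x(\mfat_0)$. Since $B$ is open and $\mfat\mapsto-(c\mfat)^{-1}$ is continuous, $x$ extends holomorphically to a complex neighborhood of $\mfat_0$; it is real on $B\cap\R$, so it extends by Schwarz reflection, and since $x'(\mfat_0)\neq0$ it is a biholomorphism of a small disc $D\ni\mfat_0$ onto a neighborhood of $x_0$. By connectedness $x(D\cap\C_+)$ lies entirely in $\C_+$ or in $\C_-$, and evaluating at $\mfat_0+\imath\varepsilon$, where $x\approx x_0+\imath\,x'(\mfat_0)\varepsilon$, shows it lies in $\C_+$. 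Let $\mfat_*$ denote the holomorphic inverse. For $z\in\C_+$ near $x_0$ the relation $x(\mfat_*(z))=z$ is precisely equation \eqref{m=f(m)} for $\mfat_*(z)$, and $\Im\mfat_*(z)>0$; by the uniqueness in Theorem~\ref{lsm}-\eqref{st-lsm}, $\mfat_*(z)=\mfat(z)$. Hence $\mfat$ continues holomorphically across a real neighborhood of $x_0$ with real values there, so $\Im\mfat\equiv0$ on it and the density $f=\pi^{-1}\Im\mfat$ of Proposition~\ref{lim-m(z)} vanishes there; as $\mu$ carries no atom on $\R^{*}$, the (punctured) interval lies in $\support(\mu)^c$. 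In particular $x_0\in\support(\mu)^c$ and $\mfat(x_0)=\mfat_*(x_0)=\mfat_0$.

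\emph{Main obstacle.} Everything except the forward claim $-(c\mfat_0)^{-1}\notin\support(\nu)$ is essentially formal. There, two cases must be excluded: $\nu$ has an atom at $-(c\mfat_0)^{-1}$, in which case the integral in \eqref{m=f(m)} diverges as $z\to x_0$ and contradicts finiteness of $z$; and the harder case where $-(c\mfat_0)^{-1}$ is a non-isolated point of $\support(\nu)$, which is exactly where the sign identity above and the fine estimates of \cite{sil-choi95} on the boundary behavior of $\Im\mfat(z)$ near $\R$ are needed. A minor additional point is the endpoint $x_0=0$, not covered by Proposition~\ref{lim-m(z)}, which must be handled by a separate limiting argument.
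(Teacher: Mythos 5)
The paper does not prove this proposition: it is cited verbatim from Silverstein and Choi, Theorems 4.1 and 4.2, and the paper relies on it as external input. There is therefore no internal proof to compare against; what follows is an assessment of your reconstruction on its own terms.

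Your outline follows the standard Silverstein--Choi strategy, and the converse direction as you present it is essentially complete: the formula \eqref{x(m)} gives a holomorphic extension of $x$ to a complex neighborhood of $\mfat_0$ directly (Schwarz reflection is not even needed, since the integrand $t/(1+c\mfat t)$ is already holomorphic in $\mfat$ on $\{\,\mfat\neq 0,\ -(c\mfat)^{-1}\notin\support(\nu)\,\}$); the local biholomorphism and the sign check $x'(\mfat_0)>0$ put the local inverse $\mfat_*$ into $\C_+$, uniqueness in Theorem~\ref{lsm}-\eqref{st-lsm} identifies $\mfat_*$ with $\mfat$, and then $f=\pi^{-1}\Im\mfat\equiv 0$ near $x_0$ plus absence of atoms on $\R^*$ (Proposition~\ref{lim-m(z)}) gives $x_0\in\support(\mu)^c$. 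The derivation of $x'(\mfat_0)>0$ from $x(\mfat(s))=s$ in the forward direction is likewise fine, as is the argument that $\mfat_0\neq0$.

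The genuine gap is the one you flag yourself: the forward claim $-(c\mfat_0)^{-1}\notin\support(\nu)$. Your imaginary-part identity is correct, and dividing by $\Im\mfat(z)>0$ gives
\[
\frac{1}{|\mfat(z)|^{2}} - c\int\frac{t^{2}}{|1+c\mfat(z)t|^{2}}\,\nu(dt) = \frac{\Im z}{\Im\mfat(z)} > 0,
\]
so by Fatou the limiting integral is finite, i.e.\ $c\int t^{2}|1+c\mfat_0 t|^{-2}\nu(dt)\le|\mfat_0|^{-2}<\infty$. But finiteness of this integral does not by itself exclude $-(c\mfat_0)^{-1}\in\support(\nu)$: if $\nu$ has little mass near that point the integral can converge even when $1+c\mfat_0 t$ vanishes on $\support(\nu)$. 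So ``combining $\Im\mfat(z)>0$ with $\Im\mfat(z)\to0$ and $\Im z\to0$ excludes the vanishing'' is not a proof; it rules out the atom case and the case where $\nu$ has enough local mass, but not the general one. Silverstein and Choi handle this with a finer analysis of the boundary behavior of $\mfat$ and of the map $x(\cdot)$, which you (honestly) defer to. As written, this step is an appeal to the reference rather than an argument, so the proposal reconstructs the architecture of the proof but leaves its hardest brick unlaid. The separate edge case $x_0=0$, which you also flag, would likewise need to be dispatched before the proof could be called complete.
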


The following proposition will also be useful: 
\begin{proposition}[\cite{sil-choi95}, Th. 4.4] 
\label{disjoint} 
Let $[\mfat_1, \mfat_2]$ and $[\mfat_3, \mfat_4]$ be two disjoint intervals of $B$ satisfying
$\forall \mfat \in (\mfat_1, \mfat_2) \cup (\mfat_3, \mfat_4)$, $x'(\mfat) > 0$. Then $[x_1, x_2]$ and
$[x_3, x_4]$ are disjoint where $x_i = x(\mfat_i)$. 
\end{proposition}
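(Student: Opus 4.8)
The plan is to identify $\mfat$, restricted to $\support(\mu)^c$, as a continuous inverse of the map $x$ of \eqref{x(m)} on the locus $\{\mfat\in B:x'(\mfat)>0\}$, and then to transport a hypothetical common point of $[x_1,x_2]$ and $[x_3,x_4]$ back to a common point of the disjoint intervals $[\mfat_1,\mfat_2]$ and $[\mfat_3,\mfat_4]$, which is absurd. Everything needed about $x$ and $\mfat$ is already contained in Propositions~\ref{support} and \ref{lim-m(z)}.

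Concretely, I would proceed as follows. On the open set $B$ the function $x$ is real-analytic; being continuous on $[\mfat_1,\mfat_2]\subset B$ with $x'>0$ on $(\mfat_1,\mfat_2)$, it is strictly increasing there, hence a homeomorphism of $[\mfat_1,\mfat_2]$ onto $[x_1,x_2]$ with $x_1<x_2$, and by Proposition~\ref{support} its open image $(x_1,x_2)$ lies in $\support(\mu)^c$; likewise for $[\mfat_3,\mfat_4]$ and $[x_3,x_4]$. The converse part of Proposition~\ref{support} makes $x$ injective on $\{\mfat\in B:x'(\mfat)>0\}$ --- if $x(\mfat)=x(\mfat')=y$ with $x'(\mfat),x'(\mfat')>0$, then $\mfat=\mfat(y)=\mfat'$ --- so for every $y\in(x_1,x_2)$ the preimage $(x|_{[\mfat_1,\mfat_2]})^{-1}(y)$ and the value $\mfat(y)$ (which, by Propositions~\ref{support} and \ref{lim-m(z)}, belongs to $B$ and satisfies $x(\mfat(y))=y$, $x'(\mfat(y))>0$) coincide. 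Letting $y$ approach the endpoints of $(x_1,x_2)$ and using the continuity of $\mfat$ on $\R^*$ together with the monotone continuity of the inverse homeomorphism, this identification extends to the closed interval, so $\mfat$ sends $[x_1,x_2]$ into $[\mfat_1,\mfat_2]$ and $[x_3,x_4]$ into $[\mfat_3,\mfat_4]$. A common point $y$ of $[x_1,x_2]$ and $[x_3,x_4]$ would then give $\mfat(y)\in[\mfat_1,\mfat_2]\cap[\mfat_3,\mfat_4]=\emptyset$, the desired contradiction.

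The step deserving care, and the real obstacle, is the passage to the closed intervals at a shared endpoint $x_0$, because $x'$ is only known to be $\ge 0$ (not $>0$) at the $\mfat_i$, and because Proposition~\ref{lim-m(z)} provides no information at $x_0=0$. If the open intervals $(x_1,x_2)$ and $(x_3,x_4)$ already overlap, the injectivity of $x$ on $\{x'>0\}$ used above closes the argument immediately. Otherwise they are disjoint and $\overline{(x_1,x_2)}$, $\overline{(x_3,x_4)}$ meet in a single endpoint $x_0$; it then suffices to show $x_0\in\support(\mu)^c$, for then $\mfat$ is analytic at $x_0$ and its one-sided limits along $(x_1,x_2)$ and $(x_3,x_4)$ --- which equal $\mfat_2$ and $\mfat_3$ by the inverse relation --- must agree, giving $\mfat_2=\mfat_3$ against disjointness. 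For $x_0\neq0$ this holds because $x_0$ would otherwise be an isolated point of $\support(\mu)$, impossible since $\mu$ has a continuous density on $\R^*$ (Proposition~\ref{lim-m(z)}); for $x_0=0$ one uses that $(x_1,x_2)\cup(x_3,x_4)\subset\support(\mu)^c$ together with $\support(\mu)\subset\R_+$ already forces $0$ to be, if in $\support(\mu)$ at all, an isolated point and hence an atom, while an atom at $0$ would make $|\mfat(y)|\to\infty$ as $y\to0$, contradicting the finiteness of $\mfat_2,\mfat_3\in B$.
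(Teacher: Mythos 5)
The paper does not prove this proposition at all --- it is quoted verbatim from Silverstein--Choi \cite{sil-choi95} (their Theorem 4.4) --- so there is no in-paper argument to compare against; I can only assess your proof on its own terms, and it is correct. The skeleton (strict monotonicity of $x$ on each closed $\mfat$-interval, injectivity of $x$ on $\{\mfat\in B: x'(\mfat)>0\}$ via the converse half of Proposition~\ref{support}, hence disjointness of the open images) is sound, and you correctly identify that the only genuine difficulty is a shared endpoint $x_0$ of the two closed images. Your treatment of that case is the right one: a shared endpoint would be an isolated point of $\support(\mu)$, which for $x_0\neq 0$ is excluded by the absolutely continuous density on $\R^*$, and for $x_0=0$ is excluded because an atom at $0$ would force $\mfat(y)\to\infty$ as $y\to 0^-$, incompatible with the finite limit $\mfat_2$ coming from the inverse homeomorphism; once $x_0\in\support(\mu)^c$, continuity of $\mfat$ there forces $\mfat_2=\mfat_3$, contradicting disjointness of the $\mfat$-intervals. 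One cosmetic remark: the claim at the end of your second paragraph that the identification ``extends to the closed interval'' is stated before you have justified it, but your third paragraph supplies exactly the justification needed (and the final contradiction only uses the one-sided limits at the shared endpoint, not the full extension), so the argument as a whole is complete.
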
 
The following result is also proven in \cite{sil-choi95}: 
\begin{proposition}
\label{weight}
Assume that $\nu(\{ 0 \}) = 0$. Then $\mu(\{ 0 \}) = \max(0, 1 - c^{-1})$. 
\end{proposition}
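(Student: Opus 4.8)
The plan is to read off $\mu(\{0\})$ from the boundary behaviour of $\mfat$ at the origin along the negative real axis. Since $\mu$ is carried by $\R_+$ by Theorem~\ref{lsm}, the half-line $(-\infty,0)$ lies in $\support(\mu)^c$, so $\mfat$ extends to a real analytic function there with $\mfat(x)=\int (t-x)^{-1}\,\mu(dt)>0$ for $x<0$. Writing $x\,\mfat(x)=\int \frac{x}{t-x}\,\mu(dt)$ and noting that the integrand is bounded by $1$ in modulus, equals $-1$ at $t=0$, and tends to $0$ as $x\uparrow 0$ for each fixed $t>0$, dominated convergence gives
\[
\mu(\{0\}) = -\lim_{x\uparrow 0}\, x\,\mfat(x),
\]
so it suffices to evaluate this limit.

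The next step is to turn the fixed-point equation into a usable identity. By Proposition~\ref{support} one has $x=x(\mfat(x))$ on $\support(\mu)^c$, which is exactly \eqref{m=f(m)} read on the real axis: $\mfat(x)^{-1}=-x+\int t(1+c\mfat(x)t)^{-1}\nu(dt)$. Multiplying by $\mfat(x)$ and using the algebraic identity $\frac{c\mfat t}{1+c\mfat t}=1-\frac{1}{1+c\mfat t}$ together with $\nu(\R)=1$ yields, for every $x<0$,
\[
-x\,\mfat(x) = 1-\frac1c + \frac1c\int \frac{1}{1+c\mfat(x)t}\,\nu(dt).
\]
Since $\mfat(x)>0$ and $c,t\ge 0$ we have $1+c\mfat(x)t\ge 1$, hence the integral $I(x)$ on the right lies in $[0,1]$.

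I would then split into two cases according to the sign of $\mu(\{0\})$. If $\mu(\{0\})>0$, then $-x\,\mfat(x)\to\mu(\{0\})>0$ forces $\mfat(x)\to+\infty$ as $x\uparrow 0$; since $(1+c\mfat(x)t)^{-1}\le 1$ and tends to $0$ for every fixed $t>0$, dominated convergence gives $I(x)\to\nu(\{0\})=0$, whence $\mu(\{0\})=1-c^{-1}$, which is positive only if $c>1$, so $\mu(\{0\})=\max(0,1-c^{-1})$. If instead $\mu(\{0\})=0$, the left-hand side tends to $0$, so $I(x)\to 1-c$; as $I(x)\ge 0$ this forces $c\le 1$, and then $\max(0,1-c^{-1})=0=\mu(\{0\})$. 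In both cases $\mu(\{0\})=\max(0,1-c^{-1})$.

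The only delicate point is the dominated-convergence step $I(x)\to 0$ in the first case: the integrand $(1+c\mfat(x)t)^{-1}$ degenerates near $t=0$, where it stays equal to $1$ however large $\mfat(x)$ is, and it is precisely the hypothesis $\nu(\{0\})=0$ that removes this contribution in the limit (dropping it would produce an extra $\nu(\{0\})$ term in the answer, reflecting the zero eigenvalues that a singular $D_n$ injects into $Y_nY_n^*$). Everything else is elementary. One could alternatively run the identical computation along the imaginary axis via $\mu(\{0\})=\lim_{y\downarrow 0}(-\imath y)\,\mfat(\imath y)$, tracking real and imaginary parts of $I(\imath y)$; the real-axis route is marginally cleaner because $\mfat(x)$ is then real and positive.
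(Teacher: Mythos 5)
Your argument is correct. Note that the paper does not actually prove Proposition \ref{weight}; it is quoted from \cite{sil-choi95}, so there is no in-paper proof to compare against. Your route is a clean, self-contained alternative: the identity $\mu(\{0\})=-\lim_{x\uparrow 0}x\,\mfat(x)$ follows from dominated convergence exactly as you say (the integrand $x/(t-x)$ is bounded by $1$ for $x<0$, $t\ge 0$), the fixed-point equation extends to $(-\infty,0)$ by Proposition \ref{support} (or by analytic continuation, using $\mfat(x)>0$ there so that $1+c\mfat(x)t\ge 1$), and the rewriting $-x\,\mfat(x)=1-c^{-1}+c^{-1}\int(1+c\mfat(x)t)^{-1}\nu(dt)$ is a correct algebraic consequence of $\nu(\R)=1$. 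The two-case analysis is airtight: when $\mu(\{0\})>0$ one gets $\mfat(x)\to+\infty$ and the integral tends to $\nu(\{0\})=0$ by dominated convergence, forcing $\mu(\{0\})=1-c^{-1}$ and hence $c>1$; when $\mu(\{0\})=0$ the constraint $I(x)\ge 0$ forces $c\le 1$. You also correctly isolate where the hypothesis $\nu(\{0\})=0$ enters. By contrast, \cite{sil-choi95} obtains the mass at $0$ from the global study of the inverse map $x(\mfat)$ and its monotonicity intervals (the machinery the present paper reuses in Lemma \ref{m(1+ctm)}); your computation is more elementary and purely local at the origin, at the price of not yielding the additional boundary information (such as $\mfat(x)\to-\infty$ as $x\downarrow 0$ when $c>1$) that the paper later extracts from the same reference.
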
 

We shall assume hereafter that $\nu(\{ 0 \}) = 0$ without loss of generality
(otherwise, it would be enough to change the value of $c$). 
The two following lemmas will also be needed: 
\begin{lemma}
\label{control-Ttilde} 
Let $\cal I$ be a compact interval of $\support(\mu)^c$, and let $D_{\cal I}$ 
be the closed disk having $\cal I$ as one of its diameters. Then there exists 
a constant $K$ which depends on $\cal I$ only such that 
\begin{gather*} 
\forall \, t \in \support(\nu), \ \forall \, z \in D_{\cal I}, \ 
\left| 1 + c \mfat(z) t \right| \geq K, \quad \text{and} \\  
\forall \, n \ \text{large enough}, \ 
\forall \, t \in \support(\nu_n), \ \forall \, z \in D_{\cal I}, \ 
\left| 1 + c_n m_n(z) t \right| \geq K . 
\end{gather*} 
% $\displaystyle{\limsup_n \Bigl\|\left( I_n + c_n m_n(z)  D_n
% \right)^{-1} \Bigr\| < \infty}$. 
\end{lemma}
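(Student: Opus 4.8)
The plan is to prove Lemma~\ref{control-Ttilde} by a compactness argument, using the key facts about $\mfat(z)$ established in Propositions~\ref{lim-m(z)} and \ref{support}, together with the uniform convergence $m_n \to \mfat$ from Proposition~\ref{equiv-deter}.

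\textbf{Step 1: continuity of $\mfat$ on a neighbourhood of $\overline{D_{\cal I}}$.} Fix a compact interval $\cal I \subset \support(\mu)^c$. Since $\support(\mu)$ is closed, $\support(\mu)^c$ is open, so there is $\varepsilon > 0$ such that the open $\varepsilon$-neighbourhood of $\cal I$ is still contained in $\support(\mu)^c$; shrinking $\cal I$'s disk slightly if necessary, we may assume $D_{\cal I}$ together with a small open neighbourhood of it lies in $\big(\support(\mu)^c\big) \cup \C_+ \cup \C_-$. On $\C_+$, $\mfat$ is analytic; by symmetry $\overline{\mfat(\bar z)}$ extends it analytically to $\C_-$; and by Proposition~\ref{lim-m(z)}, $\mfat$ has a continuous extension to $\R^*$ (note $D_{\cal I} \cap \R = \cal I$ avoids $0$ because $0 \in \support(\mu)$ whenever $c \le 1$ by Proposition~\ref{weight}, and when $c>1$ one argues that $\cal I$ is a genuine interval of $\support(\mu)^c$ bounded away from the origin since $\mu(\{0\})>0$). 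Hence $\mfat$ is continuous on the compact set $\overline{D_{\cal I}}$.

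\textbf{Step 2: the limiting inequality by compactness.} Consider the continuous function $(t,z) \mapsto |1 + c\,\mfat(z)\, t|$ on the compact set $\support(\nu) \times \overline{D_{\cal I}}$. It suffices to show it never vanishes; then it attains a strictly positive minimum $K$, which is the first claim. Suppose $1 + c\,\mfat(z_0)\, t_0 = 0$ for some $t_0 \in \support(\nu)$, $z_0 \in \overline{D_{\cal I}}$. Then $\mfat(z_0) \neq 0$ and $-(c\,\mfat(z_0))^{-1} = t_0 \in \support(\nu)$. For $z_0 \in \C_+$ this is impossible because $|1 + c\mfat(z_0)t| \ge \Im(z_0)/|z_0| > 0$, as noted just before the statement of Theorem~\ref{lsm}; similarly for $z_0 \in \C_-$. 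So $z_0 \in \cal I \subset \support(\mu)^c$, and then $\mfat(z_0) = \mfat_0$ with $\mfat_0 \in B$ by Proposition~\ref{support}; but $\mfat_0 \in B$ means precisely $-(c\mfat_0)^{-1} \in \support(\nu)^c$, contradicting $t_0 \in \support(\nu)$. Hence no zero exists and $K = \min |1 + c\mfat(z)t| > 0$ works.

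\textbf{Step 3: transferring to $m_n$.} For the second inequality, use that $m_n(z) \to \mfat(z)$ uniformly on compact subsets of $\C_+$ (Proposition~\ref{equiv-deter}); by the symmetry $m_n(\bar z) = \overline{m_n(z)}$ and a normal-families/Vitali argument this uniform convergence extends to compact subsets of $\C_+ \cup \C_- \cup \cal I$ that avoid $\support(\mu_n)$ for large $n$ (alternatively invoke the locally uniform convergence directly on a neighbourhood of $\overline{D_{\cal I}}$ in $\C \setminus \support(\mu)$, which follows since $\support(\mu_n)$ eventually stays within any fixed neighbourhood of $\support(\mu)$). Also $c_n \to c$ and, by Assumption~\ref{assump:D}(\ref{D:noeigenvalues}), every $t \in \support(\nu_n)$ is within $o(1)$ of $\support(\nu)$. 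Therefore $\sup_{t \in \support(\nu_n)} \sup_{z \in \overline{D_{\cal I}}} \big| (1 + c_n m_n(z) t) - (1 + c\,\mfat(z)\, t) \big| \to 0$, so for all large $n$ the quantity $|1 + c_n m_n(z) t|$ is bounded below by $K/2$ uniformly; replacing $K$ by $K/2$ gives the claim.

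\textbf{Main obstacle.} The only delicate point is the regularity input in Step~1: making sure $\mfat$ really is continuous up to and across the real segment $\cal I$, which requires combining the analytic extension from $\C_+$ with the boundary continuity of Proposition~\ref{lim-m(z)} and checking that $\cal I$ stays away from $0$. Once continuity of $\mfat$ (and the locally uniform convergence $m_n \to \mfat$) on a neighbourhood of $\overline{D_{\cal I}}$ is in hand, everything else is a routine compactness and perturbation argument; there are no genuinely hard estimates.
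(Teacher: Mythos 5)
Your proof is correct and takes essentially the same route as the paper's: continuity of $\mfat$ on the compact disk $D_{\cal I}$ plus Proposition~\ref{support} (which forces $-(c\mfat(z))^{-1}\in\support(\nu)^c$ for $z\in\cal I$, while $\Im(z)\neq 0$ handles the off-axis points) yields the first bound by compactness, and the second follows by uniform perturbation of $c_n$, $m_n$ and $\support(\nu_n)$, just as the paper does via the Hausdorff-distance estimate on $f_n(z)={\bs d}(-(c_n m_n(z))^{-1},\support(\nu_n))$. One harmless slip: your parenthetical claim that $0\in\support(\mu)$ whenever $c\le 1$ is false (by Proposition~\ref{weight}, $\mu(\{0\})=0$ then, and e.g.\ the Mar\v{c}enko--Pastur support for $c<1$ excludes the origin), but this does not matter since $\mfat$ is analytic at any point of $\support(\mu)^c$, including $0$ if it happens to lie in $\cal I$.
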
 
From the second inequality, we deduce that if $\{ 0 \} \not\in {\cal I}$,
the matrix function $\widetilde T_n(z)$ is analytic in a neighborhood of 
$\cal I$ for $n$ large enough, and 
\begin{equation}
\label{bound-Ttilde} 
\limsup_n \sup_{z \in D_{\cal I}} \| \widetilde T_n(z) \| < \infty . 
\end{equation} 
\begin{proof}
When $z \in \C_+$, $\Im \mfat(z) > 0$ and $\Im(-(c\mfat(z))^{-1}) > 0$, and we have
the opposite inequalities when $\Im z < 0$. 
Applying Proposition \ref{support} for $z \in \cal I$, we deduce that
$| \mfat(z) |$ and  
$f(z) = \bs d( - (c \mfat(z))^{-1}, \support(\nu) )$ are positive 
on $D_{\cal I}$. Since these functions are continuous on this compact set, 
$\min | \mfat(z) | = K_1 > 0$ and 
$\min f(z) = K_2 > 0$ on $D_{\cal I}$. Consequently, for any $z \in 
D_{\cal I}$ and any $t \in \support(\nu)$, 
$| 1 + c \mfat(z) t | = | c \mfat(z) ( - (c \mfat(z))^{-1} - t ) | 
\geq | c \mfat(z) | f(z) \geq c K_1 K_2 > 0$. We now prove the second inequality.
% Given a point $x$ and two sets $A$ and $B$ in a metric space, it is easy
% to check 
Denote by ${\bs d}_{\text{H}}(A, B)$ the Hausdorff distance between two sets 
$A$ and $B$. Let $f_n(z) = {\bs d}( - (c_n m_n(z))^{-1}, \support(\nu_n))$. 
We have 
\begin{align*} 
f_n(z) &\leq {\bs d}\Bigl( \frac{-1}{c_n m_n(z)}, \frac{-1}{c \mfat(z)} \Bigr) 
+  {\bs d}\Bigl( \frac{-1}{c \mfat(z)}, \support(\nu_n) \Bigr)  \\ 
&\leq {\bs d}\Bigl( \frac{-1}{c_n m_n(z)}, \frac{-1}{c \mfat(z)} \Bigr) 
+  f(z) + 
{\bs d}_{\text{H}}(\support(\nu_n),  \support(\nu) ) , 
\end{align*}  
and 
$f(z) \leq {\bs d}( -({c_n m_n(z)})^{-1}, -({c \mfat(z)})^{-1} ) 
+  f_n(z) + {\bs d}_{\text{H}}(\support(\nu_n),  \support(\nu) )$ 
similarly. Since $m_n(z)$ converges uniformly to $\mfat(z)$ and 
$\inf|\mfat(z)| > 0$ on $D_{\cal I}$, we get that 
${\bs d}( -({c_n m_n(z)})^{-1}, -({c \mfat(z)})^{-1} ) \to 0$
uniformly on this disk. By Assumption \ref{assump:D}, 
${\bs d}_{\text{H}}(\support(\nu_n),  \support(\nu) ) \to 0$. 
Hence $f_n(z)$ converges uniformly to $f(z)$ on 
$D_{\cal I}$ which proves the second inequality. 
\end{proof} 

\begin{lemma}
\label{lm:ST-Ttilde}
Assume the setting of Lemma \ref{control-Ttilde}, and assume that 
$\{ 0 \} \not\in {\cal I}$. Then for any sequence of vectors 
$\tilde u_n \in \C^n$ such that $\sup_n \| \tilde u_n \| < \infty$, the 
quadratic forms  
$\tilde u_n^* \widetilde T_n(z) \tilde u_n$ are the Stieltjes Transforms of
positive measures $\gamma_n$ such that $\sup_n \gamma_n(\R) < \infty$ and 
$\gamma_n(\cal I) = 0$ for $n$ large enough. 
\end{lemma}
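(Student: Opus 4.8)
The plan is to reduce the statement to a scalar, entrywise one by exploiting that $\widetilde T_n(z) = [-z(I_n + c_n m_n(z) D_n)]^{-1}$ is \emph{diagonal}, with entries $\widetilde T_n(z)_{jj} = -\big(z(1 + c_n m_n(z) d_j^n)\big)^{-1}$. Consequently $\tilde u_n^* \widetilde T_n(z)\tilde u_n = \sum_{j=1}^n |[\tilde u_n]_j|^2\, \widetilde T_n(z)_{jj}$ is a nonnegative linear combination of the scalar functions $\widetilde T_n(z)_{jj}$, with weights summing to $\|\tilde u_n\|^2 \le (\sup_n\|\tilde u_n\|)^2 =: K$. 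So it suffices to show that each $\widetilde T_n(z)_{jj}$ is the Stieltjes Transform of a positive measure $\gamma_{n,j}$ of total mass exactly $1$, carried by $[0,\infty)$, with $\gamma_{n,j}(\mathcal{I})=0$ for $n$ large; then $\gamma_n := \sum_j |[\tilde u_n]_j|^2\,\gamma_{n,j}$ satisfies all the required properties.

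For the Herglotz characterization I would verify the criteria recalled at the beginning of Section \ref{pb-results}. Since $m_n$ is the Stieltjes Transform of the probability measure $\mu_n$ carried by $\R_+$ (Proposition \ref{equiv-deter}), one has $\Im m_n(z)>0$ on $\C_+$, and, writing $z m_n(z) = -1 + \int t\,(t-z)^{-1}\mu_n(dt)$, also $\Im(z m_n(z)) \ge 0$ on $\C_+$. Put $h_j(z) = z + c_n d_j^n\, z m_n(z)$, so that $\widetilde T_n(z)_{jj} = -1/h_j(z)$. Then $\Im h_j(z) = \Im z + c_n d_j^n\,\Im(z m_n(z)) \ge \Im z > 0$ on $\C_+$; hence $h_j$ does not vanish there, $\widetilde T_n(z)_{jj}$ is analytic on $\C_+$, $\Im \widetilde T_n(z)_{jj} = \Im h_j(z)\,|h_j(z)|^{-2} \ge 0$, and $|y\,\widetilde T_n(\imath y)_{jj}| = y\,|h_j(\imath y)|^{-1} \le y\,(\Im h_j(\imath y))^{-1} \le 1$, so $\sup_{y>0}|y\,\widetilde T_n(\imath y)_{jj}| \le 1$. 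Likewise $z\,\widetilde T_n(z)_{jj} = -(1+c_n m_n(z)d_j^n)^{-1}$ has imaginary part $c_n d_j^n\,\Im m_n(z)\,|1+c_n m_n(z)d_j^n|^{-2}\ge 0$ on $\C_+$. By the converse Herglotz representation (using the $\Im(z\,m)\ge 0$ criterion for the support), $\widetilde T_n(z)_{jj}$ is the Stieltjes Transform of a positive finite measure $\gamma_{n,j}$ carried by $[0,\infty)$; and its total mass is $\gamma_{n,j}(\R) = \lim_{y\to\infty}(-\imath y)\widetilde T_n(\imath y)_{jj} = \lim_{y\to\infty}(1+c_n m_n(\imath y)d_j^n)^{-1} = 1$, since $m_n(\imath y)\to 0$.

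It remains to show $\gamma_{n,j}$, hence $\gamma_n$, charges no mass on $\mathcal{I}$ for $n$ large. As already noted just after Lemma \ref{control-Ttilde}, the hypothesis $\{0\}\notin\mathcal{I}$ ensures that, for $n$ large, $\widetilde T_n$ extends analytically to a complex neighborhood $\mathcal{O}$ of $\mathcal{I}$, which we may take symmetric about $\R$, with the bound \eqref{bound-Ttilde}. Since $m_n$, being a Stieltjes Transform of a real measure, satisfies $m_n(\bar z)=\overline{m_n(z)}$ on its domain of analyticity, the natural extension $\widetilde T_n(z)_{jj} = -\big(z(1+c_n m_n(z)d_j^n)\big)^{-1}$ inherits the same relation; hence it is real on the open interval $\mathcal{O}\cap\R$, which contains $\mathcal{I}$ in its interior. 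Applying the inversion formula \eqref{perron} on any compact subinterval of $\mathcal{O}\cap\R$ containing $\mathcal{I}$ then gives $\gamma_{n,j}(\mathcal{I})=0$. Summing over $j$ with weights $|[\tilde u_n]_j|^2$, we obtain $\tilde u_n^*\widetilde T_n(z)\tilde u_n = \int(t-z)^{-1}\gamma_n(dt)$ with $\gamma_n$ positive, $\gamma_n(\R)=\|\tilde u_n\|^2\le K$, and $\gamma_n(\mathcal{I})=0$ for $n$ large, as required.

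There is no deep obstacle here once Lemma \ref{control-Ttilde} is available: the only genuinely nontrivial input is that $\widetilde T_n$ extends analytically across $\mathcal{I}$ and is uniformly bounded there for all large $n$, which that lemma supplies via the uniform lower bound on $|1+c_n m_n(z)d_j^n|$ over $D_{\mathcal{I}}$ and the uniform convergence $m_n\to\mfat$. The remaining steps are routine Herglotz/Stieltjes-inversion bookkeeping; the one place calling for a little care is the localization away from the origin, where $\widetilde T_n(z)_{jj}$ has a pole and $\gamma_{n,j}$ may carry an atom, which is precisely why the assumption $\{0\}\notin\mathcal{I}$ is invoked.
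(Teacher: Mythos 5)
Your proof is correct and follows essentially the same route the paper sketches: verify the Herglotz/Stieltjes criteria recalled at the start of Section \ref{pb-results} (here organized entrywise via the diagonal structure of $\widetilde T_n$, with the exact mass computation $\gamma_n(\R)=\|\tilde u_n\|^2$), then use the analytic extension of $\widetilde T_n$ across a neighborhood of $\cal I$ supplied by Lemma \ref{control-Ttilde}, together with the reflection $m_n(\bar z)=\overline{m_n(z)}$ and Stieltjes inversion, to conclude $\gamma_n(\cal I)=0$. Your closing remark about the possible atom of $\gamma_{n,j}$ at the origin correctly identifies why $\{0\}\not\in\cal I$ is needed, consistent with the paper's observation that $\support(\gamma_n)\subset\support(\mu_n)\cup\{0\}$.
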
 
Indeed, one can easily check the conditions that enable 
$\tilde u_n^* \widetilde T_n(z) \tilde u_n$ to be a Stieltjes Transform 
of a positive finite measure. The last result is obtained by analyticity
in a neighborhood of $\cal I$. In fact, it can be checked that 
$\support(\gamma_n) \subset \support(\mu_n) \cup \{ 0 \}$.

\subsection{A control over the support of $\theta_n$} 

In this paragraph, we adapt to our case an idea developed in 
\cite{cdf09} to deal with Wigner matrices whose elements distribution
satisfies a Poincar\'e-Nash inequality. 

\begin{proposition}
\label{EQ-T}
For any sequence of $n \times n$ deterministic diagonal nonnegative matrices 
$\widetilde U_n$ such that $\sup_n \| \widetilde U_n \| < \infty$, 
\begin{gather*} 
\left| \frac 1n \tr \E Q_n(z) -  m_n(z) \right| \leq 
\frac{P(|z|) R(|\Im(z)|^{-1})}{n^2}, \ \text{and} \\
\left| \frac 1n \tr \widetilde U_n \E \widetilde Q_n(z) -  
\frac 1n \tr \widetilde U_n \widetilde T_n(z)  
\right| \leq 
\frac{P(|z|) R(|\Im(z)|^{-1})}{n^2} 
\end{gather*} 
for $z \in \C_+$, where $P$ and $R$ are polynomials with nonnegative 
coefficients independent of $n$. 
\end{proposition}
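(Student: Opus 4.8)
The plan is to follow the now-standard route for this kind of deterministic-equivalent estimate, namely the self-consistent-equation method combined with the Gaussian tools of Section \ref{gauss-tools}. First I would write $\alpha_n(z) = n^{-1}\tr \E Q_n(z)$ and use the resolvent identity $Q_n = -z^{-1}I_N + z^{-1} n^{-1} Y_n Y_n^* Q_n$ together with the integration-by-parts formula to express $\E[ (Y_n Y_n^* Q_n)_{ii} ]$. The IP formula introduces a derivative $\partial Q_{pq}/\partial\bar Y_{ij} = -n^{-1}[QY]_{pj}Q_{iq}$ (from Section \ref{diff-det}), and the resulting terms are of two kinds: a ``main'' term that reproduces the structure of the fixed-point equation defining $m_n(z)$ and $\widetilde T_n(z)$, and ``residual'' terms that are products of resolvent entries against $n^{-1}\tr( \cdot ( \text{something bounded}))$ minus their mean. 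Each residual is then bounded by a covariance, which is in turn controlled by the Poincaré--Nash inequality: differentiating a quadratic form in $Q$ (or $\widetilde Q$) brings down a factor $n^{-1}$, and summing $d_j |\partial/\partial Y_{ij}|^2$ over the $Nn$ indices gives one more factor $n^{-1}$ after using $\|Q\| \le |\Im z|^{-1}$, $\|\widetilde Q\|\le|\Im z|^{-1}$, and $\dmax<\infty$. This yields the $n^{-2}$ rate, with the $|z|$- and $|\Im z|^{-1}$-dependence tracked explicitly as polynomial factors $P(|z|)R(|\Im z|^{-1})$.

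The key steps, in order, are: (i) derive an \emph{approximate} fixed-point relation $\alpha_n(z) = [ -z(1 + n^{-1}\tr D_n \widetilde{\mathcal R}_n(z)) ]^{-1} + \varepsilon_n(z)$, where $\widetilde{\mathcal R}_n(z)$ is the ``empirical'' analogue of $\widetilde T_n(z)$ built from $\E\widetilde Q_n$, and $\varepsilon_n$ collects the variance-type residuals bounded above by $P(|z|)R(|\Im z|^{-1})/n^2$; (ii) do the symmetric computation for $n^{-1}\tr\widetilde U_n \E\widetilde Q_n(z)$, obtaining an approximate version of the diagonal relation $\widetilde T_n = [-z(I_n + c_n m_n D_n)]^{-1}$ with a companion error of the same order; (iii) subtract the exact equations defining $(m_n,\widetilde T_n)$ from the approximate ones and close the argument via a contraction/stability estimate for the fixed-point map. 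The stability step is where one uses that the map $m \mapsto [-z(1 + n^{-1}\tr D_n[-z(I_n+c_n m D_n)]^{-1})]^{-1}$ has a derivative with modulus $<1$ on $\C_+$ (this is the standard Nash-type inequality $c\int (mt/(1+c mt))^2\,\nu(dt)$-argument, here at finite $n$ and for $c_n$), so the $O(n^{-2})$ error on the equations transfers to an $O(n^{-2})$ error on the solutions, uniformly in the stated sense.

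The main obstacle is step (iii): controlling the propagation of the residual errors through the nonlinear fixed-point equation while keeping the bound polynomial in $|z|$ and $|\Im z|^{-1}$. Naively, inverting $1 - (\text{derivative})$ can cost an extra factor blowing up near $\support(\mu_n)$, but on $\C_+$ one has quantitative control: the relevant Jacobian stays bounded away from $1$ with a gap controlled by $\Im z$ and $|z|$, again via a Poincaré--Nash-type computation applied to $\var(n^{-1}\tr D_n \widetilde Q_n)$. So the argument should be organized as: first establish $\var(n^{-1}\tr A_n Q_n) \le P(|z|)R(|\Im z|^{-1})/n^2$ and likewise for $\widetilde Q_n$ for arbitrary deterministic $A_n$ of bounded norm (a direct Poincaré--Nash computation), then feed these variance bounds into the IP-derived identities to get the approximate equations, then invoke the finite-$n$ stability of the Marčenko--Pastur fixed point (already implicit in Proposition \ref{equiv-deter}) to conclude. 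The bookkeeping of the polynomial weights is tedious but routine once the two variance estimates are in hand.
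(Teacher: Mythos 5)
Your plan coincides with the paper's own route: the paper disposes of Proposition \ref{EQ-T} by citing \cite[Th.~3 and Prop.~5]{HLN08Ieee} (extended from $z\in(-\infty,0)$ to $z\in\C_+$), and the proof there is precisely the self-consistent-equation argument you outline --- integration by parts to produce the approximate fixed-point relations, Poincar\'e--Nash variance bounds of order $n^{-2}$ for normalized traces so that the residual covariances are $O(n^{-2})$, and a finite-$n$ stability estimate for the Mar\v{c}enko--Pastur system to transfer the error to the solutions, with polynomial tracking of $|z|$ and $|\Im z|^{-1}$. The only small imprecision is that the stability of the deterministic fixed point is itself a deterministic estimate (via the Stieltjes-transform representations of $m_n$ and $\widetilde T_n$), not a Poincar\'e--Nash computation, but this does not affect the soundness of the plan.
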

This proposition is obtained from a simple extension of the results of 
\cite[Th. 3 and Prop.5]{HLN08Ieee} from $z \in (-\infty, 0)$ to $z\in \C_+$.
\\
The following important result, due to Haagerup and Thorbj\o rnsen, is
established in the proof of \cite[Th.6.2]{haagerup}: 
\begin{lemma}
\label{lm-haagerup}
Assume that $h(z)$ is an analytic function on $\C_+$ that satisfies
$|h(z)| \leq P(|z|) R(|\Im(z)|^{-1})$ where $P$ and $R$ are polynomials with
nonnegative coefficients. Then for any function
$\varphi\in {\cal C}_c^\infty(\R,\R)$, the set of smooth real-valued functions 
with compact support in $\R$, 
\[
\limsup_{y\downarrow 0} \left| \int_\R \varphi(x) h(x +\imath y) dx \right| 
< \infty .
\]
\end{lemma}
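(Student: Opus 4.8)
The statement to prove is Lemma~\ref{lm-haagerup}: if $h$ is analytic on $\C_+$ with $|h(z)|\leq P(|z|)R(|\Im z|^{-1})$, then for every $\varphi\in\mathcal C_c^\infty(\R,\R)$ one has $\limsup_{y\downarrow 0}\bigl|\int_\R\varphi(x)h(x+\imath y)\,dx\bigr|<\infty$. The plan is to invoke the argument of Haagerup and Thorbj\o rnsen directly, which is the standard way to control boundary integrals of such ``polynomially bounded'' analytic functions, and to reproduce its skeleton here. The key device is to trade the lack of pointwise control at the real axis for smoothness of $\varphi$ via repeated integration by parts against a holomorphic antiderivative of $h$.

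First I would fix $y_0>0$ and, for $0<y<y_0$, write $\int_\R\varphi(x)h(x+\imath y)\,dx$ and move the contour: since $h$ is analytic on $\C_+$, for the compactly supported smooth $\varphi$ one can deform the line $\Im z=y$ up to the line $\Im z=y_0$ plus two vertical segments at the (finite) endpoints of $\support(\varphi)$, picking up the error terms coming from the fact that $\varphi$ is only defined on $\R$. Concretely, introduce an almost-analytic (Borel) extension $\tilde\varphi(x+\imath y)$ of $\varphi$ with $\bar\partial\tilde\varphi$ vanishing to high order on the real axis, so that by Stokes' formula
\[
\int_\R\varphi(x)h(x+\imath y)\,dx
=\int_{\Im z=y_0}\tilde\varphi(z)h(z)\,dz
-2\imath\int_{y<\Im z<y_0}\bar\partial\tilde\varphi(z)\,h(z)\,dz .
\]
The first integral on the right is over a region bounded away from $\R$, where $|h|\leq P(|z|)R(y_0^{-1})$ is bounded and $\tilde\varphi$ has compact $x$-support, so it is a finite constant independent of $y$. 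For the second integral, the point is that $\bar\partial\tilde\varphi(z)=\mathcal O((\Im z)^{N})$ for any prescribed $N$ (this is the defining property of a good almost-analytic extension of a $\mathcal C^\infty$ function), while $|h(z)|\leq P(|z|)R((\Im z)^{-1})$ grows at most polynomially in $(\Im z)^{-1}$; choosing $N$ larger than the degree of $R$ makes the integrand bounded uniformly in $z$ on the strip and, since the strip has finite area (bounded $x$-range, $\Im z\in(0,y_0)$), the integral is bounded uniformly in $y\in(0,y_0)$. Letting $y\downarrow0$ then gives the claimed $\limsup$.

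The only real obstacle is the construction and the quantitative estimate on the almost-analytic extension $\tilde\varphi$: one needs $\tilde\varphi$ with compact support in $x$, agreeing with $\varphi$ on $\R$, and satisfying $|\bar\partial\tilde\varphi(x+\imath y)|\leq C_N|y|^N$ for the chosen $N$. This is classical --- take $\tilde\varphi(x+\imath y)=\chi(y)\sum_{k=0}^{N}\varphi^{(k)}(x)(\imath y)^k/k!$ with $\chi$ a smooth cutoff equal to $1$ near $0$; then $\bar\partial\tilde\varphi$ is a sum of a term $\chi(y)\varphi^{(N+1)}(x)(\imath y)^N/(2\,N!)$, which is $\mathcal O(y^N)$, and terms supported where $\chi'\neq0$, i.e.\ away from $\R$, which cause no trouble. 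All the estimates are then routine majorizations of $\int_0^{y_0}y^{N}R(y^{-1})\,dy\cdot(\text{finite }x\text{-length})$, which converges once $N$ exceeds $\deg R$. Since the bound obtained is manifestly independent of $y$, the $\limsup$ is finite, as required; one can also note in passing that the same computation shows the limit actually exists and equals $\pi\int\varphi\,d\sigma$ for the underlying measure, but only the finiteness of the $\limsup$ is needed in the sequel.
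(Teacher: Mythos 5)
Your strategy (almost-analytic extension plus Stokes' formula) is a legitimate alternative to the paper's treatment: the paper gives no proof at all, quoting the result from the proof of Theorem 6.2 of Haagerup--Thorbj\o rnsen, whose argument essentially Taylor-expands $u(y)=\int\varphi(x)h(x+\imath y)\,dx$ at $y_0$ after observing that $u^{(m)}(y)=(-\imath)^m\int\varphi^{(m)}(x)h(x+\imath y)\,dx=O(y^{-p})$ with $p=\deg R$, so that the integral remainder is controlled by $\int_0^{y_0}t^{m-1-p}\,dt<\infty$ once $m>p$. However, as written your central display is false, and this is a genuine gap: Green's formula applied to $\tilde\varphi h$ on the strip $\{y<\Im z<y_0\}$ produces, as the inner boundary term, $\int_\R\tilde\varphi(x+\imath y)h(x+\imath y)\,dx$ and \emph{not} $\int_\R\varphi(x)h(x+\imath y)\,dx$. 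With your Borel-type extension the discrepancy between the two is, for $y$ small enough that $\chi(y)=1$,
\[
\sum_{k=1}^{N}\frac{(\imath y)^k}{k!}\int_\R\varphi^{(k)}(x)\,h(x+\imath y)\,dx ,
\]
and the only a priori bound on each of these integrals is $O(R(y^{-1}))=O(y^{-p})$; the $k=1$ term is then $O(y^{1-p})$, which is unbounded as soon as $p\geq 2$. So the identity you use to launch the estimate does not hold, and what your computation actually bounds is $\int\tilde\varphi(x+\imath y)h(x+\imath y)\,dx$, not the quantity in the statement.

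The gap is repairable within your framework, but it needs an extra idea. Set $F_k(y)=\int\varphi^{(k)}(x)h(x+\imath y)\,dx$. Your Stokes computation, applied to each $\varphi^{(k)}$ in place of $\varphi$, shows that $G_k(y)=\sum_{j=0}^{N}\frac{(\imath y)^j}{j!}F_{k+j}(y)$ stays bounded as $y\downarrow0$ for every $k$. Starting from the trivial bound $F_k(y)=O(y^{-p})$ and rewriting $F_k=G_k-\sum_{j\geq1}\frac{(\imath y)^j}{j!}F_{k+j}$, each pass through this relation gains one power of $y$, and after $p$ iterations one obtains $F_k(y)=O(1)$ for the finitely many $k$ involved; in particular $F_0$ is bounded, which is the claim. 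Without this bootstrap (or an equivalent device, such as reverting to the Taylor-remainder argument above, which sidesteps the boundary-matching issue entirely) the proof is incomplete. A last, minor point: your closing remark that the limit equals $\pi\int\varphi\,d\sigma$ presupposes that $h$ is a Stieltjes transform, which the lemma does not assume.
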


Since $N^{-1} \tr Q_n(z)$ is the Stieltjes Transform of the spectral measure
$\theta_n$, the inversion formula \eqref{perron} shows that 
\[
\int \varphi(t) \, \theta_n(dt) = 
\frac 1\pi \lim_{y\downarrow 0} 
\Im \int \varphi(x) \frac 1N \tr Q_n (x+ \imath y) \, dx 
\]
for any function $\varphi \in {\cal C}^\infty_c(\R, \R)$. 
Using then Proposition \ref{EQ-T} and Lemma \ref{lm-haagerup}, we obtain the 
following result:
\begin{proposition}
\label{prop-bound-nbr-ev}
For any function $\varphi \in {\cal C}_c^\infty(\R,\R)$,
\[
\left| \EE \int \varphi(t) \, \theta_n(dt) \ - \ 
\int \varphi(t) \, \mu_n(dt) \right| \leq 
\frac{K}{n^2} . 
\]
\end{proposition}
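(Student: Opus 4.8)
The plan is to assemble the estimate from two ingredients already available: Proposition~\ref{EQ-T}, which controls the bias of the deterministic equivalent at rate $n^{-2}$ \emph{on all of $\C_+$}, and Lemma~\ref{lm-haagerup}, which pushes such a control down to smooth linear statistics on the real axis. Fix $\varphi\in{\cal C}_c^\infty(\R,\R)$ and set $h_n(z)=\EE\bigl[\frac1N\tr Q_n(z)\bigr]-m_n(z)$ for $z\in\C_+$. I would first note that $h_n$ is analytic on $\C_+$: for $z\in\C_+$ one has $\|Q_n(z)\|\le(\Im z)^{-1}$ and $\|Q_n'(z)\|=\|Q_n(z)^2\|\le(\Im z)^{-2}$, so $\EE Q_n(z)$ is analytic by dominated convergence on the difference quotients, while $m_n$ is analytic on $\C_+$ by Proposition~\ref{equiv-deter}. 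By Proposition~\ref{EQ-T} there exist polynomials $P,R$ with nonnegative coefficients, independent of $n$, with $|h_n(z)|\le P(|z|)\,R(|\Im z|^{-1})\,n^{-2}$ on $\C_+$; equivalently $g_n:=n^2h_n$ is analytic on $\C_+$ and satisfies the hypothesis of Lemma~\ref{lm-haagerup} with the same $P,R$ for every $n$.

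Next I would use the inversion formula~\eqref{perron}. Since $t\mapsto(t-z)^{-1}$ is bounded for fixed $z\in\C_+$, Fubini gives $\EE\bigl[\frac1N\tr Q_n(z)\bigr]=\int(t-z)^{-1}(\EE\theta_n)(dt)$, i.e.\ $\EE\bigl[\frac1N\tr Q_n(\cdot)\bigr]$ is the Stieltjes transform of the probability measure $\EE\theta_n$, while $m_n$ is the Stieltjes transform of the probability measure $\mu_n$. Applying \eqref{perron} to each of these two measures against $\varphi$, subtracting, and using that $\varphi$ is real (so $\Im$ commutes with $\int\varphi(x)\,\cdot\,dx$) together with $\EE\int\varphi\,d\theta_n=\int\varphi\,d(\EE\theta_n)$, I obtain
\[
\EE\!\int\!\varphi\,d\theta_n-\int\!\varphi\,d\mu_n=\frac1\pi\lim_{y\downarrow0}\Im\int_\R\varphi(x)\,h_n(x+\imath y)\,dx .
\]

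Finally, bounding $|\Im w|\le|w|$ and writing $h_n=n^{-2}g_n$,
\[
\Bigl|\EE\!\int\!\varphi\,d\theta_n-\int\!\varphi\,d\mu_n\Bigr|\le\frac1{\pi n^2}\limsup_{y\downarrow0}\Bigl|\int_\R\varphi(x)\,g_n(x+\imath y)\,dx\Bigr| ,
\]
and Lemma~\ref{lm-haagerup} shows the $\limsup$ is finite. The point requiring care — which I expect to be the only real obstacle — is the \emph{uniformity in $n$} of the final constant: this is not explicit in the statement of Lemma~\ref{lm-haagerup}, but it is visible in the Haagerup--Thorbj\o rnsen argument of \cite{haagerup}, where $\limsup_{y\downarrow0}|\int\varphi(x)h(x+\imath y)\,dx|$ is bounded by a quantity depending only on $\varphi$ (through finitely many of its derivatives) and on the polynomials $P,R$ bounding $h$ (through their degrees and coefficients). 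Since all the $g_n$ obey the hypothesis with one and the same pair $P,R$, one gets $\limsup_{y\downarrow0}|\int\varphi(x)\,g_n(x+\imath y)\,dx|\le K$ for a constant $K$ depending only on $\varphi$, and the proposition follows after absorbing $1/\pi$ into $K$.
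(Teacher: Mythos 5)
Your proof is correct and follows essentially the same route as the paper: inversion formula \eqref{perron} applied to the Stieltjes transforms of $\EE\theta_n$ and $\mu_n$, the $n^{-2}$ bound of Proposition~\ref{EQ-T} with $P,R$ independent of $n$, and Lemma~\ref{lm-haagerup} applied to $n^2 h_n$. Your remark on the uniformity in $n$ of the Haagerup--Thorbj\o rnsen constant is exactly the point the paper leaves implicit, and you resolve it correctly.
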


\section{Proofs of first order results}
\label{1st-order}

In all this section, $\cal I$ is a compact interval of a component $(a,b)$ of
$\support(\mu)^c$, and $z$ is a complex number such that $\Re(z) \in \cal I$ 
and $\Im(z)$ is arbitrary. Moreover, $u_n, v_n \in \C^N$ and 
$\tilde u_n, \tilde v_n \in \C^n$ are sequences of deterministic vectors such 
that 
$\sup_n \max( \| u_n \|, \| v_n \|, \| \tilde u_n \|, \| \tilde v_n \| ) < 
\infty$, and $\widetilde U_n$ is a sequence of $n \times n$ diagonal 
deterministic matrix such that $\sup_n \| \widetilde U_n \| < \infty$. \\ 
We now introduce the regularization function alluded to in the introduction. 
Choose $\varepsilon > 0$ small enough so that 
$\cal I \cap {\cal S}_{\varepsilon} = \emptyset$ where 
${\cal S}_\varepsilon = \{ x \in \R, \bs d(x, \support(\mu) \cup \{ 0 \}) 
\leq \varepsilon \}$. 
Fix $0 < \varepsilon' < \varepsilon$, let $\psi : \R \to [0,1]$ be a
continuously differentiable function such that
\[
\psi(x) = \left\{\begin{array}{ll}  
1 & \text{if} \ x \in {\cal S}_{\varepsilon'} \\ 
0 & \text{if} \ x \in \R - {\cal S}_{\varepsilon}   
\end{array} \right. 
\]
and let $\phi_n = \det \psi(n^{-1} Y_n Y_n^*)$. 
In all the subsequent derivations, quantities such as $u_n^* Q_n(z) u_n$ or
$\tilde u_n^* \widetilde Q_n(z) \tilde u_n$ for $\Re(z) \in \cal I$ will 
be multiplied by $\phi_n$ in 
order to control their magnitudes when $z$ is close to the real axis. By 
performing this regularization as is done in \cite{hlmnv-rmta11}, we shall be 
able to define and control the moments of random variables such as 
$\phi_n \, u_n^* Q_n(z) u_n$ or $\phi_n \, \tilde u_n^* \widetilde Q_n(z) \tilde u_n$
with the help of the Gaussian tools introduced in Section \ref{gauss-tools}. 
\\

We start with a series of lemmas. 
The first of these lemmas relies on Proposition \ref{prop-bound-nbr-ev} and
on the Poincar\'e-Nash inequality. Its detailed proof is a minor modification
of the proof of \cite[Lemma 3]{hlmnv-rmta11} and is therefore omitted: 
\begin{lemma}
\label{lm-noeig}
Given $0 < \varepsilon' < \varepsilon$, let $\varphi$ be a smooth nonnegative
function equal to zero on ${\cal S}_{\varepsilon'}$ and to one on 
$\R - {\cal S}_{\varepsilon}$. Then for any $\ell \in \N$, there exists a 
constant $K_\ell$ for which 
\[
\E\left[ \left(\tr \varphi(n^{-1} Y_n Y_n^*) \right)^\ell \right] \leq 
\frac{K_\ell}{n^\ell} . 
\]
\end{lemma}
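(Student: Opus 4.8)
The statement is Lemma \ref{lm-noeig}, a moment bound on $\tr\varphi(n^{-1}Y_nY_n^*)$, and the excerpt already tells us the proof is "a minor modification of the proof of \cite[Lemma 3]{hlmnv-rmta11}." So the plan is to reconstruct that argument in the present notation. The key input is Proposition \ref{prop-bound-nbr-ev}, which controls the \emph{mean} of $\int\varphi\,d\theta_n$ against $\int\varphi\,d\mu_n$ with error $O(n^{-2})$. Since $\varphi$ vanishes on ${\cal S}_{\varepsilon'}$, a neighborhood of $\support(\mu)\cup\{0\}$, and $\support(\mu_n)\to\support(\mu)$ while $\support(\mu_n)\subset\R_+$ with no mass escaping ${\cal S}_{\varepsilon'}$ for large $n$ (this follows from Proposition \ref{equiv-deter} and Assumption \ref{assump:D}, since $\mu_n$ is the deterministic equivalent measure whose support converges to $\support(\mu)$), one has $\int\varphi\,d\mu_n=0$ for all large $n$. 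Hence $\E[\tr\varphi(n^{-1}Y_nY_n^*)]=\E\,N\!\int\varphi\,d\theta_n=O(n^{-1})$, which settles the case $\ell=1$.

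\textbf{Higher moments via Poincar\'e--Nash.} For $\ell\geq 2$ the plan is to induct on $\ell$ by controlling the variance of $\tr\varphi(n^{-1}Y_nY_n^*)$ (more precisely of its powers) with the Poincar\'e--Nash inequality of Section \ref{gauss-tools}. Write $\Gamma(Y)=\tr\varphi(n^{-1}YY^*)$. Using the differentiation formula for $\det\psi$-type functionals recalled in Section \ref{diff-det} — or directly the chain rule $\partial\Gamma/\partial\bar Y_{ij}=n^{-1}[\varphi'(n^{-1}YY^*)Y]_{ij}$ — one computes
\[
\sum_{i,j} d_j\,\Bigl|\frac{\partial\Gamma}{\partial\bar Y_{ij}}\Bigr|^2
=\frac{1}{n^2}\tr\bigl(\varphi'(n^{-1}YY^*)\,n^{-1}YD_nY^*\,\varphi'(n^{-1}YY^*)\bigr)^{?}
\]
(schematically); the point is that each factor $\varphi'(n^{-1}YY^*)$ is supported, like $\varphi$, on the region where $n^{-1}YY^*$ has eigenvalues outside ${\cal S}_{\varepsilon'}$, each such factor contributes a trace that is itself bounded by $\tr\tilde\varphi(n^{-1}YY^*)$ for a suitable smooth $\tilde\varphi$ of the same type (supported off a slightly smaller neighborhood), and $\|n^{-1}YD_nY^*\|$-type quantities are controlled on that eigenvalue range. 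Thus $\var(\Gamma)\leq (K/n^2)\,\E[\tr\tilde\varphi(n^{-1}YY^*)]\leq K/n^3$ by the $\ell=1$ case applied to $\tilde\varphi$. Combining $\E[\Gamma^2]=\var(\Gamma)+(\E\Gamma)^2\leq K/n^3+K/n^2\leq K/n^2$ gives $\ell=2$; iterating — applying Poincar\'e--Nash to $\Gamma^{\lceil\ell/2\rceil}$, whose derivative brings down a factor $\Gamma^{\lceil\ell/2\rceil-1}$ times $\partial\Gamma/\partial\bar Y_{ij}$, and using Cauchy--Schwarz together with the inductive bounds on lower moments — yields $\E[\Gamma^\ell]\leq K_\ell/n^\ell$ for every $\ell$.

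\textbf{Main obstacle.} The genuinely delicate point is making rigorous the assertion that the factors $\varphi'(n^{-1}Y_nY_n^*)$ appearing in the Poincar\'e--Nash estimate can be absorbed into traces of the form $\tr\tilde\varphi(n^{-1}Y_nY_n^*)$ with $\tilde\varphi$ again a smooth bump vanishing near $\support(\mu)\cup\{0\}$, \emph{uniformly} in $n$, together with bounding the auxiliary matrix norms (such as contributions of $D_n$ and of the eigenvalues of $n^{-1}Y_nY_n^*$ in the relevant window) on the event where those bumps are non-zero. One handles this by choosing a nested family of neighborhoods ${\cal S}_{\varepsilon_0'}\subset{\cal S}_{\varepsilon_1'}\subset\cdots$ and smooth functions subordinate to them so that $\varphi'$ at level $k$ is dominated by $\tilde\varphi$ at level $k+1$; since $\dmax=\sup_n\|D_n\|<\infty$ and on the support of these bumps the eigenvalues of $n^{-1}Y_nY_n^*$ lie in a fixed compact set away from $0$, all the extra factors are $O(1)$. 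This is exactly the bookkeeping carried out in \cite[Lemma 3]{hlmnv-rmta11}, so once the dictionary between the Wigner-type setting there and the present rectangular Gram-matrix setting is fixed, the argument transfers verbatim; for this reason the detailed proof is omitted.
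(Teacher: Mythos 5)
Your reconstruction follows the route the paper intends: Proposition \ref{prop-bound-nbr-ev} together with the vanishing of $\int\varphi\,d\mu_n$ for large $n$ gives the first-moment bound $\grandO(n^{-1})$, and the higher moments are obtained by applying the Poincar\'e--Nash inequality to powers of $\Gamma=\tr\varphi(n^{-1}Y_nY_n^*)$ with a nested family of bump functions and a Cauchy--Schwarz self-bounding induction, exactly as in \cite[Lemma 3]{hlmnv-rmta11} to which the paper defers. Two cosmetic points: the exact identity is $\sum_{i,j}d_j|\partial\Gamma/\partial\bar Y_{ij}|^2=n^{-2}\tr\bigl(\varphi'(n^{-1}YY^*)^2\,YD_nY^*\bigr)$, which gives $\var(\Gamma)\leq Kn^{-1}\,\E[\tr\tilde\varphi(n^{-1}YY^*)]\leq Kn^{-2}$ rather than your $Kn^{-3}$ (still ample for the induction), and the containment $\support(\mu_n)\subset{\cal S}_{\varepsilon'}$ for large $n$, hence $\int\varphi\,d\mu_n=0$, really rests on a Silverstein--Choi-type analysis of $\mu_n$ built from $\nu_n$ and $c_n$ together with Assumption \ref{assump:D}, not merely on the uniform convergence $m_n\to\mfat$ of Proposition \ref{equiv-deter}.
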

\begin{remark}
Notice that this lemma proves Theorem \ref{lsm}-\eqref{noeig-sil}. The proof 
provided in \cite{BaiSil98} is in fact more general, being not restricted to 
the Gaussian case. 
\end{remark} 

\begin{lemma} 
\label{mom-phi} 
For any $\ell \in \N$, the following holds true:
\[
\E \left[ \left( \sum_{i,j=1}^{N,n}  d_j  
\left| 
\frac{\partial \phi_n}{\partial \bar Y_{ij}} 
\right|^{2} \right)^\ell \right] 
\leq \frac{K_\ell}{n^{2\ell}} . 
% \var\det \psi\left( \frac{YY^*}{n} \right) \leq \frac{K}{n^2} 
\] 
\end{lemma}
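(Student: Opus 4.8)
The plan is to start from the explicit differentiation formula for the determinant of a function of $n^{-1}YY^*$ recalled in Section~\ref{diff-det}. Applied with $\psi$, it gives
\[
\frac{\partial \phi_n}{\partial \bar Y_{ij}}
= \frac 1n \Bigl[ \adjugate\bigl( \psi(n^{-1}YY^*) \bigr)\, \psi'(n^{-1}YY^*)\, y_j \Bigr]_i ,
\]
where $y_j$ is the $j$-th column of $Y_n$. Summing the squared moduli over $i$ and using $\sum_i |[Av]_i|^2 = \|Av\|^2 = v^* A^*A v$, the inner double sum becomes
\[
\sum_{i,j} d_j \Bigl| \frac{\partial \phi_n}{\partial \bar Y_{ij}} \Bigr|^2
= \frac 1{n^2} \sum_{j=1}^n d_j\, y_j^* \bigl(\psi'\adjugate\bigr)^*\bigl(\psi'\adjugate\bigr) y_j
= \frac 1{n^2}\,\tr\Bigl( \bigl(\psi'\adjugate\bigr)\bigl(\psi'\adjugate\bigr)^*\, Y D Y^* \Bigr),
\]
all matrix functions being evaluated at $n^{-1}YY^*$; write $\Psi_n \eqdef \adjugate(\psi(n^{-1}YY^*))\,\psi'(n^{-1}YY^*)$. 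So I must bound $\E[ (n^{-2}\tr(\Psi_n\Psi_n^* YDY^*))^\ell ]$ by $K_\ell n^{-2\ell}$, i.e. show $\E[(\tr(\Psi_n\Psi_n^* YDY^*))^\ell]\le K_\ell$.

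The key point is that $\Psi_n$ is tiny unless an eigenvalue of $n^{-1}YY^*$ has escaped into $\mathcal S_\varepsilon\setminus\mathcal S_{\varepsilon'}$, the region where $\psi'\neq 0$. Diagonalizing $n^{-1}YY^* = \sum_k \lambda_k^n v_k v_k^*$, the adjugate is $\adjugate = \sum_k \bigl(\prod_{\ell\neq k}\psi(\lambda_\ell^n)\bigr) v_k v_k^*$, so
\[
\Psi_n = \sum_k \psi'(\lambda_k^n)\Bigl(\prod_{\ell\neq k}\psi(\lambda_\ell^n)\Bigr) v_k v_k^* .
\]
Each coefficient is nonzero only if some $\lambda_k^n \in \mathcal S_\varepsilon\setminus\mathcal S_{\varepsilon'}$ (where $\psi'\neq 0$), and then the product over $\ell\neq k$ contains at least all factors $\psi(\lambda_\ell^n)$ with $\lambda_\ell^n$ outside $\mathcal S_{\varepsilon'}$; since $0\le\psi\le 1$ this product is bounded, and more importantly $\Psi_n = 0$ on the event that no eigenvalue lies outside $\mathcal S_{\varepsilon'}$. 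Hence $\|\Psi_n\| \le \|\psi'\|_\infty$ and, crucially, $\Psi_n$ is supported (in operator-norm terms) on the escaped eigenvectors: bounding $\|\Psi_n\|^2 \le \|\psi'\|_\infty^2 \cdot \1_{\{\exists k:\ \lambda_k^n\notin \mathcal S_{\varepsilon'}\}} \le \|\psi'\|_\infty^2\,\tr\varphi(n^{-1}YY^*)$ for a smooth $\varphi$ equal to $1$ off $\mathcal S_{\varepsilon}$ and $0$ on $\mathcal S_{\varepsilon'}$ (choosing the plateau so that the escape event is detected by $\varphi$). Then, with $\|YDY^*\| \le \dmax \|YY^*\| = n\dmax\,\|n^{-1}YY^*\|$,
\[
\tr(\Psi_n\Psi_n^* YDY^*) \le \|\Psi_n\|^2 \cdot \rank(\Psi_n)\cdot \|YDY^*\|
\le \|\psi'\|_\infty^2\, \bigl(\tr\varphi(n^{-1}YY^*)\bigr)\cdot r' \cdot n\dmax\,\|n^{-1}YY^*\| ,
\]
where $r'$ is a.s. a finite bound on the number of escaped eigenvalues; more cleanly, absorb the rank into the trace factor and keep $\tr(\Psi_n\Psi_n^* YDY^*) \le K\, (\tr\varphi(n^{-1}YY^*)) \cdot \|n^{-1}YY^*\| \cdot n$. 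Raising to the power $\ell$ and applying Cauchy--Schwarz,
\[
\E\bigl[(\tr(\Psi_n\Psi_n^* YDY^*))^\ell\bigr]
\le K^\ell n^\ell\, \E\bigl[(\tr\varphi(n^{-1}YY^*))^{2\ell}\bigr]^{1/2}\,\E\bigl[\|n^{-1}YY^*\|^{2\ell}\bigr]^{1/2} .
\]
Lemma~\ref{lm-noeig} gives $\E[(\tr\varphi(n^{-1}YY^*))^{2\ell}]\le K_{2\ell} n^{-2\ell}$, contributing a factor $n^{-\ell}$, which exactly cancels the $n^\ell$; and standard bounds on the operator norm of Gaussian sample covariance matrices give $\E[\|n^{-1}YY^*\|^{2\ell}] \le K_\ell$ (using $\sup_n\|D_n\|<\infty$ and $c_n\to c$). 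Combining, $\E[(\tr(\Psi_n\Psi_n^* YDY^*))^\ell] \le K_\ell$, hence $\E[(\sum_{i,j} d_j|\partial\phi_n/\partial\bar Y_{ij}|^2)^\ell] = n^{-2\ell}\E[(\tr(\Psi_n\Psi_n^* YDY^*))^\ell] \le K_\ell n^{-2\ell}$.

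The main obstacle is making the norm-versus-rank bookkeeping on $\Psi_n$ rigorous: one must argue carefully that the product-of-$\psi$ coefficients in the adjugate are uniformly bounded \emph{and} that the whole matrix vanishes unless an eigenvalue leaves $\mathcal S_{\varepsilon'}$, so that $\|\Psi_n\|^2$ (times its rank, or more simply a single scalar factor) is genuinely controlled by $\tr\varphi(n^{-1}YY^*)$ with a smooth $\varphi$ whose support/plateau are nested suitably between $\mathcal S_{\varepsilon'}$ and $\mathcal S_\varepsilon$. Once this domination is in place, the estimate is a routine Cauchy--Schwarz split between Lemma~\ref{lm-noeig} and a moment bound on $\|n^{-1}YY^*\|$; the $n^\ell$ coming from $\|YDY^*\|\le n\dmax\|n^{-1}YY^*\|$ is precisely absorbed by the $n^{-\ell}$ gain from Lemma~\ref{lm-noeig}, which is the whole point of the regularization.
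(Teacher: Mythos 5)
Your argument is correct in substance and reaches the bound by the same basic mechanism as the paper (the differentiation formula for $\det\psi(n^{-1}YY^*)$, the spectral form of $\adjugate(\psi)\psi'$, and Lemma \ref{lm-noeig}), but the bookkeeping is genuinely different and costs you an extra ingredient. The paper keeps the factor $Y/\sqrt{n}$ attached to the adjugate: writing $n^{-1/2}Y=W\diag(\sqrt{\lambda_k})V^*$, the whole matrix $\adjugate(\psi)\psi'\,(Y/\sqrt n)$ equals $W\Xi V^*$ with $\Xi=\diag(\sqrt{\lambda_k}\,\psi'(\lambda_k)\prod_{\ell\neq k}\psi(\lambda_\ell))$, and since $\psi'$ has compact support the entries $\sqrt{\lambda_k}\,\psi'(\lambda_k)$ are bounded, so $\tr\Xi^2\leq K Z_n$ with $Z_n$ the number of eigenvalues in ${\cal S}_\varepsilon-{\cal S}_{\varepsilon'}$; one then gets $\sum_{i,j}d_j|\partial\phi_n/\partial\bar Y_{ij}|^2\leq K Z_n/n$ directly and a single application of Lemma \ref{lm-noeig} at power $\ell$ finishes. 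You instead peel off $\|YDY^*\|\leq n\,\dmax\|n^{-1}YY^*\|$, which forces a Cauchy--Schwarz split and requires the external (though standard, and not proved in the paper) fact that $\E\|n^{-1}YY^*\|^{2\ell}\leq K_\ell$ for Gaussian matrices; the paper's route avoids both. One point you should fix: a $\varphi$ that is ``$1$ off ${\cal S}_\varepsilon$ and $0$ on ${\cal S}_{\varepsilon'}$'' does \emph{not} dominate the indicator that some eigenvalue lies in $\support(\psi')\subset\overline{{\cal S}_\varepsilon\setminus{\cal S}_{\varepsilon'}}$, since such an eigenvalue may sit in $\varphi$'s own transition zone where $\varphi<1$. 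You must place $\varphi$'s transition strictly inside ${\cal S}_{\varepsilon'}$ (take $\varphi=0$ on ${\cal S}_{\varepsilon_1}$ and $\varphi=1$ off ${\cal S}_{\varepsilon_2}$ with $\varepsilon_1<\varepsilon_2<\varepsilon'$), so that $\varphi\equiv 1$ on $\support(\psi')$; Lemma \ref{lm-noeig} applies to any such nested pair, and this is exactly what the paper means by ``a proper choice of $\varphi$''. With that correction your proof goes through.
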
 
\begin{proof} 
Letting $n^{-1/2} Y = W \diag(\sqrt\lambda_1,\cdots, \sqrt\lambda_N) V^*$ 
be a singular value decomposition of $n^{-1/2} Y$, we have
\[
\adjugate\left( \psi\Bigl( \frac{YY^*}{n} \Bigr) \right)
\psi'\Bigl( \frac{YY^*}{n} \Bigr) \frac{Y}{\sqrt{n}} = 
W \Xi V^* \ \text{where} \ 
\Xi = \diag\Bigl( \sqrt\lambda_k \psi'(\lambda_k) \prod_{\ell\neq k} 
\psi(\lambda_\ell) \Bigr)_{k=1}^N  
\] 
and we observe that $\tr \Xi^2 \leq K Z_n$ where 
$Z_n = \sharp\{ k \, : \, \lambda_k \in {\cal S}_\varepsilon - 
{\cal S}_{\varepsilon'} \}$. Using the first identity in Section \ref{diff-det} 
and recalling that $|\tr(AB)| \leq \| A \| \tr B$ when $A$ is a square matrix and $B$
is a Hermitian nonnegative matrix, we have 
\[ 
% \begin{align*} 
\E \left[ \left( \sum_{i,j=1}^{N,n}  d_j  
\left| \frac{\partial \phi_n}{\partial \bar Y_{ij}} \right|^2 \right)^\ell
\right]  
= \frac{1}{n^\ell} 
\E \left[ \left( \tr \left(\adjugate(\psi) \psi' \frac{Y  D Y^*}{n} 
\adjugate(\psi) \psi' \right) \right)^\ell \right] 
\leq \frac{K}{n^\ell} \E Z_n^\ell
% \end{align*} 
\] 
and the result follows from Lemma \ref{lm-noeig} with a proper choice of 
$\varphi$.  
\end{proof}

\begin{lemma}
\label{lm-var}
The following inequalities hold true: 
% \begin{equation} 
% \begin{split} 
\begin{gather*} 
\E\left| \phi_n \, u_n^* Q_n(z) v_n - 
\E [ \phi_n \, u_n^* Q_n(z) v_n ] \right|^4 \leq \frac{K}{n^2}, 
\\ 
\E\left| \phi_n \, \tilde u_n^* \widetilde Q_n(z) \tilde v_n - 
\E [ \phi_n \, \tilde u_n^* \widetilde Q_n(z) \tilde v_n ] \right|^4 
\leq \frac{K}{n^2} , \\ 
\var \left( \phi_n \tr Q_n(z) \right) \leq K . 
% \end{split} 
% \label{uQv-mom4} 
% \end{equation} 
% \begin{equation} 
% \var\left( \phi_n \, \frac 1n \tr Q_n(z) \right) \leq \frac{K}{n^2} 
% \quad \text{and} \quad 
% \var\left( \phi_n \, \frac 1n \tr \widetilde U_n \widetilde Q_n(z) \right) 
% \leq \frac{K}{n^2} . 
% \label{var-trQ} 
% \end{equation} 
\end{gather*} 
\end{lemma}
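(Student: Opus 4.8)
The plan is to bound the fourth moments of the centered regularized quadratic forms by combining the Poincar\'e--Nash inequality with the moment bounds on $\phi_n$ derived in Lemma~\ref{mom-phi}, and to treat the variance of $\phi_n \tr Q_n(z)$ separately by a direct application of Poincar\'e--Nash. The key point is that multiplying by $\phi_n$ confines all the estimates to the event where the eigenvalues of $n^{-1}Y_nY_n^*$ stay $\varepsilon$-away from $\support(\mu) \cup \{0\}$, so the resolvents are bounded on the support of $\phi_n$ even when $z$ is close to the real axis.

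\medskip

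First I would recall the standard route for a fourth-moment bound: if $F$ is a smooth functional of $Y$, one writes $\E|F - \E F|^4 \le \E|F-\E F|^2 \cdot (\text{something})$, or more efficiently applies Poincar\'e--Nash to the functional $G = |F - \E F|^2$, giving $\var(G) \le \sum_{i,j} d_j \E[|\partial G/\partial Y_{ij}|^2 + |\partial G/\partial \bar Y_{ij}|^2]$, and then $\E|F-\E F|^4 = \var(G) + (\E|F-\E F|^2)^2$. Since $\E|F-\E F|^2 = \var(F) \le \sum_{i,j} d_j \E[\,\cdots]$ as well, both pieces reduce to controlling derivatives of $\phi_n u_n^* Q_n v_n$ (and its tilde analogue). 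Using the product rule, $\partial_{\bar Y_{ij}}(\phi_n u_n^* Q_n v_n) = (\partial_{\bar Y_{ij}}\phi_n) u_n^* Q_n v_n + \phi_n \partial_{\bar Y_{ij}}(u_n^*Q_nv_n)$, and the second term is computed from the resolvent derivative formula in Section~\ref{diff-det}, namely $\partial_{\bar Y_{ij}} Q_{pq} = -n^{-1}[QY]_{pj}Q_{iq}$. On the support of $\phi_n$ one has $\|Q_n(z)\| \le K$ and $\|n^{-1/2}Q_nY\| \le K$ uniformly in $z$ with $\Re z \in \cal I$ (because the spectrum of $n^{-1}Y_nY_n^*$ is then bounded away from $\Re z$), so after squaring and summing over $i,j$ the contribution of the $\phi_n \partial Q$ term is $O(n^{-2})$ by a routine trace bound, while the $(\partial\phi_n)$ term is handled by Cauchy--Schwarz together with Lemma~\ref{mom-phi}, which gives $\E[(\sum d_j|\partial_{\bar Y_{ij}}\phi_n|^2)^\ell] \le K_\ell n^{-2\ell}$. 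Iterating this scheme once (first for $\var(F)$, then for $\var(G)$ where $G$ already carries a factor $F - \E F$) yields the $n^{-2}$ rate. The tilde version is identical using $\partial_{\bar Y_{ij}}\widetilde Q_{pq} = -n^{-1}\widetilde Q_{pj}[Y\widetilde Q]_{iq}$ and the bound $\|\widetilde Q_n(z)\| \le K$, $\|Y\widetilde Q_n/\sqrt n\| \le K$ on $\support(\phi_n)$.

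\medskip

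For the third inequality, $\var(\phi_n \tr Q_n(z)) \le K$, I would apply Poincar\'e--Nash directly to $\Gamma(Y) = \phi_n \tr Q_n(z)$. Again splitting $\partial_{\bar Y_{ij}}\Gamma$ into the $(\partial\phi_n)(\tr Q_n)$ piece and the $\phi_n \partial_{\bar Y_{ij}}(\tr Q_n) = -\phi_n n^{-1}[Q_nYQ_n \cdots]$ piece: for the first piece, $|\tr Q_n| \le N\|Q_n\| \le KN = O(n)$ on $\support(\phi_n)$, and $\sum d_j|\partial_{\bar Y_{ij}}\phi_n|^2$ has expectation $O(n^{-2})$, so this contributes $O(n^2 \cdot n^{-2}) = O(1)$; for the second piece, $\partial_{\bar Y_{ij}}\tr Q_n = -n^{-1}[Q_n^2 Y]_{ij}$ (up to indices), so $\sum_{i,j} d_j|\phi_n n^{-1}[Q_n^2 Y]_{ij}|^2 = n^{-2}\phi_n \tr(Q_n^2 Y D Y^* (Q_n^2)^*) \le K n^{-2} \tr(n^{-1}YDY^*)\|Q_n\|^4 n = O(1)$ on $\support(\phi_n)$, using $\tr(n^{-1}YDY^*) = O(n)$. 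Summing gives the claimed bound.

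\medskip

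The main obstacle I expect is bookkeeping the double application of Poincar\'e--Nash cleanly for the fourth-moment bounds: one must be careful that when differentiating $G = |F-\E F|^2$, the derivative of the $\bar F - \E\bar F$ factor again produces terms with a free $F - \E F$, so one genuinely needs the already-established $\var(F) = O(n^{-2})$ bound as input, and one must make sure the $\phi_n$-regularization survives every differentiation (it does, since $\phi_n^2 = \phi_n$-type manipulations are not needed — $\phi_n$ is simply bounded by $1$ and its derivative is controlled by Lemma~\ref{mom-phi}). A secondary technical point is justifying that $\Gamma$ and $G$ are continuously differentiable and polynomially bounded with polynomially bounded derivatives so that Poincar\'e--Nash applies; this follows because $\psi$ is $C^1$ with compact support and the resolvent is smooth in $Y$ off the (measure-zero, $\phi_n$-excluded) coincidence set, exactly as in \cite{hlmnv-rmta11}. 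Modulo these routine verifications, the argument is a direct transcription of the method in \cite[Lemma~3]{hlmnv-rmta11} to the present rectangular model.
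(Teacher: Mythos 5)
Your overall scheme is the paper's: a first Poincar\'e--Nash application to get a variance bound for $F=\phi_n u_n^*Q_n v_n$, then the identity $\E|F-\E F|^4=(\var F)^2+\var\bigl(|F-\E F|^2\bigr)$ and a second Poincar\'e--Nash application, with every derivative split into a $\phi_n\,\partial(u^*Qv)$ piece (controlled by trace bounds on the support of $\phi_n$) and a $(\partial\phi_n)\,u^*Qv$ piece (controlled by Lemma~\ref{mom-phi}). Two points need fixing, one cosmetic and one substantive.

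First, the intermediate bound is $\var(F)=O(n^{-1})$, not $O(n^{-2})$ as you assert: the term $\sum_{i,j}d_j\,\E|\phi\,\partial(u^*Qu)/\partial\bar Y_{ij}|^2=n^{-1}\E[\phi^2 u^*Q(n^{-1}YDY^*)Qu\,u^*Q^2u]$ is genuinely of order $n^{-1}$. This is harmless, because in the fourth-moment step this bound only enters squared (as $(\var F)^2$) and multiplied by $n^{-1}$ (in the term $Kn^{-1}\E|F-\E F|^2$), both of which give $O(n^{-2})$. Second, and more importantly, your treatment of the $(\partial\phi_n)$ contribution in the \emph{second} Poincar\'e--Nash application does not close as written. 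That term is $\sum_{i,j}d_j\,\E\bigl||F-\E F|\cdot u^*Qu\cdot\partial\phi/\partial\bar Y_{ij}\bigr|^2$, and applying Cauchy--Schwarz together with Lemma~\ref{mom-phi} produces $Kn^{-2}\bigl(\E|F-\E F|^4\bigr)^{1/2}$ --- the very quantity you are trying to bound reappears on the right-hand side. The paper resolves this self-reference by setting $a_n=n^2\,\E|F-\E F|^4$, obtaining $\sqrt{a_n}\le K/\sqrt{a_n}+K/n$, and concluding that $a_n$ is bounded by a contradiction argument; you need this (or an equivalent device, e.g.\ observing that $|F-\E F|$ and $|u^*Qu|$ are deterministically bounded on the event where $\partial\phi\neq0$, which lets you avoid Cauchy--Schwarz altogether) to finish. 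Your argument for $\var(\phi_n\tr Q_n)\le K$ is fine and matches the paper's indication that it is proved "similarly".
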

\begin{proof}
We shall only prove the first inequality. 
By the polarization identity, this inequality is shown whenever we show that 
$\E\left| \phi \, u^* Q u - \E [ \phi \, u^* Q u ] \right|^4 \leq K / n^2$.
Let us start by showing that $\var (\phi \, u^* Q u ) \leq K / n$. By the 
Poincar\'e-Nash inequality, we have 
%\begin{multline} 
\begin{align*}
\var\left( \phi u^* Q u \right)& \leq 
2 \sum_{i,j=1}^{N,n}  d_j \E \left| 
\frac{\partial \phi u^* Q u}{\partial \bar Y_{ij}} \right|^2 \\ 
&\leq 
4 \sum_{i,j=1}^{N,n}  d_j  
\E \left| \phi \frac{\partial u^* Q u}{\partial \bar Y_{ij}} \right|^2 
+
4 \sum_{i,j=1}^{N,n}  d_j  
\E \left| u^*Qu \frac{\partial \phi}{\partial \bar Y_{ij}} \right|^2 . 
\nonumber 
% \label{eq-np-fq} 
%\end{multline} 
\end{align*}
Using the expression of $\partial Q_{pq} / \partial \bar Y_{ij}$ 
in Section \ref{diff-det}, we have 
\[
\partial u^* Q u / \partial \bar Y_{ij} = - n^{-1} u^* Q y_j \, 
[ Qu]_i ,
\] 
hence 
\[
\sum_{i,j=1}^{N,n}  d_j  
\E \left| \phi \frac{\partial u^* Q u}{\partial \bar Y_{ij}} \right|^2 
= \frac 1n \E \left[ 
\phi^2 \, u^* Q \frac{Y  D Y^*}{n} Q u \, u^* Q^2 u \right] 
\leq \frac Kn  
\] 
since the argument of the expectation is bounded for $\Re(z) \in \cal I$. 
From the first identity in Section \ref{diff-det}, 
$\sum_{i,j}  d_j  
\E \left| u^*Qu \partial \phi / \partial \bar Y_{ij} \right|^2
\leq K 
\sum_{i,j}  d_j  
\E \left| \partial \phi / \partial \bar Y_{ij} \right|^2$ which is bounded
by $K/n^2$ by Lemma \ref{mom-phi}.  
It results that $\var (\phi \, u^* Q u ) \leq K / n$. 
Now, writing $\stackrel{\circ}{X} = X - \E X$, 
\[ 
\E\Bigl| \overbrace{\phi \, u^* Q u}^\circ \Bigr|^4 = 
\left( \var ( \phi \, u^* Q u ) \right)^2
+ \var \Bigl( \Bigl(\overbrace{\phi \, u^* Q u}^\circ \Bigr)^2 \Bigr)  
\leq K/n^2 + \var \Bigl( \Bigl(\overbrace{\phi \, u^* Q u}^\circ \Bigr)^2 
\Bigr) . 
\] 
By the Poincar\'e-Nash inequality, 
\begin{multline*} 
\var \Bigl( \Bigl(\overbrace{\phi \, u^* Q u}^\circ \Bigr)^2 \Bigr) \leq 
2 \sum_{i,j=1}^{N,n} 
 d_j \E \Bigl| 
\partial \Bigl(\overbrace{\phi \, u^* Q u}^\circ\Bigr)^2 \, / \, 
\partial \bar Y_{ij} \Bigr|^2  \\
\leq 
16 \sum_{i,j=1}^{N,n} 
 d_j \E \Bigl| 
\overbrace{\phi \, u^* Q u}^\circ \, \phi \, 
\frac{\partial u^* Q u}{\partial \bar Y_{ij}} \Bigr|^2  
+
16 \sum_{i,j=1}^{N,n} 
 d_j \E \Bigl| 
\overbrace{\phi \, u^* Q u}^\circ \, u^* Q u \, 
\frac{\partial \phi}{\partial \bar Y_{ij}} \Bigr|^2  
:= V_1 + V_2 . 
\end{multline*} 
By developing the derivative in $V_1$ similarly to above, 
$V_1 \leq Kn^{-1} \E\Bigl| \overbrace{\phi \, u^* Q u}^\circ \Bigr|^2 \leq 
K n^{-2}$. By the Cauchy-Schwarz inequality and Lemma \ref{mom-phi}, 
\[
V_2 \leq 
K \sum_{i,j=1}^{N,n} 
 d_j \E \Bigl| 
\overbrace{\phi \, u^* Q u}^\circ  
\frac{\partial \phi}{\partial \bar Y_{ij}} \Bigr|^2  
\leq
\frac{K}{n^2} \Bigl( \E\Bigl| \overbrace{\phi \, u^* Q u}^\circ \Bigr|^4 
\Bigr)^{1/2}. 
\]
Writing $a_n = n^2 \E\Bigl| \overbrace{\phi \, u^* Q u}^\circ \Bigr|^4$, we
have shown that $\sqrt{a_n} \leq K / \sqrt{a_n} + K / n$. Assume that 
$a_n$ is not bounded. Then there exists a sequence $n_k$ of integers such 
that $a_{n_k} \to \infty$, which raises a contradiction. The first inequality
in the statement of this lemma is shown. The other two inequalities can be
shown similarly. 
\end{proof} 

\begin{lemma}
\label{1-E[phi]}
The following holds true: 
\[
1 - \EE\phi_n \leq \frac{K_\ell}{n^\ell} \quad \text{for any } 
\ell \in \N .  
\]
\end{lemma}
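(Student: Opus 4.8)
The plan is to dominate $1-\phi_n$ pointwise by a power of $\tr\varphi(n^{-1}Y_nY_n^*)$ for a suitably chosen smooth cutoff $\varphi$, and then to invoke Lemma~\ref{lm-noeig}. To this end I would first record the elementary structure of $\phi_n$: writing $\lambda_1^n,\ldots,\lambda_N^n$ for the eigenvalues of $n^{-1}Y_nY_n^*$, one has $\phi_n=\prod_{i=1}^N\psi(\lambda_i^n)$, so that $0\le\phi_n\le 1$ because $\psi$ takes values in $[0,1]$, and moreover $\phi_n=1$ as soon as all the $\lambda_i^n$ lie in ${\cal S}_{\varepsilon'}$, since $\psi\equiv 1$ there. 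Equivalently, $\phi_n<1$ forces at least one $\lambda_i^n$ to lie outside ${\cal S}_{\varepsilon'}$, and therefore, deterministically,
\[
1-\phi_n \;\le\; \1_{\{\exists\, i\,:\,\lambda_i^n\notin{\cal S}_{\varepsilon'}\}} .
\]

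Next I would fix an intermediate scale $0<\varepsilon''<\varepsilon'$ and a smooth nonnegative function $\varphi$ equal to $0$ on ${\cal S}_{\varepsilon''}$ and to $1$ on $\R-{\cal S}_{\varepsilon'}$ (such a $\varphi$ exists since ${\cal S}_{\varepsilon''}$ and $\R-{\cal S}_{\varepsilon'}$ are at positive distance from each other). On the event that some $\lambda_i^n$ lies outside ${\cal S}_{\varepsilon'}$ we then have $\varphi(\lambda_i^n)=1$, hence $\tr\varphi(n^{-1}Y_nY_n^*)=\sum_{j=1}^N\varphi(\lambda_j^n)\ge 1$ by nonnegativity of $\varphi$. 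Consequently, for every $\ell\in\N$,
\[
1-\phi_n \;\le\; \1_{\{\tr\varphi(n^{-1}Y_nY_n^*)\,\ge\,1\}} \;\le\; \bigl(\tr\varphi(n^{-1}Y_nY_n^*)\bigr)^{\ell},
\]
the second inequality being verified separately on $\{\tr\varphi(n^{-1}Y_nY_n^*)\ge 1\}$ and on its complement, using that $\tr\varphi\ge 0$ and $1-\phi_n\le 1$. Taking expectations and applying Lemma~\ref{lm-noeig} (with $\varepsilon''$ and $\varepsilon'$ playing the roles of $\varepsilon'$ and $\varepsilon$ in its statement) gives $1-\E\phi_n=\E[1-\phi_n]\le\E\bigl[(\tr\varphi(n^{-1}Y_nY_n^*))^{\ell}\bigr]\le K_\ell/n^\ell$, which is exactly the assertion.

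I do not expect a genuine obstacle here. The one point that must be handled with some care is the rate: the naive bound $1-\phi_n\le\tr\varphi(n^{-1}Y_nY_n^*)$ only yields $1-\E\phi_n\le K/n$, whereas the statement asks for $n^{-\ell}$. The gain comes precisely from observing that $1-\phi_n$ is bounded by an \emph{indicator} whose underlying event is contained in $\{\tr\varphi(n^{-1}Y_nY_n^*)\ge 1\}$, so that replacing that indicator by the $\ell$-th power of the trace costs nothing. The remaining ingredients---existence of a smooth cutoff between the two nested sublevel sets and the moment bound of Lemma~\ref{lm-noeig}---are entirely routine.
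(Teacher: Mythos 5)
Your proposal is correct and follows essentially the same approach as the paper: the paper likewise picks an intermediate scale $\varepsilon_1<\varepsilon'$, takes a smooth $\varphi$ vanishing on ${\cal S}_{\varepsilon_1}$ and equal to one outside ${\cal S}_{\varepsilon'}$, notes the pointwise bound $1-\phi_n\le(\tr\varphi(n^{-1}Y_nY_n^*))^{\ell}$, and applies Lemma~\ref{lm-noeig}. You merely spell out the intermediate step through the indicator of $\{\tr\varphi\ge1\}$, which the paper leaves implicit.
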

\begin{proof}
For $0 < \varepsilon_1 < \varepsilon'$ where $\varepsilon'$ is defined 
in the construction of $\psi$,  let $\varphi$ be a smooth nonnegative
function equal to zero on ${\cal S}_{\varepsilon_1}$ and to one on 
$\R - {\cal S}_{\varepsilon'}$. Then 
$1-\phi_n \leq \left( \tr(\varphi(n^{-1} YY^*)) \right)^\ell$ for any 
$\ell \in \N$, and the result stems from Lemma \ref{lm-noeig}. 
\end{proof}
\begin{lemma}
\label{mean-tr(Q)} 
The following inequalities hold true (recall that $\Re(z) \not\in 
\support(\mu)$): 
\[ 
\left| \E\left[ \phi_n \tr Q_n(z) \right] - N m_n(z) 
\right| \leq \frac{K}{n}, \quad \text{and} \quad 
\left| \tr \widetilde U_n \left( \EE [ \phi_n \widetilde Q_n(z) ] 
- \widetilde T_n(z) \right) \right| \leq \frac{K}{n}. 
\]
\end{lemma}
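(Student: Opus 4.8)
The plan is to establish both estimates together, via an integration by parts computation carried out on the \emph{regularized} resolvents $\phi_n Q_n(z)$ and $\phi_n \widetilde Q_n(z)$, in the spirit of the analysis of \cite{HLN08Ieee} that underlies Proposition~\ref{EQ-T}. The role of the factor $\phi_n$ is the following: on the event $\{\phi_n\neq 0\}$ all eigenvalues of $n^{-1}Y_nY_n^*$ lie in ${\cal S}_\varepsilon$, which is at a fixed positive distance $\delta$ from $\cal I$ and from $0$, so there $\|Q_n(z)\|\le\delta^{-1}$, $\|\widetilde Q_n(z)\|\le K$ and $\|n^{-1/2}Y_n\|\le K$, while on $\{\phi_n= 0\}$ the products $\phi_nQ_n(z)$ and $\phi_n\widetilde Q_n(z)$ vanish. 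Hence $\phi_nQ_n(z)$ and $\phi_n\widetilde Q_n(z)$ are bounded by a deterministic constant even when $\Re(z)\in\cal I$ touches the real axis, and the Gaussian integration by parts formula of Section~\ref{gauss-tools} may be applied to their entries. I would also record that $|m_n(z)|\le K$ and, by \eqref{bound-Ttilde}, $\|\widetilde T_n(z)\|\le K$ uniformly for $\Re(z)\in\cal I$ and $n$ large, because $\support(\mu_n)$ stays at a fixed distance from $\cal I$.

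The heart of the argument is then the following computation. Starting from the resolvent identity $n^{-1}Y_nY_n^*Q_n=I_N+zQ_n$, I would expand $\E[\phi_n\,n^{-1}\tr(Y_nY_n^*Q_n)]$ entrywise and apply integration by parts in each variable $Y_{il}$, using $\partial Q_{pq}/\partial\bar Y_{ij}=-n^{-1}[Q_nY_n]_{pj}Q_{iq}$ and the expression of $\partial\phi_n/\partial\bar Y_{ij}$ from Section~\ref{diff-det}. Rewriting the term produced by $\partial Q/\partial\bar Y$ with the help of $Y_n^*Q_nY_n=Y_n^*Y_n\widetilde Q_n$ and $n^{-1}Y_n^*Y_n\widetilde Q_n=I_n+z\widetilde Q_n$, two of the resulting trace terms cancel and one is left with a relation of the form
\[
N\,\E\phi_n+z\,\E[\phi_n\tr Q_n]\Bigl(1+\tfrac1n\E[\phi_n\tr(D_n\widetilde Q_n)]\Bigr)=T_{1,n}+\epsilon_{1,n},
\]
where $T_{1,n}=\frac1{n^{2}}\sum_l d_l^n\,\E\bigl[y_l^{*}Q_n\,\adjugate(\psi(n^{-1}Y_nY_n^*))\,\psi'(n^{-1}Y_nY_n^*)\,y_l\bigr]$ collects the derivatives of $\phi_n$, and $\epsilon_{1,n}=-\frac zn\bigl(\E[\phi_n\tr(D_n\widetilde Q_n)\tr Q_n]-\E[\phi_n\tr(D_n\widetilde Q_n)]\E[\phi_n\tr Q_n]\bigr)$ is a decoupling remainder. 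One checks $|\epsilon_{1,n}|=O(1/n)$: after removing the event $\{\phi_n\neq 1\}$, of probability $O(n^{-\ell})$ for all $\ell$ by Lemma~\ref{1-E[phi]}, $\epsilon_{1,n}$ is $\tfrac zn$ times a covariance dominated by $\var(\phi_n\tr Q_n)\le K$, the third bound of Lemma~\ref{lm-var}, and the analogous $\var(\phi_n\tr(D_n\widetilde Q_n))\le K$, which follows by the same Poincar\'e-Nash argument. As for $T_{1,n}$, the matrix $\adjugate(\psi)\psi'$ is nonzero only when some eigenvalue lies in ${\cal S}_\varepsilon\setminus{\cal S}_{\varepsilon'}$, an event of probability $O(n^{-\ell})$ for all $\ell$ (Lemma~\ref{lm-noeig}, or Lemma~\ref{mom-phi}), and on that event $\|Q_n(z)\|\le\delta^{-1}$; combined with $\E\|y_l\|^{4}=O(n^{2})$ this gives $|T_{1,n}|=O(1/n)$.

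Finally, I would invoke the second asserted inequality, proved \emph{simultaneously} by the same integration by parts applied to $\E[\phi_n\widetilde Q_n(z)]$ (starting now from $n^{-1}Y_n^*Y_n\widetilde Q_n=I_n+z\widetilde Q_n$, using $\partial\widetilde Q_{pq}/\partial\bar Y_{ij}=-n^{-1}\widetilde Q_{pj}[Y_n\widetilde Q_n]_{iq}$, the diagonal structure of $\widetilde T_n(z)$ and \eqref{bound-Ttilde}); it yields $|\tr\widetilde U_n(\E[\phi_n\widetilde Q_n(z)]-\widetilde T_n(z))|\le K/n$ for every bounded diagonal $\widetilde U_n$, and in particular, taking $\widetilde U_n=D_n$, $\E[\phi_n\tr(D_n\widetilde Q_n)]=\tr(D_n\widetilde T_n)+O(1/n)$. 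Substituting this into the displayed relation and using the defining equation $1+\frac1n\tr(D_n\widetilde T_n)=-(z\,m_n(z))^{-1}$ of Proposition~\ref{equiv-deter}, together with $|z|\,|\E[\phi_n\tr Q_n(z)]|\le Kn$ uniformly for $\Re(z)\in\cal I$ (splitting according to whether $|\Im z|\le1$), one obtains $\E[\phi_n\tr Q_n(z)]=N\,m_n(z)\,\E\phi_n+O(1/n)$; Lemma~\ref{1-E[phi]} and $|m_n(z)|\le K$ then turn $N m_n(z)\,\E\phi_n$ into $N m_n(z)+O(1/n)$, which proves the first inequality. The point I expect to require the most care is precisely this last round of bookkeeping: one must check that \emph{every} remainder produced by the integration by parts — notably the decoupling term $\epsilon_{1,n}$ and its $\widetilde Q$ counterpart — carries the extra factor $n^{-1}$ stemming from the trace normalisation, so that the final bound is $O(1/n)$ and not merely $O(1)$; this is exactly where the $O(1/n)$ variance estimates of Lemma~\ref{lm-var}, the moment bound of Lemma~\ref{mom-phi}, and the super-polynomial smallness of Lemma~\ref{1-E[phi]} come in.
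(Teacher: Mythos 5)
Your strategy is genuinely different from the paper's, and as written it has a gap. The paper does not perform a fresh integration-by-parts computation for this lemma. It writes $\phi_n \frac1n\tr Q_n(z)=\phi_n\int \zeta(t)(t-z)^{-1}\theta_n(dt)$ for a smooth compactly supported cut-off $\zeta$ equal to one near $\support(\mu)\cup\{0\}$, removes $\phi_n$ at cost $O(n^{-\ell})$ via Lemma \ref{1-E[phi]}, and then applies Proposition \ref{prop-bound-nbr-ev} to the test function $t\mapsto\zeta(t)/(t-z)$ — that is, it transfers the already-established $\C_+$ estimates of Proposition \ref{EQ-T} to the real axis through the Haagerup--Thorbj{\o}rnsen Lemma \ref{lm-haagerup}. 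The second inequality is obtained the same way after noting (Lemma \ref{lm:ST-Ttilde}) that $n^{-1}\tr\widetilde U_n\widetilde Q_n$ and $n^{-1}\tr\widetilde U_n\widetilde T_n$ are Stieltjes transforms of measures carrying no mass near $\cal I$. All the fixed-point analysis is thus inherited from Proposition \ref{EQ-T}, where $\Im z>0$ supplies the stability; the only price paid is the passage to the real axis via test functions.

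The concrete gap in your route is a circularity between the two inequalities. To close your displayed relation you substitute the second inequality (with $\widetilde U_n=D_n$) so as to replace $\frac1n\E[\phi_n\tr(D_n\widetilde Q_n)]$ by $\frac1n\tr(D_n\widetilde T_n)$. But the parallel integration by parts for $\E[\phi_n\widetilde Q_n(z)]$ can only produce a deterministic equivalent of the form $\bigl[-z\bigl(I_n+\E[\phi_n n^{-1}\tr Q_n(z)]\,D_n\bigr)\bigr]^{-1}$; identifying this with $\widetilde T_n(z)=[-z(I_n+c_nm_n(z)D_n)]^{-1}$ requires $\E[\phi_n n^{-1}\tr Q_n(z)]\approx c_nm_n(z)$, which is precisely the first inequality. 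What the IP computations actually deliver is that the empirical pair $\bigl(\E[\phi_n n^{-1}\tr Q_n],\ \frac1n\E[\phi_n\tr(D_n\widetilde Q_n)]\bigr)$ satisfies, up to $O(1/n)$ errors, the same coupled fixed-point system that defines $\bigl(c_nm_n,\ \frac1n\tr D_n\widetilde T_n\bigr)$; to conclude that the two pairs are $O(1/n)$-close you must prove that this system is stable at the given $z$. Away from the real axis the contraction constant is controlled by $\Im z$ (whence the factor $R(|\Im z|^{-1})$ in Proposition \ref{EQ-T}), but for $\Re(z)\in\cal I$ with $\Im(z)$ arbitrarily small you need to show that the relevant denominators stay bounded away from zero in the spectral gap (a statement of the same nature as the positivity of ${\bs\Delta}$, resting on Lemma \ref{control-Ttilde} and Proposition \ref{support}), or at least to bootstrap from a qualitative a priori convergence $\E[\phi_n n^{-1}\tr Q_n(z)]\to c\,\mfat(z)$. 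You supply neither step. This is exactly why the paper proves Lemma \ref{mean-tr(Q)} first by the transfer argument and only afterwards runs the IP machinery (for Lemma \ref{mean-fq}), with Lemma \ref{mean-tr(Q)} available as the input that pins $\alpha$ to $c_nm_n$. The remaining ingredients of your proof (control of the $\adjugate(\psi)\psi'$ term, the covariance bound via Lemma \ref{lm-var}, the treatment of $|z|$ large) are fine.
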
 
\begin{proof} 
Let $\varepsilon$ be defined in the construction of $\psi$. 
Choose a small $\varepsilon_1 > \varepsilon$ in such a way that 
${\cal S}_{\varepsilon_1} \cap \cal I = \emptyset$. 
Let $\zeta$ be a ${\cal C}_c^\infty(\R,\R)$ nonnegative function equal to 
one on ${\cal S}_{\varepsilon}$ and to zero on
$\R - {\cal S}_{\varepsilon_1}$, so that 
\[
\phi \frac 1n \tr Q = \phi \int \frac{\zeta(t)}{t-z} \theta_n(dt) \ . 
\]
Using this equality, and recalling that $\phi \in [0,1]$, we have
\[
\left| \E \phi \frac 1n \tr Q - 
    \E \int \frac{\zeta(t)}{t-z} \theta_n(dt) \right| \leq 
\E\left[ (1-\phi) \left| \int \frac{\zeta(t)}{t-z} \theta_n(dt) \right| 
\right] 
\leq \frac{1-\E \phi}{{\bs d}(z, {\cal S}_{\varepsilon_1})} 
\leq \frac{K_\ell}{n^\ell}  
\] 
for any $\ell \in \N$. Moreover, we have 
\[
\left| \E \int \frac{\zeta(t)}{t-z} \theta_n(dt) 
- m_n(z) \right| = 
\left|  \E \int \frac{\zeta(t)}{t-z} \theta_n(dt) 
 - \int \frac{\zeta(t)}{t-z} \mu_n(dt) \right| 
\leq \frac{K}{n^2} 
\] 
by Proposition \ref{lm-haagerup}, and the first inequality is proved. \\
By performing a spectral factorization of $n^{-1} Y^*Y$, one can check that
$n^{-1} \tr \widetilde U \widetilde Q(z)$ is the Stieltjes Transform of a
positive measure $\tau_n$ such that $\sup_n \tau_n(\R) < \infty$ and 
$\support(\tau_n) \subset \support(\theta_n) \cup \{ 0 \}$. 
By Lemma \ref{lm:ST-Ttilde}, 
$n^{-1} \tr \widetilde U \widetilde T(z)$ is the Stieltjes Transform of a 
positive measure $\gamma_n$ such that $\sup_n \gamma_n(\R) < \infty$ and 
$\gamma_n(\cal I) = 0$ for all large $n$. With the help of the second 
inequality of Proposition \ref{EQ-T}, we have a result similar to that of 
Proposition \ref{prop-bound-nbr-ev}, namely that 
$| \E \int \varphi d\tau_n - \int \varphi d\gamma_n | \leq K / n^2$ 
for any function $\varphi \in {\cal C}_c^\infty(\R,\R)$. 
We can then prove the second inequality similarly to the first one. 
\end{proof} 

\begin{lemma}
\label{mean-fq} 
The following inequalities hold true:
\begin{align*}
\left| \E[ \phi_n \, u_n^* Q_n(z) v_n ] - u_n^* v_n \, m_n(z) \right| &\leq K/n, \\
\left| \E[ \phi_n \, \tilde u_n^* \widetilde Q_n(z) \tilde v_n ] - \tilde u_n^* \widetilde T_n(z) \tilde v_n \right| &\leq K/n.
\end{align*}
\end{lemma}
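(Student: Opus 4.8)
The plan is to derive, for each of the two bilinear forms, a self‑consistent identity by combining a resolvent identity with the integration by parts (IP) formula of Section~\ref{gauss-tools}, and then to solve that identity modulo $O(1/n)$ error terms. The error control rests on the variance bounds of Lemma~\ref{lm-var}, the derivative estimate of Lemma~\ref{mom-phi}, and Lemma~\ref{1-E[phi]}. I use repeatedly that if $\phi_n>0$, or merely if $\partial\phi_n/\partial\bar Y_{ij}\neq 0$, then all eigenvalues of $n^{-1}Y_nY_n^*$ lie in $\cal S_\varepsilon$, which is at positive distance from $\cal I$; hence $\phi_n Q_n(z)$, $\phi_n\widetilde Q_n(z)$ and $n^{-1/2}\phi_n Y_n\widetilde Q_n(z)$, $n^{-1/2}\phi_n Y_n^*Q_n(z)$ are all bounded in operator norm by a constant depending only on $\cal I$, so that the IP formula legitimately applies to the corresponding regularized functionals.

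I would treat $\widetilde Q$ first. From $Y_n\widetilde Q_n=Q_nY_n$ and $n^{-1}Y_n^*Y_n\widetilde Q_n=I_n+z\widetilde Q_n$ one gets $\widetilde Q_n=-z^{-1}(I_n-n^{-1}Y_n^*Q_nY_n)$, hence $\tilde u^*\widetilde Q\tilde v=-z^{-1}\tilde u^*\tilde v+z^{-1}n^{-1}(Y\tilde u)^*Q(Y\tilde v)$. Multiplying by $\phi$, taking expectations, and applying the IP formula to the un‑conjugated factor $Y_{qk}$ coming from $Y\tilde v$, the Kronecker contributions produced by $\partial\overline{Y_{pj}}/\partial\bar Y_{qk}$ and by $\partial Q_{pq}/\partial\bar Y_{qk}$ cancel after one further use of $n^{-1}Y^*QY=I_n+z\widetilde Q$, leaving
\[
\E[\phi\,\tilde u^*\widetilde Q\tilde v]=-z^{-1}\E[\phi]\,\tilde u^*\tilde v-\E\big[\phi\,(n^{-1}\tr Q)\,\tilde u^*\widetilde Q D\tilde v\big]+\varepsilon_n^{(1)},
\]
where $\varepsilon_n^{(1)}$ gathers the terms carrying a factor $\partial\phi/\partial\bar Y_{ij}$; by Cauchy--Schwarz, Lemma~\ref{mom-phi} and the boundedness noted above (the companion factors being $O(\sqrt n)$ on $\{\partial\phi\neq 0\}$), $|\varepsilon_n^{(1)}|\leq K/n$. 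Writing $n^{-1}\tr Q=c_nm_n+(n^{-1}\tr Q-c_nm_n)$ and using Lemma~\ref{mean-tr(Q)} together with $\var(\phi\tr Q)\leq K$ (Lemma~\ref{lm-var}), the correction term is $O(1/n)$ by Cauchy--Schwarz, so that $\E[\phi\,\tilde u^*\widetilde Q(I_n+c_nm_nD)\tilde v]=-z^{-1}\E[\phi]\,\tilde u^*\tilde v+O(1/n)$ uniformly over $\|\tilde u_n\|,\|\tilde v_n\|\leq 1$. By Lemma~\ref{control-Ttilde}, $I_n+c_nm_n(z)D_n$ is invertible for large $n$ with $\limsup_n\sup_{z\in D_{\cal I}}\|(I_n+c_nm_nD_n)^{-1}\|<\infty$; replacing $\tilde v$ by $(I_n+c_nm_nD_n)^{-1}\tilde v$ and using $\widetilde T_n=-z^{-1}(I_n+c_nm_nD_n)^{-1}$ (Proposition~\ref{equiv-deter}) together with $\E[\phi]=1+O(n^{-\ell})$ (Lemma~\ref{1-E[phi]}) yields the second inequality.

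The argument for $Q$ is parallel, the deterministic equivalent now being the scalar $m_nI_N$. From $\widetilde QY^*=Y^*Q$ and $n^{-1}YY^*Q=I_N+zQ$ one has $Q_n=-z^{-1}(I_N-n^{-1}Y_n\widetilde Q_nY_n^*)$; sandwiching between $u^*$ and $v$, multiplying by $\phi$, applying IP, and simplifying with $n^{-1}Y\widetilde QY^*=I_N+zQ$ yields, after replacing $n^{-1}\tr(D\widetilde Q)$ by $n^{-1}\tr(D\widetilde T_n)$ (Lemma~\ref{mean-tr(Q)} with $\widetilde U_n=D_n$, and a variance estimate $\var(\phi\tr(D\widetilde Q))\leq K$ obtained exactly as the third inequality of Lemma~\ref{lm-var}), an identity of the form $\E[\phi\,u^*Qv]\,(1+n^{-1}\tr(D\widetilde T_n))=-z^{-1}\E[\phi]\,u^*v+O(1/n)$. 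Since $1+n^{-1}\tr(D_n\widetilde T_n)=-1/(zm_n)$ (Proposition~\ref{equiv-deter}) and $m_n(z)$ is, for large $n$, bounded and bounded away from $0$ on $D_{\cal I}$ (as in the proof of Lemma~\ref{control-Ttilde}), one divides to obtain $\E[\phi\,u^*Qv]=\E[\phi]\,m_n\,u^*v+O(1/n)$, i.e.\ the first inequality; the factor $\E[\phi]$ is then absorbed at the cost of $O(n^{-\ell})$ (Lemma~\ref{1-E[phi]}), and $K$ is adjusted to cover the small $n$.

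The delicate point is the bookkeeping of the error terms, which are of two kinds: (i) terms containing a factor $\partial\phi/\partial\bar Y_{ij}$, and (ii) ``cross‑covariance'' terms $\E[\phi\times(\text{centered fluctuation})\times(\text{bounded factor})]$. Type (i) is $O(1/n)$ by Cauchy--Schwarz and Lemma~\ref{mom-phi}, crucially because $\partial\phi\neq 0$ confines the spectrum of $n^{-1}YY^*$ to $\cal S_\varepsilon$ and thereby bounds the companion factors (of the type $n^{-1/2}Y^*Qv$, $n^{-1/2}Y\widetilde QY^*u$) by $K\sqrt n$. Type (ii) is $O(1/n)$ because the fluctuation in question ($n^{-1}\tr Q$ or $n^{-1}\tr(D\widetilde Q)$) has second moment $O(1/n^2)$ by Lemma~\ref{lm-var}, while the remaining factor is bounded on $\{\phi>0\}$; the gap between $\E[\phi(\cdot)^2]$ and $\var(\phi\,\cdot)$ costs only $O(1-\E\phi)=O(n^{-\ell})$ by Lemma~\ref{1-E[phi]}. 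A subtlety worth stressing is that in the $\widetilde Q$ computation one must \emph{not} substitute the (not‑yet‑proven) approximation $\widetilde Q\approx\widetilde T_n$ inside the identity; one keeps $\E[\phi\,\tilde u^*\widetilde Q D\tilde v]$ intact and solves the linear relation in the argument $\tilde v$ directly, which is what closes the argument.
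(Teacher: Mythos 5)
Your proposal is correct and follows essentially the same route as the paper: the Gaussian integration-by-parts formula applied to the regularized bilinear forms, a self-consistent identity closed via a trace functional ($n^{-1}\tr Q$ or $n^{-1}\tr D\widetilde Q$) whose fluctuation is controlled by the variance bounds of Lemma~\ref{lm-var} and whose mean is pinned down by Lemma~\ref{mean-tr(Q)}, with the $\partial\phi/\partial\bar Y_{ij}$ terms killed by Lemma~\ref{mom-phi} and the factor $\E[\phi]$ absorbed via Lemma~\ref{1-E[phi]}. The only (harmless) cosmetic differences are which resolvent identity you take as the starting point and that you invert the self-consistent relation by substituting $(I_n+c_nm_nD_n)^{-1}\tilde v$ for $\tilde v$, whereas the paper divides by the deterministic scalar $-z(1+\tilde\alpha)$.
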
 
In \cite{hlnv-bilinear-ihp11}, it is proven in a more general setting 
that $\left| \E u_n^* Q_n(z) u_n - \| u_n\|^2 m_n(z) \right| \leq P(|z|) 
R(| \Im(z) |^{-1}) / \sqrt{n}$ for any $z \in \C_+$. Observing that 
$u_n^* Q_n(z) u_n$ and $\| u_n\|^2 m_n(z)$
are Stieltjes Transforms of positive measures, and mimicking the proof of
the previous lemma, we can establish this lemma with the 
rate ${\cal O}(n^{-1/2})$, which is in fact enough for our purposes. However, 
in order to give a flavor of the derivations that will be carried out 
in the next section, we consider here another proof that uses the IP  
formula and the Poincar\'e-Nash inequality. To that end, we introduce new 
notations: 
\begin{gather*} 
\beta(z) = \phi_n \frac 1n \tr Q_n(z), \ \alpha(z) = \E\beta(z), \ 
\hat\beta(z) = \beta(z) - \phi_n \alpha(z), \quad \text{and} \\
\tilde\alpha(z) = \frac 1n \tr  D_n 
[ - z (I_n + \alpha(z)  D_n) ]^{-1} . 
\end{gather*} 
\begin{proof} 
We start with the first inequality. By the IP formula, we have
\[
\E[ Q_{pi} Y_{ij} \bar Y_{\ell j} \phi ] = 
- \frac{ d_j}{n} \E [ [Q y_j]_p Q_{ii} \bar Y_{\ell j} \phi ] 
+ \delta(\ell -i)  d_j \E[Q_{pi} \phi] 
+ \frac{ d_j}{n} \E[ Q_{pi} \bar Y_{\ell j} 
[\adjugate(\psi) \psi' y_j]_i ] .
\]   
Taking the sum over $i$, we obtain
\begin{align*} 
\E[ [Q y_j]_p \bar Y_{\ell j} \phi ] &= 
-  d_j \E [ [Q y_j]_p \bar Y_{\ell j} \beta ] 
+  d_j \E[Q_{p\ell} \phi] 
+ \frac{ d_j}{n} \E[ \bar Y_{\ell j} 
[Q \adjugate(\psi) \psi' y_j]_p ] . 
\end{align*}
Writing $\beta = \hat\beta + \phi\alpha$, we get 
\begin{align*} 
\E[ [Q y_j]_p \bar Y_{\ell j} \phi ] &= 
 \frac{ d_j}{1+\alpha  d_j}  \E[Q_{p\ell} \phi]
- \frac{ d_j}{1+\alpha  d_j}  
\E [ [Q y_j]_p \bar Y_{\ell j} \hat\beta] \\
&+ \frac{ d_j}{n(1+\alpha  d_j)}  
\E[ \bar Y_{\ell j} 
[Q \adjugate(\psi) \psi' y_j]_p ] . 
\end{align*} 
Taking the sum over $j$, we obtain
\begin{align*} 
\E\left[ \left[ Q \frac{YY^*}{n} \right]_{p\ell} \phi \right] 
&= 
- z \tilde\alpha \E[ Q_{p\ell} \phi ] - \E\left[ \hat\beta 
\left[ Q \frac{Y D (I+\alpha  D)^{-1} Y^*}{n} 
\right]_{p\ell} \right] \\
& + \frac{1}{n} 
\E\left[ 
\left[ Q \adjugate(\psi) \psi' 
\frac{Y  D (I+\alpha  D)^{-1} Y^*}{n} 
\right]_{p\ell} \right] .
\end{align*} 
We now use the identity $z Q = n^{-1} Q YY^* - I$, which results in
\begin{align*}
z \E[ Q_{p\ell} \phi] &= \E\left[ \left[ Q \frac{YY^*}{n} \right]_{p\ell} 
\phi \right] - \delta(p-\ell) \E[\phi],  \\ 
\E[ Q_{p\ell} \phi] &= 
\frac{\delta(p-\ell)}{- z(1+ \tilde\alpha)} \E[\phi] + \frac{\text{2nd and 3rd terms of next to last equation}}
{z(1+ \tilde\alpha)} . 
\end{align*} 
Multiplying each side by $[u^*]_p [v]_\ell$ and taking the sum over $p$ and 
$\ell$, we finally obtain
\begin{align}
\E[ u^* Q v \phi ] &= 
 \E[\phi]
\frac{u^* v}{- z(1+ \tilde\alpha)} - [-z(1+ \tilde\alpha )]^{-1}\E\left[ \hat\beta 
u^* Q \frac{Y D (I+\alpha  D)^{-1} Y^*}{n} 
 v  \right] \nonumber \\
&
+\frac 1n  [-z(1+ \tilde\alpha )]^{-1} \E\left[ u^* Q  
\adjugate(\psi) \psi' 
\frac{Y  D (I+\alpha  D)^{-1} Y^*}{n}
v  \right]  .
\label{eq-EuQu} 
\end{align} 
Let us evaluate the three terms at the right hand side of this equality. 
From Lemma \ref{mean-tr(Q)}, we have $\alpha = c_n m_n + {\cal O}(n^{-2})$. 
Using in addition the bound \eqref{bound-Ttilde}, we obtain 
$\tilde\alpha = n^{-1} \tr( D(- z(I+ c_n m_n  D+ 
(\alpha - c_n m_n)  D)^{-1} )= n^{-1} \tr  D \widetilde T
+ {\cal O}(n^{-2})$. Since 
$m_n(z) = ( -z ( 1 + n^{-1} \tr  D \widetilde T(z) ) )^{-1}$, we 
obtain that $\left(-z(1+ \tilde\alpha )\right)^{-1} 
= m_n(z) + {\cal O}(n^{-2})$.
Using in addition Lemma \ref{1-E[phi]}, we obtain that the first right hand
side term of \eqref{eq-EuQu} is  $u^* v \, m_n(z) + {\cal O}(n^{-2})$. 
Due to the presence of $\phi$ in the expression of $\hat\beta$, the second
term is bounded by $K \EE|\hat\beta|$.
Moreover, 
$\hat\beta = n^{-1} \phi \tr Q - n^{-1} \E[\phi \tr Q] + 
(1-\phi) n^{-1} \E[\phi \tr Q]$. By Lemmas \ref{1-E[phi]} and \ref{lm-var}, 
$\E | \hat\beta| = {\cal O}(n^{-1})$. 
The third term can be shown to be bounded by $ K n^{-1} 
\E \tr \varphi(n^{-1} YY^*) = {\cal O}(n^{-2})$ where $\varphi$ is as in the 
statement of Lemma \ref{lm-noeig}. This proves the first inequality in 
the statement of the lemma. \\ 
The second result in the statement of the lemma is proven similarly. The
proof requires the second inequality of Lemma 
\ref{mean-tr(Q)}.
\end{proof} 
The proof of the following lemma can be done along the same lines and 
will be omitted: 
\begin{lemma}
\label{uXQv}
The following inequalities hold true:
\begin{align*}
\left| \E \phi_n u^*_n Y_n \widetilde Q_n(z) \tilde v_n \right|
&\leq K / \sqrt{n} \\
\E \left| \phi_n u^*_n Y_n \widetilde Q_n(z) \tilde v_n \right|^4
&\leq K. 
\end{align*}
\end{lemma}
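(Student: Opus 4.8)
The plan is to treat both inequalities by the same scheme that produced Lemmas~\ref{lm-var} and~\ref{mean-fq}: the integration-by-parts (IP) formula to extract the mean, and the Poincar\'e--Nash inequality together with Lemma~\ref{mom-phi} to control the variance of the regularized bilinear form. Throughout, the point of the regularizing factor $\phi_n$ is that on the event where $n^{-1} Y_n Y_n^*$ has no eigenvalue near $\support(\mu)\cup\{0\}$, the resolvents $Q_n(z)$ and $\widetilde Q_n(z)$ (and the product $n^{-1/2} Y_n\widetilde Q_n(z)$, via $n^{-1/2} Y_n\widetilde Q_n = Q_n\, n^{-1/2} Y_n$ and the identity $zQ_n = n^{-1}Q_nY_nY_n^* - I_N$) are bounded uniformly in $n$ when $\Re(z)\in\cal I$. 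Hence all the expectations appearing below have bounded integrands on the support of $\phi_n$, which is exactly what makes the Gaussian tools applicable.

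For the \emph{second} (fourth-moment) inequality, first I would show $\var(\phi_n\, u_n^* Y_n\widetilde Q_n(z)\tilde v_n)\le K$ by Poincar\'e--Nash: differentiating $\phi_n\, u^* Y\widetilde Q\tilde v$ with respect to $\bar Y_{ij}$ produces three kinds of terms --- one where the derivative hits $\widetilde Q$ (use $\partial\widetilde Q_{pq}/\partial\bar Y_{ij} = -n^{-1}\widetilde Q_{pj}[Y\widetilde Q]_{iq}$), one where it hits the explicit $Y$ (giving a Kronecker delta), and one where it hits $\phi_n$ (controlled by Lemma~\ref{mom-phi}, which gives the $n^{-2}$ gain). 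Summing the squared moduli against $d_j$ and using boundedness of $n^{-1/2}Y\widetilde Q$ and of $Q$ on $\support(\phi_n)$ yields a bound of order one (not $1/n$: there is no extra smallness here because $u^*Y\widetilde Q\tilde v$ is itself of order $\sqrt n$ in the worst direction, so its variance is only $O(1)$). Then one bootstraps to the fourth moment exactly as in Lemma~\ref{lm-var}: write $\E|\stackrel{\circ}{X}|^4 = \var(X)^2 + \var(\stackrel{\circ}{X}{}^2)$, apply Poincar\'e--Nash to $\stackrel{\circ}{X}{}^2$, develop the derivatives, and use Cauchy--Schwarz and Lemma~\ref{mom-phi} on the $\phi_n$-derivative term; the self-referential inequality $\sqrt{a_n}\le K/\sqrt{a_n} + K$ (here with no $1/n$) closes the argument and gives $\E|\phi_n\stackrel{\circ}{X}|^4\le K$. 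Combining with $|\E X|\le K/\sqrt n\le K$ from the first part gives $\E|\phi_n\, u^*Y\widetilde Q\tilde v|^4\le K$.

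For the \emph{first} (mean) inequality, I would run the IP computation as in the proof of Lemma~\ref{mean-fq}. Write $\E[\phi_n\, u^* Y\widetilde Q\tilde v] = \sum_{i,j}[u^*]_i\,\E[Y_{ij}\,[\,\widetilde Q\tilde v\,]_j\,\phi_n]$ and apply $\E[Y_{ij}\Gamma] = d_j\,\E[\partial\Gamma/\partial\bar Y_{ij}]$. The term where the derivative hits $\widetilde Q$ regenerates a factor $\beta(z) = \phi_n n^{-1}\tr Q_n(z)$ times the same kind of quantity, so after splitting $\beta = \hat\beta + \phi_n\alpha$ and summing over $j$ one gets a linear relation for $\E[\phi_n\,u^*Y\widetilde Q\tilde v]$ with coefficient $(1 + \alpha D_j)^{-1}$-type factors; the term where the derivative hits $\phi_n$ is $O(n^{-\ell})$ by Lemma~\ref{mom-phi} (after Cauchy--Schwarz against the fourth-moment bound just established, or directly), and the $\hat\beta$ term is $O(n^{-1})$ by $\E|\hat\beta| = O(n^{-1})$ (Lemmas~\ref{1-E[phi]} and~\ref{lm-var}) together with the boundedness of $n^{-1/2}Y\widetilde Q$ on $\support(\phi_n)$. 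The leading term is proportional to $\sum_j d_j[u^*]_i[\,\cdots\,]$ which, crucially, one checks to be \emph{zero at leading order}: the surviving main term is of the form $\E[\phi_n]\cdot(\text{something}) \times \E[\phi_n\, u^* Q\, Y\widetilde Q\tilde v]/(\cdots)$ but the natural deterministic equivalent of $u^* Y\widetilde Q\tilde v$ vanishes --- intuitively because $Y\widetilde Q$ has zero deterministic equivalent, reflecting $\E Y = 0$ and the independence structure. Tracking this cancellation carefully gives the $O(n^{-1/2})$ bound; the $1/\sqrt n$ (rather than $1/n$) is the honest rate because $u^*Y\widetilde Q\tilde v$ has fluctuations of order one, so its mean can only be pinned down to $o(1)$, and the IP bookkeeping gives $1/\sqrt n$.

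The main obstacle is the mean estimate: one must organize the IP expansion so that the would-be leading contribution is seen to cancel, and so that every remainder is genuinely $O(n^{-1/2})$ rather than $O(1)$. Concretely, the delicate point is that $u^*Y\widetilde Q\tilde v$ is not a Stieltjes transform of a signed measure in an obvious way (unlike $u^*Q u$ or $\tilde u^*\widetilde Q\tilde u$), so one cannot simply invoke the deterministic-equivalent machinery of \cite{hlnv-bilinear-ihp11}; the cancellation has to be extracted by hand from the IP identity, keeping the $\phi_n$-insertions and their derivatives under control at every step via Lemma~\ref{mom-phi} and the fourth-moment bound of the present lemma's second part. Once that is done, the variance/fourth-moment part is entirely routine, mirroring Lemma~\ref{lm-var}.
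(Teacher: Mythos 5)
Your proposal follows exactly the route the paper intends: the paper omits the proof of Lemma~\ref{uXQv}, stating only that it ``can be done along the same lines'' as Lemma~\ref{mean-fq}, i.e.\ the integration-by-parts formula for the mean, the Poincar\'e--Nash inequality bootstrapped to the fourth moment as in Lemma~\ref{lm-var}, and Lemma~\ref{mom-phi} to absorb the derivatives of the regularizer $\phi_n$. Your outline is correct; the only simplification worth noting is that the cancellation you worry about in the mean estimate is automatic, since the IP formula applied to the single unmatched factor $Y_{ij}$ produces no Kronecker-delta (deterministic) term, so the expansion directly yields $(1+\E\tilde\beta)\,\E[\phi_n u^*Y\widetilde Q\tilde v]=O(n^{-1/2})$ without any delicate bookkeeping.
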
 

We now prove Theorem \ref{1st-ord}-\eqref{loc-spikes}. 

\subsection*{Proof of Theorem \ref{1st-ord}-(\ref{loc-spikes})}   

Our first task is to establish the properties of $H_*(z)$ given in the 
statement of Theorem \ref{1st-ord}-(\ref{loc-spikes}). 
By Lemma \ref{control-Ttilde} and the fact that $\support(\Lambda_*) 
\subset \support(\nu)$, the function $H_*(z)$ can be analytically extended
to $(a,b)$. 
The comments preceding Theorem \ref{1st-ord} show that any $H_*(z)$ is 
the Stieltjes Transform of a matrix-valued nonnegative measure $\Gamma$. 
Since $H_*(z)$ is analytic on $(a,b)$, it is increasing on this interval in 
the order of Hermitian matrices, and the properties of this function given 
in the statement of Theorem \ref{1st-ord}-\eqref{loc-spikes} are established. 

We now prove the convergence stated in Theorem 
\ref{1st-ord}-\eqref{loc-spikes}, formalizing the argument introduced in 
Section \ref{idee-preuve}. In the remainder, we restrict ourselves to 
the probability one set where $n^{-1} Y Y^*$ has no eigenvalues for large $n$ 
in a large enough closed interval in $(a,b)$. Given a compact 
interval $\cal I \subset (a,b)$, let $D_{\cal I}^\circ$ be the open disk with 
diameter $\cal I$. Observe that the functions $\widehat S_n(x)$ and $S_n(x)$ 
introduced in Section \ref{idee-preuve} can be extended analytically to 
a neighborhood of $D_{\cal I}$, and the determinants of these functions do
not cancel outside the real axis. Let 
$\widehat L_n = \sharp\{ i \, : \, \hat\lambda_i^n \in 
D_{\cal I}^\circ \}$, $L_n = \sharp\{\text{ zeros of } \det S_n(z) \text{ in } 
D_{\cal I}^\circ \}$, and 
$L = \sharp\{ i \, : \, \rho_i \in D_{\cal I}^\circ \}$. 
We need to prove that $\widehat L_n = L$ with probability one for $n$ large. 
By the argument of \cite{ben-rao-published, ben-rao-11}, $\widehat L_n$ is
equal to the number of zeros of $\det\widehat S_n(z)$ in $\cal I^\circ$. 
By the well known argument principle for holomorphic functions,  
\begin{align*} 
\widehat L_n &= 
\frac{1}{2\imath \pi} \oint_{\partial D^\circ_{\cal I}} 
\!\!
\frac{(\det \widehat S_n(z))'}{\det \widehat S_n(z)} \, dz, \\ 
L_n &= 
\frac{1}{2\imath \pi} \oint_{\partial D^\circ_{\cal I}} 
\!\!
\frac{(\det S_n(z))'}{\det S_n(z)} \, dz 
= 
\frac{1}{2\imath \pi} \oint_{\partial D^\circ_{\cal I}} 
\!\!
\frac{(\det( H_n(z) + I_r) )'}{\det( H_n(z) + I_r) } \, dz \quad 
\text{and} \\ 
L &= 
\frac{1}{2\imath \pi} \oint_{\partial D^\circ_{\cal I}} 
\!\!
\frac{(\det (H_*(z) + I_r))'}{\det(H_*(z) + I_r)} \, dz 
\end{align*}  
where $\partial D^\circ_{\cal I}$ is seen as a positively oriented contour. \\
For any $1\leq k, \ell \leq r$, let $h_{n, k,\ell}(z) = 
[U_n^* (Q_n(z) - m_n(z) I_N) U_n]_{k,\ell}$. Let $V$ be a small neighborhood
of $D_{\cal I}$, the closure of $D^\circ_{\cal I}$. Let $z_m$ be a 
sequence of complex numbers in $V$ having an accumulation point in $V$. 
By Lemmas \ref{lm-noeig}, \ref{lm-var} and \ref{mean-fq} and the Borel
Cantelli lemma, $h_{n, k,\ell}(z_m) \toasshort 0$ as $n\to\infty$ for every 
$m$. Moreover, for $n$ large, the $h_{n,k,\ell}$ are uniformly bounded on any
compact subset of $V$. By the normal family theorem, every $n$-sequence 
of $h_{n,k,\ell}$ contains a further subsequence which converges uniformly on
the compact subsets of $V$ to a holomorphic function $h_*$. Since 
$h_*(z_m)= 0$ for every $m$, we obtain that almost surely, $h_{n,k,\ell}$ 
converges uniformly to zero on the compact subsets of $V$, and the same 
can be said about $\| U_n^* (Q_n(z) - m_n(z) I_N) U_n \|$. Using in addition
Lemmas \ref{control-Ttilde} and \ref{uXQv} we obtain the same result for  
$\| R_n^* (\widetilde Q_n(z) - \widetilde T_n(z)) R_n \|$ and 
$n^{-1/2} \| U_n^* Y_n \widetilde Q_n(z) R_n \|$. 

Since $\det X$ is a polynomial in the elements of matrix $X$, 
$\det \widehat S_n(z) - \det S_n(z)$ converges almost surely to zero on 
$\partial D^\circ_{\cal I}$, and this convergence is uniform. 
By analyticity, the same can be said about the derivatives of these quantities.
Moreover, $\det S_n(z)$ converges to $(-1)^r \det( H_*(z) + I_r)$ 
(which is the same for all accumulation points $\Lambda_*$) 
uniformly on $\partial D^\circ_{\cal I}$. Similarly, 
$(\det S_n(z))'$ converges to $(-1)^r (\det( H_*(z) + I_r))'$ 
uniformly on $\partial D^\circ_{\cal I}$. Furthermore, by construction of 
the interval $\cal I$, we have  
$\inf_{z \in \partial D^\circ_{\cal I}} | \det( H_*(z) + I_r)  | > 0$ 
which implies that 
$\liminf_n \inf_{z \in \partial D^\circ_{\cal I}} 
| \det S_n(z) | > 0$. It follows that $\widehat L_n = L_n$ and 
$L_n = L$ for $n$ large enough. 
This concludes the proof of Theorem \ref{1st-ord}-\eqref{loc-spikes}. 

\subsection*{Proofs of Theorems \ref{1st-ord}-(\ref{no-spikes}) and 
\ref{partout}}   
We start with the following lemma: 
\begin{lemma}
\label{m(1+ctm)} 
Let $A = \inf( \support(\mu) - \{ 0 \})$ and let $(a,b)$ be a component of 
$\support(\mu)^c$. Then the following facts hold true: 
\begin{itemize}
\item[(i)] If $b \leq A$, then $\mfat(x)(1+c\mfat(x)t)^{-1} > 0$
for all $x \in (a,b)$ and all $t \in \support(\nu)$. 
\item[(ii)] Alternatively, if $a > A$, then there exists a Borel set 
$E \subset \R_+$ such that $\nu(\partial E) = 0$ and 
\[
q(x) = \int_E \frac{\mfat(x)}{1+c\mfat(x)t} \nu(dt) < 0
\]
for all $x \in (a,b)$.     
\end{itemize} 
\end{lemma}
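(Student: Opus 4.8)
The plan is to reduce the entire statement to controlling the single real quantity $-(c\mfat(x))^{-1}$. By Proposition~\ref{support} one has $\mfat(x)\in B$ for every $x\in\support(\mu)^c$, hence $-(c\mfat(x))^{-1}\in\support(\nu)^c$; and the elementary identity
\[
\frac{\mfat(x)}{1+c\mfat(x)t}=\frac{1}{c\bigl(t+(c\mfat(x))^{-1}\bigr)}
\]
shows that for $t\ge0$ the sign of $\mfat(x)/(1+c\mfat(x)t)$ is that of $t-\bigl(-(c\mfat(x))^{-1}\bigr)$, i.e.\ it is positive exactly when $t>-(c\mfat(x))^{-1}$. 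On a component $(a,b)$ of $\support(\mu)^c$ the function $\mfat$ is real, continuous and nowhere zero (continuity at $0$, should $0$ lie in $(a,b)$, following from $0\notin\support(\mu)$ and Proposition~\ref{lim-m(z)}), so $x\mapsto-(c\mfat(x))^{-1}$ is a continuous map of $(a,b)$ into the open set $\support(\nu)^c$ and therefore stays in one connected component $G$ of $\support(\nu)^c$. With $s_-=\inf\support(\nu)$ and $s_+=\sup\support(\nu)$, everything follows from the dichotomy
\[
b\le A \quad\Longleftrightarrow\quad G=(-\infty,s_-).
\]

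Granting the dichotomy, the two assertions are immediate. If $b\le A$ then $G=(-\infty,s_-)$, so every $t\in\support(\nu)$ satisfies $t\ge s_->-(c\mfat(x))^{-1}$ and the integrand of $q$ is everywhere positive, which is~(i). If $a>A$ then $G\ne(-\infty,s_-)$, so $G$ is either $(s_+,\infty)$ or a bounded gap $(g_1,g_2)$ of $\support(\nu)$ with $g_1>0$ (an isolated mass at $0$ being excluded by $\nu(\{0\})=0$). In the first case take $E=\R_+$: each $t\in\support(\nu)$ obeys $t\le s_+<-(c\mfat(x))^{-1}$, so the integrand is negative, $\nu(\R_+)=1$ and $\nu(\partial\R_+)=\nu(\{0\})=0$, whence $q(x)<0$. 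In the second case $(a,b)$ is bounded and $\mfat(x)<0$ there; take $E=[0,g_1+\varepsilon)$ with $\varepsilon>0$ so small that $g_1+\varepsilon<\inf_{x\in(a,b)}\bigl(-(c\mfat(x))^{-1}\bigr)$. This infimum equals $-(c\mfat(a^+))^{-1}$ because $x\mapsto-(c\mfat(x))^{-1}$ is increasing on $(a,b)$, and it exceeds $g_1$ because the finite bulk edge $a$ is the image under $x(\cdot)$ of an interior point of $B$, so $-(c\mfat(a^+))^{-1}$ lies strictly inside $(g_1,g_2)$. Then the integrand is negative on $E\cap\support(\nu)\subseteq[0,g_1]$, $\nu(E)=\nu([0,g_1])>0$ since $g_1\in\support(\nu)$, and $\nu(\partial E)=\nu(\{0\})+\nu(\{g_1+\varepsilon\})=0$ since $g_1+\varepsilon$ sits in a gap of $\support(\nu)$; hence $q(x)<0$ throughout $(a,b)$. (When $(a,b)$ is the rightmost, unbounded component, $-(c\mfat(x))^{-1}\to+\infty$ as $x\to\infty$, so $G=(s_+,\infty)$ and the first case applies.)

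It remains to prove the dichotomy. For ``$\Rightarrow$'': the only components of $\support(\mu)^c$ with $b\le A$ are $(-\infty,\inf\support(\mu))$ and, when $0$ is an isolated atom of $\mu$, the interval $(0,A)$. If $a=-\infty$, then $\mfat(x)=\int(t-x)^{-1}\mu(dt)\to0^+$ as $x\to-\infty$, so $-(c\mfat(x))^{-1}\to-\infty$ and $G=(-\infty,s_-)$. If $(a,b)=(0,A)$, then $\mu(\{0\})=1-c^{-1}>0$ by Proposition~\ref{weight}, the atom forces $\mfat(x)\to-\infty$ as $x\to0^+$, hence $-(c\mfat(x))^{-1}\to0^+$; moreover $s_->0$ here, for otherwise $-(c\mfat(x))^{-1}$, which sweeps out a one-sided neighbourhood of $0$, would meet $\support(\nu)$ — so again $G=(-\infty,s_-)$.

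The converse, ``$a>A\Rightarrow G\ne(-\infty,s_-)$'', is the heart of the argument and, I expect, the main obstacle. Observe that $G=(-\infty,s_-)$ is equivalent to $\mfat((a,b))\subseteq B_0:=\{\mfat\ne0:-(c\mfat)^{-1}<s_-\}=(0,\infty)\cup(-\infty,-(cs_-)^{-1})$, the second piece being empty when $s_-=0$. For the function $x(\mfat)=-\mfat^{-1}+\int t(1+c\mfat t)^{-1}\nu(dt)$ of~\eqref{x(m)}, a direct computation gives $x'(\mfat)=\mfat^{-2}\bigl(1-h(\mfat)\bigr)$ with $h(\mfat)=c\int\bigl(\mfat t/(1+c\mfat t)\bigr)^2\nu(dt)$; since $\frac{d}{d\mfat}\bigl(\mfat t/(1+c\mfat t)\bigr)=t/(1+c\mfat t)^2>0$, the function $h$ is increasing on each of the two intervals forming $B_0$, so $\operatorname{sign} x'(\mfat)$ changes there at most once, from $+$ to $-$. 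By Propositions~\ref{support} and~\ref{disjoint}, the components of $\support(\mu)^c$ are precisely the $x$-images of the maximal subintervals of $B$ on which $x'>0$; by the monotonicity of $h$, the only such subinterval contained in $(0,\infty)$ has the form $(0,M)$ (with $M\in(0,\infty]$) and the only one contained in $(-\infty,-(cs_-)^{-1})$ has the form $(-\infty,M')$. Since $\lim_{\mfat\to0^+}x(\mfat)=-\infty$ and $\lim_{\mfat\to-\infty}x(\mfat)=0$, any component $(a,b)$ with $\mfat((a,b))\subseteq B_0$ must have $a\in\{-\infty,0\}$, hence $b\le A$; equivalently, $a>A$ forces $\mfat((a,b))\subseteq B\setminus B_0$, i.e.\ $G=(s_+,\infty)$ or $G$ a bounded gap, which completes the dichotomy. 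The points I would handle most carefully are: the precise Silverstein--Choi description of the edges of $\support(\mu)$ (that each finite edge is $x(\mfat_0)$ for an interior $\mfat_0\in B$ with $x'(\mfat_0)=0$), used both for ``maximal subinterval'' and for the $\varepsilon$-argument above; the sign of $1-h$ near $\mfat=0$ and $\mfat=-(cs_-)^{-1}$, which branches on whether $c<1$, $c=1$ or $c>1$; and the degenerate cases $s_-=0$ and $A=0$.
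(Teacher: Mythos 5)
Your proof is correct, but it reaches the conclusion by a genuinely different route in the hard direction. Both arguments ultimately hinge on locating $-(c\mfat(x))^{-1}$ relative to $\support(\nu)$: the paper fixes $x_0\in(a,b)$, sets $E=[0,-(c\mfat(x_0))^{-1}]$, proves the equivalence $\nu(E)>0\Leftrightarrow a>A$, and then transfers the conclusion from $x_0$ to the other points of $(a,b)$ via $\nu(E\bigtriangleup E_1)=0$. The forward implication is established there by a three-way case split on $c<1$, $c=1$, $c>1$, using Proposition~\ref{weight} (the atom of $\mu$ at $0$) and the boundary behaviour of $x(\mfat)$. You instead track the connected component $G$ of $\support(\nu)^c$ containing $-(c\mfat(x))^{-1}$ and prove the dichotomy $b\le A\Leftrightarrow G=(-\infty,s_-)$; your hard direction replaces the $c$-dependent case analysis by the identity $x'(\mfat)=\mfat^{-2}\bigl(1-h(\mfat)\bigr)$ and the monotonicity of $h$ on $(0,\infty)$ and on $(-\infty,-(cs_-)^{-1})$, which forces any maximal $x'>0$ subinterval of $B_0$ to be an initial segment and hence forces $a\in\{-\infty,0\}$. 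This buys a unified treatment of the three regimes of $c$ (they are absorbed into the limiting value $h\to 1/c$), and your choice of $E$ ($\R_+$ or $[0,g_1+\varepsilon)$) works simultaneously for all $x\in(a,b)$, so you avoid the symmetric-difference step. The price is that you lean more heavily on the explicit structure of $B$ and of the derivative of $x(\cdot)$, whereas the paper only quotes Propositions~\ref{support}, \ref{disjoint} and \ref{weight} as black boxes.

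One small repair: in the bounded-gap case of~(ii) you require $g_1+\varepsilon<\inf_{x\in(a,b)}\bigl(-(c\mfat(x))^{-1}\bigr)$ and justify the strict inequality over $g_1$ by the claim that the edge value $\mfat(a^+)$ is an interior point of the relevant component of $B$ — a Silverstein--Choi fact you have not established from the quoted propositions, and which you rightly flag as delicate. But this condition is not needed: any $\varepsilon\in(0,g_2-g_1)$ already gives $E\cap\support(\nu)\subseteq[0,g_1]$, and for each individual $x\in(a,b)$ one has $g_1<-(c\mfat(x))^{-1}$ simply because $-(c\mfat(x))^{-1}\in G=(g_1,g_2)$, so the integrand is strictly negative on $E\cap\support(\nu)$ and $q(x)<0$ follows. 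Dropping the superfluous condition removes the only step of your argument that rests on an unproved input.
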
 

\begin{proof} 
The proof is based on the results of Section \ref{subsec-jack}. 
To have an illustration of some of the proof arguments, the reader may refer 
to Figures \ref{fig:c<1} and \ref{fig:c>1} which provide typical plots of 
$x(\mfat)$ for $c<1$ and $c>1$ respectively. \\
We start by fixing a point $x_0$ in $(a,b)$, we write $\mfat_0 = \mfat(x_0)$ 
and we choose in the remainder of the proof $E = [0, - (c\mfat_0)^{-1}]$ with 
the convention $E = \emptyset$ when $\mfat_0 > 0$. We already assumed that 
$\nu(\{0\}) = 0$ in Section \ref{subsec-jack}. Since 
$- (c\mfat_0)^{-1} \in \support(\nu)^c$ by 
Proposition \ref{support}, $\nu(\{- (c\mfat_0)^{-1}\}) = 0$. Hence 
$\nu(\partial E) = 0$. To prove the lemma, we shall show that  
\begin{equation}
\label{nu-A} 
\nu(E) > 0 \Leftrightarrow a > A . 
\end{equation} 
To see why \eqref{nu-A} proves the lemma, consider first $a > A$. Then
$\mfat_0 < 0$ since $\nu(E) > 0$. Moreover $1 + c\mfat_0 t \geq 0$ for any 
$t \in E$. 
It results that $q(x_0) < 0$. Consider now another point $x_1 \in (a,b)$, and 
let $\mfat_1 = \mfat(x_1)$ and $E_1 = [0, - (c\mfat_1)^{-1}]$. By the same 
argument as for $x_0$, we get that 
$\int_{E_1} \mfat(x)(1+c\mfat(x)t)^{-1} \nu(dt) < 0$. But Proposition 
\ref{support} shows that the closed interval between $\mfat_0$ and $\mfat_1$ 
belongs to the set $B$.
It results that $\nu(E \bigtriangleup E_1) = 0$ where $E \bigtriangleup E_1$
is the symmetric difference between $E$ and $E_1$. Hence $q(x_1) < 0$ and 
$(ii)$ is true. \\ 
Assume now that $b \leq A$. If $\mfat_0 > 0$, then for all $x_1 \in (a,b)$,
$\mfat_1 = \mfat(x_1) > 0$ since the segment between $\mfat_0$ and $\mfat_1$
belongs to $B$, and $(i)$ is true. Assume that $\mfat_0 < 0$. Then since 
$\nu(E) = 0$, for any $t \in \support(\nu)$, $t > -(c \mfat_0)^{-1}$, hence 
$\mfat_0 (1 + c \mfat_0 t)^{-1} > 0$. 
If we take another point $x_1 \in (a,b)$, then the associated
set $E_1$ will also satisfy $\nu(E_1) = 0$ since 
$\nu(E \bigtriangleup E_1) = 0$. Hence we also have 
$\mfat_1 (1 + c \mfat_1 t)^{-1} > 0$ for any $t \in \support(\nu)$, which
proves $(i)$. \\ 
We now prove \eqref{nu-A} in the $\Leftarrow$ direction, showing that 
$x_0 > A \Rightarrow \nu(E) > 0$.  
Since $\mfat(z)$ is the Stieltjes Transform of a probability measure supported 
by $\R_+$, the function $\mfat(x)$ decreases to zero as $x\to-\infty$. 
Furthermore, $(0,\infty)$ belongs to $B$. Hence, by Proposition \ref{support}, 
$x_0 > A \Rightarrow \mfat_0 < 0$. 
Assume that $\nu(E) = 0$. Then $(-\infty, \mfat_0] \subset B$. Since 
$t > - (c\mfat_0)^{-1}$ in the integral in 
\eqref{x(m)}, $x(\mfat) \to 0$ as $\mfat\to-\infty$ by the dominated 
convergence theorem. By Propositions \ref{lim-m(z)}, \ref{support} and 
\ref{disjoint}, $x(\mfat)$ should be increasing from $0$ to $x_0$ on 
$(-\infty, \mfat_0]$. This contradicts $x_0 > A$. \\ 
We now prove \eqref{nu-A} in the $\Rightarrow$ direction. To that end, 
we consider in turn the cases $c < 1$, $c > 1$ and $c=1$. \\ 
Assume $c < 1$. 
Since $\mfat(z)$ is the Stieltjes Transform of a probability measure supported 
by $\R_+$, the function $\mfat(x)$ decreases to zero as $x\to-\infty$. 
Hence $x(\mfat) \to - \infty$ as $\mfat \to 0^+$.
From \eqref{x(m)} we notice that $\mfat x(\mfat) \to (1-c) / c > 0$ as $\mfat\to\infty$, 
hence $x(\mfat)$ reaches a positive maximum on $(0, \infty)$. By Propositions 
\ref{support} and \ref{disjoint}, this maximum is $A$, and we have 
$x < A \Rightarrow \mfat(x) > 0$. Therefore, $x_0 < A \Rightarrow 
\nu(E) = 0$. \\ 
Consider now the case $c > 1$. We shall also show that $x_0 < A \Rightarrow 
\nu(E) = 0$. 
By Proposition \ref{weight}, the measure $\mu$ has a Dirac at zero with
weight $1-c^{-1}$. Hence, either $x_0 < 0$, or $A > 0$ and $0 < x_0 < A$. 
Since $\mfat(z)$ is the Stieltjes Transform of a probability measure supported 
by $\R_+$, it holds that $x < 0 \Rightarrow \mfat(x) > 0$. Hence, 
$\nu(E) = 0$ when $x_0 < 0$. We now consider the second case. Since 
$(0, x_0] \subset \support(\mu)^c$, the image of this interval by $\mfat$
belongs to $B$. By Proposition \ref{weight}, $\lim_{x\to 0^+} \mfat(x) = 
-\infty$. Hence this image is $(-\infty, \mfat_0]$. This implies that 
$\nu(E) = 0$. \\
We finally consider the case $c=1$. We show here that $A=0$, which will 
result in $x_0 < A \Rightarrow \mfat_0 > 0 \Rightarrow \nu(E) = 0$ as above. 
Assume $A > 0$ and let $x_1 \in (0,A)$. By Proposition \ref{weight}, 
$\mu(\{0\}) = 0$ hence $\mfat(x_1) = \int(t-x_1)^{-1} \mu(dt) > 0$. But from 
\eqref{x(m)}, we easily show that $x(\mfat)$ increases from $-\infty$ to $0$ as 
$\mfat$ increases from $0$ to $\infty$, which raises a contradiction. 
This concludes the proof of Lemma \ref{m(1+ctm)}. 
\end{proof} 

 \begin{figure}[t]
  \begin{center}
    \includegraphics[width=0.7\linewidth]{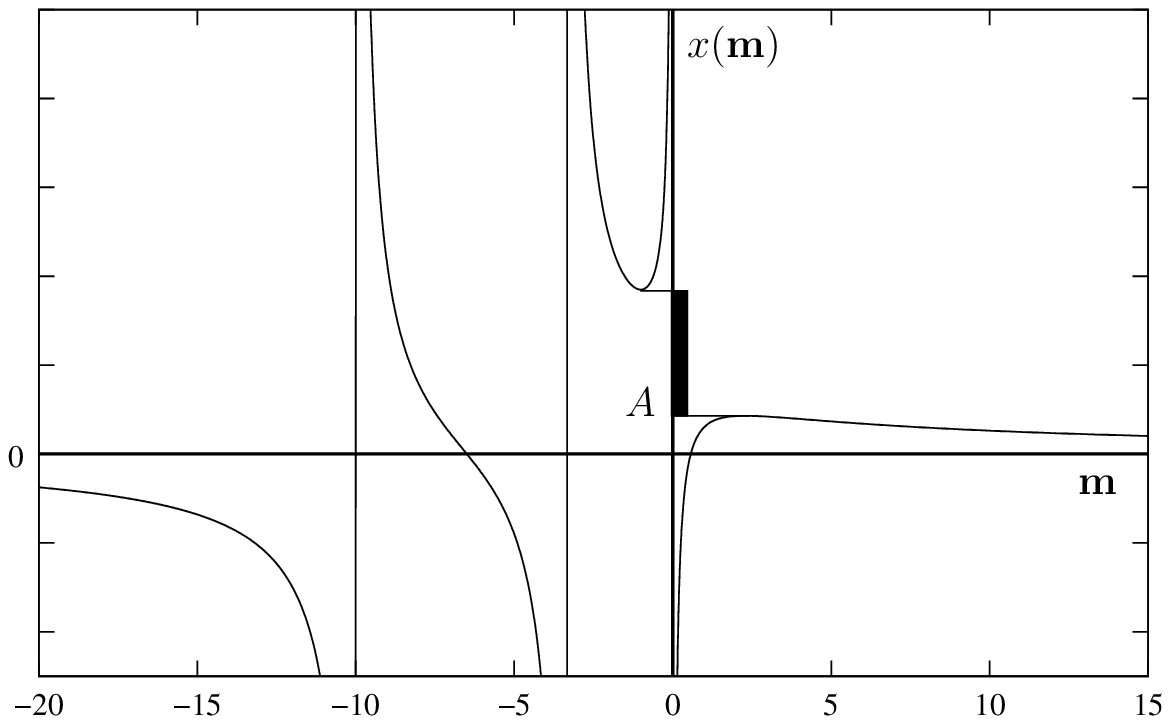}
  \end{center}
  \caption{Plot of $x(\mfat)$ for $c=0.1$ and 
    $\nu = 0.5(\bs\delta_1 + \bs\delta_3)$. The thick segment represents 
$\support(\mu)$.} 
  \label{fig:c<1}
\end{figure}
\begin{figure}[t]
  \begin{center}
    \includegraphics[width=0.7\linewidth]{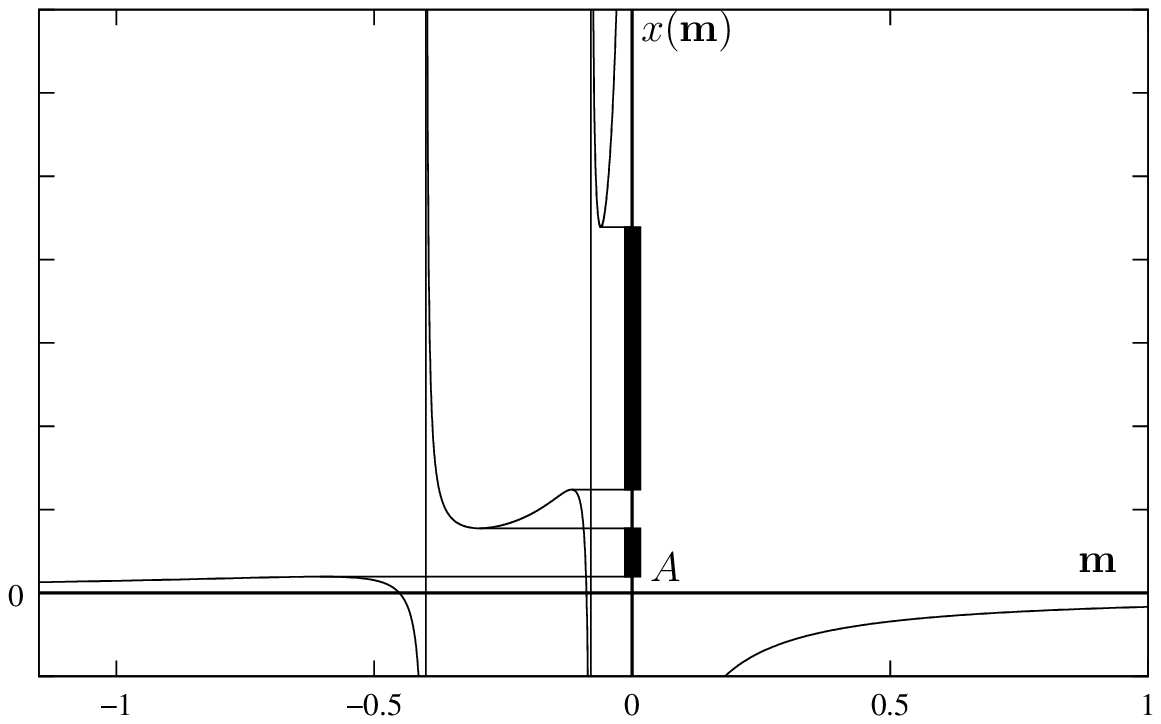}
  \end{center}
  \caption{Plot of $x(\mfat)$ for $c=5$ and 
            $\nu = 0.5(\bs\delta_{1/2} + \bs\delta_{5/2})$. The thick
segments represent $\support(\mu)$.} 
  \label{fig:c>1}
\end{figure}

This lemma shows that for any $x < \inf(\support(\mu) - \{ 0 \})$, 
$H_*(x) \geq 0$, hence ${\cal D}(x) > 0$ for those $x$. This proves Theorem 
\ref{1st-ord}-\eqref{no-spikes}. \\ 
Turning to Theorem \ref{partout}, let $E$ be a Borel set associated to
$(a,b)$ by Lemma \ref{m(1+ctm)}-$(ii)$ and let $q(x)$ be the function
defined in the statement of that lemma. The argument preceding Theorem
\ref{1st-ord} shows that the extension of $q(x)$ to $\C_+$ is the Stieltjes 
Transform of a positive measure. It results that $q(x)$ is negative and 
increasing on $(a,b)$. Let 
${\bs \Omega} = \diag( \omega_1^2, \ldots, \omega_r^2)$ where 
$\omega_k^2 = - 1 / q(\rho_k)$. Then it is clear that the function 
$\cal D(x) = \det( q(x) \bs \Omega + I_r)$ has $r$ roots in $(a,b)$ 
which coincide with the $\rho_k$. Theorem \ref{partout} 
will be proven if we find a sequence of matrices $P_n$ for which 
$H_*(z) = q(z) \bs\Omega$, \emph{i.e.}, $\Lambda_*(dt) = \1_E(t) \, \bs \Omega
\, \nu(dt)$. \\ 
Rearrange the elements of $ D_n$ in such a way that all the 
$ d_j^n$ which belong to $E$ are in the top left corner of this matrix. 
Let $M_n = [ M^n_{ij} ]$ be a random $\lfloor n \nu(E) \rfloor \times r$ 
matrix with iid elements such that $\sqrt{n} M^n_{11}$ has mean zero and 
variance one. Let $Z_n$ be the $n \times r$ matrix obtained by adding 
$n - \lfloor n \nu(E) \rfloor$ rows of zeros below $M_n$. 
% Using Assumption \ref{assump:D}, 
Then the law of large numbers shows in conjunction with a normal 
family theorem argument that there is a set of probability one over which 
$z m_n(z) Z_n^* \widetilde T_n(z) Z_n$ converges to $q(z) I_r$ uniformly 
on the compact subsets of $(a,b)$.  
Consequently, there exists a sequence of deterministic matrices $B_n$ such
that $z m_n(z) B_n^* \widetilde T_n(z) B_n \to q(z) I_r$ uniformly on
these compact subsets. Matrix $P_n = A_n B_n^*$ with 
$A_n = \begin{bmatrix} \bs \Omega^{1/2} \\ 0_{(N-r) \times r} \end{bmatrix}$ 
satisfies the required property. Theorem \ref{partout} is proven. 

\subsection*{Proof of Corollary \ref{1stord-sp}} 
Observe from Proposition \ref{equiv-deter} that
$\int (1 + c\mfat(z)t)^{-1} \nu(dt) = - z \lim (n^{-1} \tr 
\widetilde T_n(z)) = - c z \mfat(z) + 1-c$. Consequently, in this particular
case, $H_*(x)$ is unitarily equivalent to 
$- \mfat(x) \left( c x \mfat(x) - 1 + c \right) {\bs \Omega}$ on $(a,b)$.

\section{Proof of the second order result} 
\label{sec:2ord}

We start by briefly showing Proposition \ref{var-biais}. 

\subsection*{Proof of Proposition \ref{var-biais}} 
For any $i = 1,\ldots, p$, it is clear that $\mfat(\rho_i)^2>0$ and 
$\mfat'(\rho_i)>0$. An immediate calculus then gives
$\mfat'(\rho_i) {\bs\Delta}(\rho_i) = \mfat^2(\rho_i)$ 
% $$ \mfat'(\rho_i) \left( 1 - c\mfat(\rho_i)^2 \int \frac{t^2}{(1+c\mfat(\rho_i) t)^2} \nu(dt) \right) = \mfat^2(\rho_i)$$
which shows that ${\bs \Delta}(\rho_i)  > 0$. \\
To prove the second fact, we shall establish more generally that  
$\limsup \sqrt{n} \| H_n(\rho_i) + g(\rho_i) {\bs\Omega} \| < \infty$.
 Invoking 
Equation \eqref{m=f(m)} and its analogue 
$m_n(z) = (-z + \int t (1+c_n m_n(z)t)^{-1} \nu_n(dt))^{-1}$, taking the 
difference and doing some straightforward derivations, we get that 
$(m_n(\rho_i) - \mfat(\rho_i)) ({\bs\Delta}(\rho_i) + \varepsilon_1) = 
\varepsilon_2$ where $\varepsilon_1 \to 0$ and where $|\varepsilon_2 | 
\leq K / \sqrt{n}$ thanks to the first two items of Assumption \ref{fast-cvg}.  
Hence $|m_n(\rho_i) - \mfat(\rho_i)| \leq K/\sqrt{n}$. Now we have
\begin{align*} 
H_n(\rho_i) + g(\rho_i) {\bs\Omega} &= 
\int \left( \frac{m_n(\rho_i)}{1 + c_n m_n(\rho_i) t} 
- \frac{\mfat(\rho_i)}{1 + c \mfat(\rho_i) t} \right) \Lambda_n(dt)  \\ 
&
+ \int \frac{\mfat(\rho_i)}{1 + c \mfat(\rho_i) t} \Lambda_n(dt)  
- \int \frac{\mfat(\rho_i)}{1 + c \mfat(\rho_i) t} \nu(dt) \times {\bs\Omega} . 
\end{align*} 
which shows thanks to Assumption \ref{fast-cvg} that 
$\limsup \sqrt{n} \| H_n(\rho_i) + g(\rho_i) {\bs\Omega} \| < \infty$.

\subsection*{Proof of Theorem \ref{th-2ord}} 
In all the remainder of this section, we shall work on a sequence of 
factorizations $P_n = U_n R_n^*$ such that $\Lambda_n$ satisfies the third 
item of Assumption \ref{fast-cvg}. We also write 
$U_n = [ U_{1,n} \, \cdots \, U_{t,n} ]$ and 
$R_n = [ R_{1,n} \, \cdots \, R_{t,n} ]$ where 
$U_{i,n} \in \C^{N \times j_i}$ and $R_{i,n} \in \C^{n \times j_i}$. \\

We now enter the core of the proof Theorem \ref{th-2ord}. The following 
preliminary lemmas are proven in the appendix: 

\begin{lemma}
\label{lm:clt2} 
Let $s$ be a fixed integer, and let $Z_N = [Z_{ij}] $ be a $N \times s$ 
complex matrix with iid elements with independent ${\mathcal N}(0,1/2)$ real 
and imaginary parts. Let $\Upsilon_N = [\Upsilon_{ij}] $ be a deterministic 
Hermitian $N\times N$ matrix such that $\tr \Upsilon_N = 0$, and let 
$F_N = [F_{ij}] $ be a complex deterministic $N \times s$ matrix. Assume that 
$F_N^* F_N \to {\bs\varsigma}^2 I_s$, that
$\limsup_N \| \Upsilon_N \| < \infty$, and that 
$N^{-1} \tr \Upsilon_N^2 \to {\bs\sigma}^2$ as $N\to\infty$. 
Let $M$ be a $s\times s$ complex matrix with iid elements with independent 
${\mathcal N}(0,1/2)$ real and imaginary parts, and let $G$ be a $s\times s$
GUE matrix independent of $M$. Then 
\[
\left( N^{-1/2} Z_N^* \Upsilon_N Z_N, Z_N^* F_N  \right) 
\xrightarrow[N\to\infty]{{\mathcal D}} 
\left( {\bs\sigma} G , {\bs\varsigma} M \right).
\]
\end{lemma}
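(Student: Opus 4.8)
The plan is to prove joint convergence via the Cramér–Wold device, showing that every real-linear combination of the entries of the pair $\left( N^{-1/2} Z_N^* \Upsilon_N Z_N, \ Z_N^* F_N \right)$ converges in law to the corresponding linear combination of $\left( {\bs\sigma} G, \ {\bs\varsigma} M \right)$. Writing $Z_N = [\,z_1 \ \cdots \ z_s\,]$ for the columns, a general such combination is a sum of terms of the form $N^{-1/2} z_a^* \Upsilon_N z_b$ and $z_a^* f_b$, where $f_b$ is column $b$ of $F_N$. Since the $z_a$ are jointly Gaussian with independent $\mathcal N(0,1/2)$ real and imaginary parts, one natural route is to combine a martingale CLT argument for the quadratic part with the fact that the linear part $z_a^* f_b$ is already \emph{exactly} Gaussian for each finite $N$, then check the joint Gaussian limit by computing covariances.

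The key steps, in order: (i) Reduce to the case where $\Upsilon_N$ is real diagonal — by the unitary invariance of the distribution of $Z_N$, replacing $\Upsilon_N$ by $V^*\Upsilon_N V$ (with $V$ its eigenvector matrix) replaces $Z_N$ by $V^* Z_N$, which has the same law, and replaces $F_N$ by $V^* F_N$, which still satisfies $F_N^* F_N \to {\bs\varsigma}^2 I_s$. (ii) Establish the mean and covariance structure of the limit. For the quadratic part, $\E[N^{-1/2} z_a^* \Upsilon_N z_b] = N^{-1/2}\delta(a-b)\tr\Upsilon_N = 0$ by hypothesis; a direct second-moment computation using $\E|Z_{ij}|^2 = 1$, $\E Z_{ij}^2 = 0$, and the diagonal reduction gives $\E|N^{-1/2} z_a^*\Upsilon_N z_b|^2 \to {\bs\sigma}^2$ for $a\neq b$ and, for $a=b$, $\var(N^{-1/2} z_a^*\Upsilon_N z_a) = N^{-1}\sum_i \Upsilon_{ii}^2 \to {\bs\sigma}^2$, matching the GUE normalization; cross-covariances between distinct off-diagonal entries and between quadratic and linear terms vanish in the limit, so the limiting objects $G$ and $M$ are independent, and $G$ is GUE. (iii) Prove asymptotic normality of the quadratic part: for a fixed linear combination $S_N = \sum_{a,b} c_{ab} N^{-1/2} z_a^*\Upsilon_N z_b$ (Hermitian part), filter on the $\sigma$-algebras generated by the first $k$ rows of $Z_N$ and apply a martingale CLT (Lyapunov/Lindeberg condition), using $\limsup\|\Upsilon_N\| < \infty$ to control the fourth moments and the conditional variances, which converge to the constant computed in (ii). (iv) Handle the linear part jointly: $z_a^* f_b$ is Gaussian for each $N$ with covariance $(F_N^* F_N)_{ba}\to {\bs\varsigma}^2\delta(a-b)$, so any linear combination of real/imaginary parts of the full pair is a sum of an asymptotically normal term and an exactly-Gaussian term with asymptotically decoupled covariance; conclude via the computed joint covariance and Slutsky-type arguments.

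The main obstacle I expect is step (iii): establishing the CLT for the quadratic form $z_a^*\Upsilon_N z_a$ (and the mixed forms $z_a^*\Upsilon_N z_b$) with the correct limiting variance, while simultaneously tracking that the quadratic and linear parts do not contribute cross-covariance in the limit. The bookkeeping of the covariance structure across the $s^2 + s$ complex entries — separating real and imaginary parts, verifying that off-diagonal entries of $N^{-1/2}Z_N^*\Upsilon_N Z_N$ behave like independent complex Gaussians while diagonal entries are real Gaussians, and that all of these are asymptotically independent of $Z_N^* F_N$ — is where the care is needed. An alternative to the martingale CLT that may streamline (iii) is a moment/Wick-calculus argument: since all objects are polynomials in jointly Gaussian variables, one can compute all joint moments of $\left(N^{-1/2}Z_N^*\Upsilon_N Z_N, Z_N^*F_N\right)$ by Wick's theorem and check that they converge to the Gaussian moments of $({\bs\sigma}G,{\bs\varsigma}M)$; this avoids tightness/Lindeberg verifications at the cost of a somewhat longer combinatorial computation, and either route is acceptable here.
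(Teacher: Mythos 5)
Your proposal is correct but follows a genuinely different route from the paper. The paper also uses Cram\'er--Wold, forming $\Gamma_N = N^{-1/2}\tr A Z_N^*\Upsilon_N Z_N + \Re(\tr B Z_N^* F_N)$, but then establishes the CLT by showing that the characteristic function $\varphi_N(t)=\E[\exp(\imath t\Gamma_N)]$ satisfies an approximate ODE $\varphi_N'(t)=-t{\bs v}^2\varphi_N(t)+\varepsilon_N(t)$ with $\varepsilon_N\to 0$, via the Gaussian integration-by-parts formula and Poincar\'e--Nash variance bounds --- the same toolkit used throughout the paper. You instead propose (i) diagonalizing $\Upsilon_N$ by unitary invariance of $Z_N$ (which the paper does not do, but which is valid and simplifies the variance computations), (ii) computing the limiting covariance structure directly by Wick/second moments and noting that $Z_N^*F_N$ is exactly Gaussian at every $N$, and (iii) establishing asymptotic normality of the quadratic part via a martingale CLT filtering on rows, or alternatively by a joint-moments/Wick-calculus argument. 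Both routes work. The paper's ODE-for-characteristic-function method integrates seamlessly with the rest of its machinery (especially since an entirely analogous regularized version of this argument is needed for Lemma~\ref{le:sigman}, where $\widetilde Q$ replaces the explicit Gaussian quadratic form and the martingale route would be far less convenient); your approach is more elementary and self-contained for this specific lemma, at the cost of the covariance bookkeeping across the $s^2+s$ complex entries that you correctly flag as the delicate point. One small bonus of your covariance computation worth stating explicitly: because $\tr\Upsilon_N=0$ exactly, the covariance $\E\bigl[N^{-1}(z_a^*\Upsilon_N z_a)(z_c^*\Upsilon_N z_c)\bigr]$ for $a\neq c$ contains a term $N^{-1}(\tr\Upsilon_N)^2$ that vanishes identically rather than merely asymptotically, and the quadratic--linear cross-moments vanish by parity, so the asymptotic independence you need is in fact exact at each $N$ up to the higher-order Wick pairings.
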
 

\begin{lemma}
\label{lm-aQYYQa}
For $x \in \support(\mu)^c$,
\begin{align*} 
	&\E\left[\phi_n \tilde{u}_n^* \widetilde Q_n(x)(n^{-1} Y_n^* Y_n) 
	\widetilde Q_n(x) \tilde{u}_n \right] = c_n
	\frac{x^2 m_n(x)^2 \tilde{u}_n^* D_n \widetilde T_n^2(x) \tilde{u}_n}
{1-c_n x^2 m_n(x)^2 \frac 1n \tr D_n^2 \widetilde T_n^2(x)} 
+ \mathcal O(n^{-1})
\end{align*} 
and
\[
\var \left(\phi_n \tilde{u}_n^* \widetilde Q_n(x) (n^{-1} Y_n^* Y_n) 
\widetilde Q_n(x) \tilde{u}_n \right) \leq \frac Kn . 
\]
\end{lemma}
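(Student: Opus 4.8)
The plan is to reduce the whole statement to the already-established control of the bilinear form $\phi_n\tilde u_n^*\widetilde Q_n(x)\tilde u_n$ and its first $x$-derivative, using two elementary identities. Since $\widetilde Q_n(x) = (n^{-1}Y_n^*Y_n - x I_n)^{-1}$, one has $n^{-1}Y_n^*Y_n = \widetilde Q_n(x)^{-1} + x I_n$ and $\partial_x\widetilde Q_n(x) = \widetilde Q_n(x)^2$, whence
\[
\widetilde Q_n(x)\,(n^{-1}Y_n^*Y_n)\,\widetilde Q_n(x) = \widetilde Q_n(x) + x\,\widetilde Q_n(x)^2 = \widetilde Q_n(x) + x\,\partial_x\widetilde Q_n(x).
\]
As $\phi_n$ does not depend on $x$, the random variable under study equals $\phi_n\tilde u_n^*\widetilde Q_n(x)\tilde u_n + x\,\partial_x\bigl(\phi_n\tilde u_n^*\widetilde Q_n(x)\tilde u_n\bigr)$, and I intend to get the two needed pieces by differentiating Lemmas \ref{lm-var} and \ref{mean-fq}. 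The preliminary remark that makes this legitimate is that, on a fixed complex neighbourhood $V$ of $\cal I$, the map $z\mapsto\phi_n\tilde u_n^*\widetilde Q_n(z)\tilde u_n$ is analytic and bounded by a deterministic constant: on $\{\phi_n\neq 0\}$ all nonzero eigenvalues of $n^{-1}Y_nY_n^*$ lie in ${\cal S}_\varepsilon$, which is bounded away from $\cal I$, so $\|\widetilde Q_n(z)\|$ is bounded for $z\in V$; on $\{\phi_n=0\}$ the function vanishes identically.

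For the variance, bound $\var\bigl(\phi_n\tilde u_n^*\widetilde Q_n(x)(n^{-1}Y_n^*Y_n)\widetilde Q_n(x)\tilde u_n\bigr)\le 2\var\bigl(\phi_n\tilde u_n^*\widetilde Q_n(x)\tilde u_n\bigr) + 2x^2\var\bigl(\phi_n\tilde u_n^*\widetilde Q_n(x)^2\tilde u_n\bigr)$. The first term is $\le K/n$ by Lemma \ref{lm-var}. For the second, write $\phi_n\tilde u_n^*\widetilde Q_n(x)^2\tilde u_n = (2\imath\pi)^{-1}\oint_{|z-x|=\eta}(z-x)^{-2}\phi_n\tilde u_n^*\widetilde Q_n(z)\tilde u_n\,dz$ for a small fixed $\eta$ with the circle inside $V$, and apply the integral Minkowski inequality for the standard deviation together with the bound $\var\bigl(\phi_n\tilde u_n^*\widetilde Q_n(z)\tilde u_n\bigr)\le\sqrt K/n$ on that circle (a consequence of Lemma \ref{lm-var}); this gives $\var\bigl(\phi_n\tilde u_n^*\widetilde Q_n(x)^2\tilde u_n\bigr)\le K/n$. (Alternatively one may apply the Poincar\'e--Nash inequality directly to $\phi_n\tilde u_n^*\widetilde Q_n(x)^2\tilde u_n$ using the derivative formula for $\widetilde Q_{pq}$ in Section \ref{diff-det}.)

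For the expectation, the deterministic boundedness of $\phi_n\tilde u_n^*\widetilde Q_n(z)\tilde u_n$ on $V$ justifies interchanging $\partial_x$ with $\E$, so
\[
\E\bigl[\phi_n\tilde u_n^*\widetilde Q_n(x)(n^{-1}Y_n^*Y_n)\widetilde Q_n(x)\tilde u_n\bigr] = \E\bigl[\phi_n\tilde u_n^*\widetilde Q_n(x)\tilde u_n\bigr] + x\,\partial_x\E\bigl[\phi_n\tilde u_n^*\widetilde Q_n(x)\tilde u_n\bigr].
\]
By Lemma \ref{mean-fq} and the analyticity of $\widetilde T_n$ from Lemma \ref{control-Ttilde}, the function $\E[\phi_n\tilde u_n^*\widetilde Q_n(z)\tilde u_n] - \tilde u_n^*\widetilde T_n(z)\tilde u_n$ is analytic on $V$ and bounded there by $K/n$; Cauchy's estimate then yields $\partial_x\E[\phi_n\tilde u_n^*\widetilde Q_n(x)\tilde u_n] = \partial_x\bigl(\tilde u_n^*\widetilde T_n(x)\tilde u_n\bigr) + \mathcal O(n^{-1})$. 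Hence the left-hand side equals $\tilde u_n^*\widetilde T_n(x)\tilde u_n + x\,\tilde u_n^*\partial_x\widetilde T_n(x)\tilde u_n + \mathcal O(n^{-1})$.

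Finally I identify this deterministic quantity. Differentiating $\widetilde T_n(x) = [-x(I_n + c_nm_n(x)D_n)]^{-1}$ gives $\partial_x\widetilde T_n(x) = c_n x\,m_n'(x)\,D_n\widetilde T_n(x)^2 - x^{-1}\widetilde T_n(x)$, so $\tilde u_n^*\widetilde T_n\tilde u_n + x\,\tilde u_n^*\partial_x\widetilde T_n\tilde u_n = c_n x^2 m_n'(x)\,\tilde u_n^*D_n\widetilde T_n(x)^2\tilde u_n$. Differentiating the fixed-point relation $m_n(x)^{-1} = -x + \int t(1+c_nm_n(x)t)^{-1}\nu_n(dt)$ and using $\tfrac1n\tr D_n^2\widetilde T_n(x)^2 = x^{-2}\int t^2(1+c_nm_n(x)t)^{-2}\nu_n(dt)$ yields $m_n'(x) = m_n(x)^2\bigl(1 - c_nx^2m_n(x)^2\tfrac1n\tr D_n^2\widetilde T_n(x)^2\bigr)^{-1}$, and substituting produces exactly the expression in the statement. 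The only points requiring care — rather than genuine obstacles — are the uniform analyticity and deterministic boundedness of $\phi_n\tilde u_n^*\widetilde Q_n(z)\tilde u_n$ on a fixed complex neighbourhood of $\cal I$ (needed both to differentiate under $\E$ and to transfer the $\mathcal O(n^{-1})$ rates of Lemmas \ref{lm-var} and \ref{mean-fq} to derivatives via Cauchy's formula), and the routine algebra matching $c_nx^2m_n'\,\tilde u_n^*D_n\widetilde T_n^2\tilde u_n$ with the self-consistent form.
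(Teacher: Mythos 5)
Your proof is correct, but it takes a genuinely different route from the paper. The paper proves this lemma via the Gaussian integration-by-parts machinery: it reruns the argument of Lemma \ref{mean-fq} starting from $\E[\phi \tQ_{pk}Y_{\ell k}^*Y_{\ell m}\tQ_{mr}]$, reduces the quantity to $\E[\phi\,\tilde u^*\tQ D\tQ\,\tilde u]$, and then invokes the auxiliary Lemma \ref{le:aQDQb} (itself an IP computation for $\E[\phi\,\tilde u^*\tQ(x)D\tQ(y)\tilde v]$). You instead observe that $\tQ(x)(n^{-1}Y^*Y)\tQ(x)=\tQ(x)+x\tQ(x)^2=\tQ(x)+x\,\partial_x\tQ(x)$, so the target is $\phi_n\tilde u^*\tQ(x)\tilde u + x\,\partial_x\bigl(\phi_n\tilde u^*\tQ(x)\tilde u\bigr)$, and you transfer the mean and variance bounds of Lemmas \ref{mean-fq} and \ref{lm-var} to the derivative by Cauchy's formula (justified by the deterministic boundedness of $\phi_n\tilde u^*\tQ(z)\tilde u$ on a fixed complex neighbourhood of $\cal I$, since $\phi_n\neq 0$ forces the spectrum into ${\cal S}_\varepsilon$, which avoids $\cal I$). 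The closed-form then follows by differentiating $\tT_n$ and the fixed-point equation for $m_n$; your algebra checks out and reproduces the stated expression exactly. The trade-off: your method is shorter, avoids a fresh IP computation, and makes Lemma \ref{le:aQDQb} dispensable for this particular lemma; on the other hand, your identity is intrinsically diagonal (it requires the two resolvents to have the same spectral argument), whereas the paper's Lemma \ref{le:aQDQb} treats the mixed form $\tQ(x)D\tQ(y)$ for $x\neq y$ — a level of generality the paper states but does not actually exploit for the target lemma.
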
 

%\begin{lemma}
%\label{lm-aQ^2b} 
%Let $a_n,b_n \in \C^n$ be such that $\sup_n \| a_n \| < \infty$ and 
%$\sup_n \| b_n \| < \infty$. Then, for any $x \in \support(\mu)^c$, 
%\[
%\E\left[\phi_n a_n^* \widetilde Q_n(x)^2 b_n \right] 
%= a_n^*\widetilde T_n^2(x) b_n + 
%\frac{x^2 c_n m_n(x)^2 \frac 1n \tr D_n \widetilde T_n^2(x)}
%{1-c_n x^2 m_n(x)^2 \frac 1n \tr D_n^2 \widetilde T_n^2(x)} a_n^* D_n 
%\widetilde T_n^2(x) b_n + \mathcal O(n^{-1}),  
%\] 
%and 
%\[
%\var \left(\phi_n a_n^* \widetilde Q_n(x)^2 b_n \right) \leq \frac Kn . 
%\] 
%\end{lemma}

\begin{lemma}
\label{le:sigman} 
For $i = 1,\ldots, p$, let $A_i$ be a deterministic Hermitian $j_i \times j_i$ 
matrix independent of $n$, where $p$ and the $j_i$ are as in the statement
of Theorem \ref{th-2ord}. For $i=1,\ldots, p$, let $M_{i,n}$ a $n \times j_i$ 
matrix such that $\sup_n \| M_{i,n} \| < \infty$. Then for any $t\in\R$, 
\begin{multline*} 
\E\Bigl[ \exp\Bigl( \imath \sqrt{n} t \sum_{i=1}^p 
\tr A_i 
M_{i,n}^* ( \widetilde Q_n(\rho_{i}) - \widetilde T_n(\rho_{i}) ) M_{i,n} 
\Bigr) \Bigr] = \exp(-t^2 \tilde\sigma_n^2 / 2) + {\mathcal O}(n^{-1/2}) 
\end{multline*} 
where
\begin{align*}
\tilde\sigma_n^2 = 
&\sum_{i,k=1}^p  c_n \rho_i \rho_k m_n(\rho_i) m_n(\rho_k)  \\ 
&\quad\quad\times\frac{\tr A_i M_{i,n}^* \widetilde T_n(\rho_i) D_n 
\widetilde T_n(\rho_k) M_{k,n} A_k 
M_{k,n}^* \widetilde T_n(\rho_k) D_n \widetilde T_n(\rho_i)
M_{i,n}}
{1 - c_n \rho_i\rho_k m_n(\rho_i) m_n(\rho_k) 
\frac1n\tr  D_n\widetilde T_n(\rho_i)
 D_n\widetilde T_n(\rho_k)} . 
\end{align*} 
\end{lemma}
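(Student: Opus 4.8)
Since each $\rho_i$ belongs to $(B_\mu,\infty)\subset\support(\mu)^c$ and the $A_i$ are Hermitian, the quantity
\[
\Theta_n = \sum_{i=1}^p \tr A_i\, M_{i,n}^* \bigl( \widetilde Q_n(\rho_{i}) - \widetilde T_n(\rho_{i}) \bigr) M_{i,n}
\]
is real almost surely. I would analyze the regularized characteristic function $\Phi_n(t)=\E[\phi_n\exp(\imath t\sqrt n\,\Theta_n)]$, where $\phi_n$ is the regularizer of Section~\ref{1st-order} built from a function $\psi$ whose support avoids $\rho_1,\dots,\rho_p$, so that $\phi_n\widetilde Q_n(\rho_i)$ and hence $\phi_n\Theta_n$ are bounded uniformly in $n$. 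Since $1-\E\phi_n={\mathcal O}(n^{-\ell})$ by Lemma~\ref{1-E[phi]} and $|\exp(\imath t\sqrt n\,\Theta_n)|\le 1$ almost surely, it suffices to prove that $\Phi_n(t)=\exp(-t^2\tilde\sigma_n^2/2)+{\mathcal O}(n^{-1/2})$ uniformly for $t$ in compact sets.

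The method is the classical differential-equation argument for characteristic functions. Differentiating, $\Phi_n'(t)=\imath\sqrt n\,\E[\phi_n\Theta_n\exp(\imath t\sqrt n\,\Theta_n)]$. I would then use the resolvent identity $\widetilde Q_n(\rho_i)-\widetilde T_n(\rho_i)=-\widetilde Q_n(\rho_i)\bigl(n^{-1}Y_n^*Y_n+\rho_i c_n m_n(\rho_i)D_n\bigr)\widetilde T_n(\rho_i)$ to rewrite each summand of $\Theta_n$ with explicit factors of $Y_n$, and apply the integration-by-parts formula of Section~\ref{gauss-tools} to every entry $Y_{ij}$. The $\bar Y$-derivative can fall on four kinds of objects: (i) the conjugated $Y_n$ inside the quadratic form $Y_n^*Y_n$; (ii) the resolvents $\widetilde Q_n(\rho_i)$ occurring in $\Theta_n$, through $\partial\widetilde Q_{pq}/\partial\bar Y_{ij}=-n^{-1}\widetilde Q_{pj}[Y\widetilde Q]_{iq}$; (iii) the resolvents $\widetilde Q_n(\rho_k)$ hidden inside the exponential, which brings out a factor $\imath t\sqrt n$ multiplying the $\bar Y$-derivative of $\Theta_n$; (iv) the regularizer $\phi_n$, through the first differentiation formula of Section~\ref{diff-det}.

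Term (iii) is the one that produces the Gaussian law: it carries a prefactor $(\imath\sqrt n)(\imath t\sqrt n)=-tn$, and after replacing the random bilinear forms and normalized traces that appear by their deterministic equivalents (Lemmas~\ref{mean-tr(Q)}, \ref{mean-fq}, \ref{lm-var}, and Lemma~\ref{lm-aQYYQa} together with its two-point analogue), it should collapse to $-t\,\tilde\sigma_n^2\,\Phi_n(t)+{\mathcal O}(n^{-1/2})$. The denominator $1 - c_n\rho_i\rho_k m_n(\rho_i)m_n(\rho_k)\frac1n\tr D_n\widetilde T_n(\rho_i)D_n\widetilde T_n(\rho_k)$ enters exactly as in the proof of Lemma~\ref{lm-aQYYQa}: integrating by parts the factor $Y_n^*Y_n$ produces a self-consistent equation for the relevant expectation, whose resolution yields that denominator, now carrying the two points $\rho_i$ and $\rho_k$ (it reduces to the denominator of Lemma~\ref{lm-aQYYQa} when $\rho_i=\rho_k$), while the cyclic structure $A_iM_i^*\cdots M_kA_kM_k^*\cdots M_i$ in the numerator reflects that $\tilde\sigma_n^2$ is a covariance. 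The other terms are remainder: (iv) is ${\mathcal O}(n^{-\ell})$ by Lemma~\ref{mom-phi}; for (i) and the subleading part of (ii) one isolates the fluctuating factors using the centering device ($\beta=\hat\beta+\phi_n\alpha$) from the proof of Lemma~\ref{mean-fq}, bounding them by the fourth-moment estimates of Lemma~\ref{lm-var}, the ${\mathcal O}(n^{-1/2})$ mean estimates of Lemmas~\ref{mean-fq} and~\ref{uXQv}, and Lemma~\ref{lm-aQYYQa}; the ``mean-level'' remainder of (ii) cancels, by the identity $-\rho_i\widetilde T_n(\rho_i)(I+c_n m_n(\rho_i)D_n)=I$, against the contribution of the $\rho_i c_n m_n(\rho_i)D_n$ term, so that no net first-order term survives apart from (iii).

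Assembling the estimates gives an approximate linear differential equation $\Phi_n'(t)=-t\,\tilde\sigma_n^2\,\Phi_n(t)+\varepsilon_n(t)$ with $\sup_{|t|\le T}|\varepsilon_n(t)|\le K_T n^{-1/2}$ and $\Phi_n(0)=\E\phi_n=1+{\mathcal O}(n^{-\ell})$. The sequence $\tilde\sigma_n^2$ is bounded: its numerators are controlled by \eqref{bound-Ttilde} and $\sup_n\|M_{i,n}\|<\infty$, and by Cauchy--Schwarz its denominator is, up to a vanishing error, at least $1-\sqrt{(1-{\bs\Delta}(\rho_i))(1-{\bs\Delta}(\rho_k))}$, which is positive because ${\bs\Delta}(\rho_i),{\bs\Delta}(\rho_k)\in(0,1]$ by Proposition~\ref{var-biais}. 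Integrating, $\Phi_n(t)=e^{-t^2\tilde\sigma_n^2/2}\bigl(\Phi_n(0)+\int_0^t e^{s^2\tilde\sigma_n^2/2}\varepsilon_n(s)\,ds\bigr)=e^{-t^2\tilde\sigma_n^2/2}+{\mathcal O}(n^{-1/2})$ uniformly on compact $t$-sets, which is the claim. The main obstacle is step (iii): among the numerous terms generated by the integration by parts one must single out exactly those that persist in the limit and organize them into the self-consistent relation that produces the stated form of $\tilde\sigma_n^2$, being careful with the cross terms $i\neq k$ that couple resolvents at two distinct real points.
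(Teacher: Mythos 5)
Your proposal follows essentially the same route as the paper's proof: a regularized characteristic function $\varphi_n(t)$ controlled via the approximate ODE $\varphi_n'(t)=-t\tilde\sigma_n^2\varphi_n(t)+{\mathcal O}(n^{-1/2})$, obtained by the Gaussian integration-by-parts formula in which the derivative falling on the exponential produces the variance term and a self-consistent equation yielding the two-point denominator, with the remaining terms absorbed into the error via the centering device, the variance bounds of Lemma \ref{lm-var}, and the $\phi_n$-derivative bound of Lemma \ref{mom-phi}. The only differences are cosmetic (the paper places $\phi_n$ inside the exponent and starts from $\E[\phi\,[n^{-1}Y^*Y\,\widetilde Q]_{pq}e^{\imath t\phi\Gamma}]$ rather than from your equivalent resolvent identity for $\widetilde Q-\widetilde T$), so the approaches coincide.
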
 

Replacing the $M_{i,n}$ with the blocks $R_{i,n}$ of $R_n$ in the statement of 
Lemma \ref{le:sigman} and observing that 
\[
R_n^* \widetilde T_n(\rho_i)  D_n \widetilde T_n(\rho_k) R_n = 
\int \frac{t}
{\rho_i\rho_k (1+c_n m_n(\rho_i) t) (1+c_n m_n(\rho_k) t)} 
\Lambda_n(dt) , 
\]
we obtain from the third item of Assumption \ref{fast-cvg} that 
$\tilde\sigma_n^2 \to 
\sum_{i=1}^p \bs{\tilde\sigma}_{i}^2 \tr A_i^2$ where 
\[
\bs{\tilde\sigma}_{i}^2 = 
\frac{c \omega_i^4}{\rho_i^2\bs\Delta(\rho_i)} 
\Bigl( \int \frac{ \mfat(\rho_i)t}{(1 + c{\bf m}(\rho_i) t)^2} \nu(dt) \Bigr)^2 . 
\]
Invoking the Cramer-Wold device, this means that the $p$-uple of random 
matrices 
\[ 
\sqrt{n} \left( 
R_{i,n}^* ( \widetilde Q_n(\rho_{i}) - \widetilde T_n(\rho_{i}) ) R_{i,n} 
\right)_{i=1}^p
\]
converges in distribution towards 
$(\bs{\tilde\sigma}_i \widetilde G_i)_{i=1}^p$ where 
$\widetilde G_1, \ldots, \widetilde G_p$ are independent GUE matrices 
with $\widetilde G_i \in \C^{j_i \times j_i}$. \\

Lemmas \ref{lm:clt2}--\ref{le:sigman} lead to the following result which 
plays a central role in the proof of Theorem \ref{th-2ord}: 

\begin{lemma} 
\label{le:3p}
Consider the $3p$-uple of random matrices 
\begin{align*} 
&L_n = \sqrt{n} \times \\
&\left( 
\frac{U_{i,n}^* Y_n\widetilde Q_n(\rho_i) R_{i,n}}{\sqrt{n}} ,U_{i,n}^* ( Q_n(\rho_{i}) - m_n(\rho_{i}) I_N ) U_{i,n}, \ 
R_{i,n}^* ( \widetilde Q_n(\rho_{i}) - \widetilde T_n(\rho_{i}) ) R_{i,n} 
\right)_{i=1}^p .  
\end{align*} 
Define the following quantities 
\begin{align*}
\bs\varsigma_i^2        &= \frac{\omega_i^2}{ {\bs\Delta}(\rho_i) } \int \frac{\mfat^2(\rho_i)t}{(1+c\mfat(\rho_i)t)^2}\nu(dt) \\
\bs\sigma_i^2           &= \frac{1}{ {\bs\Delta}(\rho_i) } \int \frac{\mfat^4(\rho_i)t^2}{(1+c\mfat(\rho_i)t)^2}\nu(dt) \\
\bs{\tilde\sigma}_{i}^2 &= \frac{c \omega_i^4}{\rho_i^2\bs\Delta(\rho_i)} \left( \int \frac{ \mfat(\rho_i)t}{(1 + c{\bf m}(\rho_i) t)^2} \nu(dt) \right)^2.
\end{align*}
Let $M_1, \ldots , M_p$ be random matrices such that 
$M_i \in \C^{j_i \times j_i}$ and has independent elements with independent 
${\cal N}(0,1/2)$ real and imaginary parts. 
Let $G_1, \widetilde G_1, \ldots, G_p, \widetilde G_p$ be GUE matrices 
such that $G_i, \widetilde G_i \in \C^{j_i \times j_i}$. Assume in addition 
that $M_1$, $G_1$, $\widetilde G_1$, $\ldots$, $M_p$, $G_p$, $\widetilde G_p$ are 
independent. Then %$L_n$ converges in distribution towards $( \bs\varsigma_i M_i, \bs\sigma_i G_i, \bs{\tilde\sigma}_i \widetilde G_i)_{i=1}^p$. 
\begin{align*}
	L_n \overset{\mathcal D}{\underset{n\to\infty}{\longrightarrow}} \left( \bs\varsigma_i M_i, \bs\sigma_i G_i, \bs{\tilde\sigma}_i \widetilde G_i\right)_{i=1}^p.
\end{align*}
\end{lemma}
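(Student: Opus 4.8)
The plan is to prove the convergence of the $3p$-uple $L_n$ by the Cram\'er--Wold device. Fix arbitrary matrices $A_i\in\C^{j_i\times j_i}$ and Hermitian matrices $B_i,C_i\in\C^{j_i\times j_i}$; I will show that the real scalar
\[
S_n=\sqrt n\sum_{i=1}^p\Bigl(2\Re\tr A_i\,\frac{U_{i,n}^*Y_n\widetilde Q_n(\rho_i)R_{i,n}}{\sqrt n}+\tr B_i\,U_{i,n}^*\bigl(Q_n(\rho_i)-m_n(\rho_i)I_N\bigr)U_{i,n}+\tr C_i\,R_{i,n}^*\bigl(\widetilde Q_n(\rho_i)-\widetilde T_n(\rho_i)\bigr)R_{i,n}\Bigr)
\]
converges to the centered Gaussian of variance $\sum_{i=1}^p\bigl(2\bs\varsigma_i^2\tr(A_iA_i^*)+\bs\sigma_i^2\tr B_i^2+\bs{\tilde\sigma}_i^2\tr C_i^2\bigr)$. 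The crucial point is that this limiting variance has no cross-terms between the three families, nor between distinct indices $i$, which is exactly what yields, once joint Gaussianity is in place, the asserted mutual independence of $(M_i)_i$, $(G_i)_i$ and $(\widetilde G_i)_i$. As in Section~\ref{1st-order}, each bilinear form will first be multiplied by the regularizer $\phi_n$, which by Lemma~\ref{1-E[phi]} equals $1$ outside an event of probability $\grandO(n^{-\ell})$ and makes all the moments I manipulate finite, and I will work at the real points $\rho_i\in\support(\mu)^c$ on the almost sure event of Theorem~\ref{lsm}-\eqref{noeig-sil}.

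I will compute the limiting characteristic function of $S_n$ by conditioning on the ``noise rows'' of $X_n$. Using the invariance of the law of $X_n$ under $X_n\mapsto WX_n$ for $W$ a deterministic $N\times N$ unitary matrix --- which fixes $\widetilde Q_n$ and every $R_{i,n}^*\widetilde Q_nR_{i,n}$ while conjugating $Q_n$ and left-multiplying $Y_n$ --- there is no loss in assuming $U_n=[e_{1,N},\dots,e_{r,N}]$. Write $X_n$ as its first $r$ rows $\Xi\in\C^{r\times n}$ on top of the remaining rows $\Xi'$, and let $\Xi_i$ be the sub-block of rows of $\Xi$ indexed by block $i$. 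For block $i$, conditioning on all rows of $X_n$ except $\Xi_i$, a Schur-complement identity turns $U_{i,n}^*Q_n(\rho_i)U_{i,n}$ into the $j_i\times j_i$ resolvent $\bigl(n^{-1}\Xi_iM_n(\rho_i)\Xi_i^*-\rho_i\bigr)^{-1}$ with $M_n(\rho_i)$ deterministic given that $\sigma$-field, while a rank-$r$ Woodbury expansion of $\widetilde Q_n$ shows both that $n^{-1/2}U_{i,n}^*Y_n\widetilde Q_n(\rho_i)R_{i,n}$ equals, up to $\grandO_{\PP}(n^{-1})$, an asymptotically constant multiple of $n^{-1/2}\Xi_iD_n^{1/2}\widetilde Q'_n(\rho_i)R_{i,n}$, and that $\sqrt n\bigl(R_{i,n}^*\widetilde Q_n(\rho_i)R_{i,n}-R_{i,n}^*\widetilde Q'_n(\rho_i)R_{i,n}\bigr)=o_{\PP}(1)$, where $\widetilde Q'_n(\rho_i)$ is the resolvent built from $\Xi'$ alone. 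Consequently the third family is, up to $o_{\PP}(1)$, measurable with respect to $\Xi'$, whereas the first two families are functions of $\Xi$.

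Expanding the $j_i\times j_i$ resolvent around its deterministic equivalent and replacing $M_n(\rho_i)$, $\widetilde Q'_n(\rho_i)$ and the normalized traces that arise by their limits --- using Proposition~\ref{equiv-deter} and Lemmas~\ref{control-Ttilde}, \ref{mean-tr(Q)}, \ref{mean-fq}, \ref{uXQv}, and crucially Lemma~\ref{lm-aQYYQa}, which is the source of the factor $\bs\Delta(\rho_i)^{-1}$ in the variances through the denominator $1-c_n\rho_i^2m_n(\rho_i)^2\,n^{-1}\tr D_n^2\widetilde T_n^2(\rho_i)\to\bs\Delta(\rho_i)$ --- and using that $\sqrt n$-fluctuations of normalized traces are $\grandO_{\PP}(n^{-1/2})$ and thus drop out, I obtain
\[
\sqrt n\,U_{i,n}^*\bigl(Q_n(\rho_i)-m_n(\rho_i)I_N\bigr)U_{i,n}=n^{-1/2}\Xi_i\Upsilon_{i,n}\Xi_i^*+o_{\PP}(1),\qquad U_{i,n}^*Y_n\widetilde Q_n(\rho_i)R_{i,n}=\Xi_iF_{i,n}+o_{\PP}(1),
\]
with $\Upsilon_{i,n}$ a Hermitian $n\times n$ matrix, asymptotically deterministic, satisfying $\tr\Upsilon_{i,n}=0$ (this vanishing being exactly what the centering by $m_n(\rho_i)I_N$ buys, so that Lemma~\ref{lm:clt2} applies) and $n^{-1}\tr\Upsilon_{i,n}^2\to\bs\sigma_i^2$, and with $F_{i,n}$ an $n\times j_i$ matrix, asymptotically deterministic, such that $F_{i,n}^*F_{i,n}\to\bs\varsigma_i^2I_{j_i}$. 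Applying Lemma~\ref{lm:clt2} block by block (with large dimension $n$ and $Z_n=\Xi_i^*$), and using that the $\Xi_i$ are mutually independent while the coefficient matrices are asymptotically deterministic, the conditional characteristic function given $\Xi'$ of the first-two-families part of $S_n$ converges to a deterministic Gaussian characteristic function with the stated variance and with $G_i$ independent of $M_i$; meanwhile the third family is handled by Lemma~\ref{le:sigman} applied with the $R_{i,n}$ in place of the $M_{i,n}$, together with the computation following that lemma, which gives $\tilde\sigma_n^2\to\sum_i\bs{\tilde\sigma}_i^2\tr C_i^2$ via the third item of Assumption~\ref{fast-cvg} and the identity $\mfat'(\rho_i)\bs\Delta(\rho_i)=\mfat^2(\rho_i)$. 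Taking expectations over $\Xi'$ and invoking dominated convergence, the characteristic function of $S_n$ converges to the product of the two deterministic Gaussian characteristic functions, which is the required limit.

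The main obstacle will be this conditional analysis: linearizing the resolvent quadratic form $U_{i,n}^*Q_nU_{i,n}$ and the bilinear form $U_{i,n}^*Y_n\widetilde Q_nR_{i,n}$, controlling the Woodbury correction coming from the dependence of $\widetilde Q_n$ on $\Xi$ (and checking it is negligible for the third family and contributes only an asymptotically scalar factor for the first), and then verifying that the deterministic coefficient matrices $\Upsilon_{i,n}$ and $F_{i,n}$ carry precisely the prescribed limiting normalized second moments. That last identification is the computational heart of the proof; it rests on the deterministic-equivalent estimates of Lemmas~\ref{mean-tr(Q)} and~\ref{lm-aQYYQa} together with the algebraic identities for $\mfat$, $\widetilde T_n$ and $g$ collected in Section~\ref{subsec-jack} and in the proof of Proposition~\ref{var-biais}.
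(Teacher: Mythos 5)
Your proposal is correct in outline and shares the paper's overall architecture: Cram\'er--Wold, a conditioning that turns the first two families into quadratic and linear forms in an independent Gaussian block so that Lemma~\ref{lm:clt2} applies, Lemma~\ref{le:sigman} for the third family, and dominated convergence to combine the conditional limits. But the conditioning itself is genuinely different. The paper writes $n^{-1/2}X_n=W_n\Delta_n\widetilde W_n^*$, conditions on $(\Delta_n,\widetilde W_n)$, and uses the Haar distribution of $W_n$ to represent $U_n^*Q_nU_n$ and $U_n^*Y_n\widetilde Q_nR_{i,n}$ \emph{exactly in law} as $(Z_n^*Z_n)^{-1/2}Z_n^*\Upsilon_{i,n}Z_n(Z_n^*Z_n)^{-1/2}$ and $(Z_n^*Z_n)^{-1/2}Z_n^*F_{i,n}$ for an auxiliary Gaussian $Z_n$ independent of the coefficient matrices: no linearization is needed, $\tr\Upsilon_{i,n}=0$ holds identically once $m_n(\rho_i)$ is replaced by $\alpha_n(\rho_i)=N^{-1}\tr Q_n(\rho_i)$, and $N^{-1}\tr\Upsilon_{i,n}^2\to\bs\sigma_i^2$ falls out of the empirical spectral distribution with no deterministic-equivalent computation. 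Your route (canonical $U_n$, conditioning on the last $N-r$ rows, Schur complement and Woodbury) is viable and I checked that the Woodbury corrections are of the orders you claim, but it buys the same structure at the price of an approximate linearization, and two points deserve care. First, your effective coefficient matrix is $\rho_i m_n^2\,D^{1/2}\widetilde Q_n'D^{1/2}$, which does \emph{not} have zero trace; to invoke Lemma~\ref{lm:clt2} as stated you must subtract $n^{-1}\tr(D\widetilde Q_n')I_n$, handle the leftover $n^{-1}\tr(D\widetilde Q_n')\cdot n^{-1/2}(\Xi_i\Xi_i^*-nI_{j_i})$ separately, and use that it is uncorrelated with the trace-free quadratic form (precisely because the latter is trace-free) so that the variances recombine to $\rho_i^2m_n^4\,n^{-1}\tr(D\widetilde Q_n')^2$. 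Second, identifying $\bs\sigma_i^2$ then requires convergence in probability of $n^{-1}\tr\widetilde Q_n'D\widetilde Q_n'D$ to $n^{-1}\tr D\widetilde T_nD\widetilde T_n/\bs\Delta(\rho_i)$, i.e.\ Lemma~\ref{le:aQDQb} plus a variance bound; your citation of Lemma~\ref{lm-aQYYQa} covers $F_{i,n}^*F_{i,n}\to\bs\varsigma_i^2I_{j_i}$ but not this trace, which the paper obtains for free from the eigenvalues of $n^{-1}Y_nY_n^*$. Neither point is an obstruction, so your plan goes through; the Haar/SVD representation is simply what lets the paper skip both.
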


\begin{proof}
Let $\alpha_n(\rho) = N^{-1} \tr Q_n(\rho)$. By Lemmas \ref{lm-var} and 
\ref{mean-tr(Q)}, 
$\sqrt{n} ( \alpha_n(\rho_i) - m_n(\rho_i) ) \toprobashort 0$. 
Therefore, we can replace the $m_n(\rho_i)$ in the expression of $L_n$ by
$\alpha_n(\rho_i)$, as we shall do in the rest of the proof. \\ 
Write $s = j_1 + \cdots + j_p$ and let $Z_n$ be a $N \times s$ complex matrix 
with iid elements with independent ${\mathcal N}(0, 1/2)$ real and imaginary
parts. Assume that $Z_n$ and $X_n$ are independent. 
Write $Z_n = \begin{bmatrix} Z_{1,n} \cdots Z_{p,n} \end{bmatrix}$ where the
block $Z_{i,n}$ is $N\times j_i$. 
Let $n^{-1/2} X_n = W_n \Delta_n \widetilde W_n^*$ be a singular value 
decomposition of $n^{-1/2} X_n$. By assumption \ref{X:gauss}, the square 
matrices $W_n$ and $\widetilde W_n$ are Haar distributed over their respective
unitary groups, and moreover, $W_n$, $\Delta_n$ and $\widetilde W_n$ are 
independent. Let
\begin{align*} 
&\overline{L}_n = \\
&\sqrt{n} \left( 
\frac{ U_{n}^* Y_n \widetilde Q_n(\rho_i) R_{i,n}}{\sqrt{n}},U_{n}^* ( Q_n(\rho_{i}) - \alpha_n(\rho_{i}) I_N ) U_{n}, \ 
R_{i,n}^* ( \widetilde Q_n(\rho_{i}) - \widetilde T_n(\rho_{i}) ) R_{i,n} 
\right)_{i=1}^p .  
\end{align*} 
We have 
\begin{align*} 
\overline{L}_n 
\stackrel{\cal L}{=} & \Bigl( 
\sqrt{N} (Z_n^* Z_n)^{-1/2} Z_n^* F_{i,n},  
N^{1/2} (Z_n^* Z_n)^{-1/2} Z_n^* \Upsilon_{i,n} Z_n (Z_n^* Z_n)^{-1/2} , \\  
& \sqrt{n} 
R_{i,n}^* ( \widetilde Q_n(\rho_{i}) - \widetilde T_n(\rho_{i}) ) R_{i,n} 
\Bigr)_{i=1}^p 
\end{align*} 
where 
$F_{i,n} = c_n^{-1/2} \Delta_n \widetilde W_n^* D_n^{1/2} 
\widetilde Q_n(\rho_i) R_{i,n}$ and 
$\Upsilon_{i,n} = c_n^{-1/2} 
( (\Delta_n \widetilde W_n^* D_n \widetilde W_n \Delta_n - \rho_i)^{-1} 
- \alpha_n(\rho_i) I_N)$.  We shall now show that the term 
$\sqrt{N} (Z_n^* Z_n)^{-1/2} Z_n^* F_{i,n}$ can be replaced with 
$Z_n^* F_{i,n}$. By the law of large numbers, we have  
$N^{-1} Z_n^* Z_n \toasshort I_s$. 
By the independence of $Z_n$ and $(\Delta_n, \widetilde W_n)$, we have
$\E[\tr Z_n^* F_{i,n} F_{i,n}^* Z_n \, |\, 
(\Delta_n \widetilde W_n) ]
= s c_n^{-1} \tr R_{i,n}^* \widetilde Q_n(\rho_i) (n^{-1} Y_n^* Y_n) 
\widetilde Q_n(\rho_i) R_{i,n}$ 
whose limit superior is bounded with probability one. Hence 
$Z_n^* F_{i,n}$ is tight, proving that the replacement can be done. \\
By deriving the variances of the elements of 
$N^{-1/2} Z_n^* \Upsilon_{i,n} Z_n$ with 
respect to the law of $Z_n$, and by recalling that 
$\limsup_n \| \Upsilon_{i,n} \|$ is bounded with probability one, we obtain 
that these elements are also tight.  It results that we can replace $L_n$ with 
\begin{align*} 
\underline{L}_n = \left( 
Z_{i,n}^* F_{i,n}, \
\frac{Z_{i,n}^* \Upsilon_{i,n} Z_{i,n}}{\sqrt{N}},
\sqrt{n} R_{i,n}^* ( \widetilde Q_n(\rho_{i}) - \widetilde T_n(\rho_{i}) ) 
 R_{i,n} \right)_{i=1}^p .  
\end{align*} 
For $i = 1,\ldots, p$, let $A_i$ and $B_i$ be deterministic
Hermitian $j_i \times j_i$ matrices and let $C_i$ be deterministic complex 
$j_i \times j_i$ matrices, all independent of $n$. The lemma will be 
established if we prove that 
\begin{align} 
&\E\left\{ \exp\Bigl( \imath \sqrt{n} t \sum_{i=1}^p 
\tr A_i 
R_{i,n}^* ( \widetilde Q_n(\rho_{i}) - \widetilde T_n(\rho_{i}) ) R_{i,n} 
\Bigr)\nonumber \right.\\
&\quad \left.\times \E\Bigl[ 
\exp\Bigl( \imath  t \sum_{i=1}^p 
N^{-1/2} \tr B_i Z_{i,n}^* \Upsilon_{i,n} Z_{i,n} + 
\Re( \tr C_i Z_{i,n}^* F_{i,n}  )  
\Bigr) \,\Big|\, (\Delta_n, \widetilde W_n ) \Bigr] 
\right\}  \nonumber\\  
&\xrightarrow[n\to\infty]{} 
\prod_{i=1}^p \exp(-t^2 ( \bs{\tilde\sigma}_{i}^2 \tr A_i^2 
+ \bs\sigma_{i}^2 \tr B_i^2 + \frac12 \bs\varsigma_{i}^2 \tr C_iC_i^*)/ 2) . 
\label{cvg-char-fct} 
\end{align} 
In addition to the boundedness of $\| \Upsilon_{i,n} \|$ w.p.~one, 
we have $\tr \Upsilon_{i,n} = 0$, and 
\begin{multline*} 
\frac 1N \tr \Upsilon_{i,n}^2 = 
\frac{1}{c_n N} \sum_{\ell=1}^N 
\left( (\lambda_\ell^n - \rho_i)^{-1} - \alpha_n(\rho_i) \right)^2 \\ 
\toaslong c^{-1} ( \mfat'(\rho_i) - \mfat(\rho_i)^2 ) = 
c^{-1} \mfat(\rho_i)^2 \left({\bs \Delta}(\rho_i)^{-1}-1\right) 
= {\bs\sigma}_i^2 . 
\end{multline*} 
Moreover, using Lemma \ref{lm-aQYYQa} in conjunction with Assumption 
\ref{ass:sp}, we obtain
\[
F_{i,n}^* F_{i,n} = c_n^{-1} 
R_{i,n}^* \left( \widetilde Q_n(\rho_i)\frac1nY_n^* Y_n\widetilde Q_n(\rho_i) 
\right) R_{i,n} \toprobalong \bs\varsigma_i^2 I_{j_i} . 
\]
From any sequence of integers increasing to infinity, there exists a
subsequence along which this convergence holds in the almost 
sure sense. Applying Lemma \ref{lm:clt2}, we get that the inner expectation
at the left hand side of \eqref{cvg-char-fct} converges almost surely
along this subsequence towards 
$\prod_{i=1}^p \exp(-t^2 
(\bs\sigma_{i}^2 \tr B_i^2 + \frac12 \bs\varsigma_{i}^2 \tr C_iC_i^*)/ 2)$. 
Using in addition Lemma \ref{le:sigman} along with the dominated convergence
theorem, we obtain that Convergence \eqref{cvg-char-fct} holds true
along this subsequence. Since the original sequence is arbitrary, we
obtain the required result.
\end{proof}

%%%%%%%% LAST ADDITION

The remainder of the proof of Theorem~\ref{th-2ord} is an adaptation of the
approach of \cite{bgm-fluct10}. 

\begin{lemma}
\label{le:chi_n}
For a given $x \in \R$ and a given $i \in \{1,\ldots, p\}$, let 
$y = \rho_i + n^{-1/2} x$, and let 
\[
\widehat S_n(y) = 
\begin{bmatrix}
\sqrt{y} U_n^* Q_n(y) U_n & I_r + n^{-1/2} U_n^* Y_n \widetilde Q_n(y) R_n \\
I_r + n^{-1/2} R_n^* \widetilde Q_n(y) Y_n^* U_n & 
\sqrt{y} R_n^* \widetilde Q_n(y) R_n 
\end{bmatrix}  
\]
Let 
\begin{align}
\label{eq:chi_n}
&\chi^{(i)}_n(x) = n^{\frac{j_i}2}
\Bigg[\det \hat{S}_n(y) 
- \prod_{k\neq i}^t
\left[\omega_k^2  g(\rho_i) -1 \right]^{j_k} 
\nonumber \\ 
	&\times\det\Bigg( 
\frac{\sqrt{n} U_{i,n}^*\left( Q_n(\rho_i)-m_n(\rho_i)I_N\right)U_{i,n}}
{\mfat(\rho_i)} + 
\rho_i \mfat(\rho_i) \sqrt{n} R_{i,n}^* ( \tQ_n(\rho_i)-\tT_n(\rho_i))R_{i,n}
 \nonumber \\
	&- 2 \Re\left[U_{i,n}^* Y_n \widetilde{Q}_n(\rho_i) R_{i,n} \right] 
- \sqrt{n}(H_{i,n}(\rho_i)+I_{j_i}) - x H_{i,n}'(\rho_i) \Bigg) \Bigg] 
\end{align}
% where we defined $H_{i,n}(x)=-xm_n(x)R_{i,n}^*\tT_n(x)R_{i,n}$. 
%where $g(x)$ is defined in the statement of Corollary \ref{1stord-sp}. 
where $\Re(M) = (M + M^*)/2$ for a square matrix $M$. 
Then
\begin{align*}
	(\chi^{(i)}_n(x_1),\ldots,\chi^{(i)}_n(x_p))\toprobalong 0
\end{align*}
for every finite sequence $\{x_1,\ldots,x_p\}$.
\end{lemma}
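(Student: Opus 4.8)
The plan is to expand $\det\widehat S_n(y)$ around $y=\rho_i$ by exploiting the cluster block structure coming from Assumption~\ref{ass:sp}: the $2j_i\times 2j_i$ principal submatrix of $\widehat S_n(y)$ carried by $U_{i,n}$ and $R_{i,n}$ is the only place where the determinant degenerates, and everything outside it is handled by a Schur complement. Throughout we work on the probability-one event on which $n^{-1}Y_nY_n^*$ has no eigenvalue in a fixed neighborhood of $\rho_i$ for large $n$ (so $\widehat S_n$ extends analytically there) and, since $\{\phi_n=1\}$ has probability $1-o(1)$, we freely replace the $\phi_n$-regularized quantities of Lemmas~\ref{lm-var}, \ref{mean-fq}, \ref{uXQv} and Proposition~\ref{var-biais} by the plain ones. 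The recurring estimates are: every entry of $\sqrt n\,U_{k,n}^*(Q_n(z)-m_n(z)I_N)U_{\ell,n}$ and of $\sqrt n\,R_{k,n}^*(\widetilde Q_n(z)-\widetilde T_n(z))R_{\ell,n}$, as well as of $U_{k,n}^*Y_n\widetilde Q_n(z)R_{\ell,n}$, is $O_P(1)$ uniformly in $z$ on a neighborhood of $\rho_i$ (as holomorphic families, by Lemmas~\ref{lm-var}, \ref{mean-fq}, \ref{uXQv}, the normal family theorem, and Cauchy's estimates for the derivatives); $\|\sqrt n(H_{i,n}(\rho_i)+I_{j_i})\|=O(1)$ by Proposition~\ref{var-biais}, while $H_n$ is analytic and bounded uniformly in $n$ near $\rho_i$ by Lemma~\ref{control-Ttilde}; and the algebraic identity $R_{i,n}^*\widetilde T_n(z)R_{i,n}=-H_{i,n}(z)/(z\,m_n(z))$, which follows from \eqref{def-Lambda} and the definitions of $\widetilde T_n$ and $H_n$.

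First I would reorder the $2r$ rows and columns of $\widehat S_n(y)$ into the $2j_i$ indices attached to cluster $i$ and the $2(r-j_i)$ remaining ones, and write $\det\widehat S_n(y)=\det\bigl(\widehat S_n^{\bar\imath}(y)\bigr)\,\det\bigl(\widehat S_n^{\imath}(y)-\Xi_n\bigr)$, where $\widehat S_n^{\bar\imath}$ is the complementary principal submatrix, $\widehat S_n^{\imath}$ the cluster-$i$ one, and $\Xi_n$ the Schur correction. Using that $\bs\Omega$ is block-diagonal, that $m_n\to\mfat$, $\Lambda_n\to\nu\times\bs\Omega$, $c_n\to c$ at rate $n^{-1/2}$ (Assumption~\ref{fast-cvg}), and the first-order estimates, $\widehat S_n^{\bar\imath}(y)$ converges to the block-diagonal matrix whose $k$-th block ($k\neq i$) is $\bigl[\begin{smallmatrix}\sqrt{\rho_i}\mfat(\rho_i)I_{j_k}&I_{j_k}\\ I_{j_k}&\omega_k^2g(\rho_i)(\sqrt{\rho_i}\mfat(\rho_i))^{-1}I_{j_k}\end{smallmatrix}\bigr]$, so $\det\widehat S_n^{\bar\imath}(y)=\prod_{k\neq i}^t(\omega_k^2g(\rho_i)-1)^{j_k}+O_P(n^{-1/2})$; since the $\omega_k^2$ are distinct and $g$ is strictly decreasing on the component of $\support(\mu)^c$ containing $\rho_i$ (Corollary~\ref{1stord-sp}), this limit is non-zero, hence $\widehat S_n^{\bar\imath}(y)$ is invertible for large $n$ with inverse $O_P(1)$. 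The blocks of $\widehat S_n(y)$ coupling the two index groups are $O_P(n^{-1/2})$, so $\Xi_n=O_P(n^{-1})$ and, redoing the reduction of $\widehat S_n^{\imath}(y)-\Xi_n$ exactly as for $\widehat S_n^{\imath}(y)$ below, this correction alters the determinant only by $O_P(n^{-(j_i+1)/2})$.

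Next I would treat the inner block $\widehat S_n^{\imath}(y)=\bigl[\begin{smallmatrix}A&B\\ B^*&D\end{smallmatrix}\bigr]$ with $A=\sqrt y\,U_{i,n}^*Q_n(y)U_{i,n}$, $B=I_{j_i}+n^{-1/2}U_{i,n}^*Y_n\widetilde Q_n(y)R_{i,n}$, $D=\sqrt y\,R_{i,n}^*\widetilde Q_n(y)R_{i,n}$, via $\det\widehat S_n^{\imath}(y)=\det(A)\,\det(D-B^*A^{-1}B)$. Setting $a=\sqrt y\,m_n(y)$, $\tilde A=A-aI_{j_i}=\sqrt y\,U_{i,n}^*(Q_n(y)-m_n(y)I_N)U_{i,n}$, $\tilde D=D+H_{i,n}(y)/a=\sqrt y\,R_{i,n}^*(\widetilde Q_n(y)-\widetilde T_n(y))R_{i,n}$ (by the above identity), and $\hat B=U_{i,n}^*Y_n\widetilde Q_n(y)R_{i,n}$ — all of $\tilde A,\tilde D,n^{-1/2}\hat B$ being $O_P(n^{-1/2})$ — a first-order expansion of $A^{-1}$ gives
\[
D-B^*A^{-1}B = a^{-1}\Theta_n + O_P(n^{-1}),\qquad \Theta_n = -(H_{i,n}(y)+I_{j_i}) + a^{-1}\tilde A + a\tilde D - n^{-1/2}(\hat B+\hat B^*),
\]
with $\Theta_n=O_P(n^{-1/2})$; since the adjugate of $\Theta_n$ is then $O_P(n^{-(j_i-1)/2})$, the first differential of $\det$ yields $\det(D-B^*A^{-1}B)=a^{-j_i}\bigl(\det\Theta_n+O_P(n^{-(j_i+1)/2})\bigr)$. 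Multiplying $\Theta_n$ by $\sqrt n$, replacing $y$ by $\rho_i$ and the coefficients $m_n(\rho_i)^{-1}$, $y\,m_n(y)$ by $\mfat(\rho_i)^{-1}$, $\rho_i\mfat(\rho_i)$ — each replacement costing $o_P(1)$, either because it multiplies a tight quantity or because the $y\to\rho_i$ substitution inside $Q_n,\widetilde Q_n$ is, by the mean-value theorem and the uniform $O_P(n^{-1/2})$ (resp.\ $O_P(1)$) bounds on the relevant holomorphic families and their derivatives, of order $o_P(1)$ — and Taylor-expanding $H_{i,n}(y)=H_{i,n}(\rho_i)+n^{-1/2}x\,H_{i,n}'(\rho_i)+O(n^{-1})$, one identifies $\sqrt n\,\Theta_n$ with the matrix $\mathcal M_n$ inside the determinant of \eqref{eq:chi_n}, up to $o_P(1)$. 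As $\mathcal M_n=O_P(1)$, continuity of $\det$ and $\det(A)=a^{j_i}+O_P(n^{-1/2})$ give $n^{j_i/2}\det\widehat S_n^{\imath}(y)=\det\mathcal M_n+o_P(1)$; combining with the outer factorization, $n^{j_i/2}\det\widehat S_n(y)=\prod_{k\neq i}^t(\omega_k^2g(\rho_i)-1)^{j_k}\det\mathcal M_n+o_P(1)$, i.e.\ $\chi^{(i)}_n(x)=o_P(1)$. The joint statement over $\{x_1,\dots,x_p\}$ is then immediate, each coordinate converging to $0$ in probability.

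The main obstacle is the bookkeeping of lower-order terms in the last two steps: one must verify that $\Xi_n$ and the quadratic remainders in the expansions of $A^{-1}$ and of $\det$ remain $o_P(n^{-j_i/2})$, even though $\widehat S_n^{\imath}(y)$ is only \emph{nearly} singular — its determinant, not its norm, is $O_P(n^{-j_i/2})$. The device that makes this go through is to perform all expansions at the level of the $j_i\times j_i$ Schur complement $D-B^*A^{-1}B$, which is a genuinely $O_P(n^{-1/2})$ matrix, so that adjugates are $O_P(n^{-(j_i-1)/2})$ and determinants of perturbations split cleanly. A secondary point requiring care is the passage $y\to\rho_i$ at scale $\sqrt n$, licit precisely because $U_{i,n}^*(Q_n(z)-m_n(z)I_N)U_{i,n}$ and $R_{i,n}^*(\widetilde Q_n(z)-\widetilde T_n(z))R_{i,n}$ are themselves $O_P(n^{-1/2})$ uniformly near $\rho_i$, so differentiating them does not lose an order.
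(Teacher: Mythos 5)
Your proof is correct and follows essentially the same route as the paper: an outer Schur complement isolating the $2j_i\times 2j_i$ cluster-$i$ block (with the complementary block converging to an invertible limit of determinant $\prod_{k\neq i}(\omega_k^2 g(\rho_i)-1)^{j_k}$), followed by an inner Schur complement reducing that block to a $j_i\times j_i$ matrix whose $\sqrt n$-rescaling is identified with the matrix in \eqref{eq:chi_n}. The only (immaterial) difference is that you pivot the inner reduction on the $U$-block $A$ and use the identity $R_{i,n}^*\widetilde T_n(z)R_{i,n}=-H_{i,n}(z)/(z m_n(z))$ directly, whereas the paper pivots on the $R$-block $C_{11}$ and manipulates $(R_1^*\widetilde T R_1)^{-1}$; your bookkeeping of the error orders and of the uniform-in-$z$ tightness needed for the $y\to\rho_i$ substitution is, if anything, more explicit than the paper's.
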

\begin{proof}
We show the result for $i=1$, the same procedure being valid for the other 
values of $i$. The notation $X_n = o_P(1)$ means that the random variable
$X_n$ converges to zero in probability, while $X_n = {\cal O}_P(n^{-\ell})$ 
means that $n^\ell X_n$ is tight. 
Write $U = [ U_1, \bar U_1]$ and $R = [ R_1, \bar R_1 ]$ where
$\bar U_1 = [ U_2, \ldots, U_t]$ and $\bar R_1 = [ R_2, \ldots, R_t]$. Writing
\begin{align*} 
A &= \begin{bmatrix} \sqrt{y} U_1^* Q(y) U_1 &  \sqrt{y} U_1^* Q(y) \bar U_1 \\  
\sqrt{y} \bar U_1^* Q(y) U_1 &  \sqrt{y} \bar U_1^* Q(y) \bar U_1 \end{bmatrix}
:= \begin{bmatrix} A_{11} & A_{12} \\ A_{12}^* & A_{22} \end{bmatrix}, \\
B &= \begin{bmatrix} I_{j_1} + n^{-1/2} U_1^* Y \tQ(y) R_1 & 
                            n^{-1/2} U_1^* Y \tQ(y) \bar R_1  \\ 
n^{-1/2} \bar U_1^* Y \tQ(y) R_1 & 
            I_{r-j_1} + n^{-1/2} \bar U_1^* Y \tQ(y) \bar R_1 \end{bmatrix}
:= 
\begin{bmatrix} B_{11} & B_{12} \\ B_{21} & B_{22} \end{bmatrix}, \ \text{and}\\
C &= \begin{bmatrix} \sqrt{y} R_1^* \tQ(y) R_1 &  
\sqrt{y} R_1^* \tQ(y) \bar R_1 \\  
\sqrt{y} \bar R_1^* \tQ(y) R_1 &  
\sqrt{y} \bar R_1^* \tQ(y) \bar R_1 \end{bmatrix}
:= \begin{bmatrix} C_{11} & C_{12} \\ C_{12}^* & C_{22} \end{bmatrix}, 
\end{align*} 
we have 
\[
\det \widehat S = \det \begin{bmatrix} A & B \\ B^* & C \end{bmatrix} = 
\det \begin{bmatrix} 
A_{11} & B_{11} & & A_{12} & B_{12} \\ 
B_{11}^* & C_{11} & & B_{21}^* & C_{12} \\ \\ 
A_{12}^* & B_{21} & & A_{22} & B_{22} \\
B_{12}^* & C_{12}^* & & B_{22}^* & C_{22} 
\end{bmatrix} 
:= \det \begin{bmatrix} M_{11} & M_{12} \\ M_{12}^* & M_{22} \end{bmatrix} 
\] 
after a row and column permutation. Hence 
% \[
$
n^{j_1/2} \det \widehat S = 
\det M_{22} \times  
n^{j_1/2} \det( M_{11} - M_{12} M_{22}^{-1} M_{12}^* )  . 
$
% \]
Write ${\bs\Omega}=\diag(\omega_1^2I_{j_1},{\bs \Omega}_2)$. From the first 
order analysis we get that 
\[
M_{22} \toaslong 
\begin{bmatrix} \sqrt{\rho_1} \mfat(\rho_1) I_{r-j_1} &  I_{r-j_1} \\
I_{r-j_1} & 
\sqrt{\rho_1} ( c \mfat(\rho_1) - \rho_1^{-1} (1-c) ) {\bs \Omega}_2
\end{bmatrix} 
\]
which is invertible since $\det M_{22} \toasshort 
\prod_{k > 1} (\omega_k^2 g(\rho_1) - 1)^{j_k} \neq 0$. Moreover, 
$\| M_{12} \| = {\cal O}_P(n^{-1/2})$. To see this, consider for instance the 
term $\sqrt{n} C_{12} = \sqrt{ny} R_1^* (\tQ - \tT) \bar R_1 + 
\sqrt{ny} R_1^* \tT \bar R_1$. The first term is tight by Lemma 
\ref{le:3p} while the second is bounded by Assumption \ref{fast-cvg}. The
other terms are treated similarly. It results that 
$\| M_{12} M_{22}^{-1} M_{12}^* \| = {\cal O}_P(1/n)$. \\ 
In addition, 
$\det(y^{-1/2} C_{11}) \toasshort [ \omega_1^2 (c \mfat(\rho_1) - 
\rho_1^{-1} (1-c) )]^{j_1} = (\rho_1 \mfat(\rho_1))^{-j_1}$
by the definition of $\rho_1$. From these observations we get that
\begin{multline*} 
n^{j_1/2} \det \widehat S = 
\left( \prod_{k > 1} (\omega_k^2 g(\rho_1) - 1)^{j_k} + o_P(1) \right) 
\left( (\rho_1 \mfat(\rho_1))^{-j_1} + o_P(1) \right) \\ 
\times \det\left( \sqrt{ny} A_{11} - \sqrt{ny} B_{11} C_{11}^{-1} B_{11}^* + 
{\cal O}_P(n^{-1/2}) \right)  . 
\end{multline*} 
Now we make the expansion 
\begin{align}
\label{eq:determinant}
&\sqrt{ny} A_{11} - \sqrt{ny} B_{11} C_{11}^{-1} B_{11}^*  \\
&= y \sqrt{n} U_1^*(Q(y)-m_n(y)I_N) U_1+ y\sqrt{n} m_n(y)I_{j_1} \nonumber \\
&+ \sqrt{n}\left(I_{j_1}+ U_1^*\frac{Y\tQ(y)}{\sqrt{n}}R_1\right)(R_1^*\tQ(y) R_1)^{-1}(R_1^*(\tQ(y)-\tT(y))R_1)(R_1^*\tT(y) R_1)^{-1} \nonumber \\
&\times \left(I_{j_1}+ R_1^*\frac{\tQ(y) Y^*}{\sqrt{n}}U_1\right) \nonumber\\
&- \sqrt{n}\left(I_{j_1}+ U_1^*\frac{Y\tQ(y)}{\sqrt{n}}R_1\right)(R_1^*\tT(y) R_1)^{-1}\left(I_{j_1}+ R_1^*\frac{\tQ(y) Y^*}{\sqrt{n}}U_1\right). 
	\end{align}
To go further, remark that
\begin{align*}
	y\sqrt{n}& m_n(y)I_{j_1} - \sqrt{n}(R_1^*\tT(y) R_1)^{-1} \\
	&= (R_1^*\tT(y) R_1)^{-1} \left[\sqrt{n} \left( y m_n(y) R_1^*\tT(y) R_1 - \rho_1 m_n(\rho_1) R_1^*\tT(\rho_1) R_1 \right) \nonumber \right.\\ 
	& \left.+ \sqrt{n} \left(\rho_1 m_n(\rho_1) R_1^*\tT(\rho_1) R_1 - I_{j_1}\right)\right] \\
	&= \rho_1\mfat(\rho_1) 
\left[ - xH_1'(\rho_1) - \sqrt{n} (H_1(\rho_1)+I_{j_1})\right] 
+ o(1) 
\end{align*}
where we recall that $y=\rho_1+xn^{-\frac12}\to \rho_1$ and that
$R_1^*\tT(y) R_1\to (\rho_1 \mfat(\rho_1))^{-1} I_{j_1}$.
Recall from Lemma \ref{le:3p} that $\sqrt{n} U_1^* (Q - m_n I) U_1$, 
$\sqrt{n} R_1^* (\tQ - \tT) R_1$, and $U_1^* Y \tQ R_1$ are tight. Keeping 
the non negligible terms, we can write \eqref{eq:determinant} under the form
\begin{align*}
&\sqrt{ny} A_{11} - \sqrt{ny} B_{11} C_{11}^{-1} B_{11}^*  \\
	&= \rho_1 \sqrt{n} U_1^*(Q(\rho_1)-m_n(\rho_1)I_N) U_1 + 
(\rho_1 \mfat(\rho_1))^2 \sqrt{n}R_1^*(\tQ(\rho_1)-\tT(\rho_1)I_n)R_1\\
	&- 2\rho_1\mfat(\rho_1)\Re \left[U_1^*Y\tQ(\rho_1)R_1\right]
- \rho_1\mfat(\rho_1) 
\left( xH_1'(\rho_1) + \sqrt{n} (H_1(\rho_1)+I_{j_1})\right)
+ o_P(1).  
\end{align*}
Plugging this expression at the right hand side of the expression of 
$n^{j_1/2} \det \widehat S$ and observing that 
$H_{1,n}'(\rho_1) \to - \omega_1^2 g(\rho_1)' I_{j_1}$ concludes the proof. 
\end{proof}
For $i=1,\ldots,p$, take $x_1(i)>y_1(i)>x_2(i)>y_2(i)>\ldots>y_{j_i}(i)$ fixed sequences of real numbers. Call $J_n=(\sqrt{n}(\hat{\lambda}_{k(i)+\ell}^n-\rho_i),~i=1,\ldots,p,~\ell=1,\ldots,j_i)$, with $k(i)=\sum_{m=1}^{i-1} j_m$. Let also $C$ be the rectangle $C=[x_1(1),y_1(1)]\times \ldots\times [x_p(j_p),y_p(j_p)]$. Then, for all large $n$, we have
\begin{align*}
	\PP\left( J_n \in C \right) = \PP \left( \left\{\det \hat{S}_n\left(\rho_i + \frac{x_{\ell}(i)}{\sqrt{n}} \right)\det \hat{S}_n\left(\rho_i + \frac{y_{\ell}(i)}{\sqrt{n}} \right) < 0\right\}\right)
\end{align*}
since $\det\hat{S}_n(t)$ changes sign around $t=\hat{\lambda}_{k(i)+\ell}^n$, and only there (with probability one, for all large $n$).

From Lemma \ref{le:chi_n}, we see that, for growing $n$, the probability for the product of the determinants above to be negative for all $i$ and $\ell$ approaches the probability
\begin{align*}
	\PP\left( \left\{\det A_{x_{\ell}(i)} \det A_{y_{\ell}(i)}<0,~i=1,\ldots,p,~\ell=1,\ldots,j_i \right\}\right)
\end{align*}
where $A_x$ is the matrix
\begin{align*}
	A_x &= \frac{\sqrt{n} U_{i,n}^*
\left( Q_n(\rho_i)-m_n(\rho_i)I_N\right)U_{i,n}}{\mfat(\rho_i)} + 
\frac{\rho_i\sqrt{n} R_{i,n}^*( \tQ_n(\rho_i)-\tT_n(\rho_i))R_{i,n}}{\omega_i^2(c+c\rho_i\mfat(\rho_i)-1)} \\ 
	&- 2\Re\left[U_{i,n}^*Y\tilde{Q}_n(\rho_i)R_{i,n} \right] - \sqrt{n}(H_{i,n}(\rho_i)+I_{j_i}) - x H_{i,n}'(\rho_i). 
\end{align*}

This last probability is equal to $\PP(\bar{J}_n\in C)$, where $\bar{J}_n$ 
is the vector obtained by stacking the $p$ vectors of decreasingly ordered 
eigenvalues of the matrices 
\begin{align*}
	&B_i= [H_{i,n}'(\rho_i)]^{-1} \left( \frac{\sqrt{n} U_{i,n}^*\left( Q_n(\rho_i)-m_n(\rho_i)I_N\right)U_{i,n}}{\mfat(\rho_i)}\right. \\ 
	&\left.+ \frac{\rho_i\sqrt{n} R_{i,n}^*
( \tQ_n(\rho_i)-\tT_n(\rho_i))R_{i,n}}{\omega_i^2(c+c\rho_i\mfat(\rho_i)-1)} - 2\Re\left[U_{i,n}^*Y\tilde{Q}_n(\rho_i)R_{i,n} \right] - \sqrt{n}(H_{i,n}(\rho_i)+I_{j_i}) \right).
\end{align*}

%We now create $v_i\in\C^{3j_i^2}$ the vector collecting the entries of the matrices $\sqrt{n}U_i^*(Q(\rho_i)-m_n(\rho_i)I_N)U_i$, $\sqrt{n}\tilde{U}_i^*( \tQ(\rho_i)-\tilde{m}_n(\rho_i)I_n)\tilde{U}_i$, and $\Re[U_i^*Y\tilde{Q}(\rho_i)\tilde{U}_i]$. From Lemma \ref{le:3p}, the vector $v_i$ is a Gaussian vector with independent entries. 

From Lemma \ref{le:3p}, $\{B_1,\ldots,B_t\}$ asymptotically behave as scaled non-zero mean GUE matrices. Precisely, denoting $\bar{B}_i=H_{i,n}'(\rho_i) B_i + \sqrt{n}(H_{i,n}(\rho_i)+I_{j_i})$, from Lemma \ref{le:3p} and for all $a,b$,
\begin{align*}
	&\EE\left[ |(\bar{B}_i)_{ab}|^2\right] \\ 
	&\to \frac{ {\bs \sigma}_i^2 }{\mfat(\rho_i)^2} + \frac{\rho_i^2 {\bs{\tilde\sigma}}_i^2 }{\omega_i^4(c+c\rho_i\mfat(\rho_i)-1)^2} + 2{\bs\varsigma}^2 \\
	&= \frac{ {\bs \sigma}_i^2 }{\mfat(\rho_i)^2} + \rho_i^2 \mfat(\rho_i)^2 {\bs{\tilde\sigma}}_i^2 + 2{\bs\varsigma}^2 \\
	&= \frac{\mfat^2(\rho_i)}{ {\bs\Delta}(\rho_i) } 
\left[ \int \frac{t^2\nu(dt)}{(1+c\mfat(\rho_i)t)^2} 
+ c \omega_i^4 \left( \int 
\frac{ \mfat(\rho_i)t\nu(dt)}{(1 + c{\bf m}(\rho_i) t)^2} \right)^2 
+  \int \frac{2 \omega_i^2t\nu(dt)}{(1+c\mfat(\rho_i)t)^2} \right].
%	&= \frac{\mfat^2(\rho_i)}{ {\bs\Delta}(\rho_i) } \left( \int \frac{t^2 \nu(dt)}{(1+c\mfat(\rho_i)t)^2} + \omega_i^2\int \frac{t\nu(dt)}{(1+c\mfat(\rho_i)t)^2}\left[1-\int\frac{\omega_i^2\mfat(\rho_i)\nu(dt)}{(1+c\mfat(\rho_i)t)^2} \right] \right).
\end{align*}

This concludes the proof of Theorem~\ref{th-2ord}.

%%%%%%%%

\appendix

\section{Proofs of Lemmas \ref{lm:clt2} to \ref{le:sigman}} 

\subsection{Proof of Lemma \ref{lm:clt2}} 
Given a $s \times s$ deterministic Hermitian matrix $A$ and a $s \times s$ 
deterministic complex matrix $B$, let 
$\Gamma_N = N^{-1/2} \Tr A Z_N^* \Upsilon_N Z_N + \Re ( \tr B Z_N^* F_N )$
where $\Re(M) = (M + M^*) / 2$ for any square matrix $M$. 
We shall show that for any $t \in \R$, 
\[ 
\varphi_N(t) := \E[\exp(\imath t \Gamma_N )] 
\xrightarrow[N\to\infty]{} \exp\Bigl( - t^2 \frac{{\bs\sigma}^2 \tr A^2 + 
{\bs\varsigma}^2 \tr BB^*  / 2}{2} \Bigr) 
:= \exp\Bigl(- \frac{t^2{\bs v}^2}{2}\Bigr) .  
\] 
The result will follow by invoking the Cram\'er-Wold device. 
To establish this convergence, we show that the derivative $\varphi_N'(t)$ 
satisfies 
$\varphi_N'(t) = - t {\bs v}^2 \varphi_N(t) + \varepsilon_N(t)$ where 
$\varepsilon_N(t) \to 0$ as $N\to\infty$ uniformly on any compact interval 
of $\R$. That being true, the function 
$\psi_N(t) = \varphi_N(t) \exp( t^2 {\bs v}^2 / 2)$ satisfies 
$\psi_N(t) = 1 + \int_0^t \varepsilon_N(u) \exp( u^2 {\bs v}^2 / 2) du 
\to 1$ which proves the lemma.  \\
By the IP formula, we get 
\begin{align*}
\varphi'(t) &= \imath \E [ \Gamma \exp(\imath t \Gamma) ] \\
&= \imath \E \Bigl[
\Bigl( 
\sum_{i,j=1}^s\sum_{k,\ell=1}^N \frac{ A_{ij} Z^*_{kj} \Upsilon_{k\ell} Z_{\ell i}}
{\sqrt{N}} + 
\sum_{i,j=1}^s \sum_{k=1}^N 
\frac{B_{ij} Z^*_{kj} F_{ki} + F_{ki}^* Z_{kj} B_{ij}^*}{2} 
\Bigr)  \\
& 
\ \ \ \ \ \ \ \ \ \ \ \ \ \ \ \ \ \ \ \ \ \ \ \ \ \ \ \ \ \ \ \ 
\ \ \ \ \ \ \ \ \ \ \ \ \ \ \ \ \ \ \ \ \ \ \ \ \ \ \ \ \ \ \ \ 
\ \ \ \ \ \ \ \ \ \ \ \ \ \ \ \ \ \ \ 
\times  \exp(\imath t \Gamma) \Bigr] \\
&= \imath \E \Bigl[
\sum_{i,j,k,\ell} \frac{A_{ij} \Upsilon_{k\ell}}{\sqrt N} 
\frac{\partial( Z_{\ell i} \exp(\imath t \Gamma) )}{\partial Z_{kj}} \\
& 
\ \ \ \ \ \ \ \ \ \ \ \ \ \ \ \ \ \ \ \ \ \ \ \ \ \ \ \ \ \ \ \ 
+ \frac 12 \sum_{i,j,k} 
B_{ij} F_{ki} \frac{\partial \exp(\imath t \Gamma)}{\partial Z_{kj}} 
+ 
F_{ki}^* B_{ij}^* \frac{\partial \exp(\imath t \Gamma)}{\partial Z_{kj}^*} 
\Bigr] . 
\end{align*} 
We obtain after a small calculation
\begin{gather*} 
\frac{\partial \exp(\imath t \Gamma)}{\partial Z_{kj}} =
\imath t \Bigl( 
\frac{[A Z^* \Upsilon]_{jk}}{\sqrt{N}} + \frac 12 [ B^* F^* ]_{jk} \Bigr) 
\exp(\imath t \Gamma) , \\ 
\frac{\partial \exp(\imath t \Gamma)}{\partial Z_{kj}^*} =
\imath t \Bigl( 
\frac{[\Upsilon Z A]_{kj}}{\sqrt{N}} + \frac 12 [ F B ]_{kj} \Bigr) 
\exp(\imath t \Gamma) 
\end{gather*} 
which leads to 
\begin{align*} 
\varphi'(t) &= 
  - t \E [ N^{-1} \tr A^2 Z^* \Upsilon^2 Z \, \exp(\imath t \Gamma) ] 
- (t/2) \tr(BB^* F^* F)  \, \varphi(t) \\ 
&\phantom{=} 
+ \imath N^{-1/2} \tr A \tr \Upsilon  \, \varphi(t)  \\
&\phantom{=} 
-t \E [ N^{-1/2} \tr A B^* F^* \Upsilon Z \, \exp(\imath t \Gamma) ]  
-(t/2) \E [ N^{-1/2} \tr Z^* \Upsilon FBA \, \exp(\imath t \Gamma) ] . 
\end{align*} 
Let us consider the first term at the right hand side of this equation. 
We have $\E[N^{-1} \tr A^2 Z^* \Upsilon^2 Z] = N^{-1} \tr A^2 \tr \Upsilon^2$. 
Applying the Poincar\'e-Nash inequality, we obtain after some calculations
that $\var( N^{-1} \tr A^2 Z^* \Upsilon^2 Z ) \leq 2N^{-2} \tr A^4 \tr \Upsilon^4 = 
{\cal O}(N^{-1})$ since $\| \Upsilon \|$ is bounded. It results that 
$\E [ N^{-1} \tr A^2 Z^* \Upsilon^2 Z \, \exp(\imath t \Gamma) ] = 
N^{-1} \tr A^2 \tr \Upsilon^2 \, \varphi(t) + {\cal O}(N^{-1/2})$
by Cauchy-Schwarz inequality. The third term is zero by hypothesis. 
Finally, 
$N^{-1} \E | \tr Z^* \Upsilon FBA |^2 = N^{-1} \tr \Upsilon^2 F B A^2 B^* F^*   
\leq N^{-1} \| \Upsilon \|^2 \tr F B A^2 B^* F^* = {\cal O}(N^{-1})$. 
Hence, the last two terms are ${\cal O}(N^{-1/2})$ by Cauchy-Schwarz 
inequality, which proves Lemma \ref{lm:clt2}. 

\subsection{An intermediate result.}

The following lemma will be needed in the proof of Lemma \ref{lm-aQYYQa}: 

\begin{lemma}
	\label{le:aQDQb}
	For $x,y\in {\rm supp}(\mu)^c$,
\begin{align*}
	\EE\left[\phi_n \frac1n\tr \tQ_n(x)\tD\tQ_n(y)\tD \right] &= \frac{\frac1n \tr \tD\tT_n(x)\tD\tT_n(y)}{1-c_n x m_n(x) ym_n(y)\frac1n\tr \tD\tT_n(x)\tD\tT_n(y)}  + \mathcal O(n^{-1})\\
	\EE\left[\phi_n \tilde{u}_n^*\tQ_n(x)\tD\tQ_n(y)\tilde{v}_n \right] &= \frac{\tilde{u}_n^*\tT_n(x)\tD \tT_n(y) \tilde{v}_n}{1-c_n x m_n(x) ym_n(y)\frac1n\tr \tD\tT_n(x)\tD\tT_n(y)} + \mathcal O(n^{-1}).
\end{align*}
\end{lemma}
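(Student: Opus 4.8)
The plan is to establish the second (bilinear) identity first; the first (trace) identity then follows by specializing $\tilde u_n=\tilde v_n=e_{b,n}$, multiplying by $d_b$, summing over $b$ and dividing by $n$. As in Section~\ref{1st-order}, every quantity is multiplied by the regularizing factor $\phi_n$, which keeps it bounded when $\Re(x)$ and $\Re(y)$ stay in compact subsets of $\support(\mu)^c$ (choosing the parameter $\varepsilon$ in the construction of $\psi$ small enough that both $x$ and $y$ lie at distance more than $\varepsilon$ from $\support(\mu)\cup\{0\}$). The heart of the argument is to derive, by the integration by parts formula and the Poincar\'e--Nash inequality of Section~\ref{gauss-tools}, a closed self-consistent equation for $\Psi_n(x,y):=\E[\phi_n\,\tilde u_n^*\widetilde Q_n(x)D_n\widetilde Q_n(y)\tilde v_n]$, and then to solve it.

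For the IP step I would start from the resolvent identity $\widetilde Q_n(x)=x^{-1}\bigl(\widetilde Q_n(x)\,n^{-1}Y_n^*Y_n-I_n\bigr)$, which rewrites $\Psi_n(x,y)$ as $-x^{-1}\E[\phi_n\,\tilde u_n^*D_n\widetilde Q_n(y)\tilde v_n]$ plus a term $x^{-1}\E[\phi_n\,\tilde u_n^*\widetilde Q_n(x)\,n^{-1}Y_n^*Y_n\,D_n\widetilde Q_n(y)\tilde v_n]$ carrying an explicit $Y_n$ factor. Applying the IP formula to that factor and differentiating, through the formulas of Section~\ref{diff-det}, the two resolvents and $\phi_n$, one reorganizes the sum over free indices (using $N/n=c_n$) into three groups of terms: (a) a ``deterministic'' contribution which, after replacing the expected normalized traces $\E[\phi_n n^{-1}\tr Q_n]$ and $\E[\phi_n n^{-1}\tr D_n\widetilde Q_n]$ by $m_n$ and $n^{-1}\tr D_n\widetilde T_n$ respectively (Lemmas~\ref{mean-tr(Q)}--\ref{mean-fq} and the defining equations of $m_n$ and $\widetilde T_n$), and handling the first term above via Lemma~\ref{mean-fq}, yields $\tilde u_n^*\widetilde T_n(x)D_n\widetilde T_n(y)\tilde v_n$; (b) a self-consistent contribution of the form $c_n x m_n(x)\,y m_n(y)\,\bigl(n^{-1}\tr D_n\widetilde T_n(x)D_n\widetilde T_n(y)\bigr)\,\Psi_n(x,y)$ plus a term proportional to $\E[\phi_n n^{-1}\tr D_n\widetilde Q_n(x)D_n\widetilde Q_n(y)]$; and (c) remainder terms, each of which is either a product of a centered quantity with a uniformly bounded one, or a term carrying a derivative of $\phi_n$ or a factor $1-\phi_n$. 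The remainders are shown to be $\mathcal O(n^{-1})$ by means of Lemma~\ref{mom-phi}, Lemma~\ref{1-E[phi]}, and an analogue of Lemma~\ref{lm-var} for the two-resolvent forms $\phi_n\,\tilde u_n^*\widetilde Q_n(x)D_n\widetilde Q_n(y)\tilde v_n$ and $\phi_n\,n^{-1}\tr D_n\widetilde Q_n(x)D_n\widetilde Q_n(y)$, whose variances are $\mathcal O(n^{-2})$ by a further Poincar\'e--Nash computation based on the derivative formulas of Section~\ref{diff-det}.

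It remains to solve the system. Specializing the equation to the trace form gives $\beta_n:=\E[\phi_n n^{-1}\tr D_n\widetilde Q_n(x)D_n\widetilde Q_n(y)]=n^{-1}\tr D_n\widetilde T_n(x)D_n\widetilde T_n(y)+c_n x m_n(x)\,y m_n(y)\bigl(n^{-1}\tr D_n\widetilde T_n(x)D_n\widetilde T_n(y)\bigr)\beta_n+\mathcal O(n^{-1})$; since $\bigl|c_n x m_n(x)\,y m_n(y)\,n^{-1}\tr D_n\widetilde T_n(x)D_n\widetilde T_n(y)\bigr|$ stays bounded away from $1$ uniformly in $n$, which I would check from the properties of $m_n$ and $\widetilde T_n$ on $\support(\mu)^c$ recalled in Section~\ref{subsec-jack} and from Lemma~\ref{control-Ttilde}, this yields the first displayed formula; substituting this value of $\beta_n$ into the self-consistent equation for $\Psi_n(x,y)$ and dividing by the same non-vanishing factor gives the second. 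The step I expect to be the main obstacle is the bookkeeping in (b)--(c): pinning down exactly which index contractions generate the self-consistent coefficient and which are genuine error, and verifying that every error term is $\mathcal O(n^{-1})$. This is where the regularization $\phi_n$ is indispensable, since it is what makes the two-resolvent forms and their moments well controlled for $x,y$ close to the real axis, and it is also the reason the trace form $n^{-1}\tr D_n\widetilde Q_n(x)D_n\widetilde Q_n(y)$ has to be carried alongside the bilinear one throughout the computation.
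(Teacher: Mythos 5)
Your proposal is correct and follows essentially the same route as the paper's proof: an integration-by-parts expansion of the form $\E[\phi_n\,\tilde u^*\widetilde Q(x)\,n^{-1}Y^*Y\,D\widetilde Q(y)\tilde v]$ combined with the resolvent identity $n^{-1}Y^*Y\widetilde Q(x)=I+x\widetilde Q(x)$, yielding a master self-consistent equation that is closed by its trace specialization $\E[\phi_n n^{-1}\tr D\widetilde Q(x)D\widetilde Q(y)]$, solved, and substituted back, with all remainders controlled by the $\phi_n$-regularization, Lemmas~\ref{lm-var}, \ref{1-E[phi]}, \ref{mom-phi}, \ref{mean-tr(Q)}--\ref{mean-fq} and Cauchy--Schwarz. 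The only cosmetic difference is that the paper works entrywise (deriving one identity in the indices $(a,q)$ and contracting it two ways) rather than stating the bilinear form first, which changes nothing of substance.
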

\begin{proof}
We denote here $\tQ_x=\tQ(x)$ and drop all unnecessary indices. Using the IP
formula, we obtain
\begin{align*}
	&\EE\left[\phi Y_{ia}^*Y_{ij}\tQ_{x,jp}d_p\tQ_{y,pq} \right] = \frac{d_pd_j}n \left( \delta(a-j) \EE\left[\phi \tQ_{x,jp}\tQ_{y,pq} \right] \right. \\
	&\left. - \frac1n \EE\left[\phi \tQ_{x,jj}[Y\tQ_x]_{ip}Y_{ia}^* \tQ_{y,pq} \right] - \frac1n \EE\left[\phi Y_{ia}^*\tQ_{x,jp}\tQ_{y,pj}[Y\tQ_{y}]_{iq} \right] \right. \\
	&\left. + \EE\left[\frac1n[\adjugate(\psi)\psi'Y]_{ij}Y_{ia}^*\tQ_{x,jp}\tQ_{y,pq} \right] \right).
\end{align*}

Making the sum over $i$, $p$, and $j$, this is
\begin{align*}
	&\frac1n\EE\left[\phi [Y^*Y\tQ_xD\tQ_y]_{aq} \right] = \frac1{n^2}\EE\left[ [Y^*\adjugate(\psi)\psi'YD\tQ_xD\tQ_y]_{aq}\right] + c_n d_a \EE\left[\phi [\tQ_xD\tQ_y]_{aq} \right] \\
	&- \frac1n\EE\left[\phi \frac1n\tr D\tQ_x [Y^*Y\tQ_{x}D\tQ_y]_{aq} \right] - \frac1n\EE\left[\phi \frac1n\tr \tQ_x D\tQ_yD [Y^*Y\tQ_y]_{aq} \right]. 
\end{align*}

Using the relation $\frac1nY^*Y\tQ_x=x\tQ_x+I_n$ and appropriately gathering the terms on each side gives
\begin{align}
	\label{eq:aQDQb_1}
	&\EE\left[\phi [\tQ_x\tD\tQ_y]_{aq}(x-c_n\ d_a + x\frac1n\tr \tD\tQ_x) \right] \nonumber \\ 
	&= -\EE\left[\phi [\tD\tQ_y]_{aq}(1+\frac1n\tr \tD\tQ_x) \right] - \EE\left[\phi \frac1n\tr \tQ_x\tD\tQ_y\tD(\delta(a-q)+y[\tQ_y]_{aq}) \right] \nonumber \\
	&+ \EE\left[\frac1{n^2}[Y^*\adjugate(\psi)\psi'Y\tD\tQ_x\tD\tQ_y]_{aq} \right].
\end{align}
Introducing the term $\tilde{\beta}_x=\phi\frac1n\tr \tD\tQ_x$ and $\hat{\tilde{\beta}}_x=\tilde{\beta}_x-\phi\EE[\tilde{\beta}_x]$, we have
\begin{align}
	\label{eq:trace_quad}
	&\EE\left[\phi [\tQ_x\tD\tQ_y]_{aq}\right](x-c_n\ d_a + x \EE[\tilde{\beta}_x]) \nonumber \\
	&= - \EE\left[\phi [\tD\tQ_y]_{aq} \right](1+\EE[\beta_x]) - \EE\left[\phi \frac1n\tr \tQ_x\tD\tQ_y\tD \right](\delta(a-q)+y\EE[[\tQ_y]_{aq}]) \nonumber \\
	&-\EE\left[ [\tD\tQ_y]_{aq} \hat{\tilde{\beta}}_x\right] 
	- \EE\left[   [\tQ_x D\tQ_y]_{aq} x\hat{\tilde\beta}_x\right]
	- \EE\left[\phi \frac1n\tr \tQ_x\tD\tQ_y\tD y ([\tQ_y]_{aq}-\EE[\tQ_y]_{aq}]) \right] \nonumber \\
	&+\EE\left[\frac1{n^2}[Y^*\adjugate(\psi)\psi'Y\tD\tQ_x\tD\tQ_y]_{aq} \right].
\end{align}

At this point, we can prove both results for the trace and for the quadratic form. We start by dividing each side by $x-c_n\ d_a + x \EE[\tilde{\beta}_x]$. We begin with the trace result. Multiplying the resulting left- and right-hand sides by $\ d_a$, summing over $a=q$ and normalizing by $1/n$, we obtain
\begin{align*}
	\EE\left[\phi \frac1n\tr \tQ_x\tD\tQ_x\tD \right] &= -(1+\EE[\tilde{\beta}_x])\EE\left[\phi \frac1n\tr \tD\tQ_y\tD A_x\right] \\
	&- \EE\left[\phi \frac1n\tr \tQ_x\tD\tQ_y\tD\right] \left(y \EE[\phi \frac1n\tr\tD A_x \tQ_y] + \frac1n\tr \tD A_x \right) + \varepsilon_n
\end{align*}
where we denoted $A_x = (x (1+\EE[\tilde{\beta}_x]) I_n - c_n\tD)^{-1}$ and where
\begin{align}
	\label{eq:epsilon}
	\varepsilon_n &=  \EE\left[\tr \frac{Y^*\adjugate(\psi)\psi'Y}{n^3}\tD\tQ_x\tD\tQ_y\tD A_x\right]-\EE\left[ \frac1n\tr \tD\tQ_y\tD A_x \hat{\tilde{\beta}}_x\right]\nonumber \\ 
	&- \EE\left[\frac{1}{n}\tr \tQ_x D \tQ_y D A_x \hat{\tilde\beta}_x  \right]\nonumber \\
&	- \EE\left[\phi \frac1n\tr \tQ_x\tD\tQ_y\tD y \left(\frac{1}{n}\tr \tQ_y D A_x -\EE(\frac{1}{n}\tr \tQ_y D A_x)\right) \right].
\end{align}
From Lemma~\ref{mean-tr(Q)}, $\EE[\tilde{\beta}_x]=\tilde{\delta}_x+\mathcal O(n^{-2})$, where we denoted $\tilde{\delta}_x=\frac1n\tr \tD\tT_x$. Also, it is easily observed that
\begin{align}
	\label{eq:tTtD}
	\left(I_n (1+\tilde{\delta}_x)x - c_n \tD \right)^{-1} = -\frac1{1+\tilde{\delta}_x} \tT_x
\end{align}
with $\tT_x=\tT(x)$. Therefore, along with Lemma~\ref{mean-tr(Q)}, we now have
\begin{align*}
	&\EE\left[\phi \frac1n\tr \tQ_x\tD\tQ_x\tD \right] \\
	&= \frac1n \tr \tD\tT_x\tD\tT_y + \EE\left[\phi \frac1n\tr \tQ_x\tD\tQ_x\tD\right]\frac{y\frac1n\tr\tD\tT_x\tT_y +\frac1n\tr\tD\tT_x}{1+\tilde{\delta}_x} + \varepsilon_n + \mathcal O(n^{-2}).
\end{align*}
Using now the fact that $y\tT_y + I_n = c_n\frac1{1+\tilde{\delta}_y}\tD\tT_y$, we conclude
\begin{align*}
	\EE\left[\phi \frac1n\tr \tQ_x\tD\tQ_x\tD \right] &= \frac{\frac1n\tr \tD\tT_x\tD\tT_y}{1-c_n(1+\tilde{\delta}_x)^{-1}(1+\tilde{\delta}_y)^{-1}\frac1n\tr \tD\tT_x\tD\tT_y} + \varepsilon_n + \mathcal O(n^{-2}).
\end{align*}
It therefore remains to prove that $\varepsilon_n=\mathcal O(n^{-1})$. Due to the presence of $\phi$ in the expression of $\hat{\tilde\beta}_x$, and using Lemma~\ref{lm-var} and Cauchy-Schwarz inequality, one can see that the last three terms in the expression of $\varepsilon_n$ are $\mathcal O(n^{-1})$. % as a consequence of the Cauchy-Scwharz inequality and of Lemma~\ref{lm-var}. 
As for the first term, it is treated in a similar manner as in the proof of Lemma~\ref{mean-fq}, and is $\mathcal O(n^{-2})$.

In order to prove the result on the quadratic form, we start again from \eqref{eq:trace_quad}. Dividing each side again by $x-c_n\ d_a + x \EE[\tilde{\beta}_x]$, introducing $[\tilde{u}]_a$, $[\tilde{v}]_q$, and summing over the indices, we obtain
\begin{align}
	\label{eq:aQDQb_2}
	&\EE\left[\phi \tilde{u}^* \tQ_x\tD\tQ_y \tilde{v} \right] \nonumber\\
	&= -\EE\left[\phi \tilde{u}^* A_x \tD \tQ_y \tilde{v} \right] - \EE\left[\phi \frac1n\tr \tQ_x\tD\tQ_y \tD \right]\left(\tilde{u}^*A_x(y\EE[\phi\tQ_y]+I_n)\tilde{v} \right) + \varepsilon'_n
\end{align}
where $\varepsilon_n'$ is very similar to $\varepsilon_n$ and is shown to be $\mathcal O(n^{-1})$ with the same line of arguments. Using Lemma~\ref{lm-var}, \eqref{eq:tTtD}, and the previous result on $\EE[\phi \frac1n\tr \tQ_x\tD\tQ_y \tD]$, we finally obtain
\begin{align*}
	&\EE\left[\phi \tilde{u}^* \tQ_x\tD\tQ_y \tilde{v} \right] \\
	&= \tilde{u}^*\tT_x\tD\tT_y\tilde{v}\left( 1+\frac{c_n(1+\tilde{\delta}_x)^{-1}(1+\tilde{\delta}_y)^{-1}\frac1n\tr\tD\tT_x\tD\tT_y}{1-c_n(1+\tilde{\delta}_x)^{-1}(1+\tilde{\delta}_y)^{-1}\frac1n\tr\tD\tT_x\tD\tT_y}\right) + \mathcal O(n^{-1}).
\end{align*}
from which
\begin{align*}
	\EE\left[\phi \tilde{u}^* \tQ_x\tD\tQ_y \tilde{v} \right] &= \frac{\tilde{u}^*\tT_x\tD\tT_y\tilde{v}}{1-c_n(1+\tilde{\delta}_x)^{-1}(1+\tilde{\delta}_y)^{-1}\frac1n\tr\tD\tT_x\tD\tT_y} + \mathcal O(n^{-1}).
\end{align*}
We conclude with the remark $xm_n(x)=-(1+\tilde{\delta}_x)^{-1}$.
\end{proof}

%\subsection{Proof of Lemma \ref{lm-aQ^2b}} 
%\label{prf-aQ^2b} 
%	The line of proof follows closely the proof of Lemma~\ref{le:aQDQb}. A careful inspection shows that \eqref{eq:aQDQb_1} is still valid with $\tD\tQ_y$ replaced by $\tQ_x$, where we recall the notation $\tQ_x=\tQ(x)$. The proof then unfolds similarly to first obtain 
%\begin{align*}
%	\EE\left[\phi \frac1n\tr \tD\tQ_x^2 \right] &= \frac{\frac1n\tr \tD\tT_x^2}{1-c_n x^2 m_n(x)^2\frac1n\tr \tD^2\tT_x^2} + \varepsilon_n + \mathcal O(n^{-2}).
%\end{align*}
%with $\varepsilon_n$ given in \eqref{eq:epsilon} with $\tD\tQ_y$ replaced by $\tQ_x$. 
%%Recalling that $c_nm_n(x)^2=\gamma_n(x)$ by definition, we obtain 
%%\begin{align*}
%%	c_n x^2m_n(x)^2\frac1n\tr \tD^2\tT_x^2 = x^2 \gamma_n(x)\tilde{\gamma}_n(x)
%%\end{align*}
%We then retrieve a first intermediary result
%\begin{align*}
%	\EE\left[\phi \frac1n\tr \tD\tQ_x^2 \right] &= \frac{\frac1n \tr \tD\tT_x^2}{1-c_n x^2 m_n(x)^2\frac1n\tr \tD^2\tT_x^2} + \mathcal O(n^{-1}).
%\end{align*}
%
%To obtain the lemma, we start from \eqref{eq:aQDQb_2}, which still holds with again $\tD\tQ_y$ replaced by $\tQ_x$. From there, we have, with the same notations as in the proof of Lemma~\ref{le:aQDQb} and with the result above
%\begin{align*}
%	\EE\left[\phi a^*\tQ_x^2b \right] &= a^*\tT_x^2 b + a^*\tD\tT_x^2 b\frac{c_nx^2 m_n(x)^2\frac1n\tr\tD\tT_x^2}{1-c_n x^2m_n(x)^2\frac1n\tr \tD^2\tT_x^2} + \mathcal O(n^{-1})
%\end{align*}
%which gives the desired result.
%

\subsection{Proof of Lemma \ref{lm-aQYYQa}} 
\label{prf-aQYYQa} 
The line of proof closely follows the proof of Lemma~\ref{le:aQDQb}. We provide
here its main steps. By the IP formula, we have
\begin{align*} 
\EE[ \phi \tQ_{pk} Y_{\ell k}^* Y_{\ell m} \tQ_{mr} ] &= 
% d_m \EE[ \partial ( \tQ_{pk} Y_{\ell k}^* \tQ_{mr} \phi ) / 
% \partial Y_{\ell m}^* ] \\ 
-\frac{d_m}{n} \EE[\phi \tQ_{pk} Y_{\ell k}^* \tQ_{mm} [ Y\tQ]_{\ell r}] 
+ \delta(k-m) d_m \EE[ \phi \tQ_{pk} \tQ_{mr} ] \\ 
&\phantom{=} 
-\frac{d_m}{n} \EE[\phi Y_{\ell k}^* \tQ_{mr} \tQ_{pm} [ Y\tQ]_{\ell k}] \\
&\phantom{=} 
+ \frac{d_m}{n} \EE[ \tQ_{pk} Y_{\ell k}^* \tQ_{mr}
[\adjugate(\psi) \psi' Y]_{\ell m} ] 
\end{align*} 
Taking the sum over $m$, we obtain 
% \begin{align*} 
% \EE[ \phi \tQ_{pk} Y_{\ell k}^* [ Y \tQ]_{\ell r} ] &= 
% - \EE[\tilde\beta \tQ_{pk} Y_{\ell k}^* [ Y\tQ]_{\ell r}] 
% +  d_k \EE[ \phi \tQ_{pk} \tQ_{kr} ] \\ 
% &\phantom{=} 
% -\frac{1}{n} \EE[\phi Y_{\ell k}^* [\tQ D \tQ]_{pr} [ Y\tQ]_{\ell k}] \\
% &\phantom{=} 
% + \frac{1}{n} \EE[ \tQ_{pk} Y_{\ell k}^* 
% [\adjugate(\psi) \psi' Y D \tQ]_{\ell r} ] 
% \end{align*} 
\begin{align*} 
\EE[ \phi \tQ_{pk} Y_{\ell k}^* [ Y \tQ]_{\ell r} ] &= 
  \frac{d_k}{1+\EE[\tilde\beta]} \EE[ \phi \tQ_{pk} \tQ_{kr} ] 
- \frac{1}{1+\EE\tilde\beta}
\frac{1}{n} \EE[\phi Y_{\ell k}^* [\tQ D \tQ]_{pr} [ Y\tQ]_{\ell k}] \\
&\phantom{=} 
+ \frac{1}{1+\EE[\tilde\beta]}
\frac{1}{n} \EE[ \tQ_{pk} Y_{\ell k}^* 
[\adjugate(\psi) \psi' Y D \tQ]_{\ell r} ] 
- \EE[\hat{\tilde\beta} \tQ_{pk} Y_{\ell k}^* [ Y\tQ]_{\ell r}] 
\end{align*} 
where $\tilde{\beta}(x) = \phi\frac1n\tr \tD\tQ(x)$ and 
$\hat{\tilde{\beta}}(x)=\tilde{\beta}(x)-\phi\EE[\tilde{\beta}(x)]$ as in the 
proof of Lemma~\ref{le:aQDQb}. Taking the sum over $\ell$ 
% \begin{align*} 
% \EE[ \phi \tQ_{pk} [ \frac{Y^* Y}{n} \tQ]_{k r} ] &= 
% c_n   \frac{d_k}{1+\EE[\tilde\beta]} \EE[ \phi \tQ_{pk} \tQ_{kr} ] 
% - \frac{1}{1+\EE\tilde\beta}
% \frac{1}{n} \EE[\phi [\tQ D \tQ]_{pr} [ \frac{Y^* Y}{n} \tQ]_{kk}] \\
% &\phantom{=} 
% + \frac{1}{1+\EE[\tilde\beta]}
% \frac{1}{n} \EE[ \tQ_{pk} 
% [\frac{Y^* \adjugate(\psi) \psi' Y}{n} D \tQ]_{k r} ] 
% - \EE[\hat{\tilde\beta} \tQ_{pk} [\frac{Y^* Y}{n}\tQ]_{k r}] 
% \end{align*} 
then over $k$, we obtain
\begin{align*} 
\EE[ \phi [ \tQ \frac{Y^* Y}{n} \tQ]_{p r} ] &= 
c_n   \frac{1}{1+\EE[\tilde\beta]} \EE[ \phi [ \tQ D \tQ ]_{pr} ] 
- \frac{1}{1+\EE[\tilde\beta]}
\EE[\phi [\tQ D \tQ]_{pr} \frac 1n \tr (\frac{Y^* Y}{n} \tQ) ] \\
&\phantom{=} 
+ \frac{1}{1+\EE[\tilde\beta]}
\frac{1}{n} \EE[ 
[\tQ \frac{Y^* \adjugate(\psi) \psi' Y}{n} D \tQ]_{p r} ] 
- \EE[\hat{\tilde\beta} [ \tQ \frac{Y^* Y}{n}\tQ]_{p r}] 
\end{align*} 
Observing that $(1 + \EE[\tilde\beta(x)])^{-1} = - x m_n(x) + {\cal O}(n^{-2})$
and making the usual approximations, we get 
\begin{align*} 
	\EE[ \phi \tilde{u}^* \tQ \frac{Y^* Y}{n} \tQ \tilde{u} ] &= 
\Bigl( x m_n(x) \frac 1n \tr (\EE[\phi \frac{Y^* Y}{n} \tQ]) 
- c_n x m_n(x) \Bigr)  
\EE[ \phi \tilde{u}^* \tQ D \tQ \tilde{u} ] + {\cal O}(n^{-1})  
\end{align*} 
Observing that $n^{-1} \tr (\EE[\phi (n^{-1}Y^* Y) \tQ(x)] = 
Nn^{-1} x m_n(x) + Nn^{-1} + {\cal O}(n^{-2})$ and invoking Lemma 
\ref{le:aQDQb}, we obtain the desired result.

\subsection{Proof of Lemma \ref{le:sigman}} 
\label{prf-sigman} 
As in the previous proofs, we discard unnecessary indices. We also denote $\tQ_i=\tQ(\rho_i)$. For readability, we also write $\tilde{M}_i=M_{i,n}A_i$ and use the shortcut notation $\Gamma=\sqrt{n}\sum_{i=1}^p\tr M_i^*\tQ_i\tilde{M}_i$. We focus first on the term in $\rho_1$. The line of proof closely follows that of Lemma~\ref{lm:clt2} with the exception that we need to introduce the regularization function $\phi$ to ensure the existence of all the quantities under study. That is, with $\varphi_N(t)=\EE[\exp(\imath t \phi\Gamma)]$, we only need to show that $\varphi_N'(t)=-t \tilde{\sigma}_n^2\varphi_N(t)+\mathcal O(1/\sqrt{n})$. Using $|\varphi_N(t)|\leq 1$ and Lemma~\ref{1-E[phi]}, $|\EE[\exp(\imath t\Gamma)]-\varphi_N(t)|\leq 1-\EE[\phi]\to 0$ as $N\to\infty$, from which the result unfolds.

Using the IP formula, we first obtain
\begin{align*}
	&\EE\left[\phi \left[\frac{Y^*Y}n\tQ_1\right]_{pq} e^{\imath t \phi \Gamma}\right] \\
	&= c_n \EE\left[\phi [\tD\tQ_1]_{pq}e^{\imath t \phi \Gamma}\right] - \EE\left[\phi \frac1n\tr \tD\tQ_1 \left[\frac{Y^*Y}n\tQ_1\right]_{pq}e^{\imath t \phi \Gamma} \right] \\
	&- \EE\left[\imath t e^{\imath t \phi \Gamma} \phi^2 \frac1{\sqrt{n}} \sum_{j=1}^p \sum_{a=1}^r \left[(\tilde{M}_j)^*_a \tQ_j\tD\tQ_1\right]_q \left[\frac{Y^*Y}n\tQ_j (M_j)_a\right]_p  \right] + \varepsilon_{n,pq}
\end{align*}
where 
\begin{align*}
	\varepsilon_{n,pq} &= \EE\left[\frac1n \left[\frac{Y^*\adjugate(\psi)\psi'Y}n\tD\tQ_1 \right]_{pq}e^{\imath t \phi \Gamma}\right] + \EE\left[\phi \frac1n \left[\frac{Y^*\adjugate(\psi)\psi'Y}n\tD\tQ_1 \right]_{pq}\imath t \Gamma e^{\imath t \phi \Gamma}\right]
\end{align*}
and where we denoted $X_a$ the column $a$ of matrix $X$, $X_a^*$ being the row vector $(X_a)^*$.

With $\tilde{\beta}_j=\phi \frac1n \tr D \tQ_j$, $\hat{\tilde\beta}_j=\tilde{\beta}_j-\phi \EE \left[\tilde{\beta}_j\right]$, and with the relation $n^{-1}Y^*Y\tQ_1=I_n+\rho_1\tQ_1$, we obtain
\begin{align*}
	&\left( \rho_1(1+\EE[\tilde{\beta}_1]) - c_n \ d_p \right) \EE\left[\phi [\tQ_1]_{pq} e^{\imath t \phi \Gamma} \right] = - \delta(p-q)(1+\EE[\tilde{\beta}_1]) \EE\left[\phi e^{\imath t \phi \Gamma} \right] \\
	&- \EE\left[\imath t e^{\imath t \phi \Gamma} \phi^2 \frac1{\sqrt{n}} \sum_{j=1}^p \sum_{a=1}^r \left[(\tilde{M}_j)^*_a \tQ_j\tD\tQ_1\right]_q \left[\frac{Y^*Y}n\tQ_j (M_j)_a\right]_p \right] + \varepsilon'_{n,pq}
\end{align*}
where
\begin{align*}
	\varepsilon'_{n,pq} &= \varepsilon_{n,pq} - \EE\left[ \hat{\tilde{\beta}}_1 \left[\frac{Y^*Y}n \tQ_1\right]_{pq} e^{\imath t \phi \Gamma}\right].
\end{align*}
Dividing each side by $\rho_1(1+\EE[\tilde{\beta}_1]) - c_n \ d_p$, then multiplying by $(\tilde{M}_1)_p$ and $(M_1)_q$, and summing over $p,q$ gives
\begin{align*}
	\EE[\phi \tr (\tilde{M}_1^*\tQ_1 M_1)e^{\imath t \phi \Gamma}] &= -(1+\EE[\tilde{\beta}_1])\EE[\phi e^{\imath t \Gamma}] \tr \left( \tilde{M}_1^*A_{\rho_1}M_1\right) \\
	&- \imath t \EE\left[\phi^2 e^{\imath t \phi\Gamma} \frac1{\sqrt{n}}\sum_{j=1}^p \tr \tilde{M}_1^* A_{\rho_1} \frac{Y^*Y}n\tQ_jM_j\tilde{M}_j^*\tQ_j\tD\tQ_1\right] + \varepsilon_n'
\end{align*}
with $A_{\rho_i}=( \rho_i(1+\EE[\tilde{\beta}_i])I_n-c_n \tD)^{-1}$, and
\begin{align*}
	\varepsilon_n' &= \tr \tilde{M}_1^* A_{\rho_1} E' M_1
\end{align*}
with $(E')_{pq}=\varepsilon_{pq}'$. From \eqref{eq:tTtD}, the identity $n^{-1}Y^*Y\tQ_j=I_n+\rho_j\tQ_j$, and Lemma~\ref{mean-tr(Q)}, we finally obtain
\begin{align*}
	&\EE\left[\phi \tr (\tilde{M}_1^*\tQ_1 M_1)e^{\imath t \phi \Gamma} \right] - \EE[\phi e^{\imath t \phi \Gamma}] \tr \tilde{M}_1\tT_1 M_1  \\
	&= \imath t \EE[\phi e^{\imath t \phi \Gamma}] \frac1{\sqrt{n}}\sum_{j=1}^p \frac{\tr \tilde{M}_1^* \frac{\tT_1}{1+\tilde{\delta}_1}\frac{c_n\tD\tT_j}{1+\tilde{\delta}_j} M_j \tilde{M}_j^*\tT_j\tD\tT_1 M_1}{1-c_n(1+\tilde{\delta}_1)^{-1}(1+\tilde{\delta_j})^{-1}\frac1n\tr \tD\tT_1\tD\tT_j} + \varepsilon_n' + \mathcal O(n^{-2}) 
\end{align*}
with $\tT_i=\tT(\rho_i)$, from which
\begin{align*}
	&\EE\left[\phi \tr (\tilde{M}_1^*\tQ_1 M_1)e^{\imath t \phi \Gamma} \right] - \EE[\phi e^{\imath t \phi \Gamma}] \tr \tilde{M}_1\tT_1 M_1 \\
	&= \frac{\imath t \EE[\phi e^{\imath t \phi \Gamma}]}{\sqrt{n}} \sum_{j=1}^p \frac{c_n \rho_1 m_n(\rho_1) \rho_j m_n(\rho_j) \tr \tilde{M}_1^* \tT_1\tD\tT_j M_j \tilde{M}_j^*\tT_j\tD\tT_1 M_1}{1 - c_n \rho_1 m_n(\rho_1) \rho_j m_n(\rho_j) \frac1n\tr \tD\tT_1\tD\tT_j} + \varepsilon_n' + \mathcal O(n^{-2}).
\end{align*}
It remains to show that $\varepsilon_n'=\mathcal O(n^{-1})$. We have explicitly
\begin{align*}
	\varepsilon_n' &= \EE\left[ \frac1n\tr \left(\tilde{M}_1^* A_{\rho_1} \frac{Y^*\adjugate(\psi)\psi'Y}n\tD\tQ_1 M_1\right) (1+\phi \imath t \Gamma)e^{\imath t\phi \Gamma} \right] \\
	&- \EE\left[\phi \hat{\tilde{\beta}}_1 \tr \left(\tilde{M}_1^* A_{\rho_1} \frac{Y^*Y}n \tQ_1 M_1\right) e^{\imath t \phi \Gamma}\right].
\end{align*}
Using the fact that $|e^{\imath t \phi \Gamma}|=1$ and the relation $n^{-1}Y^*Y\tQ_1=\rho_1\tQ_1+I_n$, the second term is easily shown to be $\mathcal O(n^{-1})$ from the Cauchy-Scwharz inequality and Lemma~\ref{lm-var}. If it were not for the factor $\Gamma$, the convergence of the first term would unfold from similar arguments as in the proof of Lemma~\ref{mean-fq}. We only need to show here that $\EE[|\phi\Gamma|^2]=\mathcal O(1)$. But this follows immediately from Lemma~\ref{lm-var} and Lemma~\ref{mean-tr(Q)}. \\
The generalization to $\sum_{i} \EE[\phi \tr (\tilde{M}_i^*\tQ_i M_i)e^{\imath t \phi \Gamma}]$ is then immediate and we have the expected result.

%\bibliographystyle{plain}
%\bibliography{math}

\begin{thebibliography}{10}

\bibitem{by08}
Z.~Bai and J.-F. Yao.
\newblock Central limit theorems for eigenvalues in a spiked population model.
\newblock {\em Ann. Inst. Henri Poincar\'e Probab. Stat.}, 44(3):447--474,
  2008.

\bibitem{Bai99}
Z.~D. Bai.
\newblock Methodologies in spectral analysis of large-dimensional random
  matrices, a review.
\newblock {\em Statist. Sinica}, 9(3):611--677, 1999.

\bibitem{BaiSil98}
Z.~D. Bai and J.~W. Silverstein.
\newblock No eigenvalues outside the support of the limiting spectral
  distribution of large-dimensional sample covariance matrices.
\newblock {\em Ann. Probab.}, 26(1):316--345, 1998.

\bibitem{bai-sil-separation}
Z.~D. Bai and Jack~W. Silverstein.
\newblock Exact separation of eigenvalues of large-dimensional sample
  covariance matrices.
\newblock {\em Ann. Probab.}, 27(3):1536--1555, 1999.

\bibitem{bbp05}
J.~Baik, G.~Ben~Arous, and S.~P{\'e}ch{\'e}.
\newblock Phase transition of the largest eigenvalue for nonnull complex sample
  covariance matrices.
\newblock {\em Ann. Probab.}, 33(5):1643--1697, 2005.

\bibitem{bk-sil06}
J.~Baik and J.W. Silverstein.
\newblock Eigenvalues of large sample covariance matrices of spiked population
  models.
\newblock {\em J. Multivariate Anal.}, 97(6):1382--1408, 2006.

\bibitem{bgm-fluct10}
F.~Benaych-Georges, A.~Guionnet, and M.~Ma{\"\i}da.
\newblock {Fluctuations of the extreme eigenvalues of finite rank deformations
  of random matrices}.
\newblock {\em Electron. J. Prob.}, 16:1621--1662, 2011.
\newblock Paper no. 60.

\bibitem{ben-rao-published}
F.~Benaych-Georges and R.~R. Nadakuditi.
\newblock {The eigenvalues and eigenvectors of finite, low rank perturbations
  of large random matrices}.
\newblock {\em Adv. in Math.}, 227(1):494--521, 2011.

\bibitem{ben-rao-11}
F.~Benaych-Georges and R.~R. Nadakuditi.
\newblock {The singular values and vectors of low rank perturbations of large
  rectangular random matrices}.
\newblock {\em J. Multivariate Anal.}, 111:295--309, 2012.

\bibitem{bolot97}
Vladimir Bolotnikov.
\newblock On a general moment problem on the half axis.
\newblock {\em Linear Algebra Appl.}, 255:57--112, 1997.

\bibitem{cdf09}
M.~Capitaine, C.~Donati-Martin, and D.~F{\'e}ral.
\newblock The largest eigenvalues of finite rank deformation of large {W}igner
  matrices: convergence and nonuniversality of the fluctuations.
\newblock {\em Ann. Probab.}, 37(1):1--47, 2009.

\bibitem{cdf-ihp12}
M.~Capitaine, C.~Donati-Martin, and D.~F{\'e}ral.
\newblock {Central limit theorems for eigenvalues of deformations of {W}igner
  matrices}.
\newblock {\em Ann. Inst. H. Poincar\'e Probab. Statist.}, 48(1):107--133,
  2012.

\bibitem{chen-jmva82}
L.~H.~Y. Chen.
\newblock An inequality for the multivariate normal distribution.
\newblock {\em J. Multivariate Anal.}, 12(2):306--315, 1982.

\bibitem{cou-hac-it12}
R.~Couillet and W.~Hachem.
\newblock Fluctuations of spiked random matrix models and failure diagnosis in
  sensor networks.
\newblock {\em IEEE Transactions on Information Theory}, 2012.
\newblock In press.

\bibitem{Gesztesy-herglotz}
Fritz Gesztesy and Eduard Tsekanovskii.
\newblock On matrix-valued {H}erglotz functions.
\newblock {\em Math. Nachr.}, 218:61--138, 2000.

\bibitem{Gir90}
V.~L. Girko.
\newblock {\em Theory of random determinants}, volume~45 of {\em Mathematics
  and its Applications (Soviet Series)}.
\newblock Kluwer Academic Publishers Group, Dordrecht, 1990.

\bibitem{haagerup}
U.~Haagerup and S.~Thorbj{\o}rnsen.
\newblock A new application of random matrices: {${\rm Ext}(C^*_{\rm
  red}(F_2))$} is not a group.
\newblock {\em Ann. of Math. (2)}, 162(2):711--775, 2005.

\bibitem{HLN08Ieee}
W.~Hachem, O.~Khorunzhy, P.~Loubaton, J.~Najim, and L.~Pastur.
\newblock A new approach for mutual information analysis of large dimensional
  multi-antenna chennels.
\newblock {\em IEEE Trans. Inform. Theory}, 54(9):3987--4004, 2008.

\bibitem{hlmnv-rmta11}
W.~Hachem, P.~Loubaton, X.~Mestre, J.~Najim, and P.~Vallet.
\newblock Large information plus noise random matrix models and consistent
  subspace estimation in large sensor networks.
\newblock {\em To be published in Random Matrices: Theory and Applications},
  2011.
\newblock ArXiv preprint arXiv:1106.5119.

\bibitem{hlmnv-subspace11}
W.~Hachem, P.~Loubaton, X.~Mestre, J.~Najim, and P.~Vallet.
\newblock A subspace estimator for fixed rank perturbations of large random
  matrices.
\newblock {\em ArXiv preprint arXiv:1106.1497}, 2011.

\bibitem{hlnv-bilinear-ihp11}
W.~Hachem, P.~Loubaton, J.~Najim, and P.~Vallet.
\newblock On bilinear forms based on the resolvent of large random matrices.
\newblock {\em To be published in Annales de l'IHP}, 2012.

\bibitem{hachem-loubaton-najim07}
W.~Hachem, Ph. Loubaton, and J.~Najim.
\newblock Deterministic equivalents for certain functionals of large random
  matrices.
\newblock {\em Ann. Appl. Probab.}, 17(3):875--930, 2007.

\bibitem{joh01}
I.~M. Johnstone.
\newblock On the distribution of the largest eigenvalue in principal components
  analysis.
\newblock {\em Ann. Statist.}, 29(2):295--327, 2001.

\bibitem{kho-pastur93}
A.~M. Khorunzhy and L.~A. Pastur.
\newblock Limits of infinite interaction radius, dimensionality and the number
  of components for random operators with off-diagonal randomness.
\newblock {\em Comm. Math. Phys.}, 153(3):605--646, 1993.

\bibitem{krein-nudel}
M.~G. Kre{\u\i}n and A.~A. Nudel{\cprime}man.
\newblock {\em The {M}arkov moment problem and extremal problems}.
\newblock American Mathematical Society, Providence, R.I., 1977.
\newblock Ideas and problems of P. L. {\v{C}}eby{\v{s}}ev and A. A. Markov and
  their further development, Translated from the Russian by D. Louvish,
  Translations of Mathematical Monographs, Vol. 50.

\bibitem{MarPas67}
V.~A. Mar{\v{c}}enko and L.~A. Pastur.
\newblock Distribution of eigenvalues in certain sets of random matrices.
\newblock {\em Mat. Sb. (N.S.)}, 72 (114):507--536, 1967.

\bibitem{mes-it08}
X.~Mestre.
\newblock Improved estimation of eigenvalues and eigenvectors of covariance
  matrices using their sample estimates.
\newblock {\em IEEE Trans. Inform. Theory}, 54(11):5113--5129, 2008.

\bibitem{mes-sp08}
X.~Mestre.
\newblock On the asymptotic behavior of the sample estimates of eigenvalues and
  eigenvectors of covariance matrices.
\newblock {\em IEEE Trans. Signal Process.}, 56(11):5353--5368, 2008.

\bibitem{chemo}
T.~N{\ae}s, T.~Isaksson, T.~Fearn, and T.~Davies.
\newblock {\em A User-Friendly Guide to Multivariate Calibration and
  Classification}.
\newblock NIR Publications, Chichester, UK, 2002.

\bibitem{pastur-etal95}
L.~A. Pastur, A.~M. Khorunzhi{\u\i}, and V.~Yu. Vasil{\cprime}chuk.
\newblock On an asymptotic property of the spectrum of the sum of
  one-dimensional independent random operators.
\newblock {\em Dopov. Nats. Akad. Nauk Ukra\"\i ni}, (2):27--30, 1995.

\bibitem{pas-livre}
L.~A. Pastur and M.~Shcherbina.
\newblock {\em Eigenvalue distribution of large random matrices}, volume 171 of
  {\em Mathematical Surveys and Monographs}.
\newblock American Mathematical Society, Providence, RI, 2011.

\bibitem{paul-07}
D.~Paul.
\newblock Asymptotics of sample eigenstructure for a large dimensional spiked
  covariance model.
\newblock {\em Statist. Sinica}, 17(4):1617--1642, 2007.

\bibitem{peche-06}
S.~P{\'e}ch{\'e}.
\newblock The largest eigenvalue of small rank perturbations of {H}ermitian
  random matrices.
\newblock {\em Probab. Theory Related Fields}, 134(1):127--173, 2006.

\bibitem{pot-bouch-lal-05}
M.~Potters, J.-P. Bouchaud, and L.~Laloux.
\newblock Financial applications of random matrix theory: Old laces and new
  pieces.
\newblock In {\em Proceedings of the Cracow conference on ``Applications of
  Random Matrix Theory to Economy and Other Complex Systems''}, 2005.
\newblock arXiv preprint: arXiv:physics/0507111.

\bibitem{Sil95}
J.~W. Silverstein.
\newblock Strong convergence of the empirical distribution of eigenvalues of
  large-dimensional random matrices.
\newblock {\em J. Multivariate Anal.}, 55(2):331--339, 1995.

\bibitem{SilBai95}
J.~W. Silverstein and Z.~D. Bai.
\newblock On the empirical distribution of eigenvalues of a class of
  large-dimensional random matrices.
\newblock {\em J. Multivariate Anal.}, 54(2):175--192, 1995.

\bibitem{sil-choi95}
Jack~W. Silverstein and Sang-Il Choi.
\newblock Analysis of the limiting spectral distribution of large-dimensional
  random matrices.
\newblock {\em J. Multivariate Anal.}, 54(2):295--309, 1995.

\bibitem{tse-vis-livre-05}
D.~Tse and P.~Viswanath.
\newblock {\em Fundamentals of wireless communication}.
\newblock Cambridge university press, 2005.

\bibitem{val-lou-mes-it12}
P.~Vallet, P.~Loubaton, and X.~Mestre.
\newblock Improved subspace estimation for multivariate observations of high
  dimension: The deterministic signals case.
\newblock {\em Information Theory, IEEE Transactions on}, 58(2):1043 --1068,
  feb. 2012.

\bibitem{vino-etal-ieeesp-sub12}
J.~Vinogradova, R.~Couillet, and W.~Hachem.
\newblock Statistical inference in large antenna arrays under unknown noise
  pattern.
\newblock {\em ArXiv preprint}, 2012.

\end{thebibliography}

\def\cprime{$'$} \def\cdprime{$''$} \def\cprime{$'$} \def\cprime{$'$}
  \def\cprime{$'$} \def\cprime{$'$}

\newpage 

\noindent {\sc Fran\c cois Chapon}, \\
Universit\'e Paul Sabatier, \\ 
Institut de Math\'ematiques de Toulouse, \\ 
118 route de Narbonne, \\ 
31062 Toulouse Cedex 9, \\ 
France. \\ 
e-mail: \texttt{francois.chapon@math.univ-toulouse.fr}\\
\\
\noindent {\sc Romain Couillet},\\
Sup\'elec, \\
Plateau de Moulon, \\
91192 Gif-sur-Yvette Cedex, \\
France. \\ 
e-mail: \texttt{romain.couillet@supelec.fr} \\
\\ 
\noindent {\sc Walid Hachem}, \\
CNRS, T\'el\'ecom Paristech\\
46, rue Barrault, \\
75634 Paris Cedex 13, \\
France.\\
e-mail: \texttt{walid.hachem@telecom-paristech.fr}\\
\\
\noindent {\sc Xavier Mestre},\\
Centre Tecnol\`ogic de Telecomunicacions de Catalunya (CTTC), \\
Parc Mediterrani de la Tecnologia (PMT) - Building B4, \\
Av.~Carl Friedrich Gauss 7, \\
08860 - Castelldefels \\
Barcelona, Spain. \\
e-mail: \texttt{xavier.mestre@cttc.cat} \\

\end{document}